\newtheorem{theorem}{Theorem}[section]
\newtheorem{cor}[theorem]{Corollary}
\newtheorem{lemma}[theorem]{Lemma}
\newtheorem{prop}[theorem]{Proposition}
\newtheorem*{thm7.2}{Theorem \ref{Thm:Ramsey_Space}}
\newtheorem*{thm7.3}{Theorem \ref{Thm:TRS}}
\newtheorem*{thm7.4}{Theorem \ref{Thm:General_GP_Silver}}
\newtheorem*{thm6.16}{Theorem \ref{thm.ARTIdeal}}
\newtheorem*{theorem*}{Theorem}
\theoremstyle{definition}
\newtheorem{defin}[theorem]{Definition}
\newtheorem{fact}[theorem]{Fact}
\newtheorem{que}[theorem]{Question}
\newtheorem{notation}[theorem]{Notation}
\theoremstyle{remark}
\newtheorem*{rem}{Remark}
\newtheorem*{claim}{Claim}
\newcommand{\flim}{\mathrm{Flim}}
\newcommand{\age}{\mathrm{Age}}
\newcommand{\fin}{\mathrm{Fin}}
\newcommand{\fr}{Fra\"iss\'e }
\renewcommand{\phi}{\varphi}
\newcommand{\emb}{\mathrm{Emb}}
\newcommand{\aut}{\mathrm{Aut}}
\newcommand{\dom}{\mathrm{dom}}
\newcommand{\im}{\mathrm{Im}}
\newcommand{\cupdots}{\cup\cdots\cup}
\newcommand{\la}{\langle}
\newcommand{\ra}{\rangle}
\renewcommand{\c}[1]{\mathcal{#1}}
\renewcommand{\cal}[1]{\mathcal{#1}}
\newcommand{\bb}[1]{\mathbb{#1}}
\renewcommand{\rm}[1]{\mathrm{#1}}
\renewcommand{\sf}[1]{\mathsf{#1}}
\renewcommand{\frak}[1]{\mathfrak{#1}}
\newcommand{\bA}{\mathbf{A}}
\newcommand{\cA}{\mathcal{A}}
\newcommand{\bB}{\mathbf{B}}
\newcommand{\cB}{\mathcal{B}}
\newcommand{\bC}{\mathbf{C}}
\newcommand{\cC}{\mathcal{C}}
\newcommand{\bD}{\mathbf{D}}
\newcommand{\cD}{\mathcal{D}}
\newcommand{\rmE}{\mathrm{E}}
\newcommand{\bF}{\mathbf{F}}
\newcommand{\cF}{\mathcal{F}}
\newcommand{\bI}{\mathbf{I}}
\newcommand{\cI}{\mathcal{I}}
\newcommand{\bK}{\mathbf{K}}
\newcommand{\cK}{\mathcal{K}}
\newcommand{\cL}{\mathcal{L}}
\newcommand{\bM}{\mathbf{M}}
\newcommand{\bbN}{\mathbb{N}}
\newcommand{\cO}{\mathcal{O}}
\newcommand{\bbP}{\mathbb{P}}
\newcommand{\rmP}{\mathrm{P}}
\newcommand{\bbQ}{\mathbb{Q}}
\newcommand{\cR}{\mathcal{R}}
\newcommand{\cS}{\mathcal{S}}
\newcommand{\cU}{\mathcal{U}}
\newcommand{\sfU}{\mathsf{U}}
\newcommand{\bX}{\mathbf{X}}
\newcommand{\cX}{\mathcal{X}}
\newcommand{\frakX}{\mathfrak{X}}
\newcommand{\frakY}{\mathfrak{Y}}
\newcommand{\wt}{\widetilde}
\newcommand{\wh}{\widehat}
\newcommand{\ol}{\overline}
\newcommand{\code}{\mathrm{Code}}
\newcommand{\crit}{\mathrm{Crit}}
\newcommand{\Left}{\mathrm{Left}}
\newcommand{\argpath}{\mathrm{Path}}
\newcommand{\econ}{\mathrm{ECon}}
\newcommand{\con}{\mathrm{Con}}
\renewcommand{\sp}{\mathrm{Sp}}
\newcommand{\spnd}{\mathrm{SpNd}}
\newcommand{\cd}{\mathrm{Cd}}
\newcommand{\cdnd}{\mathrm{CdNd}}
\newcommand{\ac}{\mathrm{AC}}
\newcommand{\StEmb}{\mathrm{StEmb}}
\renewcommand{\succ}{\mathrm{Succ}}
\newcommand{\is}{\mathrm{IS}}
\newcommand{\str}{\mathbf{Str}}
\newcommand{\aemb}{\mathrm{AEmb}}
\newcommand{\demb}{\mathrm{DEmb}}
\newcommand{\Path}{\mathsf{Path}}
\newcommand{\sort}{\mathrm{Sort}}
\newcommand{\lex}{\preceq_{lex}}
\newcommand{\MP}{\mathsf{MP}}
\newcommand{\SL}{\mathrm{SL}}
\newcommand{\axiom}{\textbf{A.3(2)}}
\newcommand{\depth}{\mathrm{depth}}
\newcommand{\full}{\mathsf{Full}}
\begin{document}
	\title{Infinite-dimensional Ramsey theory for binary free amalgamation classes}
	\author{Natasha Dobrinen and Andy Zucker}
    \date{}
	\maketitle

 \begin{abstract}
 We develop infinite-dimensional Ramsey theory for 
\fr limits of 
finitely constrained free 
amalgamation classes in finite binary languages.
We show that our approach is optimal and in particular, recovers the
exact big Ramsey degrees proved in
 \cite{BCDHKVZ} for these structures.
A crucial step in the work   develops the new notion of an \axiom-ideal and 
shows  that  Todorcevic's Abstract Ramsey Theorem  \cite{stevo_book}
holds when  Axiom {\bf A.3(2)} is replaced by the weaker assumption of an {\bf A.3(2)}-ideal.
\let\thefootnote\relax\footnote{2020 Mathematics Subject Classification. Primary: 05D10. Secondary: 03E02, 03E75.}
\let\thefootnote\relax\footnote{Keywords: Structural Ramsey theory, Ramsey spaces, binary structures.}
\let\thefootnote\relax\footnote{The first author was supported by NSF grants DMS-1901753 and 2300896, and the second author was supported by NSF grant DMS-2054302 and NSERC grants RGPIN-2023-03269 and DGECR-2023-00412.}
 \end{abstract}

\section{Introduction}

Ramsey theory
was initiated  by
 the following celebrated result:
Given   positive integers $m,r$
and a
coloring 
of $[\mathbb{N}]^m = \{F\subseteq \bbN: |F| = m\}$ 
 into  $r$ colors, there is an infinite  subset  $M\subseteq\mathbb{N}$ such that  $[M]^m$ is monochromatic
\cite{Ramsey}.
Ramsey's theorem is considered  ``finite-dimensional" as the subsets of $\bbN$ being colored are of 
 the fixed finite size $m$.

Infinite-dimensional Ramsey theory is concerned with 
coloring the \emph{infinite} subsets of $\bbN$. Immediately we see that this is more difficult; whereas $[\bbN]^m$ is just a countable set, $[\bbN]^\infty$ is most naturally thought of as a topological space homeomorphic to the Baire space, where the typical basic clopen subset of $[\bbN]^\infty$ has the form $\{M\in [\bbN]^\infty: M\cap n = F\}$ for some positive integer $n$ and some $F\subseteq n$. Using the axiom of choice, Erd\H{o}s and Rado produced a coloring $\chi\colon [\bbN]^\infty\to 2$ with the property that no set of the form $[M]^\infty$ is monochromatic \cite{ErdosRado52}; 
 however, restrictions 
to `definable' colorings have yielded  the desired infinite-dimensional Ramsey theorems.
A subset $\frakX \subseteq [\mathbb{N}]^{\infty}$
 is called {\em Ramsey} if there
is  an infinite subset $M\subseteq \bbN$ with $[M]^\infty\subseteq \frakX$ or $[M]^\infty\cap \frakX = \emptyset$. An intermediate step in 
the progression from finite-dimensional to infinite-dimensional Ramsey theory was the   Nash-Williams
 Theorem \cite{Nash-Williams}, which implies that clopen subsets of the Baire space are Ramsey.
 In unpublished work, Galvin showed that open sets are Ramsey.
Galvin and Prikry \cite{GP} substantially 
 advanced the theory  by showing that  Borel sets are Ramsey, and 
 soon after, Silver \cite{Silver} extended their result to all analytic sets.
 This line of work culminated in the
 Ellentuck Theorem \cite{Ellentuck},
 which characterises
  those sets which are 
 (completely)
  Ramsey
  as exactly those with the property of Baire in the \emph{Ellentuck topology}. In particular, this implies that every \emph{Souslin measurable} subset of $[\bbN]^\infty$ is Ramsey, where a subset of a Polish space $X$ is \emph{Souslin measurable} if it is in the smallest $\sigma$-algebra generated by the topology and which is closed under the \emph{Souslin operation}.

In order to discuss the extensions of both finite and infinite-dimensional Ramsey theory to countable first-order structures, we briefly review some notation and terminology. All structures considered in this paper are relational. Recall that a \emph{relational language} is a set $\cL = \{R_i: i\in I\}$ of relational symbols, where each $R_i$ is equipped with a natural number $n_i\geq 1$ called the \emph{arity} of $R_i$. An \emph{$\cL$-structure} $\bA$ is a set $A$ (the \emph{universe} or \emph{underlying set} of $\bA$) along with a distinguished subset $R_i^{\bA}\subseteq A^{n_i}$ for each $i\in I$. We typically denote $\cL$-structures in bold letters and use the un-bolded letter to denote the underlying set, i.e.\ $A, B, C$ are the underlying sets of $\bA$, $\bB$, $\bC$, etc. If $\bA$ and $\bB$ are $\cL$-structures, an \emph{embedding} $f\colon \bA\to \bB$ is an injection from $A$ to $B$ such that for every $i\in I$, we have $(a_0,...,a_{n_i-1})\in R_i^\bA$ iff $(f(a_0),...,f(a_{n_i-1}))\in R_i^\bB$. Write $\emb(\bA, \bB)$ for the set of embeddings of $\bA$ into $\bB$. A \emph{copy} of $\bA$ in $\bB$ is the image of an embedding of $\bA$ into $\bB$, and we write $\binom{\bB}{\bA}$ for the set of copies of $\bA$ in $\bB$. We write $\bA\leq \bB$ iff $\emb(\bA, \bB)\neq\emptyset$ iff $\binom{\bB}{\bA}\neq \emptyset$. Given $\bA\leq \bB\leq \bC$ and positive integers $t< r$, we write
$$\bC\xrightarrow{copy}(\bB)^\bA_{r, t}$$
if for any coloring $\chi\colon \binom{\bC}{\bA}\to r$, there is $\bB'\in \binom{\bC}{\bB}$ so that $|\chi[\binom{\bB'}{\bA}]|\leq t$. When $t = 1$, we omit it from the notation. Similarly, we write
$$\bC\to (\bB)^\bA_{r, t}$$
if for any $\chi\colon \emb(\bA, \bC)\to r$, there is $g\in \emb(\bB, \bC)$ with $|\chi[g\circ \emb(\bA, \bB)]|\leq t$, once again omitting $t$ from the subscript when $t = 1$. 

Fix a structure $\bK$ with underlying set $\bbN$. Given a structure $\bA$ with $|\bA| = m\in \bbN\cup \{\infty\}$, the set of copies $\binom{\bK}{\bA}$ is then a subset of $[\bbN]^m$. The \emph{age} of $\bK$, denoted $\age(\bK)$, is the class of finite structures $\bA$ with $\emb(\bA, \bK)\neq \emptyset$.  Thus an analog of the finite-dimensional Ramsey theorem for $\bK$ would be as follows: given any $\bA\in \age(\bK)$ and any $r< \omega$, we have $\bK\xrightarrow{copy} (\bK)^\bA_r$. However, due to a result of Hjorth
\cite{Hjorth},
 exact analogues of Ramsey's Theorem usually fail
  for all but vertex colorings 
 when $\bK$ is ultrahomogeneous. An example of this was already  seen  in 
work of 
Sierpi\'{n}ski  in 1933, who constructed a 
 coloring of pairs in the  rational linear order into two colors so that both colors persist in any subset of the rationals forming a dense linear order \cite{Sierpinski}.
Thus we weaken our desired Ramsey theorem by instead asking for the least $t$, if it exists, for which $\bK\xrightarrow{copy} (\bK)^\bA_{r, t}$ for every $r> t$ (equivalently, for just $r = t+1$). This $t$ is called the {\em big Ramsey degree for copies} of $\bA$ in $\bK$ and is denoted $\rm{BRD}^{copy}(\bA, \bK)$. Similarly, one can ask for the least $t$ satisfying $\bK\to (\bK)^\bA_{r, t}$ for every $r> t$, and if it exists, this $t$ is called the \emph{big Ramsey degree} of $\bA$ in $\bK$ and denoted $\rm{BRD}(\bA, \bK)$. The relationship between the two versions is straightforward; we have $\rm{BRD}(\bA, \bK) = \rm{BRD}^{copy}(\bA, \bK)\cdot |\aut(\bA)|$ (for a proof of a similar result, see Proposition~4.4 of \cite{Zucker_Metr_UMF}). In particular, one value is finite iff the other is. For example, when $\bK$ is the rational linear order and $\bA$ is the $2$-element linear order, Sierpi\'{n}ski's coloring is the worst possible; Galvin \cite{Galvin} shows that $\rm{BRD}^{copy}(\bA, \bK) = \rm{BRD}(\bA, \bK) = 2$. We say that a countably infinite structure $\bK$ has \emph{finite big Ramsey degrees} if $\rm{BRD}(\bA, \bK)$ is finite for every $\bA\in \age(\bK)$. 

A rich source of countably infinite structures to consider are \emph{\fr structures}. Recall that given a relational language $\cL$, a \emph{\fr class} of $\cL$-structures is any class $\cK$ of finite $\cL$-structures which is closed under isomorphism, contains arbitrarily large finite structures, and satisfies the following three key properties.
\begin{itemize}
    \item 
    $\cK$ has the \emph{hereditary property} (HP): Whenever $\bB\in \cK$ and $\bA\leq \bB$, then $\bA\in \cK$.
    \item 
    $\cK$ has the \emph{joint embedding property} (JEP): Whenever $\bA, \bB \in \cK$, there is $\bC\in \cK$ with both $\bA\leq \bC$ and $\bB\leq \bC$.
    \item 
    $\cK$ has the \emph{amalgamation property} (AP): Whenever $\bA, \bB, \bC\in \cK$, $f\in \emb(\bA, \bB)$, and $g\in \emb(\bA, \bC)$, there are $\bD\in \cK$, $r\in \emb(\bB, \bD)$, and $s\in \emb(\bC, \bD)$ with $r\circ f = s\circ g$.
\end{itemize}
\fr \cite{fraisse_1954} proves that given a \fr class $\cK$, there is up to isomorphism a unique countably infinite structure $\bK$ with $\age(\bK) = \cK$ and with $\bK$ \emph{ultrahomogeneous}, i.e.\ for any finite $\bA\subseteq \bK$ and any $f\in \emb(\bA, \bK)$, there is $g\in \aut(\bK)$ with $g|_\bA = f$. This unique $\bK$ is called the \emph{\fr limit} of $\cK$ and is sometimes written as $\flim(\cK)$. Conversely, whenever $\bK$ is countably infinite and ultrahomogeneous, $\age(\bK)$ is a \fr class. Given a \fr class $\cK$, we say that $\cK$ has \emph{finite big Ramsey degrees} if $\rm{BRD}(\bA, \flim(\cK))$ is finite for every $\bA\in \cK$. We take a moment to emphasize the difference between the finite-dimensional Ramsey theory of \fr limits and the Ramsey theory of classes of finite structures. A class $\cK$ of finite structures is said to have the \emph{Ramsey property} if for any $\bA\leq \bB\in \cK$, there is $\bC\in \cK$ with $\bB\leq \bC$ so that $\bC\to (\bB)^\bA_2$. Ramsey theory on finite structures  has been  developed  with wide success
from the 1970's through the present
(see, for instance, \cite{HubickaNesetril19}).
An early highlight in this area is  the 
Ne\v{s}et\v{r}il--R\"{o}dl Theorem  that all finite ordered free amalgamation classes in finite languages have the Ramsey property
\cite{NesetrilRodl77, NesetrilRodl83}.
In particular, the classes of finite linear orders, finite ordered graphs, and finite ordered $k$-clique-free graphs all have the Ramsey property. Compare this to the discussion above, where we saw that the $2$-element linear order has big Ramsey degree $2$ in the rational linear order.

Until quite recently, 
finite-dimensional structural Ramsey theory on infinite structures  
has progressed  sporadically. The first non-trivial infinite structure for which the finite-dimensional Ramsey theory has been fully understood
is the rationals as a dense linear order.
In 1979,  Devlin 
characterized the big Ramsey degrees of the rationals in \cite{Devlin}, 
building on unpublished work of Laver establishing upper bounds.
The next complete result only appeared in 2006:
The big Ramsey degrees of the Rado graph were fully characterized by Laflamme, Sauer, and Vuksanovic in \cite{LSV}, showing that the 
upper bounds due to  Sauer \cite{Sauer06} were exact. 
Other work shortly after this giving complete characterizations of big Ramsey degrees include  \cite{NVT08}, for classes of finite-distance ultrametric spaces, and 
 \cite{LNS} for various unary expansions of the class of linear orders as well as the circular directed graph $\mathbf{S}(2)$.

 Characterizing big Ramsey degrees in finitely constrained binary free amalgamation classes took another fifteen years. Recall that an $\cL$-structure $\bF$ is \emph{irreducible} if every pair $a\neq b\in F$ is contained in some non-trivial relation. If $\cF$ is a set of finite irreducible $\cL$-structures, then $\rm{Forb}(\cF)$ denotes the class of finite $\cL$-structures which \emph{forbid}, i.e.\ do not embed,
all $\bF\in \cF$. Every free amalgamation class has the form $\rm{Forb}(\cF)$ for some set $\cF$ of finite irreducible structures. If $\cL$ is a finite relational language with symbols of arity at most $2$ and $\cF$ is finite, we call $\rm{Forb}(\cF)$ a \emph{finitely constrained binary free amalgamation class}.
The class of finite triangle-free graphs was shown to have finite big Ramsey degrees using new methods 
developed by the first author  in  
\cite{Dobrinen_3Free}, and later, these methods were extended to the class of $k$-clique-free graphs for any $k\geq 3$ in \cite{Dobrinen_Henson}. These advances inspired extremely rapid progress on big Ramsey degrees; see for instance \cite{BalkoPO, BalkoForbCycles, BCHKV_3Unif, Barbosa, Braunfeld_etal, CDP_SDAP1, CDP_SDAP2, Hubicka_CS20, Masulovic18}. In
\cite{Zucker_BR_Upper_Bound}, the second author
simplified and abstracted the methods from \cite{Dobrinen_3Free} and \cite{Dobrinen_Henson}
to 
produce a streamlined proof that every finitely constrained binary relational free amalgamation class has finite big Ramsey degrees. The  big Ramsey degrees for  these   classes of structures were completely characterized 
 in \cite{BCDHKVZ}; we will discuss this characterization in more detail later in the introduction. For further  background and  overview regarding the finite-dimensional Ramsey theory of countable structures, we refer the reader to \cite{BCDHKVZ} and \cite{Dobrinen_ICM}.

The problem of developing infinite-dimensional structural Ramsey theory was brought into focus in the paper \cite{KPT} of Kechris, Pestov, and Todorcevic. 
When considering the infinite-dimensional Ramsey theory of a countably infinite structure $\bK$ with $K = \bbN$, we can identify $\binom{\bK}{\bK}$ with a closed subspace of the Baire space. Now, along with the same topological considerations that arose when considering colorings of $[\bbN]^\infty$, we must also consider the effect of big Ramsey degrees. For instance, if $\bK$ is the \fr limit of a \fr class with strong amalgamation and $\bA\in \age(\bK)$ has big Ramsey degree for copies in $\bK$ at least $t$, fix an unavoidable coloring $\chi_\bA\colon \binom{\bK}{\bA}\to t$. We can equip $\binom{\bK}{\bA}$ with a lexicographic well-order induced from $\bbN$, and then define $\chi\colon \binom{\bK}{\bK}\to t$ by setting $\chi(\bM) = \chi_\bA(\min\binom{\bM}{\bA})$. This is a clopen $t$-coloring of $\binom{\bK}{\bK}$, and our assumptions on $\bK$ imply that each color class is unavoidable. Thus the subspaces of  $\binom{\bK}{\bK}$ that we consider  must be further restricted  to produce any viable Ramsey theory. The first result along these lines is due to the first author for the example where $\bK$ is the Rado graph, exhibiting  subspaces of $\binom{\bK}{\bK}$ satisfying an analogue of the Galvin--Prikry Theorem \cite{Dobrinen_Rado}. Of course, a priori there are many subspaces of $\binom{\bK}{\bK}$ one might choose to investigate. Ideally, an infinite-dimensional Ramsey theorem for a given subspace should yield information about the finite-dimensional Ramsey theory of $\bK$; one might even hope to exactly recover the big Ramsey degrees of every $\bA\in \age(\bK)$. The approach we take in this paper is to consider subspaces of $\binom{\bK}{\bK}$ given by \emph{big Ramsey structures}, first defined by the second author in \cite{Zucker_BR_Dynamics}. These are expansions of $\bK$ which simultaneously encode the exact big Ramsey degrees of every finite $\bA\subseteq \bK$. Among the different big Ramsey structures a structure $\bK$ might admit, we will see that the \emph{recurrent} ones (Definition~\ref{Def:Recurrent}), should they exist, are canonical. While the questions of whether having finite big Ramsey degrees implies admitting a big Ramsey structure or whether admitting a big Ramsey structure implies admitting a recurrent big Ramsey structure
are still open, all known examples of \fr classes with finite big Ramsey degrees do admit recurrent big Ramsey structures.

In \cite{Dobrinen_SDAP}, 
the first author developed infinite-dimensional Ramsey theory for \fr structures in finite binary relational languages  with a certain property called SDAP$^+$ (as well as a  weaker property LSDAP$^+$),
building on work in \cite{CDP_SDAP1, CDP_SDAP2}. This property
essentially implies that any ``diagonal antichain" coding the \fr limit is a recurrent big
Ramsey structure for it. 
There are two classes of results in \cite{Dobrinen_SDAP}: an Ellentuck-style theorem for the rationals and more general $\mathbb{Q}$-like structures, and a Galvin--Prikry-style theorem for the  
unconstrained\footnote{
As well,  SDAP$^+$ classes  in finitely many relations of arity at most two include  the \fr classes of  $k$-partite  graphs and more generally, free amalgamation classes with with finitely many forbidden irreducible substructures each of size  two.}
free amalgamation classes considered in this paper,
possibly with an additional rational linear order, each theorem recovering exact big Ramsey degrees.

However, 
the following substantial problems remained open:
\begin{quote}
Do all finitely constrained binary free amalgamation classes carry infinite-dimensional Ramsey theory?
\end{quote}
\begin{quote}
For such classes, even the unconstrained ones,
exactly which topological properties characterize those sets which are Ramsey?
\end{quote}

We answer these questions by developing the infinite-dimensional Ramsey theory for finitely constrained binary free amalgamation classes; these are exactly the classes of structures considered in \cite{Zucker_BR_Upper_Bound} and \cite{BCDHKVZ}.
Examples of such classes are  the class of finite graphs, the class of finite $k$-clique-free graphs ($k\geq 3$), the class of finite $k$-partite graphs with a labeled $k$-partition ($k\geq 2$), and the class of finite graphs with two types of edges (say $R_0$ and $R_1$)
which do not embed triangles with two $R_0$-edges and one $R_1$-edge.
Fix a finitely constrained binary free amalgamation class $\cK$ with limit $\bK$. In \cite{BCDHKVZ}, it is shown that the big Ramsey degree for copies of any $\bA\in \cK$ is exactly the number of \emph{diaries} (Definition~\ref{Def:DiagonalDiary}) which encode the structure $\bA$. Furthermore, it is shown that any two diaries which encode $\bK$ are bi-embeddable,  implying that each such diary is a recurrent big Ramsey structure for $\bK$. As any diary is completely determined by its behavior on substructures of a fixed finite size, it follows that in fact, \emph{every} recurrent big Ramsey structure for $\bK$ is bi-interpretable with some diary. Diaries coding $\flim(\cK)$
then necessarily 
form the starting point for our development of the infinite-dimensional structural Ramsey theory 
of these classes. Theorem~\ref{Thm:General_GP_Silver} is one of the main theorems of this paper; ``$\rm{DEmb}$" refers to ``diary embedding" (Definition~\ref{Def:EmbDiary}).

\begin{thm7.4}
    Fix a finitely constrained binary free amalgamation class $\cK$. Let $\Gamma$ and $\Theta$ be any diaries with $\str(\Gamma)\cong \str(\Theta)\cong \flim(\cK)$. Then for any finite Souslin-measurable coloring of $\demb(\Theta, \Gamma)$, there is $h\in \demb(\Gamma, \Gamma)$ with $h\circ \demb(\Theta, \Gamma)$ monochromatic. 
\end{thm7.4}

We show that Theorem~\ref{Thm:General_GP_Silver} is optimal in a number of ways, completely answering a question of Kechris, Pestov, and Todorcevic \cite{KPT} and recovering exact big Ramsey degrees via clopen colorings.

Theorem~\ref{Thm:General_GP_Silver} is proven by first constructing a fixed diary $\Delta$ coding $\bK$ with desirable properties. An infinite number of levels of $\Delta$ are designated as \emph{strong} (Definition~\ref{Def:StrongDiary}), and given another diary $\Theta$, the \emph{strong embeddings} of $\Theta$ into $\Delta$ are embeddings which interact nicely with the strong levels of $\Delta$ (Definition~\ref{Def:StrongEmbs}). Let $\cR\subseteq \demb(\Theta, \Delta)$ denote the set of strong embeddings of $\Theta$ into $\Delta$. We also consider embeddings of $\Delta$ into $\Delta$ which are the identity up to a strong level, then above this are strong in the above sense; let $\cS\subseteq \demb(\Delta, \Delta)$ denote these embeddings. On $\cS$, let $\leq$ denote the partial order where $\sigma\leq \phi$ iff $\sigma\in \phi\cdot\cS$, and let $\leq_\cR\,\subseteq \cR\times\cS$ be defined so that $\eta \leq_\cR \phi$ iff $\eta\in \phi\cdot \cR$. Then $(\cR, \cS, \leq_\cR, \leq)$ is a monoid-based containment space (Definitions~\ref{Def:ContainmentSpace} and \ref{Def:MonoidBased}), and $(\cS, \leq)$ is a monoid-based topological containment space. In the course of proving Theorems~\ref{Thm:Ramsey_Space} and \ref{Thm:TRS}, 
we make a contribution to abstract Ramsey space theory by showing that Todorcevic's Abstract Ramsey Theorem (Theorem 4.27 of \cite{stevo_book}) still holds upon weakening his axiom \axiom\hphantom{} (see Definition~\ref{Def:A32Ideal}).
\begin{thm7.2}
    With notation as in Definition~\ref{Def:StrongEmbs}, $(\cR, \cS, \leq_\cR, \leq)$ is a  Ramsey  space. More precisely, $(\cR, \cS, \leq_\cR, \leq)$ satisfies \textbf{A.4} and admits an \axiom-ideal. 
\end{thm7.2}
\begin{thm7.3}
    With notation as in Definition~\ref{Def:StrongEmbs}, $(\cS, \leq)$ is a topological Ramsey space. More precisely, $(\cS, \leq)$ satisfies \textbf{A.4} and admits an \axiom-ideal.
\end{thm7.3}

 Sections \ref{Sec:Background on gluings}
and \ref{Sec:DiagonalDiaries} review gluings, age-classes, and diaries,  simplifying some of the work in \cite{BCDHKVZ}. Strong diaries are introduced in 
Section \ref{Sec:Strong diaries}.
Spaces of  \emph{strong embeddings} of diaries into strong diaries
form the foundation  of our infinite-dimensional Ramsey theory. 
In Section \ref{Sec:Ramsey spaces}
we review Ramsey spaces  and  the four axioms which Todorcevic showed suffice to prove  the Abstract Ramsey  Theorem \ref{thm.RS}. 
An obstacle to applying this theory, however,
is that our spaces of embeddings do not satisfy axiom \axiom. 
Indeed, this failure is a property intrinsic to  free amalgamation classes,
not of our set-up, and was already noticed in \cite{Dobrinen_Rado}.
We overcome this obstacle by introducing the new notion of an \emph{\axiom-ideal} (Definition 
\ref{Def:A32Ideal}). 
\begin{thm6.16}
    Suppose $\Omega = (\cR,\cS,\le,\le_{\cR})$ is a containment space which is perfect, respects branching, satisfies axiom {\bf A.4}, and admits an {\bf A.3(2)}-ideal.
Then the conclusion of the Abstract Ramsey Theorem holds.
\end{thm6.16}
The assumptions that $\Omega$ be perfect and respect branching are mild technical assumptions that always hold for monoid-based containment spaces.

Section \ref{Sec:Main Theorem} contains the main theorems.  We first use Theorem~\ref{Thm:Ramsey_Space} along with the bi-embeddability of diaries coding $\bK$ 
 to prove Theorem~\ref{Thm:General_GP_Silver}.
Then, in
Subsection~\ref{Subsec:Ellentuck}, we discuss what Theorem~\ref{Thm:TRS} implies about $\demb(\Delta, \Delta)$. Given a monoid $M$ equipped with a left-invariant metric, the \emph{Ellentuck topology} on $M$ is the coarsest left-invariant topology containing the metric topology (see subsection~\ref{Subsec:Ellentuck}).
Theorem \ref{Thm:TRS} implies
 that for  any positive integer $r$ and coloring
$\chi:\cS\rightarrow  
 r$ which has the Baire-property with respect to the Ellentuck topology on $\cS$,
  there is a strong embedding $h:\Delta\rightarrow\Delta$ with $\chi\circ h$ constant. 
Thus, the analogue of the Ellentuck Theorem holds for the space $\cS$. This yields Corollary~\ref{Cor:Ellentuck}, a strengthening of Theorem~\ref{Thm:General_GP_Silver} in the case $\Gamma = \Theta = \Delta$ which allows all finite colorings of $\demb(\Delta, \Delta)$ which are \emph{Ellentuck Souslin measurable}, i.e.\ in the smallest $\sigma$-algebra containing the Ellentuck topology on $\demb(\Delta, \Delta)$ and closed under the Souslin operation. 
Subsections~\ref{Subsec:A32GenericEmbs} and \ref{Subsec:A4} prove the two main parts of Theorem~\ref{Thm:Ramsey_Space}, namely that $\cS$ contains an \axiom-ideal for $\cR$ and that \textbf{A.4} holds. Slight modifications of these arguments yield the analogous parts of Theorem~\ref{Thm:TRS}. We remark that the proof that \textbf{A.4} holds in our setting amounts to proving a very tight analogue of the Halpern-L\"auchli theorem \cite{halpern_lauchli}.

Section \ref{sec.optimality} shows the optimality of our results.
Theorem \ref{Thm:LargestSemigroup}
shows that for any diary $\Theta$ coding $\bK$, the semigroup $S:=\demb(\Theta,\Theta)$ is as large as possible in the following sense:
If $T\subseteq \emb(\bK,\bK)$ is a Polish subsemigroup of $\emb(\bK,\bK)$ with $S\subsetneq T$, then $T$ cannot satisfy any meaningful infinite-dimensional Ramsey theorem.
Subsection \ref{Subsec:7.2} shows  the impossibility of infinite-dimensional Ramsey theory on the whole space $\emb(\bK,\bK)$  by showing there is a clopen coloring into finitely, or even infinitely many colors such that each color is recurrent no matter how we zoom in to a subcopy of $\bK$. 
We also show the  impossibility of  obtaining an analogue of Ellentuck's Theorem on arbitrary diaries in Subsection \ref{Subsec:7.3}.

To conclude, we point out that 
while soft arguments using  previous work in \cite{Zucker_BR_Upper_Bound} and \cite{BCDHKVZ} are able to  show   that clopen colorings  are Ramsey, here we extend this to very complex colorings.
Moreover, we obtain direct (hard) arguments 
for colorings of diaries, both finite and infinite.
This yields a direct recovery of exact big Ramsey degrees, namely one color per each finite diary coding a given finite structure, 
whereas previously for finitely constrained binary FAP classes, 
big Ramsey degrees were characterized by arguments  going back and forth between the Ramsey theorems for coding trees and the lower bound arguments.


\section{Recurrent big Ramsey structures}

This section gives some background from \cite{Zucker_BR_Dynamics} and \cite{BCDHKVZ} on recurrent big Ramsey structures, making explicit some ideas which only appear implicitly in those works. One new result presented here is Proposition~\ref{Prop:Recover_BRD}, which demonstrates how Ramsey theorems for suitable subspaces of $\binom{\bK}{\bK}$ can recover the exact big Ramsey degrees of $\bK$.  

\begin{defin}
    \label{Def:Expansion}
    Given relational languages $\cL^*\supseteq \cL$ and an $\cL^*$-structure $\bM^*$, the \emph{$\cL$-reduct} $\bM|_\cL$ is the $\cL$-structure on the same underlying set as $\bM^*$ and with the same interpretations of symbols from $\cL$ as in $\bM$. Conversely, if $\bM$ is an $\cL$-structure, an \emph{$\cL^*$-expansion} of $\bM$ is some $\cL^*$-structure $\bM^*$ on underlying set $M$ with $\bM^*|_\cL = \bM$. Given an $\cL$-structure $\bM$, $\bB\leq \bM$, and an $\cL^*$-expansion $\bM^*$ of $\bM$, we set
$$\bM^*(\bB):= \{\bB^*: \bB^* \text{ is an $\cL^*$-expansion of $\bB$ with } \bB^*\leq \bM^*\}.$$ 
We call $\bM^*$ a \emph{precompact} expansion of $\bM$ if $\bM^*(\bB)$ is finite for every finite $\bB\leq \bM$. If $f\in \emb(\bB, \bM)$, we write $\bM^*{\cdot}f$ for the unique $\bB^*\in \bM^*(\bB)$ with $f\in \emb(\bB^*, \bM^*)$. \qed 
\end{defin}

\begin{defin}[\cite{Zucker_BR_Dynamics}]
    \label{Def:BRS}
    If $\cK$ has finite big Ramsey degrees, a \emph{big Ramsey structure} for $\cK$ is an $\cL^*$-expansion $\bK^*$ of $\bK$ for some first-order language $\cL^*\supseteq \cL$ satisfying the following:
\begin{itemize}
    \item
    For every $\bA\in \age(\bK)$, we have $|\bK^*(\bA)| = \rm{BRD}(\bA, \cK)$.
    \item
    On $\emb(\bA, \bK)$, the coloring $f\to \bK^*{\cdot}f$ witnesses that $\rm{BRD}(\bA, \cK)\geq |\bK^*(\bA)|$.
\end{itemize}
\end{defin}

Thus a big Ramsey structure for $\bK$ is an expansion of $\bK$ which simultaneously describes the big Ramsey degrees of every finite substructure of $\bK$. Note that if $\bK^*$ is a big Ramsey structure for $\bK$ and $\eta\in \emb(\bK, \bK)$, then $\bK^*{\cdot}\eta$ is also a big Ramsey structure for $\bK$. While it is an open question whether every \fr structure with finite big Ramsey degrees admits a big Ramsey structure, every known example does.

In fact, for every known example of a \fr structure $\bK$ with finite big Ramsey degrees, we can find big Ramsey structures satisfying a much stronger property. 

\begin{defin}
    \label{Def:Recurrent}
    Given relational languages $\cL^*\supseteq \cL$ and an infinite $\cL$-structure $\bM$,  we call an $\cL^*$-expansion $\bM^*$ of $\bM$  \emph{recurrent} if $\emb(\bM^*, \bM^*{\cdot}\eta)\neq \emptyset$ for every $\eta\in \emb(\bM, \bM)$. Equivalently, $\bM^*$ is a recurrent expansion of $\bM$  iff $\emb(\bM^*, \bM^*)\subseteq \emb(\bM, \bM)$ meets every right ideal of $\emb(\bM, \bM)$. Similarly, if $\bA\subseteq \bM$ is finite, then a coloring of $\emb(\bA, \bM)$ is \emph{recurrent} if upon encoding this coloring as an expansion of $\bM$ (for instance, by introducing a new relational symbol for each color), this expansion is recurrent.

    If $\bK$ is a \fr $\cL$-structure with finite big Ramsey degrees, we call an $\cL^*$-expansion $\bK^*$ a \emph{recurrent big Ramsey structure for $\bK$} if it is both a big Ramsey structure for $\bK$ and a recurrent expansion of $\bK$.
\end{defin}
Note that if $\bM^*$ is a recurrent expansion of $\bM$ and $\eta\in \emb(\bM, \bM)$, then $\bM^*{\cdot}\eta$ is also a recurrent expansion of $\bM$. In particular, if $\bK^*$ is a recurrent big Ramsey structure for $\bK$ and $\eta\in \emb(\bK, \bK)$, then $\bK^*{\cdot}\eta$ is also a recurrent big Ramsey structure for $\bK$.

The definition of a recurrent big Ramsey structure is implicit in \cite{BCDHKVZ}. That paper considered \emph{strong} big Ramsey structures, which are big Ramsey structures satisfying $\bK^*\to (\bK^*)^{\bA^{\!*}}_2$ for every finite $\bA^*\subseteq \bK^*$. By Theorem~5.1.6 from \cite{BCDHKVZ} and the remark that follows, every recurrent big Ramsey structure is strong, and any strong big Ramsey structure in a finite language is recurrent.

The following proposition shows that a recurrent big Ramsey structure for $\bK$ can interpret a wide variety of expansions of $\bK$.  

\begin{prop}
    \label{Prop:Recurrent_BRS}
    Fix $\cL$ a finite relational language and $\bK = \flim(\cK)$ a \fr $\cL$-structure. Suppose $\bK^*$ is a recurrent big Ramsey structure for $\bK$ in a relational language $\cL^* \supseteq \cL$. Suppose $\bK'$ is another expansion of $\bK$ in a finite relational language $\cL'\supseteq \cL$. Then $\bK^*$ interprets an $\cL'$-structure isomorphic to some $\bM'\subseteq \bK'$ with $\bM'|_\cL\cong \bK$.
\end{prop}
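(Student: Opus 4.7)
The plan is to combine the big Ramsey property of $\bK^*$ with recurrence to find an embedding $\phi\in\emb(\bK^*,\bK^*)$ on which the $\cL'$-structure pulled back from $\bK'$ depends only on the $\cL^*$-structure of $\bK^*$. First, since $\cL'$ is finite, the set $\bK'(\bA)$ is finite for every $\bA\in\age(\bK)$, so the product coloring
\[
c_\bA\colon\emb(\bA,\bK)\to\bK^*(\bA)\times\bK'(\bA),\qquad c_\bA(f)=(\bK^*\cdot f,\,\bK'\cdot f)
\]
has finite range. Since $\rm{BRD}(\bA,\cK)=|\bK^*(\bA)|$ by the definition of big Ramsey structure, for each single $\bA$ finite big Ramsey degrees provide an $\eta_\bA\in\emb(\bK,\bK)$ with $|c_\bA(\eta_\bA\circ\emb(\bA,\bK))|\leq|\bK^*(\bA)|$.

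The principal technical step is to combine these reductions simultaneously across all $\bA\in\age(\bK)$. Enumerating $\age(\bK)=\{\bA_i:i<\omega\}$ and $K=\{k_j:j<\omega\}$, a standard fusion argument---at stage $i+1$ applying big Ramsey in its ``rooted'' form that fixes the images of $k_0,\dots,k_i$ while still achieving the desired reduction---yields a single $\eta\in\emb(\bK,\bK)$ with $|c_{\bA_i}(\eta\circ\emb(\bA_i,\bK))|\leq|\bK^*(\bA_i)|$ for every $i$. Then by recurrence of $\bK^*$, pick $\zeta\in\emb(\bK^*,\bK^*\cdot\eta)$ and set $\phi:=\eta\circ\zeta\in\emb(\bK^*,\bK^*)$. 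Since $\phi(K)\subseteq\eta(K)$, the coloring $c_\bA$ still takes at most $|\bK^*(\bA)|$ values on $\phi\circ\emb(\bA,\bK)$.

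Because $\phi\in\emb(\bK^*,\bK^*)$, the identity $\bK^*\cdot(\phi\circ f)=\bK^*\cdot f$ holds for all $f\in\emb(\bA,\bK)$, and every value in $\bK^*(\bA)$ is realized (take any $f\in\emb(\bA^*,\bK^*)$ for each $\bA^*\in\bK^*(\bA)$). Hence the first coordinate of $c_\bA$ on $\phi\circ\emb(\bA,\bK)$ already attains all $|\bK^*(\bA)|$ values, forcing the second coordinate to be a function $\psi_\bA\colon\bK^*(\bA)\to\bK'(\bA)$ of the first; that is, $\bK'\cdot(\phi\circ f)=\psi_\bA(\bK^*\cdot f)$.

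Finally, I would read off the interpretation. Define an $\cL'$-expansion $\bM''$ of $\bK$ by declaring, for each tuple $\bar a\subseteq K$ with induced $\cL$-substructure $\bA$ and inclusion $\iota_{\bar a}\colon\bA\hookrightarrow\bK$, the $\cL'$-structure on $\bar a$ to be $\psi_\bA(\bK^*\cdot\iota_{\bar a})$. Since this depends only on the quantifier-free $\cL^*$-type of $\bar a$, and only finitely many such types arise for tuples of arity at most $\max\{n_i:R_i\in\cL'\}$, each $R'\in\cL'$ is defined by a quantifier-free $\cL^*$-formula, exhibiting $\bM''$ as interpreted in $\bK^*$. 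The identity $\bK'\cdot(\phi\circ\iota_{\bar a})=\psi_\bA(\bK^*\cdot\iota_{\bar a})$ shows $\phi\colon\bM''\to\bK'$ is an $\cL'$-embedding with image $\bM':=\bK'|_{\phi(K)}\subseteq\bK'$, and $\bM'|_\cL=\bK|_{\phi(K)}\cong\bK$ because $\phi$ is a $\bK$-embedding. The main obstacle will be securing the rooted fusion in the second paragraph; once that is in place, the remaining steps are just bookkeeping.
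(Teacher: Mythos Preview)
Your overall strategy---form the product coloring, reduce to big Ramsey degree many colors, apply recurrence, then read off the interpretation---is exactly right and matches the paper. But the ``main obstacle'' you flag, the rooted fusion across all $\bA\in\age(\bK)$, is entirely self-inflicted. Since $\cL'$ is finite, the $\cL'$-structure on any tuple is determined by the $\cL'$-structure on its subtuples of size at most the maximum arity in $\cL'$. And since $\cL$ is finite, there are only \emph{finitely many} isomorphism types $\bA_0,\dots,\bA_{n-1}$ in $\cK$ of that bounded size. So you only need a single $\eta$ that works for these finitely many $\bA_i$, and this is obtained by a finite composition: apply big Ramsey for $c_{\bA_0}$ to get $\eta_0$, then for $c_{\bA_1}\circ\eta_0$ to get $\eta_1'$ and set $\eta_1=\eta_0\circ\eta_1'$, and so on through $\bA_{n-1}$. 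No fusion, no rooted version of big Ramsey, is needed. This is precisely what the paper does.

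The rooted form you propose (fixing the images of finitely many points while still achieving the big Ramsey reduction) does not follow from the definition of big Ramsey degrees alone, so as written your argument has a genuine gap at exactly the point you suspected. Once you replace the infinite fusion by the finite iteration above, the rest of your proof---the recurrence step producing $\phi=\eta\circ\zeta\in\emb(\bK^*,\bK^*)$, the pigeonhole forcing $\bK'\cdot(\phi\circ f)$ to depend only on $\bK^*\cdot f$, and the interpretation---is correct and essentially identical to the paper's.
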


\begin{proof}
    Let $\{\bA_i: i< n\}$ list up to isomorphism all members of $\cK$ of size at most the largest possible arity among relation symbols in $\cL'$.  Let $\chi_i'$ and $\chi_i^*$ denote the finite colorings of $\emb(\bA_i, \bK)$ given by $f\to \bK'{\cdot}f$ and $f\to \bK^*{\cdot}f$, respectively. Let $\chi_i = \chi_i'\times \chi_i^*$ denote the product coloring. Find $\eta\in \emb(\bK, \bK)$ so that for every $i< n$, the coloring $\chi_i\circ \eta$ takes on at most $\rm{BRD}(\bA_i, \bK)$ colors. In particular, since $\chi_i^*$ is an unavoidable 
    $\rm{BRD}(\bA_i, \bK)$-coloring of $\emb(\bA_i, \bK)$, we have that for each $i< n$ and $f\in \emb(\bA_i, \bK)$, the expansion $\bK'{\cdot}(\eta\circ f)$ depends only on the expansion $\bK^*{\cdot}(\eta\circ f)$. As $\bK^*$ is recurrent, find $\theta\in \emb(\bK, \bK)$ with $\bK^*{\cdot} (\eta\circ \theta) = \bK^*$. It follows that $\bK^*$ interprets an $\cL'$-structure isomorphic to $\bK'{\cdot}(\eta\circ\theta)$.  
\end{proof}

Proposition~\ref{Prop:Recurrent_BRS} has two key corollaries. The first shows that under mild assumptions, recurrent big Ramsey structures interpret $\omega$-orders of the underlying set.

\begin{cor}
    \label{Cor:Recurrent_BRS_Enum}
    If $\cK$ is a \fr class in a finite relational language, $\bK = \flim(\cK)$, and $\bK^*$ is a recurrent big Ramsey structure for $\bK$, then $\bK^*$ interprets an ordering of $K$ in order type $\omega$. Thus we can identify $\binom{\bK^*}{\bK^*}$ and $\emb(\bK^*, \bK^*)$.
\end{cor}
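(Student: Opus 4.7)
The plan is to obtain the $\omega$-order as a direct application of Proposition~\ref{Prop:Recurrent_BRS} to a suitably chosen expansion $\bK'$ of $\bK$. Concretely, let $\cL' = \cL\cup\{<\}$ where $<$ is a new binary relation symbol, and define $\bK'$ to be the expansion of $\bK$ in which $<$ is interpreted as any fixed well-ordering of the countable set $K$ in order type $\omega$. Since $\cL$ is finite, $\cL'$ is also finite, so $\bK'$ is a legal input to Proposition~\ref{Prop:Recurrent_BRS}. Applying that proposition, $\bK^*$ interprets an $\cL'$-structure isomorphic to some $\bM'\subseteq \bK'$ with $\bM'|_{\cL}\cong \bK$. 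In particular $M'\subseteq K$ is infinite, so the restriction of $<$ to $M'$ is an infinite subset of an $\omega$-order and hence is itself of order type $\omega$. This $\omega$-order, pulled back to $K$ along the interpreting isomorphism, is the required ordering definable from $\bK^*$.

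For the second assertion, consider any $f\in \emb(\bK^*,\bK^*)$. Since $f$ preserves all relations in $\cL^*$, and the $\omega$-order on $K$ has been produced as a relation interpretable in $\bK^*$, $f$ must preserve this order. But any order-embedding of $(\omega,<)$ into itself is uniquely determined by its image, because the order type $\omega$ is rigid (there is a unique order-isomorphism between any two well-ordered sets of the same order type). Hence the map $f\mapsto f(K)$ from $\emb(\bK^*,\bK^*)$ to $\binom{\bK^*}{\bK^*}$ is injective; it is surjective by definition of a copy. This gives the identification.

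The argument is essentially a bookkeeping application of Proposition~\ref{Prop:Recurrent_BRS}, so there is no genuine obstacle; the only mild care needed is to note that an arbitrary $\omega$-ordering of $K$ yields a legitimate finite-language expansion of $\bK$, and that the corresponding $\bM'$ supplied by the proposition is automatically of order type $\omega$ by virtue of being an infinite subset of $(K,<)$.
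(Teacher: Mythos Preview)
Your proof is correct and is exactly the intended derivation: the paper states this result as a direct corollary of Proposition~\ref{Prop:Recurrent_BRS} without further argument, and your choice of $\bK'$ as $\bK$ expanded by an arbitrary $\omega$-ordering of $K$ is precisely the right input to that proposition. Your observation that any infinite substructure of an $\omega$-order again has order type $\omega$, and your justification that $\cL^*$-embeddings must preserve any relation interpretable in $\bK^*$, complete the argument cleanly.
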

    
\begin{rem}
    To obtain Corollary~\ref{Cor:Recurrent_BRS_Enum}, one only needs to assume that $\cK$ contains finitely many isomorphism types of structures of size $2$.
\end{rem}

The second corollary shows, again under some finite language assumptions, that recurrent big Ramsey structures are canonical up to bi-embeddability and bi-interpretation. 

\begin{cor}
    \label{Cor:Recurrent_BRS_Canonical}
    If $\cK$ is a \fr class in a finite relational language, $\bK = \flim(\cK)$, and $\bK^*, \bK'$ are recurrent big Ramsey structures for $\bK$  in finite relational languages $\cL^*, \cL'\supseteq \cL$, respectively, then each is bi-interpretable with a substructure of the other.   
\end{cor}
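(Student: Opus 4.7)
The plan is a symmetric double application of Proposition~\ref{Prop:Recurrent_BRS}. Both $\bK^*$ and $\bK'$ are recurrent big Ramsey structures for $\bK$ in finite relational languages, so each can play either role in the proposition. First I would apply Proposition~\ref{Prop:Recurrent_BRS} with $\bK^*$ as the recurrent big Ramsey structure and $\bK'$ as the target expansion. This produces an $\cL'$-structure, definable inside $\bK^*$, that is isomorphic to a substructure $\bM' \leq \bK'$ with $\bM'|_\cL \cong \bK$. Examining the proof of Proposition~\ref{Prop:Recurrent_BRS}, this $\bM'$ has the form $\bK'{\cdot}\eta_1$ for some $\eta_1 \in \emb(\bK,\bK)$; since $\bK'$ is recurrent, Definition~\ref{Def:Recurrent} gives $\emb(\bK', \bK'{\cdot}\eta_1) \neq \emptyset$, so $\bM'$ contains a full $\cL'$-isomorphic copy of $\bK'$. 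Swapping the roles of $\bK^*$ and $\bK'$ yields, symmetrically, a substructure $\bM^* \leq \bK^*$ with $\bM^*|_\cL \cong \bK$ that is definable inside $\bK'$ and contains an $\cL^*$-isomorphic copy of $\bK^*$.

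This already delivers the statement in its weakest reading: $\bK^*$ interprets the substructure $\bM'$ of $\bK'$, and $\bK'$ interprets the substructure $\bM^*$ of $\bK^*$. To upgrade to genuine bi-interpretability, I would chase the identifications. Inside $\bK^*$ the definable copy $\bM'$ itself interprets, via the proof of Proposition~\ref{Prop:Recurrent_BRS} applied inside $\bM'$, an $\cL^*$-structure which is a substructure of $\bK^*{\cdot}\eta_1\eta_2$ for some further $\eta_2 \in \emb(\bK,\bK)$; recurrence of $\bK^*$ then furnishes $\theta \in \emb(\bK,\bK)$ with $\bK^*{\cdot}(\eta_1\eta_2\theta) = \bK^*$, giving a definable $\cL^*$-isomorphism between $\bK^*$ and the interpreted structure. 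The analogous composition in the opposite order yields the required definable $\cL'$-isomorphism. In both cases the composition is definable because the embeddings $\eta_i$ and $\theta$ are encoded by the big Ramsey structure itself, which is precisely the content of Proposition~\ref{Prop:Recurrent_BRS}.

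The main obstacle is essentially notational: one must track two expansions on the same underlying set and ensure that the embeddings produced by Proposition~\ref{Prop:Recurrent_BRS} and by recurrence can be composed compatibly. The only substantive input needed is that $\cL^* \cup \cL'$ is still finite, so that one may apply Proposition~\ref{Prop:Recurrent_BRS} to the joint expansion carrying both $\bK^*$- and $\bK'$-structures whenever the bookkeeping requires it; this is immediate from the hypothesis that both $\cL^*$ and $\cL'$ are finite. No new Ramsey-theoretic ideas beyond Proposition~\ref{Prop:Recurrent_BRS} and Definition~\ref{Def:Recurrent} enter the argument.
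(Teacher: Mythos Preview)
Your proposal is correct and matches the paper's intended argument: the paper gives no explicit proof, presenting the corollary as an immediate consequence of Proposition~\ref{Prop:Recurrent_BRS}, and your symmetric double application of that proposition---together with the observation from the paper that $\bK'{\cdot}\eta$ is again a recurrent big Ramsey structure whenever $\bK'$ is---is exactly what is needed. Your second paragraph, unpacking the bi-interpretability via recurrence, supplies the bookkeeping that the paper leaves implicit.
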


The last proposition of this section shows that finding an expansion of a \fr structure $\bK$ which satisfies the infinite Ramsey theorem recovers exact big Ramsey degrees for $\bK$. 

\begin{prop}
    \label{Prop:Recover_BRD}
    Let $\cL^*\supseteq \cL$ be relational languages, and let $\bK$ be a \fr $\cL$-structure. Suppose $\bK^*$ is a recurrent, precompact expansion of $\bK$ satisfying $\bK^*\to (\bK^*)^{\bA^{\!*}}_2$ for every finite $\bA^{\!*}\subseteq \bK^*$. Then $\bK$ has finite big Ramsey degrees, and $\bK^*$ is a (recurrent) big Ramsey structure for $\bK$. 
\end{prop}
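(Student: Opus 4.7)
The plan is to fix $\bA\in\age(\bK)$ and establish $\rm{BRD}(\bA,\bK) = |\bK^*(\bA)|$, with the lower bound realised by the canonical coloring $f\mapsto \bK^*{\cdot}f$. Precompactness forces the right-hand side to be finite, so this simultaneously yields that $\bK$ has finite big Ramsey degrees and that $\bK^*$ satisfies the two clauses of Definition~\ref{Def:BRS}. The workhorse observation is the disjoint decomposition
\[\emb(\bA,\bK) \;=\; \bigsqcup_{\bA^*\in\bK^*(\bA)}\emb(\bA^*,\bK^*),\]
which holds because each $\cL$-embedding $f\colon\bA\to\bK$ determines via pullback the unique $\cL^*$-expansion $\bK^*{\cdot}f\in\bK^*(\bA)$ for which $f$ becomes an $\cL^*$-embedding.

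For the upper bound, enumerate $\bK^*(\bA) = \{\bA^*_1,\ldots,\bA^*_k\}$. A standard iterated-Ramsey argument first promotes the two-color hypothesis $\bK^*\to(\bK^*)^{\bA^*}_2$ to $\bK^*\to(\bK^*)^{\bA^*}_r$ for every finite $r$. Given $\chi\colon\emb(\bA,\bK)\to r$, I would construct $g_1,\ldots,g_k\in\emb(\bK^*,\bK^*)$ iteratively: at stage $i$, apply the $\bA^*_i$-Ramsey property to the coloring $f\mapsto \chi(g_1\circ\cdots\circ g_{i-1}\circ f)$ of $\emb(\bA^*_i,\bK^*)$ to obtain a $g_i$ which makes $\chi$ constant on $g_1\circ\cdots\circ g_i\circ\emb(\bA^*_i,\bK^*)$. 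Crucially, composition with any $\cL^*$-embedding $g_i$ stabilises the partition, i.e.\ $g_i\circ\emb(\bA^*_j,\bK^*)\subseteq\emb(\bA^*_j,\bK^*)$ for every $j$; hence monochromaticity achieved at earlier stages persists through subsequent compositions. The composite $g := g_1\circ\cdots\circ g_k\in\emb(\bK^*,\bK^*)\subseteq\emb(\bK,\bK)$ then satisfies $|\chi[g\circ\emb(\bA,\bK)]|\le k$.

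For the lower bound, consider $\chi(f) := \bK^*{\cdot}f$ valued in $\bK^*(\bA)$ and show that every color is realised in $g\circ\emb(\bA,\bK)$ for each $g\in\emb(\bK,\bK)$. Fix such a $g$ together with an arbitrary $\bA^*\in\bK^*(\bA)$. Recurrence of $\bK^*$ supplies $h\in\emb(\bK,\bK)$ with $g\circ h\in\emb(\bK^*,\bK^*)$. Since $\bA^*\le\bK^*$, pick any $f'\in\emb(\bA^*,\bK^*)$ and set $f := h\circ f'$; then $g\circ f = (g\circ h)\circ f'$ is an $\cL^*$-embedding of $\bA^*$ into $\bK^*$, so $\chi(g\circ f) = \bA^*$. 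Combined with the upper bound this gives $\rm{BRD}(\bA,\bK) = |\bK^*(\bA)|$, and the canonical coloring witnesses the lower bound by construction, so $\bK^*$ is indeed a (recurrent) big Ramsey structure for $\bK$.

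No step is conceptually deep; the one piece of hygiene to watch is tracking which embedding space ($\cL$- or $\cL^*$-) each map inhabits, so that witnesses produced inside $\emb(\bK^*,\bK^*)$ via the Ramsey hypothesis can legitimately be plugged into the definition of $\rm{BRD}$, which only requires witnesses in $\emb(\bK,\bK)$.
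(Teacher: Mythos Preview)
Your proof is correct and follows essentially the same route as the paper: the disjoint decomposition $\emb(\bA,\bK)=\bigsqcup_{\bA^*\in\bK^*(\bA)}\emb(\bA^*,\bK^*)$, an iterated application of $\bK^*\to(\bK^*)^{\bA^*}_r$ for the upper bound, and recurrence for the lower bound. You spell out in more detail than the paper both the persistence of monochromaticity under composition with $g_i\in\emb(\bK^*,\bK^*)$ and the recurrence argument producing $g\circ h\in\emb(\bK^*,\bK^*)$, but the underlying argument is identical.
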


\begin{proof}
    Fix a finite $\bA\subseteq \bK$, towards showing that $\rm{BRD}(\bA, \bK) = |\bK^*(\bA)|$. First, as $\bK^*$ is a recurrent expansion of $\bK^*$, the coloring of $\emb(\bA, \bK)$ given by $f\to \bK^*{\cdot}f$ is unavoidable, showing that $\rm{BRD}(\bA, \bK)\geq |\bK^*(\bA)|$. For the other inequality, fix a finite coloring $\chi\colon \emb(\bA, \bK)\to r$. Note that $\emb(\bA, \bK) = \bigsqcup_{\bA^{\!*}\in \bK^*(\bA)} \emb(\bA^{\!*}, \bK^*)$. Repeatedly using that $\bK^*$ satisfies the infinite Ramsey theorem, find $\eta\in \emb(\bK^*, \bK^*)\subseteq \emb(\bK, \bK)$ such that for each $\bA^{\!*}\in \bK^*$, $\chi\circ \eta$ is constant on $\emb(\bA^{\!*}, \bK^*)$. Hence $|\im(\chi\circ\eta)|\leq |\bK^*(\bA)|$. Once we have $\rm{BRD}(\bA, \bK) = |\bK^*(\bA)|$, it follows that $\bK^*$ is a big Ramsey structure for $\bK$. 
\end{proof}

\section{Background on gluings and age-classes}\label{Sec:Background on gluings}

All background comes from \cite{BCDHKVZ}, but we include some of it here to keep this paper somewhat self-contained. Set-theoretic notation is standard, with $\omega = \bbN$, and given $n< \omega$, we identify $n$ with $\{0,..., n-1\}$.

Throughout, $\cL$ is a finite relational language with symbols of arity at most two, $\cF$ is a finite set of finite irreducible $\cL$-structures, $\cK = \rm{Forb}(\cF)$, and $\bK = \flim(\cK)$. Let $\|\cF\| = \max\{|\bF|: \bF\in \cF\}$. We can arrange, by changing $\cL$ and $\cF$ as necessary, that all of the following hold for any $\bA\in \cK$:
\begin{itemize}
    \item 
    For any $a\in A$, there is exactly one unary predicate $U\in \cL$ so that $U^\bA(a)$ holds.
    \item
    For any $a\in A$ and any binary $R\in \cL$, we have $\neg R^\bA(a, a)$.
    \item
    For any $a\neq b\in A$, there is at most one $R\in \cL$ with $R^\bA(a, b)$.
    \item
    There is a map $\rm{Flip}: \cL\to \cL$ so that for $a\neq b\in A$, $R^\bA(a, b)$ iff $\rm{Flip}(R)^\bA(b, a)$.
\end{itemize}
In particular, fixing $k, \sfU< \omega$ and writing $\cL^\sf{u} = \{U_i: i< \sfU\}$ and $\cL^\sf{b} = \{R_i: i< k\}$ for the unary and binary predicates, respectively, we can treat $U^\bA\colon A\to \sfU$ and $R^\bA\colon A^2\setminus \{(a, a): a\in A\}\to k$ as functions, exactly as was done in \cite{BCDHKVZ}.

We let $\{V_n: n< \omega\}$ be new unary symbols not in $\cL^\sf{u}$, and we set $\cL_d := \cL^\sf{b}\cup \{V_i: i< d\}$. We always assume that the following items hold for any $\cL_d$-structure $\bB$ that we discuss: 
\begin{itemize}
    \item 
    $B\cap \omega = \emptyset$.
    \item
    For any $a\in B$, there is exactly one $i< d$ so that $V_i^\bB(a)$ holds. 
    \item
    Conventions regarding the binary symbols are exactly the same as those for $\cL$-structures. 
\end{itemize}
We treat $V^\bB\colon B\to d$ as a function, exactly as was done in \cite{BCDHKVZ}. We write $\fin(\cL_d)$ for the class of finite $\cL_d$-structures.

\begin{defin}
    \label{Def:Gluings}
    A \emph{gluing} is a triple $\gamma:= (\bX, \rho, \eta)$ where:
\begin{itemize}
    \item
    $\bX$ is a finite $\cL$-structure with $X\subseteq \omega$. We call $X$ the \emph{underlying set} of $\gamma$ and $\bX$ the \emph{structure} of $\gamma$.
    \item 
    There is $d< \omega$ so that $\rho\colon d\to \sfU$ is a function. We call $d$ the \emph{rank} of $\gamma$ and $\rho$ the \emph{sort} of $\gamma$. More generally, we can call any function from a natural number to $\sfU$ a \emph{sort}.
    \item
    $\eta\colon d\times X \to  k$ is a function called the \emph{attachment map} of $\gamma$.
\end{itemize}

Given a sort $\rho\colon d\to \sfU$, we let $\rm{Glue}(\rho)$ be the set of gluings with sort $\rho$, and we let $\tilde{\rho}\in \rm{Glue}(\rho)$ be the gluing $(\emptyset, \rho, \emptyset)$.

Given $\bB$ an $\cL_d$-structure and $\gamma = (\bX, \rho, \eta)$ a rank $d$ gluing, the $\cL$-structure $\gamma(\bB)$ is formed on underlying set $X\cup B$ so that:
\begin{itemize}
    \item 
    $\gamma(\bB)|_X = \bX$.
    \item
    The $\cL^\sf{b}$-part of $\gamma(\bB)$ on $B$ is induced from $\bB$.
    \item
    If $b\in B$, we have $U^{\gamma(\bB)}(b) = \rho\circ V^\bB(b)$.
    \item
    If $b\in B$ and $x\in X$, we have $R^{\gamma(\bB)}(b, x) = \eta(V^\bB(b), x)$.
\end{itemize} 
\end{defin}

As the (abuse of) notation suggests, the gluing $\gamma$ gives rise to a map from the class of $\cL_d$-structures to the class of $\cL$-structures. In this spirit, we adopt the following convention we adopt when considering classes of structures.
\vspace{2 mm}

\noindent
\textbf{Convention}: Given a class $\cC$ of $\cL'$-structures (here $\cL'$ will be $\cL$ or $\cL_d$), we identify $\cC$ with its characteristic function, i.e.\ $\cC(\bA) = 1$ iff $\bA\in \cC$. If $\phi$ is a (class) function whose outputs are $\cL'$-structures, we let $\cC\cdot \phi$ denote the composition $\cC\circ \phi$. In particular, if $\dom(\phi)$ is $\fin(\cL'')$ (again with $\cL'' = \cL$ or $\cL_d$), then we treat $\cC{\cdot}\phi$ as the class of $\cL''$-structures $\{\bB\in \fin(\cL''): \phi(\bB)\in\cC\}$; this is exactly the class whose characteristic function is $\cC{\cdot}\phi$. 

\begin{defin}
 \label{Def:Poset_Of_Classes}
Given a gluing $\gamma$ of rank $d$, we define
$$\cK{\cdot}\gamma := \{\bB\in \fin(\cL_d): \gamma(\bB)\in \cK\}$$
By treating $\cK$ and $\cK{\cdot}\gamma$ as characteristic functions, the notation becomes quite suggestive, i.e.\ $(\cK{\cdot}\gamma)(\bB) = \cK(\gamma(\bB))$. 
   
A class $\cA$ of finite $\cL_d$-structures \emph{contains all unaries} if $\cA$ contains every singleton $\cL_d$-structure. A rank $d$ gluing $\gamma$ \emph{admits all unaries} if $\cK{\cdot}\gamma$ contains all unaries.

    Given a sort $\rho\colon d\to \sfU$, we set
    $$P(\rho):= \{\cK{\cdot}\gamma: \gamma\in \rm{Glue}(\rho)\text{ admits all unaries}\}$$
    We treat $P(\rho)$ as a partial order under inclusion. In the case $\sfU = 1$, there is a unique sort with domain $d$, so we just write $P(d)$. \qed
\end{defin}

\begin{prop}
    \label{Prop:Finite_Posets}
    $P(\rho)$ is finite.
\end{prop}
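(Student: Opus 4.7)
The plan is to show that $\cK{\cdot}\gamma$ is fully determined by a finite set $\mathcal{G}(\gamma)$ of $\cL_d$-structures of size at most $\|\cF\|$, and then bound the number of possible $\mathcal{G}(\gamma)$ by a cardinality argument on isomorphism types.

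First I would characterize membership in $\cK{\cdot}\gamma$. Fix $\gamma = (\bX, \rho, \eta)$ of sort $\rho\colon d\to \sfU$. By definition, $\bB \in \cK{\cdot}\gamma$ iff no $\bF \in \cF$ embeds into $\gamma(\bB)$. Since each $\bF$ is irreducible and every vertex of $\gamma(\bB)$ lies in $X$ or $B$, any embedding $f\colon \bF \to \gamma(\bB)$ induces a partition $F = F_X \sqcup F_B$ with $f|_{F_X}$ an embedding of $\bF|_{F_X}$ into $\bX$. The restriction $f|_{F_B}$ sends each $v \in F_B$ to a vertex $b$ whose $V$-value $j = V^\bB(b)$ must satisfy $\rho(j) = U^{\bF}(v)$ and $\eta(j, f_X(w)) = R^{\bF}(v,w)$ for every $w \in F_X$. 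Thus, for each triple $(\bF, F_X, f_X)$ with $\bF \in \cF$ and $f_X \in \emb(\bF|_{F_X}, \bX)$, and for each function $j\colon F_B \to d$ satisfying the above compatibility conditions on every $v \in F_B$, we obtain a \emph{residual} $\cL_d$-structure $\bG_{\bF, F_X, f_X, j}$ on $F_B$ whose binary relations are inherited from $\bF$ and whose $V$-values are given by $j$. Let $\mathcal{G}(\gamma)$ be the set of isomorphism types of all such residuals.

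The content of the first step is then the equivalence
\[
\cK{\cdot}\gamma \;=\; \{\bB \in \fin(\cL_d) : \text{no } \bG \in \mathcal{G}(\gamma) \text{ embeds into } \bB\},
\]
which follows directly from the analysis above: an embedding $\bF \to \gamma(\bB)$ exists iff some residual $\bG_{\bF, F_X, f_X, j}$ embeds into $\bB$ via $f|_{F_B}$, with the converse obtained by gluing $f_X$ and the embedding of the residual back together. Hence $\cK{\cdot}\gamma$ depends only on $\mathcal{G}(\gamma)$.

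Finally I would observe that every $\bG \in \mathcal{G}(\gamma)$ has $|G| = |F_B| \leq |F| \leq \|\cF\|$, and since $\cL_d$ is a finite relational language and only finitely many isomorphism types of $\cL_d$-structures on at most $\|\cF\|$ vertices exist, there is a fixed finite set $\mathfrak{H}$ of isomorphism types containing every $\mathcal{G}(\gamma)$ as a subset. The map $\gamma \mapsto \mathcal{G}(\gamma) \subseteq \mathfrak{H}$ therefore takes at most $2^{|\mathfrak{H}|}$ values, and since $\cK{\cdot}\gamma$ is determined by $\mathcal{G}(\gamma)$, we conclude $|P(\rho)| \leq 2^{|\mathfrak{H}|} < \infty$. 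I don't expect any serious obstacle here; the only point requiring care is correctly tracking all admissible $V$-value assignments $j$ on $F_B$ when reading off the residuals (in particular, a single $\bF$ and $f_X$ can produce several residuals because a vertex $v \in F_B$ may have multiple $j \in d$ compatible with its unary and attachments), but this multiplicity does not affect the finiteness conclusion.
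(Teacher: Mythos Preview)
Your proof is correct and supplies exactly the argument the paper defers to \cite{BCDHKVZ}: the paper's own proof is simply the one-line citation ``This is Proposition 2.1.4 from \cite{BCDHKVZ}.'' Your residual-structure analysis is the standard way to establish this fact, and the remark immediately following Definition~\ref{Def:Consecutive} (``by arguments very similar to those of Proposition~\ref{Prop:Finite_Posets}, such a $\bC$ must satisfy $|\bC|< \|\cF\|$'') confirms that the cited proof proceeds along the same lines---bounding the size of the relevant witnessing structures by $\|\cF\|$ and then counting isomorphism types.

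One small cosmetic point: the phrase ``since each $\bF$ is irreducible'' in your first paragraph is not actually used anywhere in the argument. The splitting $F = F_X \sqcup F_B$ and the gluing of $f_X$ with an embedding of the residual both work for arbitrary $\bF$; irreducibility of the members of $\cF$ is what makes $\mathrm{Forb}(\cF)$ a free amalgamation class, but it plays no role in the finiteness of $P(\rho)$. You can safely delete that clause.
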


\begin{proof}
    This is Proposition 2.1.4 from \cite{BCDHKVZ}.
\end{proof}

\begin{prop}
    \label{Prop:Intersections}
    $P(\rho)$ is closed under intersections.
\end{prop}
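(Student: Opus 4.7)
My plan is to show: given any two gluings $\gamma_1 = (\bX_1, \rho, \eta_1)$ and $\gamma_2 = (\bX_2, \rho, \eta_2)$ in $\rm{Glue}(\rho)$ admitting all unaries, I can exhibit a single gluing $\gamma \in \rm{Glue}(\rho)$ admitting all unaries with $\cK{\cdot}\gamma = (\cK{\cdot}\gamma_1)\cap(\cK{\cdot}\gamma_2)$. The natural candidate for $\gamma$ is the ``free amalgam'' of $\gamma_1$ and $\gamma_2$.

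First I would replace $\bX_1$ and $\bX_2$ by isomorphic copies with disjoint underlying sets inside $\omega$; since $\cK{\cdot}\gamma_j$ depends on $\gamma_j$ only up to relabeling of the underlying set of $\bX_j$, this causes no loss. So assume $X_1\cap X_2 = \emptyset$. Let $\bX$ be the $\cL$-structure on $X_1\cup X_2$ whose unaries and internal binaries on each $X_j$ come from $\bX_j$, and with no binary relation holding between $X_1$ and $X_2$ (i.e.\ $\bX$ is the free amalgam of $\bX_1$ and $\bX_2$ over the empty structure). Define $\eta\colon d\times(X_1\cup X_2)\to k$ by $\eta(i,x) := \eta_j(i,x)$ for $x\in X_j$, and set $\gamma := (\bX,\rho,\eta)$.

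The key observation is that for every $\cL_d$-structure $\bB$, the structure $\gamma(\bB)$ is exactly the free amalgam of $\gamma_1(\bB)$ and $\gamma_2(\bB)$ over their common substructure on $B$ (that common substructure is unambiguous, since both $\gamma_j$ impose the same unaries $\rho\circ V^{\bB}$ on $B$ and inherit the $\cL^{\sf{b}}$-data on $B$ directly from $\bB$). This is immediate from Definition~\ref{Def:Gluings}: the relations on $X_j\cup B$ inside $\gamma(\bB)$ are identical to those of $\gamma_j(\bB)$, and no relations hold between $X_1$ and $X_2$. The inclusion $\cK{\cdot}\gamma\subseteq(\cK{\cdot}\gamma_1)\cap(\cK{\cdot}\gamma_2)$ is then immediate from hereditariness of $\cK$, since $\gamma_j(\bB)$ embeds into $\gamma(\bB)$. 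For the reverse inclusion, since $\cF$ consists of finite irreducible $\cL$-structures, $\cK = \rm{Forb}(\cF)$ is a free amalgamation class, so the free amalgam of two members of $\cK$ over a common substructure lies in $\cK$; applying this with the two members $\gamma_1(\bB),\gamma_2(\bB)\in\cK$ yields $\gamma(\bB)\in\cK$.

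Finally, $\gamma$ admits all unaries: for each $i<d$, let $\bB^{(i)}$ be the singleton $\cL_d$-structure with value $i$ under $V$; since $\gamma_j$ admits all unaries, $\gamma_j(\bB^{(i)})\in\cK$ for $j=1,2$, whence $\gamma(\bB^{(i)})\in\cK$ by the equality just established. I do not expect a real obstacle here; the argument is essentially a single application of the free amalgamation property of $\cK$, with the preliminary bookkeeping of disjointifying the underlying sets of $\bX_1,\bX_2$ being the only technical nuisance.
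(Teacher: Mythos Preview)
Your proof is correct. The paper itself does not give a proof but only cites Proposition~2.1.7 of \cite{BCDHKVZ}; your argument---forming the ``union gluing'' $\gamma$ as the free amalgam of $\gamma_1$ and $\gamma_2$ and then observing that $\gamma(\bB)$ is the free amalgam of $\gamma_1(\bB)$ and $\gamma_2(\bB)$ over $B$---is exactly the standard construction, and in fact the same idea appears (in a commented-out passage) elsewhere in this paper for the related fact that $\cA\cap\cB{\cdot}f\in P(\rho)$.
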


\begin{proof}
    This is Proposition 2.1.7 from \cite{BCDHKVZ}.
\end{proof}

We now establish some notation allowing us to manipulate gluings and classes of finite $\cL_d$-structures.
\vspace{2 mm}

\noindent
\textbf{Convention}: When discussing partial functions $e\colon d_0\rightharpoonup d$ with $d_0, d<\omega$, we think of $e$ as equipped with the knowledge of what $d_0$ is, even if $\dom(e)\subsetneq d_0$, and of what $d$ is, even if $e$ is not surjective. 

\begin{defin}
    \label{Def:Maps_and_Gluings}
    Let $d_0, d< \omega$ and $e\colon d_0\rightharpoonup d$ be a partial function. If $\bB$ is an $\cL_{d_0}$-structure, then $e{\cdot}\bB$ is the $\cL_d$-structure on the underlying set $\{x\in \bB: V^\bB(x)\in \dom(e)\}$, with $\cL^\sf{b}$-part induced from $\bB$, and with $V^{e\cdot \bB}(x) = e(V^\bB(x))$. In other words, we take $\bB$, throw away the points whose unary is not in $\dom(e)$, then relabel the unaries according to the function $e$. If $\cA$ is a class of finite $\cL_d$-structures, we set
    $$\cA{\cdot}e := \{\bB\in \fin(\cL_{d_0}): e{\cdot}\bB\in \cA\}.$$
    Treating $\cA$ and $\cA{\cdot}e$ as characteristic functions, we have $(\cA{\cdot}e)(\bB) = \cA(e{\cdot}\bB)$.

\end{defin}

    \begin{notation}  
    \label{Notation:Manipulate_Classes}
    By abusing notation, we can identify a finite 
    set
    $S\subseteq \omega$ with its increasing enumeration. When doing this, we interpret $S$ as a total function with domain $|S|$ and with codomain given from context, i.e.\ writing $S\subseteq d$ indicates that the codomain is $d$. If $S\subseteq d$, $\cA$ is a class of finite $\cL_d$-structures, and keeping in mind the convention before Definition~\ref{Def:Maps_and_Gluings}, then we can write $\cA{\cdot}S$, $\rho\circ S$, etc. We write $\cA^i$ for $\cA{\cdot}\{i\}$. 

    Given a sort $\rho\colon d\to \sfU$, $\cA\in P(\rho)$, $S\subseteq d$, and $\cB\in P(\rho\circ S)$, then letting $f\colon d\rightharpoonup |S|$ denote the partial inverse of the increasing enumeration of $S$, we write $\cA|_{(S, \cB)}:= \cA\cap \cB{\cdot}f$, thinking of this as ``$\cA$ restricted at $S$ to $\cB$." If $S = \{i\}$, we write $\cA|_{(i, \cB)}$.
    \end{notation}

\begin{defin}
    \label{Def:Consecutive}
    Fix a sort $\rho\colon d\to \sfU$. We say that $\cA\supseteq \cB\in P(\rho)$ are \emph{consecutive} if there is no $\cC\in P(\rho)$ with $\cA\supsetneq \cC\supsetneq \cB$. Write $\con(\rho) = \{(\cA, \cB)\in P(\rho)^2: \cA\supseteq \cB\text{ are consecutive}\}$. We say that $(\cA, \cB)\in \con(\rho)$ is \emph{essential} on some $S\subseteq d$ if there is a minimal-under-inclusion $\bC\in \cA\setminus \cB$ with underlying set $C$ such that $S = \{V^\bC(x): x\in C\}$. We say that $(\cA, \cB)\in \con(\rho)$ is \emph{essential} if it is essential on $d$, and we write $\econ(\rho) = \{(\cA, \cB)\in \con(\rho): (\cA, \cB) \text{ is essential}\}$. \qed  
\end{defin}

By arguments very similar to those of Proposition~\ref{Prop:Finite_Posets}, such a $\bC$ must satisfy $|\bC|< \|\cF\|$, so in particular, $|S|<\|\cF\|$. 

\begin{prop}
    \label{Prop:Consecutive}
    With notation as in Definition~\ref{Def:Consecutive}, if $(\cA, \cB)\in \con(\rho)$, then there is a unique $S\subseteq d$ on which $(\cA, \cB)$ is essential. Fixing this $S$, if $S'\subseteq d$ and $S\subseteq S'$, then $(\cA{\cdot}S', \cB{\cdot}S')\in \con(\rho\circ S')$, and if $S\not\subseteq S'$, we have $\cA{\cdot}S' = \cB{\cdot}S'$.
\end{prop}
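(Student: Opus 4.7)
The plan is to handle the three assertions in order: uniqueness of the set $S$; the vanishing $\cA\cdot S'=\cB\cdot S'$ when $S\not\subseteq S'$; and consecutiveness of $(\cA\cdot S',\cB\cdot S')$ in $P(\rho\circ S')$ when $S\subseteq S'$.  Existence of \emph{some} essential $S$ is immediate: since $\cA\supsetneq\cB$ and all structures involved are finite, $\cA\setminus\cB$ has a minimal element $\bC$, and $S:=\{V^\bC(x):x\in C\}$ then witnesses essentiality.

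For the main content of the first claim---uniqueness of $S$---I would proceed by contradiction.  Suppose $\bC_1,\bC_2\in\cA\setminus\cB$ are two minimal elements with distinct $V$-value sets $S_1\neq S_2$.  Since isomorphism of $\cL_d$-structures preserves $V$-values, $\bC_1\not\cong\bC_2$; combined with the fact that any proper substructure of either $\bC_i$ lies in $\cB$ (by minimality and hereditariness), this forces $\bC_1\not\leq\bC_2$ and symmetrically $\bC_2\not\leq\bC_1$.  The goal is to produce a class $\cC\in P(\rho)$ with $\cA\supsetneq\cC\supsetneq\cB$, contradicting consecutiveness.  Writing $\cA=\cK\cdot\gamma_\cA$ and $\cB=\cK\cdot\gamma_\cB$, the relation $\bC_1\in\cA\setminus\cB$ gives $\gamma_\cA(\bC_1)\in\cK$ yet $\gamma_\cB(\bC_1)\notin\cK$, so some $\bF\in\cF$ embeds into $\gamma_\cB(\bC_1)$ via an embedding whose image must meet both $\bI_\cB$ and $\bC_1$ (an image entirely inside $\bI_\cB$ would force $\bF\leq\bI_\cB\in\cK$, an impossibility; an image entirely inside $\bC_1$ would also embed into $\gamma_\cA(\bC_1)$, contradicting $\gamma_\cA(\bC_1)\in\cK$).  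Isolating the substructure $\bI_0\subseteq\bI_\cB$ hit by this embedding and forming the restricted gluing $\gamma_0=(\bI_0,\rho,\eta_\cB|_{\bI_0})$, one intersects $\cA$ with $\cK\cdot\gamma_0$ (using Proposition~\ref{Prop:Intersections} to land in $P(\rho)$) to obtain a class which excludes $\bC_1$; and by choosing $\bI_0$ minimal with respect to forbidding $\bC_1$, one arranges that $\bC_2$ is retained, exploiting that the forbidden pattern witnessing $\bC_2\notin\cB$ engages $\bI_\cB$ at positions dictated by $S_2\neq S_1$.

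For $S\not\subseteq S'$, the inclusion $\cA\cdot S'\supseteq\cB\cdot S'$ is automatic.  For the reverse, given $\bB\in\cA\cdot S'$, suppose for contradiction $S'\cdot\bB\in\cA\setminus\cB$ and pick a minimal $\bC_0\leq S'\cdot\bB$ in $\cA\setminus\cB$: by uniqueness its $V$-value set is $S$, and since these values all lie in those of $S'\cdot\bB$, which are contained in $S'$, we conclude $S\subseteq S'$, a contradiction.  For $S\subseteq S'$, strict inclusion is witnessed by lifting a minimal $\bC\in\cA\setminus\cB$ to the $\cL_{|S'|}$-structure $\bC'$ on the same underlying set with $V^{\bC'}(x)=(S')^{-1}(V^\bC(x))$ (well-defined since $S\subseteq S'$), so that $S'\cdot\bC'=\bC$ and hence $\bC'\in\cA\cdot S'\setminus\cB\cdot S'$.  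For consecutiveness, I would suppose $\cC'\in P(\rho\circ S')$ lies strictly between $\cA\cdot S'$ and $\cB\cdot S'$, lift the gluing presentation of $\cC'$ through $S'$ to a gluing of rank $d$, and intersect with $\cA$ (again invoking Proposition~\ref{Prop:Intersections}) to produce a class $\cC\in P(\rho)$ strictly between $\cA$ and $\cB$---again contradicting consecutiveness of $(\cA,\cB)$.

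The main obstacle in both the uniqueness proof and the consecutiveness-preservation step is the same: verifying that the candidate intermediate class is actually of the form $\cK\cdot\gamma$ for some $\gamma\in\rm{Glue}(\rho)$ admitting all unaries, and in particular that the gluing one writes down really cuts out $\bC_1$ (respectively realises $\cC'$ on the $S'$-part) without over- or under-shooting.  Once such a gluing is produced, the inequalities between $\cA$, $\cC$, and $\cB$ are routine bookkeeping with the operations $(-)\cdot e$ and the enumerations $S,S'\subseteq d$.
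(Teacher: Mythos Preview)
The paper defers this proof entirely to \cite{BCDHKVZ}, so there is no in-paper argument to compare against. Your handling of the two $S'$-cases is essentially correct: for $S\not\subseteq S'$ the minimal-substructure argument works cleanly once uniqueness is in hand, and for $S\subseteq S'$ the intermediate class $\cC=\cA|_{(S',\cC')}$ you describe does lie in $P(\rho)$ and gives the required contradiction. The obstacle you flag in your last paragraph---showing the candidate class has the form $\cK\cdot\gamma$---is not the real difficulty; it dissolves via the gluing manipulations underlying Proposition~\ref{Prop:Intersections} and Notation~\ref{Notation:Manipulate_Classes}.

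The genuine gap is elsewhere in your uniqueness argument. You want $\cC\in P(\rho)$ with $\bC_1\notin\cC$ but $\bC_2\in\cC$, and you propose intersecting $\cA$ with $\cK\cdot\gamma_0$, where $\gamma_0$ restricts $\gamma_\cB$ to a minimal $\bI_0\subseteq\bI_\cB$ still forbidding $\bC_1$. Nothing, however, prevents this same $\bI_0$ from also forbidding $\bC_2$: the vertices of $\bI_\cB$ that create forbidden configurations with $\bC_1$ via $\eta_\cB|_{S_1\times I_0}$ can simultaneously create forbidden configurations with $\bC_2$ via $\eta_\cB|_{S_2\times I_0}$. Your phrase ``engages $\bI_\cB$ at positions dictated by $S_2\neq S_1$'' conflates the index sets $S_i\subseteq d$ with locations in $\bI_\cB$; these are unrelated. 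A route that does work: set $\cD:=\cA|_{(S_1,\,\cB\cdot S_1)}$. One checks $\cB\subseteq\cD$ by hereditariness of $\cB$, and $\bC_1\in\cA\setminus\cD$ since $\bC_1$ is supported on $S_1$ and $\bC_1\notin\cB$; consecutiveness then forces $\cD=\cB$. Now any minimal $\bC_2\in\cA\setminus\cB=\cA\setminus\cD$ must have its restriction to vertices with $V$-value in $S_1$ already lying in $\cA\setminus\cB$; minimality of $\bC_2$ forces this restriction to be all of $\bC_2$, so $S_2\subseteq S_1$, and by symmetry $S_1=S_2$.
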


\begin{proof}
    This is Proposition 2.2.6 from \cite{BCDHKVZ}.
\end{proof}

\subsection{Paths, free paths, and path sorts}

Given $i< \omega$, let $\iota_i\colon 1\to \omega$ be the function with $\iota_i(0) = i$.  Sometimes the intended codomain of $\iota_i$ is some $d< \omega$ (for the purposes of forming the partial function $\iota_i^{-1})$, and this will be clear from context. If $i< \sfU$, then $\iota_i$ is also a sort, and we write $P_i$ for $P(\iota_i)$. 

\begin{defin}
    \label{Def:Paths}
    A \emph{path} through $P_i$ is any
    subset $\frak{p}\subseteq P_i$ linearly ordered by inclusion. Write $\Path_i$ for the set of paths through $P_i$ and $\Path := \bigsqcup_{i< \sfU} \Path_i$. A \emph{maximal} path is maximal under inclusion; write $\sf{MP}_i$ for the set of maximal paths through $P_i$ and  $\sf{MP} := \bigsqcup_{i< \sfU}\sf{MP}_i$.   We equip $\sf{MP}$ with an arbitrary linear order $\leq_{\MP}$. We write $\sf{u}\colon \sf{MP}\to \sfU$ for the map sending $\frak{p}\in \sf{MP}$ to the $i< \sfU$ with $\frak{p}\in \sf{MP}_i$.
     
    A \emph{full} path is an initial segment of a maximal path; write $\sf{Full}_i$ for the set of full paths through $P_i$ and $\full := \bigsqcup_{i< \sfU} \full_i$. Given $\frak{p}\in \sf{Full}_i$, $\max(\frak{p})$ and $\min(\frak{p})$ denote the maximal and minimal members of $\frak{p}$ under inclusion. We note that $\min(\frak{p}) = \min(P_i) = \bigcap_{\cA\in P_i} \cA$ (Proposition~\ref{Prop:Intersections}). When $\frak{p}\in \MP_i$, then $\max(\frak{p}) = \max(P_i) = \cK{\cdot}\tilde{\iota}_i$, where we recall that $\tilde{\iota}_i$ is the gluing $(\emptyset, \iota_i, \emptyset)$.
    Up to bi-interpretation, $\cK{\cdot}\tilde{\iota}_i$ is the class of all structures in $\mathcal{K}$ with all vertices having unary relation $i$. Given $\frak{p}\in \sf{Full}_i$, we write $\frak{p}'\in \sf{Full}_i$ for the full path $\frak{p}\setminus \{\max(\frak{p})\}$.  \qed 
\end{defin}

Some of the members of $\sf{MP}$ 
will be \emph{free} while others will be \emph{non-free}. This will only depend on $\sf{u}(\frak{p})$.

\begin{defin}
    \label{Def:Free}
    We say that $i< \sfU$ is \emph{free} if there are $j< \sfU$ and $1\leq q< k$ such that, letting $\gamma_{j, i, q} = (\bX, \iota_i, \eta)$ denote the gluing with $X = \{0\}$, $U^\bX(0) = j$, and $\eta(0, 0) = q$, then $\cK{\cdot}\gamma_{j, i, q} = \cK{\cdot}\tilde{\iota}_i = \max(P_i)$. We call such $(j, q)$ a \emph{free pair} for $i$.

    Otherwise, we call $i$ a \emph{non-free} unary. Let $\sfU_{fr}\subseteq \sfU$ denote the set of free unaries and $\sf{U}_{non} = \sfU\setminus \sf{U}_{fr}$ the set of non-free unary predicates. 

    We call $\frak{p}\in \sf{MP}$ a \emph{free path} or a \emph{non-free path} depending on whether $\sf{u}(\frak{p})$ is free or non-free. Write $\sf{MP}_{fr}$ for the free paths and $\sf{MP}_{non}$ for the non-free paths. \qed
\end{defin}

We also generalize the notion of sort as follows.

\begin{defin}
    \label{Def:Path_Sorts}
     A \emph{path sort} is a function $\rho\colon d\to \sf{MP}$ for some $d< \omega$. Given a path sort $\rho$, we set 
     $$P(\rho) = \{\cA\in P(\sf{u}\circ \rho): \forall\, i< d \, (\cA^i\in \rho(i))\}$$
     Note that two elements of $P(\rho)$ are consecutive in $P(\rho)$, iff they are also consecutive in $P(\sf{u}\circ\rho)$. Thus we have $\con(\rho) = \con(\sf{u}\circ\rho)\cap P(\rho)^2$ and $\econ(\rho)= \econ(\sf{u}\circ \rho)\cap P(\rho)^2$.  
\end{defin}

\subsection{Abstract splitting events}

Given a path sort $\rho\colon d\to \sf{MP}$, a class $\cA\in P(\rho)$, and some $i <  d$, we discuss an abstract notion of what it should mean to ``split" at position $i$. This is the operation that will occur at ``splitting levels" of diaries. First, let $\sp(d, i)\colon (d+1)\to d$ be the unique non-decreasing surjection with $\sp(d, i)^{-1}(\{i\}) = \{i, i+1\}$. Using $\sp(d, i)$, we obtain a new sort $\sp(\rho, i) := \rho\circ \sp(d, i)$. 

We then take $\cA\in P(\rho)$ and form $\sp(\cA, i)$ (read ``split $\cA$ at $i$"). The definition of $\cA$ will depend on whether or not $\rho(i)\in \sf{MP}_{fr}$. 
\begin{defin}
    \label{Def:SplittingClass}
    With notation as above, the class $\sp(\cA, i)\in P(\sp(\rho, i))$ is defined as follows.
    \begin{enumerate}
        \item 
        If either $\rho(i)\in \sf{MP}_{fr}$ or $\cA^i\neq \max(\frak{p})$, we set $\sp(\cA, i) = \cA{\cdot}\sp(d, i)$.
        \item
        If $\rho(i)\in \sf{MP}_{non}$ and $\cA^i = \max(\frak{p})$, then we set
        $\sp(\cA, i) = (\cA{\cdot}\sp(d, i))|_{(i+1, \max(\frak{p}'))}$.
    \end{enumerate}
\end{defin}
In case $2$, notice that $\sp(\cA, i)^{i+1} = \max(\frak{p}')$. We remark that this is one of the main ways that the difference between free and non-free paths arises.

\subsection{Controlled coding}

We now review the notion of controlled coding triples from \cite{BCDHKVZ}, which is the last ingredient we need to give the definition of a diary.

\begin{defin}
    \label{Def:CodingClass}

    Given $j< d$, a function $\phi\colon (d-1)\to k$, and $\bB$ an $\cL_{d-1}$-structure with underlying set $B$, we define $\rm{Add}_{j, \phi}(\bB)$ an $\cL_d$-structure with underlying set $B\cup \{x\}$, where $x\not\in B$ is some new point. On $B$, $\rm{Add}_{j, \phi}(\bB)$ induces the structure $(d{\setminus}\{j\}){\cdot}\bB$. As for the new point $x$, we set $V^{(j,\phi)(\bB)}(x) = j$ and given $b\in B$, we set $R^{\rm{Add}_{j, \phi}(\bB)}(b, x) = \phi(V^\bB(b))$. 
    Thus $\rm{Add}_{j, \phi}(\bB)$ contains exactly one point with unary $j$, namely $x$.

    Given a class $\cA$ of $\cL_d$-structures, we define the class 
    $$\cA{\cdot}\rm{Add}_{j, \phi} = \{\bB\in \fin(\cL_{d-1}): \rm{Add}_{j, \phi}(\bB)\in \cA\}.$$ 
    If $\cA = \cK{\cdot}\gamma$ for some gluing $\gamma$, then also $\cA{\cdot}\rm{Add}_{j, \phi} = \cK{\cdot}\gamma_{j, \phi}$ for some gluing $\gamma_{j, \phi}$.
    
    Given a path sort $\rho\colon d\to \sf{MP}$, $\cA\in P(\rho)$, $j< d$, and $\phi\colon (d-1)\to k$, we call $(\cA, j, \phi)$ a \emph{controlled coding triple} if the following all hold.
    \begin{enumerate}
        \item 
        $\cA{\cdot}\rm{Add}_{j, \phi} = \cA{\cdot}(d{\setminus}\{j\})$.
        \item
        Writing $i = \sf{u}\circ \rho(j) < \sfU$, we have $\cA^j = \min(P_i)$ (note that $P_i$ is finite and closed under intersections, hence has a minimum element).
        \item
        If $\cB\in P(\rho)$ satisfies $\cB\subseteq \cA$ and $\cB{\cdot}(d{\setminus}\{j\}) = \cB{\cdot}\rm{Add}_{j, \phi} = \cA{\cdot}(d{\setminus}\{j\})$, then $\cB = \cA$. \qed
    \end{enumerate}
\end{defin}

Controlled coding triples exist in abundance. Namely, given a path sort $\rho\colon d\to \sf{MP}$, $\cA\in P(\rho)$, $j< d$, and $\phi\colon (d-1)\to k$, then if $\cA{\cdot}\rm{Add}_{j, \phi}$ contains all unaries, then there is a unique $\cB\in P(\rho)$ with $\cB\subseteq \cA$, $\cB{\cdot}\rm{Add}_{j, \phi} = \cA{\cdot}\rm{Add}_{j, \phi}$, and with $(\cB, j, \phi)$ a controlled coding triple. This $\cB$ can be formed by intersecting all $\cD\in P(\rho)$ with $\cD\subseteq \cA$ and $\cD{\cdot}\rm{Add}_{j, \phi} = \cA{\cdot}\rm{Add}_{j, \phi}$ (for more details, see Lemma~3.2.4 of \cite{BCDHKVZ}). We denote this unique $\cB$ by $\la \cA, j, \phi\ra$. 

\section{Diaries}
\label{Sec:DiagonalDiaries}

Diaries
are abstractions of the  information  responsible for exact big Ramsey degrees 
and arise  as follows.
Given $\mathbf{K}$, we may enumerate its universe, or equivalently, assume that its universe is $\omega$.
This induces the tree $S:=S(\mathbf{K})$ of (quantifier-free) $1$-types over the finite initial segments of the structure. 
For each $n\in\omega$ we let $c(n)$ denote  the $1$-type of the vertex $n$ in $\mathbf{K}$ over the initial substructure $\mathbf{K}|n$; thus, $c(n)$ is a node of length $n$.
Then $S$ along with the function $c:\omega\rightarrow S$ is a {\em coding tree} of $1$-types (see \cite{CDP_SDAP1} for a precise definition). Upon enriching the coding tree of $1$-types with information on each level about what configurations of coding nodes are possible above a given level set of nodes, we obtain the \emph{aged coding tree of $1$-types} (see \cite{Zucker_BR_Upper_Bound}, \cite{BCDHKVZ}). Given a finite $\bA\in \cK$, one can produce a (possibly infinite) coloring of $\binom{\bK}{\bA}$ by coloring $\bA'\in \binom{\bK}{\bA}$ based on the shape of the aged subtree induced by $\{c(a): a\in A'\}$.
One can find $\bK'\in \binom{\bK}{\bK}$ so that on $\binom{\bK'}{\bA}$, the coloring takes exactly $\rm{BRD}^{copy}(\bA, \bK)$ many values and is unavoidable (in fact recurrent). Diaries are then an abstraction  of this information, describing all of the necessary, ``unavoidable" behavior of subtrees coding subcopies of $\bK$, and from any such diary, one can build a recurrent big Ramsey structure.


\subsection{Conventions about trees}
\label{SubSec:Trees}
    Given $t\in \MP\times k^{<\omega}$, we write $t = (t^\sf{p}, t^\sf{seq})$ with $t^\sf{p}\in \MP$ and $t^\sf{seq}\in k^{<\omega}$, and we write $t^\sf{u}:= \sf{u}(t^\sf{p})$ (here $\sf{p}$ refers to ``path" and $\sf{u}$ refers to ``unary"). Write $\ell(t) = \dom(t^\sf{seq})$, which we call the \emph{level} of $t$, and given $m< \ell(t)$, we often abuse notation and write $t(m)$ for $t^\sf{seq}(m)$. Similarly abusing notation, if $q< k$, we write $t^\frown q$ for $(t^\sf{p}, (t^\sf{seq})^\frown q)$.

\begin{defin}
    \label{Def:Binary_tree_rels}
    We define the partial orders $\leq_\ell$, $\sqsubseteq$, and $\lex$ and the partial binary operation $\wedge$ on $\MP\times k^{<\omega}$; let $s, t\in \MP\times k^{<\omega}$.
    \begin{enumerate}
        \item 
        We write $s\leq_\ell t$ iff $\ell(s)\leq \ell(t)$.
        \item 
        If $m< \ell(s)$, we write $s|_m$ or $\pi_m(s)$ for the initial segment of $s$ at level $m$. We write $s\sqsubseteq t$ if $s^\sf{p} = t^\sf{p}$ and $s^\sf{seq} = t^\sf{seq}|_{\ell(s)}$.
        \item 
        We set $s\lex t$ iff $s^\sf{p}<_\MP t^\sf{p}$ or $s^\sf{p} = t^\sf{p}$ and $s^\sf{seq}\lex t^\sf{seq}$ (where the latter $\lex$ is the usual lexicographic order on $k^{<\omega}$).
        \item 
        If $s^\sf{p} = t^\sf{p}$, we write $s\wedge t$ for the largest common initial segment of $s$ and $t$. \qed
    \end{enumerate}
\end{defin}

A \emph{level subset} of $\MP\times k^{<\omega}$ is simply a subset of $\MP\times k^m$ for some $m< \omega$; given a level subset $T\subseteq \MP\times k^{<\omega}$, write $\ell(\Delta)$ for the common level of every $t\in T$. If $S = \{s_0\lex\cdots\lex s_{d-1}\}\subseteq \MP\times k^{<\omega}$ is a level subset, we let $\sort(S)\colon d\to \MP$ be the path sort given by $\sort(S)(i) = s_i^\sf{p}$. If $X\subseteq \MP\times k^{<\omega}$, then the $\sqsubseteq$-downwards closure of $X$ is denoted by $X{\downarrow}$.

A \emph{subtree} $\Delta\subseteq \MP\times k^{<\omega}$ is any $\sqsubseteq$-downwards-closed subset; thus necessarily subtrees are $\wedge$-closed as well.   The \emph{height} of $\Delta$ is $\rm{ht}(\Delta):= \{\ell(t): t\in \Delta\}\leq \omega$. Given $m< \rm{ht}(\Delta)$, we put $\Delta(m) = \{t\in \Delta: \ell(t) = m\}$. Given $x\in \MP$, we write $\Delta_x = \{t\in \Delta: t^\sf{p} = x\}$ and $\Delta_x(m) = \Delta_x\cap \Delta(m)$, and given $i< \sfU$, write $\Delta_i = \{t\in \Delta: t^\sf{u} = i\}$ and $\Delta_i(m) = \Delta_i\cap \Delta(m)$. Given $m< n< \rm{ht}(\Delta)$ and $S\subseteq \Delta(m)$, we put $\succ_\Delta(S, n) = \{t\in \Delta(n): t\sqsupset s\text{ for some }s\in S\}$; if $n = m+1$, we write $\is_\Delta(S)$ for $\succ_\Delta(S, m+1)$;  these are the \emph{immediate successors} of $S$ in $\Delta$. When $S = \{s\}$, we simply write $\succ(s, n)$ or $\is_\Delta(s)$. A \emph{splitting node} of $\Delta$ is any $t\in \Delta$ with $|\is_\Delta(t)|> 1$; write $\spnd(\Delta) = \{t\in \Delta: t\text{ a splitting node of }\Delta\}$. Given $s\in \Delta$ and $n> \ell(s)$, we put $\Left_\Delta(s, n)$ to be the $\lex$-least $t\in \Delta(n)$ with $t\sqsupset s$. Similarly, if $S\subseteq \Delta$ and $n> \ell(s)$ for each $s\in S$, we can write $\Left_\Delta(S, n) = \{\Left_\Delta(s, n): s\in S\}$. If the $\Delta$ subscript is omitted in various notation, we intend $\Delta = \MP\times k^{<\omega}$. In a similar way, one can define $\rm{Right}_\Delta(s, n)$, $\rm{Right}_\Delta(S, n)$, etc.

\begin{defin}
    \label{Def:Aged_Coding_Tree}
An \emph{aged coding tree} is a subtree $\Delta\subseteq \MP\times k^{<\omega}$ where additionally:
\begin{itemize}
    \item 
    We designate a subset $\cdnd(\Delta)\subseteq \Delta$ of \emph{coding nodes} of $\Delta$ with $|\cdnd(\Delta)\cap \Delta(m)|\leq 1$ for every $m< \rm{ht}(\Delta)$. We write $c^\Delta\colon |\cdnd(\Delta)|\to \cdnd(\Delta)$ for the function which enumerates the coding nodes in increasing height and $\ell^\Delta(n)$ for $\ell(c^\Delta(n))$.
    \item 
    We assign to each $m< \rm{ht}(\Delta)$ a class of $\cL_{|\Delta(m)|}$-structures in $P(\sort(\Delta(m)))$ which we denote by $\age_\Delta(m)$. 
\end{itemize} 
If $S\subseteq \Delta(m) = \{s_0\lex\cdots\lex s_{d-1}\}$ and $I = \{i< d: s_i\in S\}$, we can write $\age_\Delta(S)$ for $\age_\Delta(m){\cdot}I$, and if $d_0<\omega$ and $e\colon d_0\to \Delta(m)$ is any function, we define $\age_\Delta(e) = \age_\Delta(m){\cdot}\ol{e}$, where $\ol{e}$ is defined to satisfy $e(i) = s_{\ol{e}(i)}$. If $s\in \Delta$, we write $\age_\Delta(s)$ for $\age_\Delta(\{s\})$. We remark that $\age_\Delta(s)\in s^\sf{p}$. We let $\argpath_\Delta(s) = \{\age_\Delta(s|_m): m\leq \ell(s)\}$; for the aged coding trees we consider in this paper, we will always have $\argpath_\Delta(s)\in \Path_i$. \qed
\end{defin}

\begin{defin}
    \label{Def:Aged_Emb}
Given aged coding trees $\Theta, \Delta\subseteq \MP\times k^{<\omega}$, an \emph{aged embedding} $\phi\colon \Theta\to \Delta$ is an injection satisfying all of the following.
\begin{enumerate}
    \item 
    $\phi$ preserves $\sqsubseteq$, $\wedge$, $\lex$, and $\leq_\ell$ (in both the positive and negative sense). Write $\wt{\phi}\colon \rm{ht}(\Theta)\to \rm{ht}(\Delta)$ for the function satisfying $\phi[\Theta(m)]\subseteq \Delta(\wt{\phi}(m))$.
    \item 
    $\phi[\cdnd(\Theta)]\subseteq \cdnd(\Delta)$.
    \item 
    For each $x\in \MP$, $\phi[\Theta_x] \subseteq \Delta_x$.
    \item 
    For each $t\in \Theta$ and $m< \ell(t)$, we have $t(m) = \phi(t)(\wt{\phi}(m))$.
    \item 
    For each $m< \rm{ht}(\Theta)$, $\age_\Theta(m) = \age_\Delta(\phi[\Theta(m)])$.
\end{enumerate}
Write $\aemb(\Theta, \Delta)$ for the set of aged embeddings from $\Theta$ to $\Delta$. \qed
\end{defin}

\subsection{Diaries and their embeddings}
\begin{defin}
    \label{Def:DiagonalDiary}    
    A \emph{diary} is an aged coding tree $\Delta\subseteq \sf{MP}\times k^{<\omega}$ satisfying the following.
    \begin{enumerate}
        \item 
        Every non-empty level of $\Delta(m)$ contains at most one node $t$ with $|\is_\Delta(t)|\neq 1$. 
        \begin{itemize}
            \item
            If there is $t\in \Delta(m)$ with $\is_\Delta(t) = \emptyset$, we call $m$ a \emph{coding level} of $\Delta$.
            \item
            If there is $t\in \Delta(m)$ with $|\is_\Delta(t)|> 1$, then $|\is_\Delta(t)| = 2$, and we call $m$ a \emph{splitting level} of $\Delta$.
            \item
            If every $t\in \Delta(m)$ satisfies $|\is_\Delta(t)| = 1$, we call $m$ an \emph{age-change} level of $\Delta$.
        \end{itemize}
        We write $\sp(\Delta)$, $\cd(\Delta)$, and $\ac(\Delta)$ for the splitting, coding, and age-change levels of $\Delta$, respectively. We let $\cdnd(\Delta)\subseteq \Delta$ be the set of terminal nodes of $\Delta$.
        \item
        $\cdnd(\Delta)$ is $\sqsubseteq$-upwards cofinal in $\Delta$.

        \item
        Writing $\rho = \sort(\Delta(0))$, we have $\age_\Delta(0) = \cK{\cdot}\tilde{\rho} = \max(P(\rho))$.
        \item
        If $\Delta(m) = \{t_0\lex\cdots\lex t_{d-1}\}$ and $t_i$ is a splitting node, then we have $\Delta(m+1) = \{t_j^\frown 0: j< d\}\cup \{t_i^\frown 1\}$
        and $\age_\Delta(m+1) = \sp(\age_\Delta(m), i)$ (see Definition~\ref{Def:SplittingClass}).
        \item
        If $\Delta(m) = \{t_0\lex\cdots\lex t_{d-1}\}$ and $t_j$ is a coding node, then writing $\Delta(m+1) = \{u_0\lex\cdots\lex u_{d-2}\}$ and defining $\phi\colon (d-1)\to k$ via $\phi(i) = u_i(m)$, then $(\age_\Delta(m), j, \phi)$ is a controlled coding triple (Definition~\ref{Def:CodingClass}) and $\age_\Delta(m+1) = \age_\Delta(m){\cdot}\rm{Add}_{j, \phi} = \age_\Delta(m){\cdot}(d{\setminus}\{j\})$.
        \item
        If $m\in \ac(\Delta)$, then we have $\Delta(m+1) = \{t^\frown 0: t\in \Delta(m)\}$, and writing $\rho = \sort(\Delta(m))$, we have $(\age_\Delta(m), \age_\Delta(m+1))\in \con(\rho)$.
    \end{enumerate}
\end{defin}

Every level subset of a diary has a distinguished subset of \emph{critical nodes}. If $\Delta$ is a diary and $m< \rm{ht}(\Delta)$, we define $\crit_\Delta(m)\subseteq \Delta(m)$ as follows. If $m\in \sp(\Delta)$, we set $\crit_\Delta(m) = \spnd(\Delta)\cap \Delta(m)$. If $m\in \cd(\Delta)$, we set $\crit_\Delta(m) = \cdnd(\Delta)\cap \Delta(m)$. If $m\in \ac(\Delta)$, then by Proposition~\ref{Prop:Consecutive},  $(\age_\Delta(m), \age_\Delta(m+1))$ is essential on a unique $S\subseteq \Delta(m)$, and we set $\crit_\Delta(m) = S$.

Given a diary $\Delta$, the \emph{structure coded by $\Delta$}, denoted $\str(\Delta)$, is the $\cL$-structure on underlying set $\cdnd(\Delta)$ such that given $s, t\in \cdnd(\Delta)$, we set $U^{\str(\Delta)}(t) = t^\sf{u}$, and if $\ell(t)> \ell(s)$, we put $R^{\str(\Delta)}(t, s) = t(\ell(s))$. A key property of diaries (Proposition~3.4.3 of \cite{BCDHKVZ}) is that we always have $\rm{Age}(\str(\Delta))\subseteq \cK$. 

Sometimes, it is desirable to change the underlying set of the above structures. An \emph{enumerated structure} is a structure whose underlying set is a cardinal, so in this paper always a natural number or $\omega$. Given a diary $\Delta$, write $\str^\#(\Delta)$ for the enumerated structure isomorphic to $\str(\Delta)$ via the map that lists the coding nodes in order of increasing height.

\begin{defin}
    \label{Def:EmbDiary}
    Let $\Theta$ and $\Delta$ be diaries. A \emph{diary embedding} of $\Theta$ into $\Delta$ is any $\phi\in \aemb(\Theta, \Delta)$ which additionally satisfies:
    \begin{itemize}
        \item
        If $m\in \ac(\Theta)$, then $\wt{\phi}(m)\in \ac(\Delta)$. Defining $\phi^+\colon \Theta(m+1)\to \Delta(\wt{\phi}(m)+1)$ via $\phi^+(s^\frown 0) = \phi(s)^\frown 0$, we have $\age_\Theta(m+1) = \age_\Delta(\phi^+[\Theta(m+1)])$. 
    \end{itemize}
    Write $\demb(\Theta, \Delta)$ for the set of diary embeddings of $\Theta$ into $\Delta$. \qed
\end{defin}

\begin{rem}
      Given $\phi\in \demb(\Theta, \Delta)$, we can define $\phi^+$ on every level. Given any $m< \rm{ht}(\Theta)-1$, define $\phi^+\colon \Theta(m+1)\to \Delta(\wt{\phi}(m)+1)$, where given $s\in \Theta(m)$ and $i< k$ with $s^\frown i\in \Theta(m+1)$, we have $\phi^+(s^\frown i) = \phi(s)^\frown i$. This map is well defined by items $1$ through $5$. When $m\not\in\ac(\Delta)$, items $1$ through $5$ already imply that $\phi^+$ is an age map, and for $m\in \ac(\Delta)$, we explicitly demand this in item $6$.
\end{rem}

Note that if $\phi\colon \Theta\to \Delta$ is an embedding of diaries, then $\phi|_{\str(\Theta)}\colon \str(\Theta)\to \str(\Delta)$ is an embedding of $\cL$-structures, and given $S\subseteq \Theta(m)$, $\phi|_{\str(\Theta)/S}\colon \str(\Theta)/S\to \str(\Delta)/\phi[S]$ is an embedding of $\cL_d$-structures. We note the following important converse.

\begin{prop}
    \label{Prop:CodingNodesEmbedding}
    Given any induced substructure $\bX\subseteq \str(\Delta)$, there are a unique diary $\Theta$ and embedding $\phi\colon \Theta\to \Delta$ such that $\phi|_{\str(\Theta)}: \str(\Theta)\to \bX$ is an isomorphism. 
\end{prop}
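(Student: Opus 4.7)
The plan is to build $\Theta$ by taking the $\sqsubseteq$-downward closure of the coding nodes corresponding to $\bX$ inside $\Delta$, then ``telescoping'' the levels of $\Delta$ down to just those levels where something interesting (from the point of view of this subtree) happens. Concretely, let $C\subseteq\cdnd(\Delta)$ be the set of coding nodes corresponding to the elements of $\bX$ under the canonical bijection $\cdnd(\Delta)\leftrightarrow\str(\Delta)$, and let $D = C{\downarrow}$ be the $\sqsubseteq$-downward closure of $C$ in $\Delta$. This $D$ is $\wedge$-closed (hence a subtree) and its set of terminal nodes equals $C$.

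First I would identify the set $L\subseteq\rm{ht}(\Delta)$ of \emph{$D$-critical} levels: a level $m$ belongs to $L$ iff one of the following holds: (i) some node of $D(m)$ lies in $C$ (these will be coding levels of $\Theta$); (ii) some node of $D(m)$ has two immediate successors in $D$, i.e.\ is a splitting node of $D$ (these will be splitting levels of $\Theta$); or (iii) $m\in\ac(\Delta)$ and, writing $S\subseteq\Delta(m)$ for the unique essential subset of $(\age_\Delta(m),\age_\Delta(m+1))$ supplied by Proposition~\ref{Prop:Consecutive}, we have $S\subseteq D(m)$ (these will be age-change levels of $\Theta$). The key technical observation, provided by Proposition~\ref{Prop:Consecutive} together with the diary axioms for $\Delta$, is that between two consecutive $D$-critical levels $m_j<m_{j+1}$ the subtree $D$ is ``inert'': each node in $D(m_j)$ has exactly one extension in $D(m_{j+1})$, and the induced age $\age_\Delta(D(m))$ is constant as $m$ ranges from $m_j$ to $m_{j+1}$. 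For splitting levels of $\Delta$ not in $L$ this is because the splitting node is either absent from $D$ or has only one successor in $D$; for coding levels not in $L$ the coding node is absent from $D$; for age-change levels not in $L$ the essential subset $S$ is not entirely in $D(m)$, so $\cA|_{D(m)} = \cB|_{D(m)}$ by the second half of Proposition~\ref{Prop:Consecutive}.

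Given this, construct $\Theta$ by abstracting the telescoped tree: enumerate $L = \{m_0<m_1<\cdots\}$, let $\Theta(j)$ be an isomorphic copy of $D(m_j)$ labelled so that the $\sqsubseteq$, $\lex$ and path-sort structure is preserved, and equip each $t\in\Theta(j)$ with the sequence datum obtained from $\phi(t)^\sf{seq}\in k^{m_j}$ by deleting coordinates at indices outside $L$; set $\age_\Theta(j) = \age_\Delta(D(m_j))$; designate $\cdnd(\Theta)$, $\sp(\Theta)$, and $\ac(\Theta)$ according to which of the three clauses (i)--(iii) placed $m_j$ into $L$ (the clauses are mutually exclusive because $\Delta$ is a diary). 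Define $\phi\colon\Theta\to\Delta$ to be the induced bijection $\Theta\to D$ together with $\wt\phi(j)=m_j$. Then verifying the diary axioms for $\Theta$ and the diary-embedding axioms for $\phi$ (Definitions~\ref{Def:DiagonalDiary} and~\ref{Def:EmbDiary}) is a matter of translating, one case at a time, between the abstract operations at $\Theta$-levels and the literal operations at $\Delta$-levels in $L$: at splitting levels use axiom (4) of the diary and that $\sp(\cA,i)$ commutes with restriction to $D$; at coding levels check that the projected triple is controlled in $P(\sort(\Theta(j)))$, using minimality of the controlled triple (Definition~\ref{Def:CodingClass}); at age-change levels use the two-sided statement in Proposition~\ref{Prop:Consecutive} to get consecutivity after restriction. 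By construction $\phi|_{\str(\Theta)}$ is an isomorphism onto $\bX$.

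For uniqueness, suppose $(\Theta',\phi')$ is another such pair. Since $\phi'|_{\str(\Theta')}$ is an isomorphism onto $\bX$, we have $\phi'[\cdnd(\Theta')]=C$. Because $\phi'$ preserves $\sqsubseteq$ and $\wedge$ and because $\cdnd(\Theta')$ is $\sqsubseteq$-upwards cofinal in $\Theta'$ (axiom (2)), we get $\phi'[\Theta']=C{\downarrow}=D$. Finally the level structure of $\Theta'$ is forced: each $j<\rm{ht}(\Theta')$ lies in exactly one of $\sp(\Theta')$, $\cd(\Theta')$, $\ac(\Theta')$, each of which under $\phi'$ must produce a genuine split, coding, or age change in $D$, so $\wt{\phi'}$ must enumerate precisely $L$, and then all remaining data (the sequences, the designation of nodes, the ages) is rigidly determined by the requirement that $\phi'$ be a diary embedding. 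Hence $(\Theta',\phi')=(\Theta,\phi)$ up to the unique isomorphism of abstract labelling. The main obstacle is clause (iii) and the inertness claim: one must invoke Proposition~\ref{Prop:Consecutive} in both directions to see that an age-change level of $\Delta$ affects $\age_\Theta$ \emph{exactly} when the essential subset lies in $D$, with no hidden age changes creeping in through the restriction.
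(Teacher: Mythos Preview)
The paper does not prove this proposition; it cites Proposition~3.4.8 of \cite{BCDHKVZ}. Your telescoping of $D=C{\downarrow}$ at its critical levels is the natural approach, and the uniqueness argument is correct in outline.

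There is, however, a gap in your inertness claim at splitting levels. Suppose $m\in\sp(\Delta)$, the splitting node $t\in D(m)$ has $t^\sf{p}=\frak{p}\in\MP_{non}$ and $\age_\Delta(t)=\max(\frak{p})$, and only $t^\frown 1$ lies in $D(m+1)$. By case~(2) of Definition~\ref{Def:SplittingClass}, $\age_\Delta(t^\frown 1)=\max(\frak{p}')\subsetneq\max(\frak{p})$, so $\age_\Delta(D(m))\supsetneq\age_\Delta(D(m+1))$: the age of $D$ genuinely drops at this level. Your clause~(iii) does not flag $m$ (since $m\notin\ac(\Delta)$), nor does clause~(ii) (since $t$ has only one $D$-successor), so your telescoped $\Theta$ will fail the relevant diary axiom at the level straddling $m$. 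This is exactly why the relation $\ac_{(\cA,\cB)}$ in Definition~\ref{Def:L_K*} carries a second bullet for such splitting nodes. Note also that simply adding $m$ to $L$ as an age-change level of $\Theta$ conflicts, on its face, with Definition~\ref{Def:EmbDiary}, which requires $\wt{\phi}(j)\in\ac(\Delta)$ whenever $j\in\ac(\Theta)$; the construction in \cite{BCDHKVZ} must reconcile this, and your argument needs to confront it as well.
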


Therefore, if $X\subseteq \cdnd(\Delta)$ is the underlying set of $\bX$, we \emph{define} $\Delta\|_X := \Theta$. 

\begin{proof}
    This is Proposition~3.4.8 from \cite{BCDHKVZ}.
\end{proof}

\subsection{Encoding diaries}

It will be useful to encode a diary $\Delta$ as a first-order relational structure in a fixed way. While our encoding has some redundancies, this will be useful when defining embeddings of finite initial segments of diaries (Definition~\ref{Def:FiniteEmbedding}) and \emph{level types} (Definition~\ref{Def:Critical_Level_Type}). 

\begin{notation}
    Fix a diary $\Delta$. If $s\in \Delta$ is neither splitting nor coding, write $\hat{s}^\Delta$ for its unique immediate successor. If $X\subseteq \Delta(m)$ is a level subset of such nodes, we can write $\wh{X}^\Delta = \{\hat{s}^\Delta: s\in X\} = \is_\Delta(X)$. When $\Delta$ is understood, we simply write $\hat{s}$ and $\wh{X}$.
\end{notation}

\begin{defin}
\label{Def:L_K*}
We simultaneously define the language $\cL_\cK^*$ and how to interpret a diary $\Delta$ as an $\cL_\cK^*$-structure as follows.
\begin{itemize}
    \item
    For each $\frak{p}\in \MP$, there is a unary predcate $U_\frak{p}$, where given $s\in \Delta$, $U_\frak{p}^\Delta(s)$ holds iff $s\in \Delta_\frak{p}$.
    \item
    We include the order $\lex$ on level subsets of $\Delta$.
    \item
    There is a unary predicate $\code$ so that $\code^\Delta(s)$ holds iff $s\in \code(\Delta)$.

    \item
    There is a ternary relation $\wedge$ so that $\wedge^\Delta(s, t, u)$ holds iff $s^\sf{p} = t^\sf{p}$ and $s\wedge t = u$.

    \item
    For every path sort $\rho\colon d\to \MP$ and every $\cA\in P(\rho)$, there is a $d$-ary relation $R_\cA$ so that given $S\subseteq \Delta$, $R_\cA^\Delta(S)$ holds iff $S$ is a level set, $\sort(S) = \rho$, and $\age_\Delta(S) = \cA$.

    \item 
    There is a unary predicate $\sp$ so that $\sp^\Delta(s)$ holds iff $s$ is a splitting node of $\Delta$.
    \item
    For each $i< k$, there is a unary predicate $\rm{Pass}_i$ so that $\rm{Pass}_i^\Delta(s)$ holds iff $|\is_\Delta(s)| =1$ and $\hat{s} = s^\frown i$. We call this $i$ the \emph{passing number} of $s$.
    \item
    For every path sort $\rho\colon d\to \MP$ and every $(\cA, \cB)\in \econ(\rho)$, there is a $d$-ary relation $\ac_{(\cA, \cB)}$ so that given $S\subseteq \Delta$, $\ac_{(\cA, \cB)}^\Delta(S)$ holds iff either of the following happens:
    \begin{itemize}
        \item 
        $S$ is a level set with $\ell(S)\in \ac(\Delta)$, $\sort(S) = \rho$, $\age_\Delta(S) = \cA$, and $\age_\Delta(\wh{S}) = \cB$.
        \item 
        $S = \{s\}$ with $s\in \spnd(\Delta_\frak{p})$ for some $\frak{p}\in \sfU_{non}$ and $\age_\Delta(s) = \cK{\cdot}\tilde{\iota}_{\sf{u}(\frak{p})}$
    \end{itemize}

\end{itemize}
\end{defin}

By Proposition~\ref{Prop:Consecutive}, there are only finitely many $\rho$ with $\econ(\rho)\neq \emptyset$. In particular, $\cL_\cK^*$ is finite, as there are only finitely many relations of the form $\ac_{(\cA, \cB)}$.

\begin{defin}
    \label{Def:FiniteEmbedding}
    Given diaries $\Theta$ and $\Delta$ and some $m\leq \rm{ht}(\Theta)$, an \emph{embedding} of $\Theta({<}m)$ into $\Delta$ is an embedding of the corresponding $\cL_\cK^*$-structures, where we view $\Theta({<}m)$ as an induced substructure of $\Theta$.
\end{defin}
In other words, if $\phi\in \demb(\Theta({<}m), \Delta)$ and $m-1 < \rm{ht}(\Theta)$, then $\phi$ needs to correctly anticipate some information about $\Theta(m)$. Thus if $m-1\in \sp(\Theta)$, $\phi$ must send the splitting node of $\Theta(m-1)$ to a splitting node of $\Delta$. If $m-1\in \cd(\Theta)$, then if $t\in \Theta(m-1)\cap \cdnd(\Theta)$, we have $\phi(t)\in \cdnd(\Delta)$, and if $s\in \Theta(m-1)$ is not the coding node, we have $\hat{s}(m) = \wh{\phi(s)}(\wt{\phi}(m))$. And if $m-1\in \ac(\Theta)$ and this age change is essential on $S\subseteq \Theta(m-1)$, then $\wt{\phi}(m-1)\in \ac(\Delta)$, and the age change is essential on $\phi[S]$ with the same change.

\begin{defin}
    \label{Def:Critical_Level_Type}
    A \emph{level type} is an $\cL_\cK^*$-structure $\tau$ for which there is a diary $\Theta$ and some level subset $X\subseteq \Theta$ such that $\tau$ and $X$ are isomorphic as $\cL_\cK^*$-structures. We set $\age(\tau) = \age_\Theta(X)$. If additionally $\crit_\Theta(\ell(X))\subseteq X$, we call $\tau$ a \emph{critical level type}. In this case, we write $\crit(\tau)\subseteq \tau$ for the set of critical nodes of $\tau$. Given level types $\tau_0$ and $\tau_1$, we write $\tau_0\cong^*\tau_1$ if they are isomorphic as $\cL_\cK^*$-structures. 
    
    If $\Delta$ is a diary, $m< \rm{ht}(\Delta)$, and $S\subseteq \Delta(m)$, we say that a critical level type $\tau$ is \emph{based on $S$} if the underlying set of $\tau$ is $S$ and $\age(\tau)\subseteq \age_\Delta(S)$.
\end{defin}

Given $\tau$ a critical level type based on $S\subseteq \Delta(m)$, we will often want to find some $T$ above $S$ with $\tau\cong^* T$. This is a key motivation for the strong diaries we construct in the next section.

We end this section with a brief discussion of how diaries give rise to recurrent big Ramsey structures. To do this, we use an encoding of diaries where the underlying set is some $\cL$-structure $\bA$ with $\age(\bA)\subseteq \cK$.

\begin{defin}
    \label{Def:Encode_Diaries_on_Structures}
The language $\cL_\cK^{\rm{diary}}\supseteq \cL$ adds a relational symbol $R_\Theta$ of arity $|\cdnd(\Theta)|$ for every finite diary $\Theta$ satisfying $|\cdnd(\Theta)| \leq \max(2\|\cF\|-2, 4)$. If $\bA$ is an $\cL$-structure with underlying set $A$ and $\age(\bA)\subseteq \cK$, $\bA^{\!*}$ is an $\cL_\cK^{\rm{diary}}$-expansion of $\bA$, and $\Gamma$ is a diary, we say that $\bA^{\!*}$ \emph{encodes the diary $\Gamma$} if there is an isomorphism $f\colon \bA\to \str(\Gamma)$ of $\cL$-structures with the property that for every $n< \omega$ and every $(a_0,...,a_{n-1})\in A^n$, we have that $R_\Theta^{\bA^{\!*}}(a_0,...,a_{n-1})$ holds iff $\rm{ht}(f(a_0))< \cdots < \rm{ht}(f(a_{n-1}))$ and there is (a unique) $g\in \demb(\Theta, \Gamma)$ with $g[\cdnd(\Theta)] = f[\{a_0,...,a_{n-1}\}]$. By (the proof of) Theorem~3.4.16 of \cite{BCDHKVZ}, there is at most one diary $\Gamma$ that $\bA^{\!*}$ encodes.

Observe that if $\bA$ is an enumerated structure and $\bA^{\!*}$ is an $\cL_\cK^{\rm{diary}}$-expansion of $\bA$ which encodes the diary $\Gamma$, then we have $\str^\#(\Gamma) = \bA$. \qed
\end{defin}

\begin{theorem}
    Let $\bK^*$ be an $\cL_\cK^{\rm{diary}}$-expansion of $\bK$ which encodes some diary. Then $\bK^*$ is a recurrent big Ramsey structure for $\bK$.
\end{theorem}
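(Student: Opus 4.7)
The plan is to view the theorem as a translation of the cited result from \cite{BCDHKVZ} --- that diaries encoding $\bK$ are bi-embeddable, and each is a recurrent big Ramsey structure for $\bK$ --- into the language of $\cL_\cK^{\rm{diary}}$-expansions. Let $\Gamma$ be the unique diary encoded by $\bK^*$, via some isomorphism $f_\Gamma\colon \bK\to\str(\Gamma)$.

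The first step is a translation lemma: whenever $\bM^*, \bN^*$ are $\cL_\cK^{\rm{diary}}$-expansions encoding diaries $\Theta, \Xi$ respectively, the set of $\cL_\cK^{\rm{diary}}$-embeddings $\bM^*\to \bN^*$ is in canonical bijection with $\demb(\Theta, \Xi)$.  The forward direction unpacks Definition~\ref{Def:Encode_Diaries_on_Structures}: each relation $R_{\Theta'}$ is preserved by diary embeddings, since diary embeddings compose and Theorem~3.4.16 of \cite{BCDHKVZ} gives uniqueness of the witnessing diary embedding. The reverse direction uses Proposition~\ref{Prop:CodingNodesEmbedding}: an $\cL_\cK^{\rm{diary}}$-embedding preserves all the $R_{\Theta'}$, and any subset of $\cdnd(\Xi)$ determines a unique subdiary, so the embedding must come from a diary embedding. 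Granting this, recurrence is immediate: for any $\eta\in\emb(\bK,\bK)$, the pulled-back expansion $\bK^*\cdot\eta$ encodes the subdiary $\Gamma\|_{f_\Gamma[\eta[K]]}$, whose coded $\cL$-structure is isomorphic to $\bK$, so bi-embeddability yields a diary embedding $\Gamma\to \Gamma\|_{f_\Gamma[\eta[K]]}$, which translates to an element of $\emb(\bK^*, \bK^*\cdot\eta)$.

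For the big Ramsey structure property, recurrence makes the coloring $f\mapsto \bK^*\cdot f$ on $\emb(\bA,\bK)$ unavoidable (any $\eta$ can be post-composed with some $\theta$ for which $\bK^*\cdot(\eta\circ\theta) = \bK^*$, so the coloring restricted to $\emb(\bA, \bK)\cdot(\eta\circ\theta)$ realizes all of $\bK^*(\bA)$), giving $\rm{BRD}(\bA,\bK)\geq |\bK^*(\bA)|$. The matching upper bound combines the identification in \cite{BCDHKVZ} of $\rm{BRD}^{copy}(\bA,\bK)$ with the number of diary-isomorphism classes of diaries encoding $\bA$ together with the formula $\rm{BRD}(\bA,\bK) = \rm{BRD}^{copy}(\bA,\bK)\cdot|\aut(\bA)|$: under the translation, $|\bK^*(\bA)|$ enumerates pairs consisting of a diary type $\Theta$ and an isomorphism $\bA\to\str(\Theta)$ modulo diary-automorphisms of $\Theta$, and since finite diaries are rigid (their $\cL_\cK^*$-labels distinguish every node), the counts agree. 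The main obstacle is this last bookkeeping --- establishing rigidity of finite diaries and reconciling the embedding- and copy-versions of the big Ramsey degree --- but once the translation lemma is in place, both recurrence and the big Ramsey structure assertion reduce to already-established results of \cite{BCDHKVZ}.
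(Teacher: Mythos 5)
Your proof is correct, but it follows a genuinely different route than the paper does. The paper's proof is a one-line citation: it invokes Theorem~5.1.6 of \cite{BCDHKVZ}, which shows that an $\cL_\cK^{\rm{diary}}$-expansion encoding a diary is a \emph{strong} big Ramsey structure (i.e.\ one satisfying $\bK^*\to(\bK^*)^{\bA^*}_2$ for every finite $\bA^*\subseteq\bK^*$), and then the remark that follows, which identifies ``strong'' with ``recurrent'' for big Ramsey structures in finite languages such as $\cL_\cK^{\rm{diary}}$. You instead never pass through the notion of strong big Ramsey structure: you set up a translation lemma identifying $\cL_\cK^{\rm{diary}}$-embeddings with diary embeddings, derive recurrence directly from the bi-embeddability of diaries coding $\bK$ (Theorem~6.2.14 of \cite{BCDHKVZ}), and obtain the big Ramsey structure property by matching $|\bK^*(\bA)|$ against the \cite{BCDHKVZ} characterization of $\rm{BRD}^{copy}(\bA,\bK)$ (together with rigidity of diaries and the $\rm{BRD} = \rm{BRD}^{copy}\cdot|\aut(\bA)|$ formula). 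Your decomposition is more explicit about where the hypothesis ``encodes a diary'' gets used and makes the role of diary embeddings visible, at the cost of needing the translation lemma and the bookkeeping argument; the paper's route is terser but opaque, packing the entire content into a citation. Both ultimately rest on substantial \cite{BCDHKVZ} input, but they draw on different results (Theorem~5.1.6 plus the strong/recurrent equivalence, versus Theorem~6.2.14 plus the big Ramsey degree characterization). One small caveat worth making explicit in your write-up: the translation lemma and the identification of $\bK^*\cdot\eta$ with the encoding of the subdiary $\Gamma\|_{f_\Gamma[\eta[K]]}$ both lean on the uniqueness statement in Proposition~\ref{Prop:CodingNodesEmbedding}, and on the fact that aged embeddings preserve $\leq_\ell$ so the height-ordering clause of Definition~\ref{Def:Encode_Diaries_on_Structures} is respected; these are routine but should be stated rather than left implicit.
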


\begin{proof}
    This follows directly from Theorem~5.1.6 of \cite{BCDHKVZ} and the remark immediately after. 
\end{proof}

\section{Strong diaries}\label{Sec:Strong diaries}

In order to perform the combinatorics needed to obtain our infinite-dimensional Ramsey theorems, we will build and analyze particularly nice diaries that we call \emph{strong diaries}. Strong diaries mimic the desirable properties of the coding trees from \cite{Zucker_BR_Upper_Bound} (here referred to as ``ordinary coding trees") while still being diaries. In a strong diary, some levels are declared to be strong, and every strong level will indicate the start of a \emph{strong level gadget}, an interval of levels with some specified behavior. Roughly speaking, each such interval of a strong diary mimics a portion of one level of an ordinary coding tree; in particular, ``left" will be a safe direction (Proposition~\ref{Prop:Left_Safe}), ensuring that strong diaries have a rich supply of self-embeddings to work with. The example constructed in Subsection~\ref{Subsec:7.3} shows what can go wrong if we were to attempt to work with general diaries.

 Eventually, given a strong diary $\Delta$, we will let $\rm{SL}(\Delta)\subseteq \omega$ denote the set of strong levels of the strong diary $\Delta$. For now, assume $\Delta$ is a diary with $\rm{ht}(\Delta) = \omega$, and let $\rm{SL}(\Delta) = \{\mu_i: i< \omega\}\subseteq \omega$, $\mu_0< \mu_1 < \cdots$, be an infinite set containing $0$. 

We now describe the three main types of strong level gadget. First, we see that ``strong splitting gadgets" are rather uninteresting.

\begin{defin}
	\label{Def:StrongSplittingGadget}
	We call the interval $[\mu_i, \mu_{i+1})$ a \emph{strong splitting gadget} if $\mu_i\in \sp(\Delta)$ and $\mu_{i+1} = \mu_i+1$. If $t\in \Delta(m)$ is the splitting node, we call $[\mu_i, \mu_{i+1}) = \{\mu_i\}$ a \emph{strong splitting gadget at $t$}. 
\end{defin}

Next, we consider age-changes. First, we need two lemmas.

\begin{lemma}
    \label{Lem:MaxNonAC}
    Suppose $\rho\colon d\to \MP$ is a path sort and that $(\cA, \cB)\in \econ(\rho)$ is such that for some $i< d$, we have $\rho(i)\in \MP_{non}$ and $\cA_i = \max(\rho(i))$. Then $d = 1$, $i = 0$, and $\cA = \max(\rho(i))$. 
\end{lemma}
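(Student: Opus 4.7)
The plan is to handle $d=1$ directly and to rule out $d\ge 2$ by contradiction. When $d=1$, necessarily $i=0$, and since $P(\rho)=\rho(0)$ for rank-one path sorts, the hypothesis $\cA^0=\max(\rho(0))$ immediately gives $\cA=\max(\rho(0))=\max(\rho(i))$. So the substantive task is to rule out $d\ge 2$.

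Suppose then that $d\ge 2$ and $\cA^i=\max(\rho(i))=\cK\cdot\tilde{\iota}_{i'}$, writing $i'=\sf{u}(\rho(i))\in\sfU_{non}$. Because $(\cA,\cB)\in\econ(\rho)$, its unique set of essentiality is $S=d$. Applying Proposition~\ref{Prop:Consecutive} with $S_1=\{i\}$ and $S_2=d\setminus\{i\}$ (neither contains $d$ since $d\ge 2$), I obtain $\cA^i=\cB^i$ and $\cA\cdot(d\setminus\{i\})=\cB\cdot(d\setminus\{i\})$; in particular $\cB^i=\max(\rho(i))$ too. The goal is to convert these two equalities into a common membership criterion for $\cA$ and $\cB$ on $\fin(\cL_d)$, forcing $\cA=\cB$ and contradicting $\cA\supsetneq\cB$.

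The crux will be a decoupling claim: for any gluing $\gamma_\cA=(\bX,\sf{u}\circ\rho,\eta)$ representing $\cA$, we have $\eta(i,y)=0$ for every $y\in\bX$. I will prove this by contradiction. If some $y\in\bX$ with $V^\bX(y)=j'$ has $\eta(i,y)=q\ge 1$, then non-freeness of $i'$ supplies $\bD^*\in\max(P_{i'})$ with $\gamma_{j',i',q}(\bD^*)\notin\cK$, so some $\bF\in\cF$ embeds there; irreducibility of $\bF$ together with $\bD^*\in\cK$ force this copy of $\bF$ to use the attached vertex $y^*$. Sending $y^*\mapsto y$ and leaving the chosen vertices of $\bD^*$ in place should then realize the same $\bF$ as an induced substructure of $\gamma_\cA(\iota_i\cdot\bD^*)$, since the unary of $y$ matches $U_{j'}$, the relations between $y$ and $\bD^*$-vertices are $R_q$ by choice of $\eta(i,y)$, and the internal structure on $\bD^*$ is unchanged. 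This contradicts $\bD^*\in\cA^i=\max(P_{i'})$.

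Given the decoupling claim, no vertex of $\bX$ is related by a nontrivial binary relation to any vertex of $B^{[i]}:=\{b\in B: V^\bB(b)=i\}$ inside $\gamma_\cA(\bB)$. By irreducibility of each $\bF\in\cF$, any embedded copy of $\bF$ must avoid $\bX$ or $B^{[i]}$ entirely. This should yield the membership criterion: $\bB\in\cA$ iff $\bB\in\max(P(\sf{u}\circ\rho))$ (i.e.\ its $\cL$-reduct is $\cF$-free) and $\bB\setminus B^{[i]}$, reindexed as an $\cL_{d-1}$-structure, lies in $\cA\cdot(d\setminus\{i\})$. The same claim and criterion apply to $\cB$ (using $\cB^i=\max(P_{i'})$), and combined with $\cA\cdot(d\setminus\{i\})=\cB\cdot(d\setminus\{i\})$ this forces $\cA=\cB$. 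The main obstacle will be the decoupling claim; it depends on unwinding the definition of non-freeness to produce a concrete forbidden substructure and verifying that its lift through $\gamma_\cA$ is induced, which is where the combinatorics of the gluing must be tracked with care.
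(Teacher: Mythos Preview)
Your proof is correct but takes a genuinely different route from the paper's. The paper chooses gluings $\delta$ and $\gamma$ for $\cA$ and $\cB$ respectively that differ by a single vertex $x\in X$, then shows via a free amalgamation argument that $\eta(j,x)\neq 0$ for \emph{every} $j<d$. Combining $\eta(i,x)\neq 0$ with non-freeness of $\sf{u}(\rho(i))$, the paper directly exhibits a minimal $\bC\in\cA\setminus\cB$ with $V^\bC[C]=\{i\}$, witnessing that $(\cA,\cB)$ is essential on $\{i\}$; Proposition~\ref{Prop:Consecutive} then forces $d=1$.

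Your argument is essentially dual. Rather than looking at the single vertex separating the two gluings, you study an arbitrary gluing for $\cA$ (or $\cB$) and show, using non-freeness together with $\cA^i=\max(P_{\sf{u}(\rho(i))})$, that the attachment map vanishes in direction $i$. This yields a structural characterization: any such class is determined, within $\max(P(\sf{u}\circ\rho))$, by its restriction to $d\setminus\{i\}$. You then invoke Proposition~\ref{Prop:Consecutive} at the outset (rather than at the end) to obtain $\cA{\cdot}(d\setminus\{i\})=\cB{\cdot}(d\setminus\{i\})$ and $\cA^i=\cB^i$, and the characterization forces $\cA=\cB$. The paper's approach is more direct and constructive, producing an explicit witness to essentiality on $\{i\}$; your approach proves a somewhat stronger structural statement (the decoupling claim holds for \emph{any} class with maximal $i$-th coordinate along a non-free path, not just for $\cA$ and $\cB$), at the cost of a slightly longer detour through the membership criterion.
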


\begin{proof}
    We can find a gluing $\gamma = (\bX, \rho, \eta)$ and $x\in X$ so that, writing $\delta = (\bX\setminus \{x\}, \rho, \eta|_{d\times (I\setminus \{x\})})$, we have $\cK{\cdot}\delta = \cA$ and $\cK{\cdot}\gamma = \cB$. First we argue that for every $j< d$, we have $\eta(j, x)\neq 0$. If not, let $\bC\in \cA\setminus \cB$ have underlying set $C$ and witness that $(\cA, \cB)\in \econ(\rho)$. So every $\bD\subsetneq \bC$ satisfies $\bD\in \cB$, and $V^\bC[C] = d$. Thus $\delta(\bC)\in \cK$, $\gamma(\bD)\in \cK$ for every $\bD\subsetneq \bC$, but $\gamma(\bC)\not\in \cK$. Towards a contradiction, suppose $\eta(j, x) = 0$ for some $j< d$. Because $V^\bC[C] = d$, we can write $\gamma(\bC)$ as a free amalgam of two proper substructures, one including $x$ but not all of $\bC$, and one including all of $\bC$ but not $x$. As $\gamma(\bD)\in \cK$ for every $\bD\subsetneq \bC$, both pieces of the free amalgam are in $\cK$, so $\gamma(\bC)\in \cK$, a contradiction.
    
    Fix $i< d$ as in the lemma statement. Write $q := \eta(i, x)\neq 0$. Because $\rho(i)\in \MP_{non}$, find $\bF\in \cF$ and $a\in \bF$ with $U^\bF(a) = \sf{u}\circ \rho(i)$ and so that for all $b\in \bF\setminus \{a\}$, we have $R^\bF(b, a) = q$. Then one can find an $\cL_d$-structure $\bC$ (up to relabeling unaries, this will be $\bF\setminus \{a\}$) with $V^\bC[C] = \{i\}$, $\bC\in \cA$, but with $\gamma(\bC)|_{C\cup \{x\}}\cong \bF$. Thus $(\cA, \cB)$ is essential on $\{i\}$, and by Proposition~\ref{Prop:Consecutive}, we must have $d = \{i\}$. 
\end{proof}

\begin{lemma}
    \label{Lem:Essential_Change_Last}
    Suppose $\rho\colon d\to \MP$ is a path sort, $\cA\in P(\rho)$, $S\subseteq d$, and $\cB\in P(\rho\circ S)$ are such that $(\cA{\cdot}S, \cB)\in \econ(\rho\circ S)$. Then if $\cA = \cA_0\supseteq\cdots\supseteq \cA_n = \cA|_{(S, \cB)}$ are such that $(\cA_i, \cA_{i+1})\in \con(\rho)$ for every $i< n$, then we have $\cA_{n-1}{\cdot}S = \cA{\cdot}S$. 
\end{lemma}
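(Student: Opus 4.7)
My plan is to argue by contradiction, exploiting hereditariness of classes in $P(\rho)$ together with the fact that $(\cA{\cdot}S, \cB)$ has no proper intermediate in $P(\rho\circ S)$.

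First I would track what the chain looks like after restricting at $S$. Each step $\cA_i\supseteq \cA_{i+1}$ still satisfies $\cA_i{\cdot}S\supseteq \cA_{i+1}{\cdot}S$ in $P(\rho\circ S)$, so the sequence $\cA_0{\cdot}S\supseteq \cdots\supseteq \cA_n{\cdot}S$ is non-increasing. It starts at $\cA{\cdot}S$, and a short computation with the convention $\cA_n = \cA\cap \cB{\cdot}f$ shows $\cA_n{\cdot}S = (\cA{\cdot}S)\cap\cB = \cB$, using the hypothesis $\cB\subseteq \cA{\cdot}S$ coming from $(\cA{\cdot}S,\cB)\in\econ(\rho\circ S)$. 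Since $(\cA{\cdot}S,\cB)$ is consecutive in $P(\rho\circ S)$, no element of $P(\rho\circ S)$ lies strictly between them, so every $\cA_i{\cdot}S$ equals either $\cA{\cdot}S$ or $\cB$. Thus there is a unique index $j< n$ where $\cA_j{\cdot}S = \cA{\cdot}S$ but $\cA_{j+1}{\cdot}S = \cB$, and the task reduces to showing $j = n-1$.

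Suppose for contradiction $j< n-1$, so that $\cA_{n-1}{\cdot}S = \cB$. I would now show $\cA_{n-1}\subseteq \cB{\cdot}f$. Fix any $\bC\in \cA_{n-1}$, and let $\bC'$ be its induced substructure on the vertices with $V$-value in $S$. Since $\cK = \rm{Forb}(\cF)$ is hereditary, so is $\cA_{n-1} = \cK{\cdot}\gamma$ for any gluing $\gamma$, hence $\bC'\in \cA_{n-1}$. A direct unwinding of Definition~\ref{Def:Maps_and_Gluings} shows $\bC' = S{\cdot}(f{\cdot}\bC)$ (the partial inverse $f$ strips the unaries not in $S$ and $S$ relabels them back). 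Therefore $f{\cdot}\bC\in \cA_{n-1}{\cdot}S = \cB$, which by definition means $\bC\in \cB{\cdot}f$, as desired.

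Combined with $\cA_{n-1}\subseteq \cA = \cA_0$, this yields
\[
\cA_{n-1}\subseteq \cA\cap \cB{\cdot}f = \cA|_{(S,\cB)} = \cA_n.
\]
Together with $\cA_{n-1}\supseteq \cA_n$ from the chain, we get $\cA_{n-1} = \cA_n$, contradicting that $(\cA_{n-1},\cA_n)\in \con(\rho)$ is a genuine (strict) consecutive pair. Hence $j = n-1$ and $\cA_{n-1}{\cdot}S = \cA{\cdot}S$.

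The only delicate point is the implication ``$\cA_{n-1}{\cdot}S\subseteq \cB \Rightarrow \cA_{n-1}\subseteq \cB{\cdot}f$,'' and the content there is entirely hereditariness of elements of $P(\rho)$ together with the observation that $S{\cdot}(f{\cdot}\bC)$ is an induced substructure of $\bC$. Everything else is bookkeeping with the operations ${\cdot}S$, ${\cdot}f$, and $|_{(S,\cB)}$ introduced in Notation~\ref{Notation:Manipulate_Classes}.
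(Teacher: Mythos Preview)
Your proof is correct and follows essentially the same approach as the paper's: assume $\cA_{n-1}{\cdot}S\neq\cA{\cdot}S$, use consecutiveness of $(\cA{\cdot}S,\cB)$ to force $\cA_{n-1}{\cdot}S=\cB$, deduce $\cA_{n-1}\subseteq\cA\cap\cB{\cdot}f=\cA_n$, and contradict strictness of the last step. The paper compresses this into two sentences, leaving the implication $\cA_i{\cdot}S\subseteq\cB\Rightarrow\cA_i\subseteq\cB{\cdot}f$ implicit; your hereditariness argument for that step is exactly the right justification.
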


\begin{proof}
    If not, then for some $i< n$ we have $\cA_i{\cdot}S\subsetneq \cA{\cdot}S$. Thus $\cA_i{\cdot}S = \cB$, and $\cA_i\subseteq \cA\cap \cB{\cdot}f$, a contradiction. 
\end{proof}

\begin{notation}
\label{Notation:Restrict_Class}
Elaborating on Notation~\ref{Notation:Manipulate_Classes}, if $m< \omega$, $U\subseteq S\subseteq\Delta(m)$, and $\cB\in P(\sort(U))$, then if $S$ is understood from context, we can abuse notation and write $\age_\Delta(S)|_{(U, \cB)}$. 
\end{notation}

\begin{defin}
	\label{Def:StrongACGadget}
	Suppose $i< \omega$, $S\subseteq \Delta(\mu_i)$, and $(\age_\Delta(S), \cB)\in \econ(\sort(S))$. The definition that $[\mu_i, \mu_{i+1})$ is a \emph{strong age-change gadget at $S$ to $\cB$} depends on some cases.
\begin{enumerate}
    \item 
    If there are $t\in S$ and $\frak{p}\in \MP_{non}$ with both
   $t^\sf{p} = \frak{p}$ and $\age_\Delta(t) = \max(\frak{p})$, then by Lemma~\ref{Lem:MaxNonAC}, we must have $S = \{t\}$. In this case, $[\mu_i, \mu_{i+1})$ is a strong age-change gadget at $\{t\}$ to $\cB$ iff it is a strong splitting gadget at $t$. Note that we must have $\cB = \max(\frak{p}')$.
    \item
   Otherwise, we call $[\mu_i, \mu_{i+1})$ a strong age-change gadget at $(S, \cB)$ if the following all hold. 
      \begin{itemize}
        \item 
        $\mu_i+|S| < \mu_{i+1}$, and for each $s\in S$, there is a splitting node $t\sqsupseteq s$ with $\ell(t)< \mu_i+|S|$.
        \item
        Set $U = \rm{Right}(S, \mu_i+|S|)$. Because there is no $t\in S$ and $\frak{p}\in \MP_{non}$ with $t^\sf{p}$ and $\age_\Delta(t) = \max(\frak{p})$, we have that $\age_\Delta(U) = \age_\Delta(S)$. The remaining levels of the gadget are age-change levels from $\age_\Delta(\mu_i+|S|)$ to $\age_\Delta(\mu_i+|S|)\big|_{(U, \cB)} = \age_\Delta(\mu_{i+1})$.
    \end{itemize}
\end{enumerate}
\end{defin}

The idea is that upon extending the members of $S$ and following all splitting \emph{to the right}, we obtain the desired essential age change to $\c{B}$, and by Lemma~\ref{Lem:Essential_Change_Last}, this happens at level $\mu_{i+1}-1$. This allows left to be a safe direction. Note that although the two cases are defined differently, they function in roughly the same way, in the sense that left is age-preserving and right is an age change.

Strong coding gadgets (Definition~\ref{Def:StrongCodingGadget}) have the most involved definition of the three types of gadgets. 

\begin{defin}
	\label{Def:ValidPassingNumbers}
    Given a path sort $\rho\colon 2\to \MP$, $\cA\in P(\rho)$, $j< 2$, and $q< k$, we say that $q$ is a \emph{valid passing number} for $(\rho, \cA, j)$ if $\cA{\cdot}\rm{Add}_{j, \iota_q}\in \rho(1-j)$. In particular, $0$ is always a valid passing number for $(\rho, \cA, j)$. \qed
\end{defin}

\begin{lemma}
    \label{Lem:Valid_Passing}
    Suppose $\rho$, $\cA$, and $j< 2$ are as in Definition~\ref{Def:ValidPassingNumbers} and that $q< k$ is a valid passing number for $(\rho, \cA, j)$. Suppose $\rho(1-j) = \frak{p}\in \MP_{non}$ and $\cA^{1-j} = \max(\frak{p})$. Then $q$ is a valid passing number for $(\rho, \cA|_{(1-j, \max(\frak{p}'))}, j)$. 
\end{lemma}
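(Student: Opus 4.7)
The plan is to unravel the notational definitions on each side and reduce the claim to the fact that the maximal path $\frak{p}$ is a chain in $P_{\sf{u}(\frak{p})}$ closed under intersection.

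First I would compute the left-hand class $\cA|_{(1-j,\max(\frak{p}'))} \cdot \mathrm{Add}_{j,\iota_q}$ directly. Let $f\colon 2\rightharpoonup 1$ be the partial inverse of the increasing enumeration of $\{1-j\}$, so $f(1-j)=0$ and $f$ is undefined at $j$. By Notation~\ref{Notation:Manipulate_Classes},
$$\cA|_{(1-j,\max(\frak{p}'))} \;=\; \cA \,\cap\, \max(\frak{p}')\cdot f.$$
For any $\cL_1$-structure $\bB$, Definition~\ref{Def:CodingClass} tells us that in $\mathrm{Add}_{j,\iota_q}(\bB)$ the new vertex $x$ carries unary $j$ while every $b\in B$ carries unary $1-j$. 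A routine bookkeeping check then identifies $f\cdot \mathrm{Add}_{j,\iota_q}(\bB)$ with $\bB$ itself (under the trivial relabeling $0\mapsto 0$), so the condition that $\mathrm{Add}_{j,\iota_q}(\bB)$ lie in $\max(\frak{p}')\cdot f$ reduces to $\bB\in\max(\frak{p}')$. Consequently,
$$\cA|_{(1-j,\max(\frak{p}'))}\cdot \mathrm{Add}_{j,\iota_q} \;=\; \bigl(\cA\cdot \mathrm{Add}_{j,\iota_q}\bigr)\,\cap\,\max(\frak{p}').$$

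Second, I would invoke that $\frak{p}$ is a maximal chain in $P_{\sf{u}(\frak{p})}$. By hypothesis, $q$ valid for $(\rho,\cA,j)$ means precisely $\cA\cdot \mathrm{Add}_{j,\iota_q}\in\frak{p}$ (Definition~\ref{Def:ValidPassingNumbers}), and of course $\max(\frak{p}')\in\frak{p}$. Since $\frak{p}$ is totally ordered by inclusion, the intersection of two of its members equals whichever is smaller and hence still lies in $\frak{p}$. This is exactly what it means for $q$ to be a valid passing number for $(\rho, \cA|_{(1-j,\max(\frak{p}'))}, j)$.

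The only mild obstacle is the bookkeeping identification $f\cdot \mathrm{Add}_{j,\iota_q}(\bB)=\bB$, which requires keeping the $V$-value conventions lined up between the operator $\mathrm{Add}_{j,\iota_q}$ and the restriction by $f$. The extra hypotheses $\frak{p}\in\MP_{non}$ and $\cA^{1-j}=\max(\frak{p})$ do not enter the argument itself; they merely pin down the setting in which the lemma is invoked, namely the non-free branch of Definition~\ref{Def:StrongACGadget}, where one wants to realize the essential age change from $\max(\frak{p})$ down to $\max(\frak{p}')$.
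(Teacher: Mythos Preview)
Your argument is correct, and in fact slightly more elementary than the paper's. Both proofs begin by computing
\[
\bigl(\cA|_{(1-j,\max(\frak{p}'))}\bigr)\cdot\mathrm{Add}_{j,\iota_q}
\;=\;
\bigl(\cA\cdot\mathrm{Add}_{j,\iota_q}\bigr)\cap\max(\frak{p}'),
\]
via the bookkeeping identification $f\cdot\mathrm{Add}_{j,\iota_q}(\bB)=\bB$ that you carry out (your verification of this is fine). At this point the two proofs diverge. You observe that both $\cA\cdot\mathrm{Add}_{j,\iota_q}$ and $\max(\frak{p}')$ lie in the chain $\frak{p}$, so their intersection is simply the smaller of the two and hence still in $\frak{p}$; this suffices for validity and indeed uses neither the non-free hypothesis nor $\cA^{1-j}=\max(\frak{p})$. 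The paper instead invokes $\frak{p}\in\MP_{non}$ and $q\neq 0$ to argue that $\cA\cdot\mathrm{Add}_{j,\iota_q}$ is already \emph{properly} contained in $\max(\frak{p})$, hence (being in the chain $\frak{p}$) contained in $\max(\frak{p}')$; this yields the sharper conclusion that the intersection \emph{equals} $\cA\cdot\mathrm{Add}_{j,\iota_q}$. That equality is not asserted in the lemma statement and is not needed downstream, so your observation that the extra hypotheses are inessential for the lemma as stated is justified. The paper's route does, however, reveal a bit more structure: restricting at coordinate $1{-}j$ from $\max(\frak{p})$ to $\max(\frak{p}')$ leaves the coding class $\cA\cdot\mathrm{Add}_{j,\iota_q}$ literally unchanged in the non-free case.
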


\begin{proof}
    Without loss of generality suppose $j = 1$ and $q\neq 0$. Then $\cA{\cdot}\rm{Add}_{1, \iota_q}\in \frak{p}$, and in particular $\cA{\cdot}\rm{Add}_{1, \iota_q}\subseteq \cA^0 = \max(\frak{p})$. Since $\cA\in P(\rho)$ as witnessed by some gluing, we must have $\cA\subseteq \cK{\cdot}\wt{\rho}$. By the definition of $\cA{\cdot}\rm{Add}_{1, \iota_q}$ and since $\frak{p}\in \MP_{non}$, we must have $\cA{\cdot}\rm{Add}_{1, \iota_q}\subseteq \max(\frak{p}')$. Hence we have $(\cA|_{(0, \max(\frak{p}'))}){\cdot}\rm{Add}_{1, \iota_q} = \cA{\cdot}\rm{Add}_{1, \iota_q}$.
\end{proof}

\begin{defin}
	\label{Def:StrongCodingGadget}
	Fix $i< \omega$ and $t\in \Delta(\mu_i)$. We call $[\mu_i, \mu_{i+1})$ a \emph{strong coding gadget at $t$} if the following all happen.
    \begin{itemize}
        \item 
        $t$ is a splitting node. Write $u = t^\frown 1$.
        \item
        For every $s\in \Delta(\mu_i+1)\setminus \{u\}$, let $I_s$ denote the set of valid passing numbers for $(s, u)$. The next several levels of the gadget are splitting levels. For each $s\in \Delta(\mu_i+1)\setminus\{u\}$, $|I_s|-1$ of these splitting levels will have a splitting node extending $s$. If $m>\mu_i$ is the level we are at after these splitting levels, note that for $s\in \Delta(\mu_i+1)\setminus \{u\}$, we have $|\succ_\Delta(s, m)| = |I_s|$, and we have $\succ_\Delta(u, m) = \{v\}$ a singleton.  
        
        Write $\phi\colon \Delta(m)\setminus\{v\}\to k$ for the function such that for each $s\in \Delta(\mu_i+1)\setminus\{u\}$, $\phi|_{\succ_\Delta(s, m)}$ is the $\lex$-increasing bijection onto $I_s$. 
        \item
        Now notice that by the definition of valid passing number, we have that $\age_\Delta(m)(v, \phi)$ contains all unaries. Levels from $m$ to $\mu_{i+1}-1$ are age-change levels, with $\age_\Delta(\mu_{i+1}-1) = \la \age_\Delta(m), v, \phi\ra$.
        \item
        $\mu_{i+1}-1\in \cd(\Delta)$, and writing $w:= \Left(v, \mu_{i+1}-1)$, the $\cL_\cK^*$-structure on $\Delta(\mu_{i+1}-1)$ is described by the controlled coding triple $(\age_\Delta(\mu_{i+1}-1), w, \phi\circ \pi_m)$.
    \end{itemize}
    Given $s\in \Delta(\mu_i)$ and $q\in I_{s^\frown 0}$, there is a unique $u\in \succ_\Delta(s, \mu_{i+1})$ with $u(\mu_{i+1}) = q$; we denote this $u$ by $s{*}q$. Note that $s{*}0 = \Left_\Delta(s, \mu_{i+1})$.
 \end{defin}

\begin{figure}
\begin{center}
\begin{tikzpicture}[scale=.18]

\node at (0,0) {};

\foreach \x in {-15,0,17,30}{
\foreach \y in {0}{
\node at (\x,\y) {};
}}

\draw[dotted] (-17,0) -- (31,0);

\node[circle, fill=black,inner sep=0pt, minimum size=2pt] at (-15,0) {};
\node[circle, fill=black,inner sep=0pt, minimum size=2pt] at (0,0) {};
\node[circle, fill=black,inner sep=0pt, minimum size=2pt] at (17,0) {};
\node[circle, fill=black,inner sep=0pt, minimum size=2pt] at (30,0) {};

\node at (-15, -1.5) {$r$};
\node at (0,-1.5) {$s$};
\node at (17,-1.5) {$t$};
\node at (30,-1.5) {$u$};
\node at (35,-0.3) {$\Delta(\mu_i)$};

\draw (0,0) to [out= -30, in=-150] (17,0);

\draw[dotted] (-19,4) -- (29,4);
\node at (35,3.7) {$\Delta(\mu_i +1)$};

\node[circle, fill=black,inner sep=0pt, minimum size=2pt] at (-17,4) {};
\node[circle, fill=black,inner sep=0pt, minimum size=2pt] at (-2,4) {};
\node[circle, fill=black,inner sep=0pt, minimum size=2pt] at (15,4) {};
\node[circle, fill=black,inner sep=0pt, minimum size=2pt] at (19,4) {};
\node at (20,4) {$u$};
\node[circle, fill=black,inner sep=0pt, minimum size=2pt] at (28,4) {};

\draw[thick] (-15,0)--(-17,4);
\draw[thick] (0,0)--(-2,4);
\draw[thick] (15,4)--(17,0)--(19,4);
\draw[thick] (30,0)--(28,4);

\draw[dotted] (-21,8) -- (28,8);

\node[circle, fill=black,inner sep=0pt, minimum size=2pt] at (-19,8) {};
\node[circle, fill=black,inner sep=0pt, minimum size=2pt] at (-4,8) {};
\node[circle, fill=black,inner sep=0pt, minimum size=2pt] at (0,8) {};
\node[circle, fill=black,inner sep=0pt, minimum size=2pt] at (13,8) {};
\node[circle, fill=black,inner sep=0pt, minimum size=2pt] at (17,8) {};
\node[circle, fill=black,inner sep=0pt, minimum size=2pt] at (26,8) {};

\draw[thick] (-19,8)--(-17,4);
\draw[thick] (-4,8)--(-2,4)--(0,8);
\draw[thick] (15,4)--(13,8);
\draw[thick]  (17,8)--(19,4);
\draw[thick] (26,8)--(28,4);

\draw[dotted] (-23,12) -- (28,12);

\node[circle, fill=black,inner sep=0pt, minimum size=2pt] at (-21,12) {};
\node[circle, fill=black,inner sep=0pt, minimum size=2pt] at (-17,12) {};
\node[circle, fill=black,inner sep=0pt, minimum size=2pt] at (-6,12) {};
\node[circle, fill=black,inner sep=0pt, minimum size=2pt] at (-2,12) {};
\node[circle, fill=black,inner sep=0pt, minimum size=2pt] at (11,12) {};
\node[circle, fill=black,inner sep=0pt, minimum size=2pt] at (15,12) {};
\node[circle, fill=black,inner sep=0pt, minimum size=2pt] at (24,12) {};

\draw[thick] (-21,12)--(-19,8)--(-17,12);
\draw[thick] (-4,8)--(-6,12);
\draw[thick]  (0,8)--(-2,12);
\draw[thick] (11,12)--(13,8);
\draw[thick]  (17,8)--(15,12);
\draw[thick] (26,8)--(24,12);

\draw[dotted] (-25,16) -- (28,16);

\node[circle, fill=black,inner sep=0pt, minimum size=2pt] at (-23,16) {};
\node[circle, fill=black,inner sep=0pt, minimum size=2pt] at (-15,16) {};
\node[circle, fill=black,inner sep=0pt, minimum size=2pt] at (-8,16) {};
\node[circle, fill=black,inner sep=0pt, minimum size=2pt] at (0,16) {};
\node[circle, fill=black,inner sep=0pt, minimum size=2pt] at (9,16) {};
\node[circle, fill=black,inner sep=0pt, minimum size=2pt] at (13,16) {};
\node[circle, fill=black,inner sep=0pt, minimum size=2pt] at (22,16) {};

\draw[thick] (-21,12)--(-23,16);
\draw[thick] (-17,12)--(-15,16);
\draw[thick] (-8,16)--(-6,12);
\draw[thick]  (0,16)--(-2,12);
\draw[thick] (11,12)--(9,16);
\draw[thick]  (13,16)--(15,12);
\draw[thick] (22,16)--(24,12);

\draw[dotted] (-27,20) -- (28,20);
\node at (35,19.7) {$\Delta(\mu_{i+1}-1)$};

\node[circle, fill=black,inner sep=0pt, minimum size=2pt] at (-25,20) {};
\node[circle, fill=black,inner sep=0pt, minimum size=2pt] at (-17,20) {};
\node[circle, fill=black,inner sep=0pt, minimum size=2pt] at (-6,20) {};
\node[circle, fill=black,inner sep=0pt, minimum size=2pt] at (-2,20) {};
\node[circle, fill=black,inner sep=0pt, minimum size=2pt] at (7,20) {};
\node[circle, fill=black,inner sep=0pt, minimum size=5pt] at (15,20) {};
\node[circle, fill=black,inner sep=0pt, minimum size=2pt] at (20,20) {};

\draw[thick] (-25,20)--(-23,16);
\draw[thick] (-17,20)--(-15,16);
\draw[thick] (-6,20)--(-8,16);
\draw[thick]  (-2,20)-- (0,16);
\draw[thick] (7,20)--(9,16);
\draw[thick]  (15,20)--(13,16);
\draw[thick] (20,20)--(22,16);

\draw[dotted] (-29,24) -- (28,24);
\node at (35,23.7) {$\Delta(\mu_{i+1}-1)$};

\node[circle, fill=black,inner sep=0pt, minimum size=2pt] at (-27,24) {};
\node[circle, fill=black,inner sep=0pt, minimum size=2pt] at (-15,24) {};
\node[circle, fill=black,inner sep=0pt, minimum size=2pt] at (-8,24) {};
\node[circle, fill=black,inner sep=0pt, minimum size=2pt] at (0,24) {};
\node[circle, fill=black,inner sep=0pt, minimum size=2pt] at (5,24) {};
\node[circle, fill=black,inner sep=0pt, minimum size=2pt] at (18,24) {};

\draw[thick] (-25,20)--(-27,24);
\draw[thick] (-17,20)--(-15,24);
\draw[thick] (-6,20)--(-8,24);
\draw[thick]  (-2,20)-- (0,24);
\draw[thick] (7,20)--(5,24);
\draw[thick] (18,24)--(20,20);

\end{tikzpicture}
\caption{Strong coding gadget at $t$ in the class of triangle-free graphs. Here $\age_\Delta(\{r, s, t, u\})$ is the class of $\cL_4$-structures (graphs with vertex set partitioned into $4$ pieces $V_0$ through $V_3$) which are triangle free, with each of $V_1$, $V_2$, and $V_3$ an anti-clique, with no edges allowed between $V_1$ and $V_3$, and with no edges allowed between $V_2$ and $V_3$. The curved line between $s$ and $t$ represents that edges are allowed between $V_1$ and $V_2$. We do not draw horizontal lines connecting to $r$ since $\age_\Delta(\{r\})$ is the class of all triangle free graphs (with all nodes in unary $V_0$), which automatically implies that edges are allowed between $V_0$ and $V_i$ for $i\in \{1, 2, 3\}$.}
\end{center}
\end{figure}
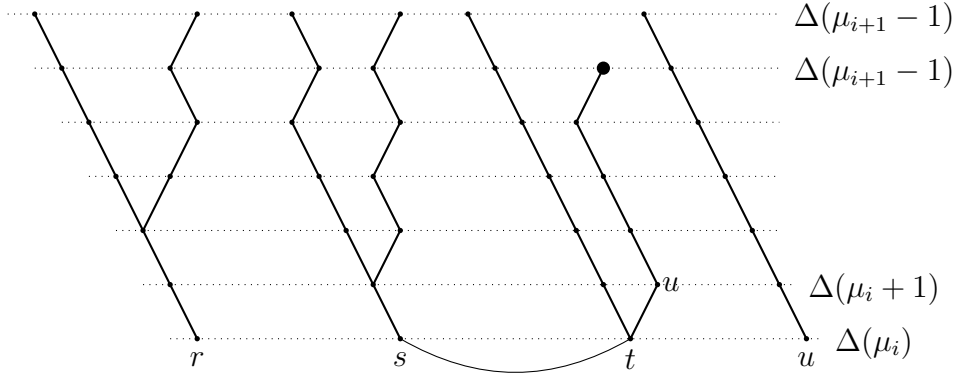

\begin{defin}
	\label{Def:StrongDiary}
	A \emph{strong diary} is any diary $\Delta$ built in the following fashion. We set $\Delta(0) = \MP\times \{\emptyset\}$; writing $\sort(\Delta(0)) = \rho$, we have $\age_\Delta(0) = \c{K}{\cdot}\tilde{\rho}$. We set $\mu_0 = 0$. If for some $n< \omega$, $\mu_n$ has been determined and $\Delta$ has been constructed up to level $\mu_n$, we build the interval from $[\mu_n, \mu_{n+1})$ by adding a strong splitting, age-change, or coding gadget. We ensure that the following all hold. 
	\begin{itemize}
		\item 
		For every $t\in \Delta$, there is some $n< \omega$ with $\mu_n > \ell(t)$ so that $[\mu_n, \mu_{n+1})$ is a strong splitting gadget at $\Left(t, \mu_n)$.
		\item
		Same as above for strong coding gadgets.
		\item
		Suppose for some $\mu_\ell< \omega$, $S\subseteq \Delta(\mu_\ell)$ and $\cB\in P(\sort(S))$ we have $(\age_\Delta(S), \cB)\in \econ(\sort(S))$. Then there is $n > \ell$ so that $[\mu_n, \mu_{n+1})$ is a strong age-change gadget at $\Left(S, \mu_n)$ to $\cB$.
	\end{itemize}
	It is routine to inductively construct strong diaries. Whenever $\Delta$ is a strong diary, we write $\rm{SL}(\Delta) = \{\mu_n: n< \omega\}$. We note that $[\mu_n, \mu_{n+1})$ is a strong X gadget, where X is one of coding, age change, or splitting, iff $\mu_{n+1}-1$ is a level of type X.  Given $\ell< \omega$, let $\pi_s(\ell)$ be the largest member of $\rm{SL}(\Delta)$ with $\pi_s(\ell)\leq \ell$, and given $t\in \Delta$, let $\pi_s(t)$ denote the restriction of $t$ to $\pi_s(\ell(t))$. The map $\pi_s$ plays a very similar role to the notion of ``splitting predecessor'' from \cite{Dobrinen_SDAP, Dobrinen_3Free, Dobrinen_Henson}. 
\end{defin}

Using strong diaries, we obtain a simple proof that there exist diaries coding $\flim(\cK)$. In fact, the proof of Proposition~3.4.11 from \cite{BCDHKVZ} is inspired from the proof of the following proposition. 

\begin{notation}
    \label{Notation:ValidPassingNumbers}
    Elaborating on Definition~\ref{Def:ValidPassingNumbers}, suppose $\Delta\subseteq \MP\times k^{<\omega}$ is an aged coding tree, $m< \rm{ht}(\Delta)$,  and $s\neq t\in \Delta(m)$, writing $\{s, t\} = \{s_0, s_1\}$ with $s_0\lex s_1$ and letting $j< 2$ satisfy $t = s_j$. Then given $q< k$, we write $\age_\Delta(s; t, q):= \age_\Delta(\{s, t\}){\cdot}\rm{Add}_{j, \iota_q}$, and we say that $q< k$ is a \emph{valid passing number} for $(\Delta, s, t)$ if  $q$ is a valid passing number for $(\sort(\{s, t\}), \age_\Delta(\{s, t\}), j)$, i.e.\ if $\age_\Delta(s; t, q)\in s^\sf{p}$.
\end{notation}

\begin{prop}
    \label{Prop:Strong_Diaries}
    If $\Delta$ is a strong diary, then $\str(\Delta)\cong \flim(\cK)$. 
\end{prop}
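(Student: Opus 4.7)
The plan is to verify that $\str(\Delta)$ satisfies the one-point extension property for $\cK$, which together with the already-known containment $\age(\str(\Delta))\subseteq \cK$ (mentioned just before the statement, Proposition 3.4.3 of \cite{BCDHKVZ}) and the countability of $\cdnd(\Delta)$, identifies $\str(\Delta)$ as $\flim(\cK)$. So fix a finite $\bA=\{c_0,\dots,c_{n-1}\}\subseteq \str(\Delta)$ enumerated in increasing order of level $\ell_0<\cdots<\ell_{n-1}$, and a desired one-point extension $\bA\cup\{v\}\in\cK$ specifying a unary $U$ for $v$ and binary relations $r_j = R(v,c_j)$ for each $j<n$. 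The goal is to exhibit a coding node $c\in \cdnd(\Delta)$ realizing the type of $v$, i.e.\ satisfying $c^\sf{u}=U$, $\ell(c)>\ell_{n-1}$, and $c(\ell_j)=r_j$ for each $j$.

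The strategy has two stages. In the first stage I produce a (not necessarily terminal) node $t\in\Delta$ with $\ell(t)>\ell_{n-1}$, $t^\sf{u}=U$, and $t(\ell_j)=r_j$ for every $j<n$. In the second stage I invoke the density clauses in Definition~\ref{Def:StrongDiary}: there exists a strong level $\mu>\ell(t)$ such that $[\mu,\mu_{\rm next})$ is a strong coding gadget at $\Left_\Delta(t,\mu)$. By Definition~\ref{Def:StrongCodingGadget}, this gadget produces a coding node $w\in\cdnd(\Delta)$ extending $\Left_\Delta(t,\mu)^\frown 1$, and hence extending $t$. Since $w$ extends $t$, we have $w^\sf{p}=t^\sf{p}$ (so $w^\sf{u}=U$) and $w(\ell_j)=t(\ell_j)=r_j$ for each $j$, so $c:=w$ is the required witness.

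The main difficulty is stage one: showing that $\Delta$ contains a node at a sufficiently high level realizing the target profile $(U,r_0,\dots,r_{n-1})$. The idea is to build $t$ by descending through the strong level gadgets, maintaining by induction on $j$ that the path built so far is compatible with the target passing numbers through $c_0,\dots,c_j$. At the strong coding gadget $[\mu_{i_j},\mu_{i_j+1})$ ending at the coding level $\ell_j$, the splittings above the splitting node spread out into exactly the valid passing numbers $I_s$ (Definition~\ref{Def:ValidPassingNumbers}) for each relevant ancestor $s$; hypothesis $\bA\cup\{v\}\in\cK$ forces $r_j\in I_s$, so one such branch realizes the desired value. At age-change gadgets we follow left branches, using the fact (built into Definition~\ref{Def:StrongACGadget}) that left preserves the age while right executes the essential change, so left-steering never forecloses future options. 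The density clauses of Definition~\ref{Def:StrongDiary} guarantee a strong coding gadget exists above each $c_j$, and a final splitting/coding gadget occurs above the last $\ell_{n-1}$ so that we can also choose the unary $U$ at the final step (by walking up a path whose terminal $\sf{MP}$-coordinate has $\sf{u}$-value $U$; such a path exists since $\Delta(0)=\MP\times\{\emptyset\}$ and every $\frak{p}\in\MP$ with $\sf{u}(\frak{p})=U$ is represented).

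The subtlest point is verifying that $r_j\in I_s$ holds at every coding gadget we traverse, because $I_s$ is defined relative to the local age class $\age_\Delta(\{s,u\})$ at level $\mu_{i_j}$, not relative to $\cK$ directly. Here one uses that strong age-change gadgets exhaust all essential consecutive pairs in $\econ(\rho)$, so that by the time we reach the coding gadget at level $\ell_j$ the age class has already been refined to reflect exactly the constraints imposed by $\bA$; the validity of $r_j$ as an extension in $\cK$ then translates to $r_j\in I_s$. This is where the full density conditions of a strong diary are essential, and it is the main technical obstacle of the argument.
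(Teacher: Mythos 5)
Your overall strategy is the paper's: verify the one-point extension property for $\str(\Delta)$, build a branch downward that picks the correct passing number at each strong coding gadget, and finish by taking a coding node above the resulting node (coding nodes are cofinal). So in outline the two arguments coincide.

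The one place where your account diverges from the actual mechanism is the crux you flag, namely why $r_j\in I_s$ at each coding gadget. You attribute this to the density of strong age-change gadgets ``refining the age class to reflect the constraints imposed by $\bA$,'' but the construction never routes through standalone age-change gadgets at all. What the paper does instead is maintain the invariant that the partial branch $s_j$ satisfies $\age_\Delta(s_j)=\cK{\cdot}\gamma_j$, where $\gamma_j$ is the rank-$1$ gluing encoding the extension problem restricted to $c_0,\dots,c_{j-1}$; since $\cK{\cdot}\gamma_0\supseteq\cdots\supseteq\cK{\cdot}\gamma_n$ all lie on one maximal path $\frak{p}\in\MP$ (chosen at the outset, using $\cK{\cdot}\gamma_n\neq\emptyset$), the validity of $r_j$ at the coding gadget for $c_j$ is \emph{exactly} the statement $\age_\Delta(u;t,r_j)=\cK{\cdot}\gamma_{j+1}\in\frak{p}$, which holds by the choice of $\frak{p}$. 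The levels between coding gadgets are handled by taking leftmost extensions, and Proposition~\ref{Prop:Left_Safe} guarantees these are age-preserving, so the invariant propagates; the operation $s_{j+1}=u{*}r_j$ then updates the age to $\cK{\cdot}\gamma_{j+1}$ automatically. With this bookkeeping substituted for your appeal to age-change density, the argument closes.
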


\begin{proof}
    We show that $\str(\Delta)$ satisfies the extension property for one-element extensions. Suppose $n < \omega$ and that $c_0,...,c_{n-1}\in \cdnd(\Delta)$ are the first $n$ coding nodes in $\Delta$. Let $\bX$ be an enumerated structure such that the map $x\to c_x$ is an isomorphism from $\bX$ to $\str(\Delta)|_{\{c_0,...,c_{n-1}\}}$. Suppose $j< \sfU$ and $\gamma_n:= (\bX, \iota_j, \eta)$ is a rank $1$ gluing with $\cK{\cdot}\gamma_n\neq\emptyset$, thus representing an instance of an extension problem. Given $i< n$, set $\gamma_i = (\bX|_i, \rho, \eta|_{1\times i})$. Thus $\max(P_j) = \cK{\cdot}\gamma_0\supseteq\cdots\supseteq \cK{\cdot}\gamma_n \in P_j$. Let $\frak{p}\in \MP_j$ satisfy $\cK{\cdot}\gamma_i\in \frak{p}$ for every $i< n$. 

    Set $s_0 = (\frak{p}, \emptyset)\in \Delta$. Suppose $i< n$ and that $s_i$ has been determined with $\age_\Delta(s_i) = \cK{\cdot}\gamma_i$ and so that for $i> 0$ we have $s_i\in \Delta(\ell(c_{i-1})+1)$. Write $\mu = \pi_s(\ell(c_i))\in \SL(\Delta)$, and set $t = c_i|_\mu$ and $u = \Left_\Delta(t_i, \mu)$. Then $q_i:= \eta(1, i)< k$ is a valid passing number for $(\Delta, u, t)$ since $\age_\Delta(u; t, q_i) = \cK{\cdot}\gamma_{i+1}\in \frak{p}$. We then set $s_{i+1} = u{*}q_i$. 

    Upon defining $s_n$, any $c\in \cdnd(\Delta)\cap \succ_\Delta(s_n)$ will witness the desired instance of the extension property.
\end{proof}

We \textbf{fix once and for all} a strong diary $\Delta$ coding $\flim(\cK)$. The key advantage to working with strong diaries is that when restricting to strong levels, they are almost as flexible to work with as the coding trees from \cite{Zucker_BR_Upper_Bound}. The next few propositions give us the key properties we will need in the proofs of the main theorems.

\begin{notation}
    \label{Notation:End_Age_Exts}
     If $S\subseteq \Delta(m)$, then an \emph{end-extension} of $S$ is a set $U\subseteq \Delta(n)$ for some $n\geq m$ with the property that each $s\in S$ extends to exactly one $u\in U$. Note that this implies $\age_\Delta(U)\subseteq \age_\Delta(S)$. If we have equality, then we call $U$ an \emph{age-extension} of $S$. We write $S\sqsubseteq_{end} U$ if $U$ end-extends $S$ and $S\sqsubseteq_{age} U$ if $U$ age-extends $S$.  
\end{notation}

\begin{prop}
    \label{Prop:Left_Safe}
    If $\mu< \nu \in \SL(\Delta)$, then $\Left_\Delta(\Delta(\mu), \nu)\sqsupseteq_{age} \Delta(\mu)$. More generally, if $S_0\subseteq \Delta(\mu)$, $T_0\subseteq \Delta(\nu)$ with $T_0\sqsupseteq_{age} S_0$, and we set $T = T_0\cup \Left_\Delta(\Delta(\mu)\setminus S_0, \nu)$, then $T\sqsupseteq_{age} \Delta(\mu)$.
\end{prop}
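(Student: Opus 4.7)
I will prove the stronger second statement (the first is the special case $S_0=T_0=\emptyset$) by induction on $n:=|\SL(\Delta)\cap(\mu,\nu]|$, the number of strong gadgets spanning $[\mu,\nu)$. The base case $n=1$ is handled by case analysis on the type of gadget; the inductive step peels off the last gadget, and everything composes using the identity $\Left_\Delta(\Left_\Delta(s,\nu'),\nu)=\Left_\Delta(s,\nu)$, which holds because extensions of a common ancestor preserve $\lex$-order.

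In the base case the guiding principle is that each strong gadget is built so that the age-changing information lives ``to the right'' of the leftmost path. For a strong splitting gadget at position $i$, the identity $\sp(d,i)\circ((d{+}1)\setminus\{i{+}1\})=\id_d$ shows that $\age_\Delta(\mu_{n+1})$ pulled back to the positions of $\Left_\Delta(\Delta(\mu_n),\mu_{n+1})$ equals $\age_\Delta(\mu_n)$; the only non-leftmost option available to $T_0$ is $t^\frown 1$ over $t$, which by Lemma~\ref{Lem:MaxNonAC} and Definition~\ref{Def:SplittingClass} is compatible with $T_0\sqsupseteq_{age}S_0$ exactly in the free case (or when $\age_\Delta(t)\neq\max(\frak p)$), and the same $\sp$-identity then handles it. For a strong age-change gadget at $(S,\cB)$ with $U=\rm{Right}(S,\mu_n+|S|)$ (case~2 of Definition~\ref{Def:StrongACGadget}; case~1 collapses to the splitting subcase), the hypothesis $T_0\sqsupseteq_{age}S_0$ forbids $T_0$ from using right-child extensions for all of $S\cap S_0$ in a way that would cover $U$ (doing so would force the $U$-part of $\age_\Delta(T_0)$ to equal $\cB$, whereas the corresponding $S$-part of $\age_\Delta(S_0)$ is $\age_\Delta(S)\supsetneq\cB$). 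Hence the positions of $T$ miss at least one element of $U$; since $(\age_\Delta(U),\cB)\in\econ(\sort(U))$ is essential on all of $U$ (using $\age_\Delta(U)=\age_\Delta(S)$ as guaranteed by the gadget definition), hereditariness of $\cB$ makes the $\cB$-pullback trivial on the positions of $T$, and combining with the splitting-gadget computation over the $|S|$ internal splits yields $\age_\Delta(T)=\age_\Delta(\mu_n)$. Strong coding gadgets are handled analogously: the coding node $w$ is killed at level $\mu_{n+1}$, the leftmost path takes the valid passing number $0$ (always available by Definition~\ref{Def:ValidPassingNumbers}) at each internal split, and the controlled coding triple ensures that $\rm{Add}_{j,\phi}$ is age-neutral on positions not meeting $w$.

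For the inductive step, let $\nu'\in\SL(\Delta)$ be the immediate predecessor of $\nu$ and put $T_0':=\pi_{\nu'}[T_0]\subseteq\Delta(\nu')$. Sandwiching $\age_\Delta(T_0)\subseteq\age_\Delta(T_0')\subseteq\age_\Delta(S_0)=\age_\Delta(T_0)$ forces $T_0'\sqsupseteq_{age}S_0$. The induction hypothesis on $[\mu,\nu')$ yields $T':=T_0'\cup\Left_\Delta(\Delta(\mu)\setminus S_0,\nu')\sqsupseteq_{age}\Delta(\mu)$, and the base case applied to $[\nu',\nu)$ with the pair $(T_0',T_0)$ gives $Y:=T_0\cup\Left_\Delta(\Delta(\nu')\setminus T_0',\nu)\sqsupseteq_{age}\Delta(\nu')$ via an age-bijection $g\colon\Delta(\nu')\to Y$. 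Restricting $g$ to $T'\subseteq\Delta(\nu')$ preserves $\lex$-positions and hence gives $g[T']\sqsupseteq_{age}T'$; composition of $\Left_\Delta$ then simplifies $g[T']$ to $T_0\cup\Left_\Delta(\Delta(\mu)\setminus S_0,\nu)=T$, and transitivity of age-extension closes the induction.

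The main obstacle is the age-change gadget case, where one must verify that the partial pullback of the $\cB$-constraint to positions missing a single element of $U$ is genuinely trivial; this hinges on essentiality of $(\age_\Delta(U),\cB)$ on all of $U$ together with hereditariness of $\cB$, so that an $\cL_{|U|}$-structure with at least one empty unary automatically lands in $\cB$. A parallel delicacy arises inside the coding gadget, where the internal consecutive age-change levels must be tracked carefully using Proposition~\ref{Prop:Consecutive} and Lemma~\ref{Lem:Essential_Change_Last} together with the structure of the controlled coding triple.
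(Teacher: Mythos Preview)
Your approach is correct and essentially the same as the paper's: reduce to a single gadget (the paper compresses this to ``we may assume $\mu=\mu_i$, $\nu=\mu_{i+1}$''; you spell out the induction explicitly) and then perform the same case analysis on gadget type. In the coding case the paper isolates a slightly sharper claim---any minimal $S\subseteq d\setminus\{j\}$ with $\age_\Delta(m){\cdot}S\supsetneq\langle\age_\Delta(m),v,\phi\rangle{\cdot}S$ must have $\phi(s_\alpha)\neq 0$ for every $\alpha\in S$---which localizes the age drop away from leftmost positions more directly than your ``$\rm{Add}_{j,\phi}$ is age-neutral'' phrasing, but the underlying mechanism is the one you describe.
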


\begin{proof}
    By considering $S_0 = \emptyset$, it suffices to prove the second claim. We may assume for some $i< \omega$ that $\mu = \mu_i$ and $\nu = \mu_{i+1}$. There are now cases depending on what type of strong gadget $[\mu_i, \mu_{i+1})$ is.
    \begin{itemize}
        \item 
        Splitting: This case is clear. 
        \item
        Age-change: We can assume we are in case $2$ of Definition~\ref{Def:StrongACGadget}, and we adopt notation as in that definition. So suppose $S\subseteq \Delta(\mu_i)$ and that $[\mu_i, \mu_{i+1})$ is a strong age-change gadget at $S$ to $\cB$. If $S\not\subseteq S_0$, then the result is clear. If $S\subseteq S_0$, then we cannot have $T_0 = \rm{Right}(S_0, \mu_{i+1})$. In particular, $\succ_\Delta(U, \mu_{i+1})\not\subseteq T$, and the result follows. 
        \item
        Coding: We adopt notation as in Definition~\ref{Def:StrongCodingGadget}. If $\Delta(m) = \{s_0\lex\cdots\lex s_{d-1}\}$ and $v = s_j$, then if $S\subseteq d\setminus \{j\}$ is minimal under inclusion with $\age_\Delta(m){\cdot}S \supsetneq  \la\age_\Delta(m), v, \phi\ra{\cdot}S$, then we must have $\phi(s_\alpha)\neq 0$ for every $\alpha\in S$. However, if  $s_\beta = \Left_\Delta(s, m)$ for some $s\in \Delta(\mu_i)$, then we have $\phi(\beta) = 0$. The result follows.  \qedhere
    \end{itemize}
\end{proof}

\begin{prop}
    \label{Prop:Find_Level_Type}
    Fix $\mu\in \SL(\Delta)$, $S\subseteq \Delta(\mu)$, and $\tau$ a critical level type based on $S$.
    \begin{enumerate}
        \item 
        There is $T\sqsupseteq_{end} S$ with $\ell(T)+1\in \SL(\Delta)$ and $T\cong^* \tau$.
        \item
        If $\age(\tau) = \age_\Delta(S)$, $S_0\subseteq S$ and $\tau_0= \tau|_{S_0}$, then if $\tau_0$ is critical and $T_0\sqsupseteq_{end} S_0$ is such that $\ell(T_0)+1\in \SL(\Delta)$ and $T_0\cong^* \tau_0$, then there is  $T\sqsupseteq_{age} S$ with $T\supseteq T_0$ and $T\cong^* \tau$.
    \end{enumerate}
\end{prop}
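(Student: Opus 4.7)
The argument rests on two ingredients: the defining properties of a strong diary (Definition~\ref{Def:StrongDiary}), which guarantee that every required splitting, coding, or essential age-change event occurs above any prescribed level, and Proposition~\ref{Prop:Left_Safe}, which guarantees that selecting left-most extensions preserves the age. Throughout, note that because $\tau_0 = \tau|_{S_0}$ and both $\tau$ and $\tau_0$ are critical level types of the same ``kind" (splitting, coding, or essential age-change), we have $\crit(\tau) \subseteq S_0$.

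For part~(1), I first reduce the age from $\age_\Delta(S)$ down to $\age(\tau)$. Since $P(\sort(S))$ is finite by Proposition~\ref{Prop:Finite_Posets}, I can pick a finite chain $\age_\Delta(S) = \cA_0 \supsetneq \cA_1 \supsetneq \cdots \supsetneq \cA_n = \age(\tau)$ with each $(\cA_i, \cA_{i+1}) \in \econ(\sort(S))$. The third clause of Definition~\ref{Def:StrongDiary} supplies a strong age-change gadget performing each essential step; between gadgets I extend left, which by Proposition~\ref{Prop:Left_Safe} preserves the age at each stage. After these gadgets, I am at some $\mu' \in \SL(\Delta)$ with end-extension $S' \sqsupseteq_{end} S$ satisfying $\age_\Delta(S') = \age(\tau)$. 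Next I apply a single strong gadget of the type matching the critical event encoded by $\tau$, positioned at the left extensions of $\crit(\tau)$; such a gadget exists by the corresponding clause of Definition~\ref{Def:StrongDiary}. Finally I define $T$ at level $\mu_{n+1}-1$ (with $\mu_{n+1} \in \SL(\Delta)$): for splitting and age-change gadgets the non-critical extensions are forced to be the left-most ones, automatically matching the passing number $0$ demanded by $\tau$ on non-critical nodes, while for a strong coding gadget the initial block of splittings in Definition~\ref{Def:StrongCodingGadget} makes every valid passing number available as a distinct extension, so I can pick one matching $\tau$'s specification.

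For part~(2), no age reduction is needed because $\age(\tau) = \age_\Delta(S)$. The given $T_0$ already sits at the final level of a strong gadget whose type matches the critical event of $\tau_0$, and hence of $\tau$. It therefore suffices to add, for each $s \in S \setminus S_0$, a single extension $t_s \in \Delta(\ell(T_0))$ so that $T := T_0 \cup \{t_s : s \in S \setminus S_0\}$ is isomorphic to $\tau$ as an $\cL_\cK^*$-structure. In a strong splitting or age-change gadget the non-critical nodes have unique extensions with passing number $0$, matching what $\tau$ prescribes outside $\crit(\tau)$; setting $t_s = \Left_\Delta(s, \ell(T_0))$ and applying Proposition~\ref{Prop:Left_Safe} iteratively across the strong levels between $\mu$ and $\ell(T_0)$ yields $T \sqsupseteq_{age} S$. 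In a strong coding gadget I instead pick $t_s$ among the $|I_{\cdot}|$-many extensions of the relevant node in $\Delta(\mu_n+1)$ to realize the passing number that $\tau$ prescribes; age-extension of $S$ by $T$ is then automatic from $T \cong^* \tau$ together with $\age(\tau) = \age_\Delta(S)$.

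The main obstacle is verifying compatibility in the coding case, where $\tau$ specifies a whole controlled coding triple rather than just a list of passing numbers. For part~(1) this is handled by positioning the strong coding gadget correctly at the critical node and using that $\la \cdot, \cdot, \cdot\ra$ is uniquely determined by the set of valid passing numbers together with the ambient age (Definition~\ref{Def:CodingClass}); for part~(2), compatibility of the gadget's coding triple with $\tau$'s restriction to $S_0$ is already certified by $T_0 \cong^* \tau_0$, and the claim is then that choices of extensions for $S \setminus S_0$ among the available branches suffice to complete the isomorphism to $\tau$.
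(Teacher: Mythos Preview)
Your proof follows essentially the same strategy as the paper's: reduce the age via age-change gadgets, then invoke the appropriate strong gadget, with Proposition~\ref{Prop:Left_Safe} handling the non-critical nodes; the coding case is singled out for extra care in both. One small correction: in the age-reduction step you write $(\cA_i,\cA_{i+1})\in\econ(\sort(S))$, but consecutive pairs in $P(\sort(S))$ lie only in $\con(\sort(S))$ --- by Proposition~\ref{Prop:Consecutive} each is essential on a unique \emph{subset} $S'\subseteq S$, and it is the gadget at (the left extension of) that subset that Definition~\ref{Def:StrongDiary} supplies. This does not affect your argument, since going right on $S'$ and left on $S\setminus S'$ through such a gadget still realizes the step $\cA_i\to\cA_{i+1}$ on $S$; the paper likewise elides this detail.
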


\begin{proof}
    By passing through some age-change gadgets, we can find $S'\sqsupseteq_{end} S$ with $\age(\tau) = \age_\Delta(S')$. So we will simply assume that $\age(\tau) = \age_\Delta(S)$. By Definition~\ref{Def:StrongDiary}, the first part of the result is clear if $\tau$ is a critical level type describing a splitting or age-change level, and the second part follows by simply moving nodes in $S\setminus S_0$ up leftmost. 
    
    So we assume that $\tau$ describes a coding level and use notation as in Definition~\ref{Def:StrongCodingGadget}. For the first result, let $x\in S$ be the node that $\tau$ thinks is a coding node. Find $\mu_i\in \SL(\Delta)$ with $\mu_i> \mu$ so that $[\mu_i, \mu_{i+1})$ is a strong coding gadget at $t:= \Left(x, \mu_i)$. If $s\in S\setminus \{x\}$ and $\tau$ thinks that $s$ has passing number $q< k$, then $q$ will be a valid passing number for $(\Left_\Delta(s, \mu_i+1), u)$. For the second result, suppose $\ell(T_0)+1 = \mu_{i+1}\in \SL(\Delta)$. Set $U = \pi_{\mu_i}[T_0]\cup \Left_\Delta(S\setminus S_0, \mu_i)$. By Proposition~\ref{Prop:Left_Safe}, $U\sqsupseteq_{age} S$. It follows that $U$ has all the same valid passing numbers as the corresponding members of $S$ for the given position of the coding node, and we can define $T\supseteq T_0$ with $T\sqsupseteq_{age} U$ according to the passing numbers we need. The reason that we can achieve these passing numbers without passing through an unwanted age change is that $\tau$ is assumed to be a critical level type based on $S$, so in particular already corresponds to a controlled coding triple.   
\end{proof}

\section{Ramsey spaces from a weaker axiom}\label{Sec:Ramsey spaces}

Ramsey spaces  generalize topological Ramsey spaces to 
pairs of spaces $(\mathcal{R},\mathcal{S})$   where  the field of $\mathcal{S}$-Ramsey subsets of $\mathcal{R}$ is closed under the Souslin operation and coincides with the field of $\mathcal{S}$-Baire subsets of $\mathcal{R}$. 
Building on prior work of Carlson and Simpson,
Todorcevic 
distilled four axioms which guarantee a Ramsey space.
The main result of this section shows  that the conclusion of his  Abstract Ramsey Theorem  still holds when 
axiom \axiom\
is replaced by the weaker condition of   an \axiom-ideal.  For further background, the reader is referred to Sections 4.2 and 4.3 of \cite{stevo_book}.

\subsection{Ramsey spaces and Todorcevic's Four Axioms}

This subsection recalls notions from Section 4.2 of \cite{stevo_book};
some of the notation is modified here.
We work with  structures of the form
$(\mathcal{R},\mathcal{S},\le_{\mathcal{R}},\le)$ with the following properties.
The sets $\mathcal{R}$ and $\mathcal{S}$ are families of infinite sequences of objects.
We will use
$\leq\ \subseteq \cS\times \cS$ to denote a pre-order on $\cS$ and use
$\leq_\cR\ \subseteq \cR\times \cS$
to refer to a relation with the property that for any
$X\leq Y$ in $\mathcal{S}$ and any $A\in\mathcal{R}$ with $A\leq_\cR  X$, we have $A\leq_\cR  Y$.

Given $n< \omega$ and $M\in \cR\cup \cS$, we will simply write $M|_n$ for the restriction operation giving the $n$-th finite approximation to $M$.
Write $\cal{AR}_n = \{M|_n: M\in \cR\}$, and similarly for $\cal{AS}_n$. Write $\cal{AR} = \bigcup_n \cal{AR}_n$, and similarly for $\cal{AS}$.
The {\em basic sets} are defined as follows:
For $a\in\mathcal{AR}_n$, $s\in\mathcal{AS}_n$, and $Y\in\mathcal{S}$,
set
\begin{align*}
[a,Y]&=\{A\in\mathcal{R}: A\le_\cR Y\mathrm{\ and\ } A|_n=a\}\cr
    [s,Y]&=\{X\in\mathcal{S}: X\le Y\mathrm{\ and\ }  X|_n=s\}\cr
    [a] &= \{A\in \cR: A|_{n} = a\}\cr
    [s] &= \{X\in \cS: X|_n = s\}
\end{align*}
We set $\cal{AR}^Y = \{b\in \cal{AR}: [b, Y]\neq \emptyset\}$, and similarly for $\cal{AR}^Y_n$. With this notation in place, we can now state the four axioms of Todorcevic.

\begin{enumerate}
    \item[\bf A.1] (Sequencing)
    For any choice of $\mathcal{P}\in\{\cR,\cS\}$,
    \begin{enumerate}
        \item[(1)] 
        $M|_0=N|_0$ for all $M,N\in\mathcal{P}$,
         \item[(2)] 
         $M\ne N$ implies that $M|_n\ne N|_n$ for some $n$,
          \item[(3)] 
          $M|_m=N|_n$ implies $m=n$ and $M|_k=N|_k$ for all $k\le m$.
    \end{enumerate}
\end{enumerate}

From this axiom, we can identify an object $M$ from $\mathcal{R}\cup\mathcal{S}$ with the infinite sequence 
$(M|_k)_{k<\omega}$.
Similarly, $x\in\mathcal{AR}\cup\mathcal{AS}$
is identified with the sequence $(x|_k)_{k<n}$, where $n$ is the unique integer such that $x=M|_n$ for some $M\in\mathcal{R}\cup\mathcal{S}$.
The notion of end-extension is defined as follows:
For $\mathcal{P}\in\{\mathcal{R},\mathcal{S}\}$ and $x,y\in\mathcal{AP}$, 
write $x\sqsubseteq y$  if and only if there  are  $M\in\mathcal{P}$ and  $m\le n$ such that $x=M|_m$ and $y=M|_n$.
We write $x\sqsubset y$ when $x\sqsubseteq y$ and $x\ne y$.

\begin{enumerate}
    \item[{\bf A.2}] (Finitization)
    There is a   pre-order $\le_{\mathrm{fin}}\ \subseteq \mathcal{AS}\times\mathcal{AS}$
    and  a relation $\le^{\mathcal{R}}_{\mathrm{fin}}\, \subseteq \mathcal{AR}\times\mathcal{AR}$
    which are finitizations of the relations $\le$ and $\le_{\cR}$, meaning that the following hold:
    \begin{enumerate}
        \item[(1)]
        $\{a: a \le^{\mathcal{R}}_{\mathrm{fin}} x\}$ and $\{y:y\le_{\mathrm{fin}} x\}$ are finite for all $x\in\cS$,
        
        \item[(2)]
        $X\le Y$ iff $\forall m\ \exists n$ such that 
        $X|_m\le_{\mathrm{fin}} Y|_n$,
        
        \item[(3)]
        $A\le_{\cR} X$ iff $\forall m\ \exists n$ such that $A|_m \le^{\mathcal{R}}_{\mathrm{fin}} X|_n$,
        
        \item[(4)]
        $\forall a\in\mathcal{AR}$\  $\forall x,y\in\mathcal{AS}$ [$a\le^{\mathcal{R}}_{\mathrm{fin}} x\le_{\mathrm{fin}} y \, \rightarrow\,  a\le^{\mathcal{R}}_{\mathrm{fin}} y$],
        
        \item[(5)]
        $\forall a,b\in\mathcal{AR}\ \forall x\in\mathcal{AS}$
        [$a\sqsubseteq b$ and $b\le^{\mathcal{R}}_{\mathrm{fin}} x \, \rightarrow\, \exists y\sqsubseteq x\ a \le^{\mathcal{R}}_{\mathrm{fin}} y$].
    \end{enumerate}
\end{enumerate}

For $a\in\mathcal{AR}$ and $Y\in\cS$, depth$_Y(a)$ denotes  the minimum $k$ such that $a \le^{\mathcal{R}}_{\mathrm{fin}} Y|_k$ if such a $k$ exists;
otherwise $\rm{depth}_Y(a)=\infty$.
If $d< \omega$, then $[d,Y]$ denotes $[Y|_d,Y]$.

\begin{enumerate}
    \item[{\bf A.3}] (Amalgamation)
    \begin{enumerate}
        \item[(1)]
        $ \forall Y\in\cS\ \forall a\in\cal{AR}^Y\ \forall X\in [\rm{depth}_Y(a), Y]\ ([a,X]\ne\emptyset).$
         \item[(2)]
 $\forall X\leq Y\in\cS\ \forall a\in \cal{AR}^X\ \exists Y'\in 
         [\rm{depth}_Y(a), Y]\ ([a,Y']\subseteq [a,X]).
        $
    \end{enumerate}
\end{enumerate}

\begin{enumerate}
    \item[{\bf A.4}] (Pigeonhole)
    $\forall Y\in \cS\ \forall k< \omega\ \forall a\in \cal{AR}^Y_k\ \forall \cO\subseteq \cal{AR}_{k+1}\ \exists X\in [\rm{depth}_Y(a), Y]$
    $$\{b\in \cal{AR}^X_{k+1}: a\sqsubseteq b\}\subseteq \cO \text{ or } \{b\in \cal{AR}^X_{k+1}: a\sqsubseteq b\}\cap \cO = \emptyset.$$
\end{enumerate}

We isolate a subset of the above axioms in the following definition.

\begin{defin}
    \label{Def:ContainmentSpace}
    $(\cR, \cS, \leq, \leq_\cR)$ is called a \emph{containment space} if it satisfies axioms {\bf A.1}, {\bf A.2}, and {\bf A.3(1)}. If $\cR = \cS$ and $\leq\, =\, \leq_\cR$, we call it a \emph{topological containment space}. 
\end{defin}

The Abstract Ramsey Theorem (Theorem~\ref{thm.RS}) concerns the following notions of definability for subsets of $\cR$.

\begin{defin}\label{Def:SRamseyBaire}
Given $\frakX, \frakY\subseteq \cR$, we say that $\frakY$ is \emph{$\frakX$-invariant} if either $\frakY\subseteq \frakX$ or $\frakY\cap \frakX = \emptyset$. 

A set $\mathfrak{X}\subseteq\cR$ is {\em $\cS$-Ramsey} if for every nonempty basic set $[a,Y]$ there is $X\in[\mathrm{depth}_Y(a),Y]$ such that $[a,X]$ is $\frakX$-invariant.
If for every $[a,Y]\ne\emptyset$ we can find $X\in[\mathrm{depth}_Y(a),Y]$ such that $[a,X]\cap\mathfrak{X}=\emptyset$, then we call $\mathfrak{X}$ an {\em $\cS$-Ramsey null set of $\cR$}.

A set $\mathfrak{X}\subseteq\mathcal{R}$ is {\em $\cS$-Baire} if 
for every nonempty basic set $[a,Y]$ there exists $X\le Y$ and $a\sqsubseteq b\in\cal{AR}^X$ such that  $[b,X]$ is $\frakX$-invariant.
If for every $[a,Y]\ne\emptyset$ we can find $X\le Y$ and $a\sqsubseteq b\in \cal{AR}^X$ with $[b,X]\cap \mathfrak{X} = \emptyset$, then $\mathfrak{X}$  is said to be  {\em $\cS$-meager}.
\end{defin}

It is clear that $\cS$-Ramsey implies $\cS$-Baire, and that $\cS$-Ramsey null implies $\cS$ meager. 
The following  theorem of Todorcevic
shows that axioms {\bf A.1}--{\bf A.4} suffice to conclude the converse.

\begin{theorem}[Abstract Ramsey Theorem, Todorcevic \cite{stevo_book}]\label{thm.RS}
If  $(\mathcal{R},\mathcal{S},\le,\le_{\cR})$ is a structure satisfying axioms {\bf A.1} to {\bf A.4} and that $\mathcal{S} = \varprojlim \cal{AS}_n$,
then the field of $\mathcal{S}$-Ramsey subsets of $\mathcal{R}$ is closed under the Souslin operation and it coincides with the field of $\mathcal{S}$-Baire subsets of $\mathcal{R}$.
Moreover the ideals of $\mathcal{S}$-Ramsey null subsets of $\mathcal{R}$ and $\mathcal{S}$-meager subsets of $\mathcal{R}$ are $\sigma$-ideals and they also coincide.
\end{theorem}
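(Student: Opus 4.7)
The plan is to adapt the combinatorial forcing strategy from the Galvin--Prikry and Ellentuck theorems to the two-sorted setting. Fix a target set $\frakX \subseteq \cR$. I will say that $Y \in \cS$ \emph{accepts} $a \in \cal{AR}^Y$ if $[a, Y] \subseteq \frakX$, that $Y$ \emph{rejects} $a$ if no $Z \in [\mathrm{depth}_Y(a), Y]$ accepts $a$, and that $Y$ \emph{decides} $a$ if it does one or the other. For any nonempty $[a, Y]$ there is some $Z \in [\mathrm{depth}_Y(a), Y]$ that decides $a$: either some such $Z$ accepts $a$ outright, or by definition every refinement of $Y$ rejects $a$. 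Axiom \textbf{A.3(1)} ensures that these basic sets are nonempty, so the dichotomy is genuine.

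The first substantive step is the key lemma: for every metrically open $\frakX$ and every nonempty $[a, Y]$, there is $X \in [\mathrm{depth}_Y(a), Y]$ with $[a, X]$ $\frakX$-invariant. After replacing $Y$ by a decider for $a$, if $Y$ accepts we are done, so I may assume $Y$ rejects $a$. Writing $k = |a|$, I apply \textbf{A.4} to the collection $\cO$ of length-$(k+1)$ extensions $b \sqsupseteq a$ for which some $Z \leq Y$ accepts $b$. Rejection of $a$ forces the homogenizing $X$ given by \textbf{A.4} to lie on the side disjoint from $\cO$. I then iterate through all finite $\sqsupseteq$-successors of $a$ in $\cal{AR}^X$, thinning at each stage, to build a decreasing sequence $X_0 \geq X_1 \geq \cdots$ in $\cS$ so that no length-$n$ extension of $a$ in $\cal{AR}^{X_n}$ is accepted. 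A fusion argument, using \textbf{A.2(5)} to link levels and the hypothesis $\cS = \varprojlim \cal{AS}_n$ to produce a genuine limit $X_\infty \in \cS$, then yields $X_\infty \leq Y$ with $[a, X_\infty] \cap \frakX = \emptyset$, since $\frakX$ is a union of basic sets and every finite approximation in $[a, X_\infty]$ has been rejected.

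Once metrically open sets are known to be $\cS$-Ramsey, the rest proceeds fairly mechanically. Closure of the $\cS$-Ramsey field under complements is immediate from the definition, closure under countable unions follows from a standard fusion threading together the individual witnesses, and closure under the Souslin operation is handled by a more delicate simultaneous fusion: for a Souslin scheme $(\frakX_s)_{s \in \omega^{<\omega}}$, at stage $n$ one shrinks to decide all $\frakX_s$ with $|s| \leq n$ on every finite approximation of length at most $n$, and the limit in $\cS$ witnesses the Ramsey property for the associated set. The coincidence of the $\cS$-Ramsey and $\cS$-Baire fields reduces, after this machinery is in place, to the observation that given $\cS$-Baire $\frakX$ with witness $[b, X] \subseteq \frakX$ and $a \sqsubseteq b$, one applies the key lemma to the metrically open set $[b]$ inside $[a, Y]$ to upgrade the witness to an $X' \in [\mathrm{depth}_Y(a), Y]$ making $[a, X']$ itself $\frakX$-invariant. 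The $\sigma$-ideal statements for $\cS$-Ramsey null and $\cS$-meager follow by running the same arguments while demanding exclusion rather than decision.

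The principal obstacle is the fusion step in the two-sorted framework. Unlike the Ellentuck setting where $\cR = \cS$, here one must simultaneously track approximations in $\cal{AR}$ (which carry decisions about $\frakX$) and shrinkings in $\cS$ (which provide the flexibility to decide). Axioms \textbf{A.2(4)} and \textbf{A.2(5)} are exactly the bridges needed to transport decisions across the two scales, and \textbf{A.3(1)} guarantees that successive shrinkings continue to yield nonempty basic sets. The role of the hypothesis $\cS = \varprojlim \cal{AS}_n$ is precisely to ensure that the fusion limit sits in $\cS$ rather than being merely a coherent sequence of finite approximations, which is what allows the final basic set $[a, X_\infty]$ to be invoked in the statements of Ramseyness and Baireness.
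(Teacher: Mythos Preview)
The paper does not itself prove this theorem---it is quoted from Todorcevic's book---but the paper's proof of the generalization (Theorem~\ref{thm.ARTIdeal}) follows the classical argument closely, so one can compare against that. Your outline has the right architecture (combinatorial forcing, fusion, Souslin closure), but there is a genuine gap: you never invoke axiom \textbf{A.3(2)}, and the argument does not go through without it.

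The specific failure is in the preservation of rejection under shrinking. In your fusion for the open case you build $X_0 \geq X_1 \geq \cdots$ and claim that rejection propagates along this chain; but if $Y$ rejects $a$ and $X \leq Y$, it does \emph{not} follow from \textbf{A.1}--\textbf{A.2}, \textbf{A.3(1)}, \textbf{A.4} alone that $X$ rejects $a$. One needs \textbf{A.3(2)} to argue: if some $X' \in [\mathrm{depth}_X(a), X]$ accepted $a$, then \textbf{A.3(2)} produces $Y' \in [\mathrm{depth}_Y(a), Y]$ with $[a, Y'] \subseteq [a, X'] \subseteq \frakX$, contradicting that $Y$ rejects $a$ (compare the paper's Proposition~\ref{Prop:Rejects}). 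Your sentence ``Rejection of $a$ forces the homogenizing $X$ given by \textbf{A.4} to lie on the side disjoint from $\cO$'' is exactly where this bites: having each extension $b$ accepted by \emph{some} $Z_b \leq Y$ does not yield a single $Z$ accepting $a$; the correct argument (cf.\ Lemma~\ref{Lem:GrowRejects}) first passes to a $Z$ deciding all extensions, then uses downward preservation of rejection. The same issue resurfaces in your Baire-equals-Ramsey step: the upgrade from an $\cS$-Baire witness $[b, X]$ with $X \leq Y$ to an invariant $[a, X']$ with $X' \in [\mathrm{depth}_Y(a), Y]$ is precisely an application of \textbf{A.3(2)} (see the end of the proof of Theorem~\ref{Thm:BaireEqualsRamsey}), not of the open-set lemma. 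You should locate these two uses of \textbf{A.3(2)} explicitly and rewrite the fusion and the Baire step around them.
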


\noindent Any containment space $(\mathcal{R},\mathcal{S},\le,\le_{\cR})$ satisfying the conclusion of Theorem~\ref{thm.RS} is a {\em Ramsey space}.

Part (2) of axiom {\bf A.3}  has presented a fundamental obstacle to   infinite-dimensional Ramsey theorems for  the majority of homogeneous structures  investigated so far.
A strengthening of {\bf A.4}  was developed by the first author to obtain analogues of the Galvin-Prikry theorem
without using  \axiom\
for the Rado graph in \cite{Dobrinen_Rado} and for diagonal antichains (diaries in the terminology of this paper) in \cite{Dobrinen_SDAP} for 
a  class of homogeneous structures which includes  the unconstrained structures in this paper.
The results in \cite{Dobrinen_SDAP}, however, 
do not apply  to structures of the form Forb$(\mathcal{F})$ when $\mathcal{F}$ has structures of size greater than two.
While  the so-called $\mathbb{Q}$-like structures were shown  in \cite{Dobrinen_SDAP} to carry analogues of the Ellentuck theorem,
the free amalgamation classes in \cite{Dobrinen_SDAP} were only shown to carry analogues of the Galvin-Prikry Theorem.

Motivated to obtain a stronger infinite-dimensional Ramsey theorem which is moreover applicable to the class of finitely constrained free amalgamation classes with finitely many binary relations,
we will prove
Theorem \ref{thm.ARTIdeal}:
the conclusion of 
the Abstract Ramsey Theorem
still holds when \axiom\ is replaced by a weaker version.
This involves the new notion of \axiom-ideals.

\subsection{\axiom-ideals}

\textbf{Throughout this subsection, we fix} a containment space $\Omega:= (\cR, \cS,  \leq, \leq_\cR)$. 

\begin{defin}
    \label{Def:Branching_Approx}
    \begin{enumerate}
        \item 
        Given $X\in \cS$, we call $a\in \cal{AR}^X$ \emph{$X$-branching} if whenever $b\in \cal{AR}^X$ satisfies $a\sqsubseteq b$ and $[a, X] = [b, X]$, then $a = b$. Equivalently, if $a\in \cal{AR}^X_\ell$, then $a$ is $X$-branching iff $|\{b\in \cal{AR}^X_{\ell+1}: a\sqsubseteq b\}|\geq 2$. Write $\cal{BAR}^X := \{a\in \cal{AR}^X: a\text{ is $X$-branching}\}$. 
        \item 
        We say that $\Omega$ \emph{respects branching} if whenever $X\in \cS$, $a\in \cal{AR}^X$, and $Y\in [\rm{depth}_X(a), X]$, we have that $a\in \cal{BAR}^X$ iff $a\in \cal{BAR}^Y$.
        \item 
        We call $\Omega$ \emph{perfect} if whenever $X\in \cS$ and $a\in \cal{AR}^X$, there is $b\in \cal{BAR}^X$ with $a\sqsubseteq b$; equivalently, this happens iff we always have $[a, X]\subseteq \cR$ infinite. \qed
    \end{enumerate}
\end{defin}

\begin{defin}
	\label{Def:A32Ideal}
	A subset $\cI\subseteq \cS\times \cS$ is called an \emph{ideal} if
	\begin{itemize}
		\item 
		$(X, Y)\in \cI\Rightarrow X\leq Y$.
		\item
		$(X, Y)\in \cI\text{ and }Z\leq X \Rightarrow (Z, Y)\in \cI$.
	\end{itemize} 
		
	We call an ideal $\cI\subseteq \cS\times\cS$ an \emph{\axiom-ideal} for $\cR$ if additionally
	\begin{itemize}
		\item
		$\forall Y{\in}\cS\,\, \forall n{<}\omega\,\, \exists Y'{\in}\cS$ with $(Y', Y)\in \cI$ and $Y'|_n = Y|_n$.
        \item 
        If $(X, Y)\in \cI$ and $a\in \cal{BAR}^X$, then there is $Y'\in \cS$ with $Y'\in [\rm{depth}_Y(a), Y]$, $(Y', Y)\in \cI$, and $[a, Y']\subseteq [a, X]$. \qed         
	\end{itemize}
    $\Omega$ \emph{admits an \axiom-ideal} if there is $\cI\subseteq \cS\times \cS$ which is an \axiom-ideal for $\cR$.
\end{defin}

\textbf{For the rest of the subsection}, we additionally assume that the containment space $\Omega$ satisfies {\bf A.4}, respects branching, is perfect, and that $\cI\subseteq \cS\times \cS$ is an \axiom-ideal for $\cR$. We now proceed to follow chapter 4 of \cite{stevo_book} under these hypotheses. First we relativize the combinatorial forcing to $\cI$.

\begin{defin}
	\label{Def:ComboForcing}
	Fix $Y\in \cS$, $a\in \cal{AR}^Y$, and $\frakX\subseteq \cR$. We say that $Y$ \emph{$\frakX$-accepts} $a$ if $[a, Y]\subseteq \frakX$. We say that $Y$ \emph{$\frakX$-rejects} $a$ if for every $Y'\in [\rm{depth}_Y(a), Y]$ with $(Y', Y)\in \cI$, we have that $Y'$ does not $\frakX$-accept $a$. We say that $Y$ \emph{$\frakX$-decides} $a$ if it either $\frakX$-accepts $a$ or $\frakX$-rejects $a$.
\end{defin}

For the time being, $\frakX\subseteq \cR$ will be fixed, so we simply say \emph{accepts}, etc. We note the following key use of \textbf{A.3(2)} which now is performed by the \axiom-ideal.
	
\begin{prop}
	\label{Prop:Rejects}
	Suppose $(X, Y)\in \cI$, and fix $a\in \cal{BAR}^X$. 
	If $Y$ decides $a$, then so does $X$ with the same decision.
\end{prop}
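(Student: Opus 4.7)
The plan is to split into two cases based on the decision that $Y$ makes, and to show that essentially no work is needed in the acceptance case, while the rejection case requires carefully invoking the \axiom-ideal property together with the hypothesis that $a$ is $X$-branching.

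First I would dispose of the case where $Y$ $\frakX$-accepts $a$, i.e.\ $[a,Y]\subseteq \frakX$. The observation here is that $X\le Y$ (which follows from $(X,Y)\in\cI$ and the first clause in the definition of an ideal) implies $[a,X]\subseteq [a,Y]$: if $A\in[a,X]$, then $A\le_\cR X$, and by the compatibility assumption on $\le$ and $\le_\cR$ (the sentence immediately after the introduction of $\le_\cR$), we have $A\le_\cR Y$, hence $A\in[a,Y]$. Consequently $[a,X]\subseteq\frakX$, so $X$ also $\frakX$-accepts $a$. No use of branching or of the \axiom-ideal is needed in this case.

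The substantive case is when $Y$ $\frakX$-rejects $a$, where I must verify the rejection condition for $X$. Fix an arbitrary $X'\in[\mathrm{depth}_X(a),X]$ with $(X',X)\in\cI$; my task is to show $X'$ does not $\frakX$-accept $a$. The idea is to transfer this test from $X'$ back to some witness above $Y$ using the \axiom-ideal. Two preliminary observations: first, from $(X,Y)\in\cI$ and $X'\le X$, the second clause of the definition of ideal yields $(X',Y)\in\cI$; second, since $X'\in[\mathrm{depth}_X(a),X]$ and $\Omega$ respects branching, the hypothesis $a\in\cal{BAR}^X$ upgrades to $a\in\cal{BAR}^{X'}$. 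Now apply the second clause in the definition of \axiom-ideal to the pair $(X',Y)\in\cI$ and the $X'$-branching approximation $a$: this produces $Y'\in[\mathrm{depth}_Y(a),Y]$ with $(Y',Y)\in\cI$ and $[a,Y']\subseteq[a,X']$. Because $Y$ rejects $a$, this $Y'$ cannot $\frakX$-accept $a$, so $[a,Y']\not\subseteq\frakX$; the inclusion $[a,Y']\subseteq[a,X']$ then gives $[a,X']\not\subseteq\frakX$, as required. Since $X'$ was arbitrary, $X$ rejects $a$.

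The only potential obstacle is checking that the \axiom-ideal can indeed be invoked at the pair $(X',Y)$, which in turn needs both $(X',Y)\in\cI$ and $a\in\cal{BAR}^{X'}$. Both of these are immediate consequences of the standing assumptions, namely the downward closure of $\cI$ in its first coordinate and the fact that $\Omega$ respects branching; so apart from this bookkeeping, the argument is a direct replacement of the usual role of axiom \axiom\ by the conclusion of the \axiom-ideal definition applied to the \emph{branching} approximation $a$.
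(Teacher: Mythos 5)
Your proof is correct and follows essentially the same route as the paper: the acceptance case is handled by the inclusion $[a,X]\subseteq[a,Y]$, and the rejection case transfers $X'$ back above $Y$ by applying the \axiom-ideal clause to the pair $(X',Y)\in\cI$ once branching has been upgraded to $a\in\cal{BAR}^{X'}$. The only cosmetic difference is that you argue the rejection case directly (showing each qualifying $X'$ fails to accept) whereas the paper phrases the same step by contradiction.
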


\begin{proof}
	As $X\leq Y$, we have $[a, X]\subseteq [a, Y]$, so if $Y$ accepts $a$, then so does $X$. 
		
	Now suppose $Y$ rejects $a$. Towards a contradiction, suppose that $X$ does not reject $a$. Then there is $X'\in [\rm{depth}_X(a), X]$ with $(X', X)\in \cI$ and $[a, X']\subseteq \frakX$. We note that $a\in \cal{BAR}^{X'}$ since $\Omega$ respects branching. As $(X, Y)\in \cI$, also $(X', Y)\in \cI$. Since
	$\cI$ is an \axiom-ideal for $\cR$ and $a\in \cal{BAR}^{X'}$, find $Y'\in [\rm{depth}_Y(a), Y]$ with $(Y', Y)\in \cI$ and $[a, Y']\subseteq [a, X']$. So $Y'$ accepts $a$, a contradiction.  
\end{proof}

We also have the following easy Lemma.

\begin{lemma}
	\label{Lem:ExistsDecide}
	If $a\in \cal{AR}^Y$, then there is $Y'\in [\rm{depth}_Y(a), Y]$ with $(Y', Y)\in \cI$ and so that $Y'$ decides $a$.
\end{lemma}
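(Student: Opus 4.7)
The plan is to split on whether $Y$ itself $\frakX$-rejects $a$. The straightforward case is when $Y$ does \emph{not} $\frakX$-reject $a$: unpacking Definition~\ref{Def:ComboForcing}, the failure of rejection directly produces some $Y'\in[\rm{depth}_Y(a),Y]$ with $(Y',Y)\in\cI$ that $\frakX$-accepts $a$, and such a $Y'$ decides $a$ by accepting.

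The remaining case is when $Y$ does $\frakX$-reject $a$. The first property of an \axiom-ideal applied with $n=\rm{depth}_Y(a)$ furnishes some $Y'\in\cS$ with $(Y',Y)\in\cI$ and $Y'|_n=Y|_n$; in particular $Y'\in[n,Y]$, and since $Y'$ and $Y$ agree through level $n$, we have $\rm{depth}_{Y'}(a)=\rm{depth}_Y(a)=n$. I claim this $Y'$ $\frakX$-rejects $a$, which completes the proof since rejection is a form of decision.

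For the claim, suppose toward contradiction that $Y'$ does not $\frakX$-reject $a$. Then there is $Y''\in[\rm{depth}_{Y'}(a),Y']=[n,Y']$ with $(Y'',Y')\in\cI$ such that $Y''$ $\frakX$-accepts $a$. Downward closure of the ideal, applied to $(Y',Y)\in\cI$ and $Y''\leq Y'$, yields $(Y'',Y)\in\cI$. Moreover $Y''|_n=Y'|_n=Y|_n$, so $Y''\in[n,Y]=[\rm{depth}_Y(a),Y]$. Thus $Y''$ is a witness against $Y$ rejecting $a$, contradicting our case assumption.

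No step here poses a real obstacle; the lemma is essentially a direct unpacking of the definitions of acceptance and rejection together with the two closure properties of an \axiom-ideal (the existence clause supplying a candidate $Y'$ agreeing with $Y$ up to $\rm{depth}_Y(a)$, and the downward closure allowing a would-be accepting subobject of $Y'$ to be promoted to a would-be accepting subobject of $Y$). Notice that branching of $a$, perfectness of $\Omega$, and Axiom~\textbf{A.4} are not invoked: this lemma is a soft consequence of the ideal structure alone, in contrast with Proposition~\ref{Prop:Rejects}, where the second clause of the \axiom-ideal definition was the crucial tool.
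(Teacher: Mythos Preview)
Your proof is correct and follows essentially the same approach as the paper's: split on whether there exists an accepting $Y'\in[\rm{depth}_Y(a),Y]$ with $(Y',Y)\in\cI$ (equivalently, whether $Y$ rejects $a$), and in the rejecting case use the existence clause of the \axiom-ideal to produce a $Y'$ and verify it also rejects. The paper's proof is terser, asserting without detail that ``so will any $Y'\in[\rm{depth}_Y(a),Y]$ with $(Y',Y)\in\cI$,'' whereas you spell out the downward-closure argument; both are the same idea.
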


\begin{proof}
	If there is such a $Y'$ that accepts $a$, this $Y'$ works. Otherwise, $Y$ rejects $a$, and so will any $Y'\in [\rm{depth}_Y(a), Y]$ with $(Y', Y)\in \cI$.  
\end{proof}

\begin{prop}
	\label{Prop:Decides}
	For every $Y\in \cS$ and $n< \omega$, there is $X\in \cS$ with $(X, Y)\in \cI$, $X|_n = Y|_n$, and so that $X$ decides every $a\in \cal{AR}^X$ with $\rm{depth}_X(a)\geq n$.
\end{prop}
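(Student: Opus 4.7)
The argument follows the standard fusion pattern from Todorcevic's proof of the Abstract Ramsey Theorem, adapted to work with an \axiom-ideal in place of axiom \axiom. The plan is to build a $\leq$-decreasing sequence $Y \geq Y_n \geq Y_{n+1} \geq \cdots$ with $(Y_k, Y) \in \cI$ for $k \geq n$, with $Y_n|_n = Y|_n$ and $Y_k|_k = Y_{k-1}|_k$ for $k > n$, and such that each $Y_k$ decides every branching approximation $a \in \cal{BAR}^{Y_{k-1}}$ (with $Y_{n-1} := Y$) having $\rm{depth}_{Y_{k-1}}(a) = k$. The fusion $X$ will satisfy $X|_k = Y_k|_k$ for $k \geq n$. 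The main obstacle is coordinating the two clauses of Definition~\ref{Def:A32Ideal}: the first lets us pin down prefixes, while the second (packaged into Lemma~\ref{Lem:ExistsDecide}) is the tool for deciding individual approximations, but produces refinements in the basic set $[\rm{depth}, \cdot]$, which is potentially larger than the fusion's fixed $k$-prefix. The key observation resolving this tension is that by processing approximations at the very depth corresponding to the current fusion stage, the depth-level prefix coincides with the fusion prefix, so Lemma~\ref{Lem:ExistsDecide}'s output automatically respects the fusion. We also restrict to branching approximations, since Proposition~\ref{Prop:Rejects} (the tool for preserving decisions along the fusion) applies only in this case; non-branching approximations will be handled at the end via perfection.

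At stage $n$, use the first clause of Definition~\ref{Def:A32Ideal} to obtain $Z \in \cS$ with $(Z, Y) \in \cI$ and $Z|_n = Y|_n$. The set of $a \in \cal{BAR}^Z$ with $\rm{depth}_Z(a) = n$ is finite by \textbf{A.2}(1); enumerate these as $a_1, \ldots, a_M$ and iteratively apply Lemma~\ref{Lem:ExistsDecide}. At each substep, the current upper bound $Z'$ satisfies $Z'|_n = Y|_n$, which forces $\rm{depth}_{Z'}(a_i) = n$ and hence $[\rm{depth}_{Z'}(a_i), Z'] = [n, Z']$, so the $n$-prefix is preserved under refinement. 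Previously-decided $a_j$ (with $j < i$) retain their decisions in the refined $Z'$ by Proposition~\ref{Prop:Rejects}, which applies because we decide only branching approximations. Let $Y_n$ be the output. The inductive step for $k > n$ is entirely analogous: iterate Lemma~\ref{Lem:ExistsDecide} through the finitely many $a \in \cal{BAR}^{Y_{k-1}}$ of depth $k$, producing refinements in $[k, \cdot]$ and preserving $Y_k|_k = Y_{k-1}|_k$, with decisions propagating by Proposition~\ref{Prop:Rejects}. The ideal property gives $(Y_k, Y) \in \cI$ from $Y_k \leq Y_{k-1}$ and $(Y_{k-1}, Y)\in \cI$.

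Let $X \in \cS$ be the fusion limit, which satisfies $X|_k = Y_k|_k$ for $k \geq n$. Then $X|_n = Y|_n$, and since $X \leq Y_n$ and $(Y_n, Y) \in \cI$, the ideal property yields $(X, Y) \in \cI$. To verify the deciding property, let $a \in \cal{AR}^X$ with $\rm{depth}_X(a) = m \geq n$. If $a \in \cal{BAR}^X$, then since $X \in [m, Y_{m-1}]$, the respects-branching hypothesis gives $a \in \cal{BAR}^{Y_{m-1}}$ with $\rm{depth}_{Y_{m-1}}(a) = m$, so $Y_m$ decides $a$ by construction; moreover, $(X, Y_m) \in \cI$ follows from $X \leq Y_{m+1}$ together with $(Y_{m+1}, Y_m) \in \cI$, so Proposition~\ref{Prop:Rejects} transfers the decision to $X$. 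If $a \notin \cal{BAR}^X$, then perfection forces the chain of unique immediate successors from $a$ in $\cal{AR}^X$ to terminate at some branching $b \in \cal{BAR}^X$ with $[a, X] = [b, X]$ (otherwise $[a, X]$ would be a singleton, contradicting perfection), and a standard argument using respects-branching shows that the decision for $b$ transfers to $a$.
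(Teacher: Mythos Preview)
Your overall fusion approach matches the paper's (which simply defers to Todorcevic's Lemma~4.33, noting only that each refinement can be taken in $\cI$ via Lemma~\ref{Lem:ExistsDecide}). However, your restriction to branching approximations introduces a real gap in the final step.

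The problem is the claim that ``a standard argument using respects-branching'' transfers the decision for $b$ back to a non-branching $a$. Acceptance transfers trivially since $[a,X]=[b,X]$, but rejection need not: suppose $X$ rejects $b$, yet some $X'\in[\rm{depth}_X(a),X]$ with $(X',X)\in\cI$ accepts $a$. To contradict ``$X$ rejects $b$'' you must produce a witness in $[\rm{depth}_X(b),X]$, but $\rm{depth}_X(b)$ may strictly exceed $\rm{depth}_X(a)$, and $X'$ need not agree with $X$ that far. Respects-branching only helps at $\rm{depth}_X(a)$; the \axiom-ideal clause applied to the first $X'$-branching extension $b'\in\cal{BAR}^{X'}$ yields an $X''\in[\rm{depth}_X(b'),X]$ with $[b',X'']\subseteq\frakX$, but since $b$ is branching in $X$ one may have $b\sqsubsetneq b'$ and hence $[b',X]\subsetneq[b,X]$, so this gives no contradiction.

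The fix is exactly what the paper does: at each stage decide \emph{every} $a$ at the current depth, not just the branching ones. Lemma~\ref{Lem:ExistsDecide} applies to arbitrary $a\in\cal{AR}^Y$, and your appeal to Proposition~\ref{Prop:Rejects} for persistence is then unnecessary. Since every refinement in the construction lies in $[k,\cdot]$ for the relevant depth $k$, depths are preserved throughout, and rejection persists directly: any $X'\in[\rm{depth}_X(a),X]$ with $(X',X)\in\cI$ automatically lies in $[\rm{depth}_{Y_{k+1}}(a),Y_{k+1}]$ with $(X',Y_{k+1})\in\cI$ (using $(X,Y_{k+1})\in\cI$ and the ideal property), so it cannot accept $a$. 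No branching hypothesis enters anywhere, and the statement for all $a\in\cal{AR}^X$ follows immediately.
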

	
\begin{proof}
	The proof is almost identical to that of Lemma 4.33 from \cite{stevo_book}. The only difference is that we ensure that $(Y_{k+1}, Y_k)\in \cI$ for each $k< \omega$. To do this, when the $X_i$ are defined with $Y_k = X_0\geq\cdots \geq X_p = Y_{k+1}$, we ensure that $(X_{i+1}, X_i)\in \cI$. This is possible by Lemma~\ref{Lem:ExistsDecide}. Upon setting $X = \lim_k Y_k$, we have $X\leq Y_k$ for every $k$, so in particular $(X, Y)\in \cI$.
\end{proof}

We note that when working with \axiom-ideals, we need the following stronger-looking version of \textbf{A.4}.
	
\begin{lemma}
	\label{Lem:StrongA4}
	Suppose $Y\in \cS$, $\ell< \omega$, and $a\in \cal{AR}^Y_\ell$. Write $n = \rm{depth}_Y(a)$, and write $\Psi = \{b\in \cal{AR}^Y_{\ell+1}: a\sqsubseteq b\}$. Then for any partition $\Psi = \Psi_0\sqcup \Psi_1$, there is $X\in [n, Y]$ with $(X, Y)\in \cI$ and so that $\{b\in \cal{AR}^X_{\ell+1}: a\sqsubseteq b\}\subseteq \Psi_i$ for some $i< 2$.
\end{lemma}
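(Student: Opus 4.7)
\textbf{Proof plan for Lemma~\ref{Lem:StrongA4}.} The plan is to show that this ``strong'' version of \textbf{A.4} is just a combination of the ordinary \textbf{A.4} with the first clause of the \axiom-ideal definition, plus the downward-closure of $\cI$. The idea is to first shrink $Y$ to a witness $Y'$ of the \axiom-ideal below $Y$ that agrees with $Y$ up to the level we care about, and then apply \textbf{A.4} to $Y'$; any further shrinking inside $Y'$ will automatically stay inside $\cI$ relative to $Y$.

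First, using the first clause of Definition~\ref{Def:A32Ideal}, pick $Y'\in \cS$ with $(Y', Y)\in \cI$ and $Y'|_n = Y|_n$. Since the finite approximations of $Y'$ and $Y$ agree up through level $n$, part (3) of \textbf{A.2} gives $\rm{depth}_{Y'}(a) = \rm{depth}_Y(a) = n$, and in particular $a\in \cal{AR}^{Y'}_\ell$. Now apply the ordinary pigeonhole axiom \textbf{A.4} inside $Y'$ with $\cO := \Psi_0\subseteq \cal{AR}_{\ell+1}$. This produces $X\in [n, Y']$ such that
\[
\{b\in \cal{AR}^X_{\ell+1}: a\sqsubseteq b\}\subseteq \Psi_0 \quad \text{or} \quad \{b\in \cal{AR}^X_{\ell+1}: a\sqsubseteq b\}\cap \Psi_0 = \emptyset.
\]
I would then note that every $b\in \cal{AR}^X_{\ell+1}$ with $a\sqsubseteq b$ automatically lies in $\Psi$, since $X\leq Y'\leq Y$ forces $\cal{AR}^X\subseteq \cal{AR}^{Y'}\subseteq \cal{AR}^Y$ (if $[b,X]\neq \emptyset$ then any $A\in [b,X]$ also satisfies $A\leq_\cR Y$ by the compatibility of $\leq_\cR$ with $\leq$). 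Hence in the second alternative the set is contained in $\Psi_1$, and in either case we get homogeneity into some $\Psi_i$.

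It remains to verify that $X$ has the requested global properties. Since $X|_n = Y'|_n = Y|_n$ and $X\leq Y'\leq Y$, we have $X\in [n, Y]$. Moreover, $(Y',Y)\in \cI$ together with $X\leq Y'$ and the downward closure of the ideal (second bullet in Definition~\ref{Def:A32Ideal}) yields $(X,Y)\in \cI$, completing the proof.

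The ``hard part'' here is really nothing more than noticing that no use of \textbf{A.3(2)} is required: the only extra ingredient beyond \textbf{A.4} is the existence clause for the ideal at level $n$, together with the observation that applying \textbf{A.4} inside an ideal witness keeps us inside the ideal. This is why the lemma holds in our weakened setting, and it is exactly the reason it is stated and proved separately from \textbf{A.4}.
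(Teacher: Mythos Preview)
Your proof is correct and follows essentially the same approach as the paper: first pass to $Y'\in[n,Y]$ with $(Y',Y)\in\cI$ using the existence clause of the \axiom-ideal, then apply ordinary \textbf{A.4} inside $Y'$, and conclude by downward closure of $\cI$. One tiny quibble: the fact that $Y'|_n=Y|_n$ implies $Y'|_k=Y|_k$ for all $k\le n$ (needed for $\mathrm{depth}_{Y'}(a)=n$) comes from \textbf{A.1}(3), not \textbf{A.2}(3).
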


\begin{proof}
	First find any $Y'\in [n, Y]$ with $(Y', Y)\in \cI$. Then consider $\Psi':= \{b\in \cal{AR}^{Y'}_{\ell+1}: a\sqsubseteq b\}$ and $\Psi'_i:= \Psi'\cap \Psi_i$ for $i< 2$. Use ordinary \textbf{A.4} to find $X\in [n, Y']\subseteq [n, Y]$ with $\{b\in \cal{AR}^X_{\ell+1}: a\sqsubseteq b\}\subseteq \Psi'_i\subseteq \Psi_i$ for some $i< 2$. Then $(X, Y)\in \cI$ as desired.
\end{proof}

For Lemma~\ref{Lem:GrowRejects}, we find $X$ which rejects every $b\in \cal{AR}^X_{\ell+1}$ which is $X$-branching and extends $a$. Similarly for Lemma~\ref{Lem:RejectAll}.

\begin{lemma}
	\label{Lem:GrowRejects}
	Suppose $Y\in \cS$, $\ell< \omega$, and $a\in \cal{AR}^Y_\ell$ are such that $Y$ rejects $a$. Then there is $X\in [\rm{depth}_Y(a), Y]$ with $(X, Y)\in \cI$ and such that $X$ rejects every member of $\cal{BAR}^X_{\ell+1}$ extending $a$. 
\end{lemma}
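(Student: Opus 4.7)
The plan is to combine Proposition~\ref{Prop:Decides} with Lemma~\ref{Lem:StrongA4}, using the rejection hypothesis on $Y$ to rule out the bad alternative. Write $n = \rm{depth}_Y(a)$. First I would apply Proposition~\ref{Prop:Decides} to produce $X_0 \in \cS$ with $(X_0, Y) \in \cI$, $X_0|_n = Y|_n$, and such that $X_0$ decides every $c \in \cal{AR}^{X_0}$ with $\rm{depth}_{X_0}(c) \geq n$. Axiom \textbf{A.3(1)} together with $X_0|_n = Y|_n$ guarantees $a \in \cal{AR}^{X_0}$ with $\rm{depth}_{X_0}(a) = n$, so $X_0$ decides $a$. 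Because $Y$ rejects $a$ and $(X_0, Y) \in \cI$, $X_0$ cannot accept $a$, so $X_0$ rejects $a$. This ``transferred rejection'' of $a$ to $X_0$ is the key use of the hypothesis.

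Next I would partition $\Psi := \{b \in \cal{AR}^Y_{\ell+1} : a \sqsubseteq b\}$ into $\Psi_0 := \{b \in \Psi \cap \cal{AR}^{X_0} : X_0 \text{ accepts } b\}$ and $\Psi_1 := \Psi \setminus \Psi_0$, and apply Lemma~\ref{Lem:StrongA4} with $Y$ replaced by $X_0$ to obtain $X \in [n, X_0]$ with $(X, X_0) \in \cI$ and $\{b \in \cal{AR}^X_{\ell+1} : a \sqsubseteq b\} \subseteq \Psi_i$ for some $i \in \{0,1\}$. The ideal property then forces $(X, Y) \in \cI$, and $X \in [n, Y] = [\rm{depth}_Y(a), Y]$ as required. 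If $i = 1$, then for any $b \in \cal{BAR}^X_{\ell+1}$ extending $a$, we have $b \in \cal{AR}^X \subseteq \cal{AR}^{X_0}$ with $X_0$ rejecting $b$; applying Proposition~\ref{Prop:Rejects} to the pair $(X, X_0)$, using $b \in \cal{BAR}^X$, yields that $X$ rejects $b$, which is precisely the conclusion.

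The heart of the argument --- and the step requiring the most care --- is ruling out $i = 0$. In that case every $b \in \cal{AR}^X_{\ell+1}$ extending $a$ satisfies $[b, X] \subseteq [b, X_0] \subseteq \frakX$, and \textbf{A.2} lets us write $[a, X]$ as the union of these $[b, X]$ over all such $b$; hence $[a, X] \subseteq \frakX$, meaning $X$ accepts $a$. But $(X, X_0) \in \cI$ and $X \in [n, X_0] = [\rm{depth}_{X_0}(a), X_0]$, so $X_0$'s rejection of $a$ directly forbids $X$ from accepting $a$, a contradiction. The main subtlety to verify carefully is that the depth computations and ideal transitivities line up as claimed --- in particular that $\rm{depth}_{X_0}(a) = n$ so that Proposition~\ref{Prop:Decides} genuinely decides $a$, and that the ideal property lifts $(X, X_0) \in \cI$ together with $(X_0, Y) \in \cI$ to $(X, Y) \in \cI$. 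Note that no hypothesis on whether $a$ itself is branching is needed: the contradiction in the $i=0$ branch rests on the decisiveness of $X_0$ on $a$, not on invoking Proposition~\ref{Prop:Rejects} for $a$.
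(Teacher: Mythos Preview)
Your proof is correct and follows essentially the same route as the paper's: your $X_0$ is the paper's $Z$, and the partition-then-apply-Lemma~\ref{Lem:StrongA4} step is identical. The only cosmetic differences are that the paper defines $\Psi$ over $\cal{AR}^Z$ rather than $\cal{AR}^Y$ (which is immaterial since the conclusion concerns $\cal{AR}^X\subseteq\cal{AR}^{X_0}$), and in ruling out the acceptance case the paper invokes $Y$'s rejection of $a$ via $(X,Y)\in\cI$ whereas you invoke $X_0$'s rejection via $(X,X_0)\in\cI$ --- both are valid.
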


\begin{proof}
	Write $n = \rm{depth}_Y(a)$. Use Proposition~\ref{Prop:Decides} to find $Z\in [n, Y]$ with $(Z, Y)\in \cI$ and so that $Z$ decides every $b\in \cal{AR}^Z$ with $\rm{depth}_Z(b)\geq n$. Note that $[a, Z]\neq \emptyset$ by \textbf{A.3(1)}, and since $(Z', Z), (Z, Y)\in \cI$ implies $(Z', Y)\in \cI$, we have that $Z$ rejects $a$. Now consider the partition of $\Psi:= \{b\in \cal{AR}^Z_{\ell+1}: a\sqsubseteq b\}$ into $\Psi_{acc}\sqcup\Psi_{rej}$ depending on how $Z$ decides. By Lemma~\ref{Lem:StrongA4}, find $X\in [n, Z]$ with $(X, Z)\in \cI$ and with $\Theta:= \{b\in \cal{AR}^X_{\ell+1}: a\sqsubseteq b\}\subseteq \Psi_i$ for some $i\in \{acc, rej\}$. Note that as $(Z, Y)\in \cI$, we have $(X, Y)\in \cI$. 
		
	Towards a contradiction, suppose $i = acc$. Then $[b, X]\subseteq [b, Z]\subseteq \frakX$ for every $b\in \cal{AR}^X_{\ell+1}$ extending $a$. But this implies that $[a, X]\subseteq \frakX$, i.e.\ that $X$ accepts $a$, a contradiction as $Y$ rejects $a$ and $X\leq Y$.
		
	Thus $Z$ rejects every $b\in \Theta$. As $(X, Z)\in \cI$ and $\Theta\subseteq \cal{AR}^X$, it follows by Proposition~\ref{Prop:Rejects} that $X$ rejects every $b\in \Theta\cap \cal{BAR}^X$.
\end{proof}

\begin{lemma}
	\label{Lem:RejectAll}
	Suppose $Y\in \cS$, $\ell< \omega$, and $a\in \cal{AR}^Y_\ell$. Assume that $Y$ decides every $b\in \cal{AR}^Y$ with $\rm{depth}_Y(b)\geq \rm{depth}_Y(a)$. If $Y$ rejects $a$, then there is $X\in [\rm{depth}_Y(a), Y]$ with $(X, Y)\in \cI$ and so that $X$ rejects every $b\in \cal{BAR}^X$ with $a\sqsubseteq b$. 
\end{lemma}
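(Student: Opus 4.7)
The plan is a fusion argument iterating Lemma~\ref{Lem:GrowRejects} level by level while using Proposition~\ref{Prop:Decides} to keep decisions stable. Writing $n = \rm{depth}_Y(a)$, I will produce a decreasing sequence $Y = Y_0 \ge Y_1 \ge \cdots$ in $\cS$ with $(Y_{k+1}, Y_k) \in \cI$ and with $Y_{k+1}|_{m_k} = Y_k|_{m_k}$ along a sequence $n = m_0 < m_1 < \cdots$ tending to infinity, so that $X := \lim_k Y_k$ exists in $\cS$, lies in $[n, Y]$, and satisfies $(X, Y) \in \cI$ by closure of the ideal under $\le$.

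The inductive invariant to maintain at stage $k$ is that $Y_k$ decides every $b \in \cal{AR}^{Y_k}$ of depth at least $n$ and rejects every $b \in \cal{AR}^{Y_k}$ with $a \sqsubseteq b$ and $|b| \le \ell + k$. The base case $k = 0$ is the hypothesis of the lemma. A preliminary observation to verify first is that both acceptance and rejection of a fixed $b$ transfer from $Y_k$ to any $Z \le Y_k$ with $(Z, Y_k) \in \cI$: acceptance via $[b, Z] \subseteq [b, Y_k]$, and rejection because any $Z'$ witnessing acceptance from below $Z$ would also witness it from below $Y_k$ (using that $(Z', Y_k) \in \cI$ by ideal closure).

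For the inductive step, the set $B_k = \{b \in \cal{BAR}^{Y_k} : a \sqsubseteq b,\ |b| = \ell + k\}$ is finite by clause~(1) of \textbf{A.2}. Enumerate $B_k = \{b_1, \dots, b_r\}$ and set $Z_0 = Y_k$. For each $i \le r$, I will apply Lemma~\ref{Lem:GrowRejects} to $b_i$ inside $Z_{i-1}$; its hypothesis is met because $Z_{i-1}$ rejects $b_i$ by the preliminary observation applied to the inductive hypothesis. This produces $Z_i \in [\rm{depth}_{Z_{i-1}}(b_i), Z_{i-1}]$ with $(Z_i, Z_{i-1}) \in \cI$ such that $Z_i$ rejects every $b' \in \cal{BAR}^{Z_i}_{\ell + k + 1}$ extending $b_i$. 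Applying Proposition~\ref{Prop:Decides} to $Z_r$ then yields $Y_{k+1} \le Z_r$ with $(Y_{k+1}, Z_r) \in \cI$, agreeing with $Z_r$ up to some $m_{k+1} > m_k$ large enough to fix all level-$(\ell + k + 1)$ structure, and deciding every node of depth at least $n$. Ideal closure gives $(Y_{k+1}, Y_k) \in \cI$.

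To verify the invariant at $Y_{k+1}$, take $b' \in \cal{AR}^{Y_{k+1}}_{\ell + k + 1}$ with $a \sqsubseteq b'$ and set $b = b'|_{\ell + k}$. If $b \in \cal{BAR}^{Y_{k+1}}$, then because $\Omega$ respects branching and the initial segment of $Y_k$ up to $\rm{depth}_{Y_k}(b)$ has been preserved throughout the fusion, we have $b \in B_k$, so the stage handling $b$ already arranged that $Y_{k+1}$ rejects $b'$. If $b \notin \cal{BAR}^{Y_{k+1}}$, then $b'$ is its unique extension in $Y_{k+1}$, and respects branching forces $[b, Z] = [b', Z]$ for every $Z \le Y_{k+1}$ with $b \in \cal{AR}^Z$ and $(Z, Y_{k+1}) \in \cI$; since the inductive hypothesis passed down to $Y_{k+1}$ gives rejection of $b$, this equality of basic sets upgrades to rejection of $b'$. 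The main obstacle is precisely this propagation through non-branching predecessors, and it is the role of the ``respects branching'' hypothesis to secure it. Taking the limit $X$ and invoking respects branching once more to transfer membership in $\cal{BAR}^X$ back to some $\cal{BAR}^{Y_k}$, the invariant yields rejection of every $b \in \cal{BAR}^X$ with $a \sqsubseteq b$, completing the construction.
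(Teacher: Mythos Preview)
Your overall fusion strategy is the right one and matches the paper's approach, but the inductive step has a genuine gap: the set
\[
B_k = \{b \in \cal{BAR}^{Y_k} : a \sqsubseteq b,\ |b| = \ell + k\}
\]
need not be finite. Axiom \textbf{A.2}(1) says that $\{c : c \le^{\cR}_{\rm{fin}} x\}$ is finite for each fixed $x \in \cal{AS}$; this bounds the number of approximations at a fixed \emph{depth} in $Y_k$, not at a fixed \emph{length}. Elements of $B_k$ all have length $\ell+k$ but can have arbitrarily large $\rm{depth}_{Y_k}(b)$, so \textbf{A.2}(1) does not apply. Already in the Ellentuck space this fails: with $Y_k \in [\omega]^\infty$ and $a \in [Y_k]^\ell$, the set of $b \in [Y_k]^{\ell+k}$ with $a \subseteq b$ is infinite. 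The same happens in the spaces of this paper, where a strong embedding of $\Theta({<}\ell+k)$ extending $f$ can land at arbitrarily high levels of $\Delta$. Since your entire step hinges on iterating Lemma~\ref{Lem:GrowRejects} over the finitely many $b_i \in B_k$, the argument as written does not go through.

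The fix is to organize the fusion by depth rather than length, exactly as in Todorcevic's Lemma~4.35 (which the paper cites). At stage $k$ one works with $Y_k \in [n+k, Y_{k-1}]$ and processes the finitely many $b \in \cal{AR}^{Y_k}$ with $a \sqsubseteq b$ and $\rm{depth}_{Y_k}(b) = n+k$; for each such rejected $b$ one applies Lemma~\ref{Lem:GrowRejects}, staying inside $[n+k, Y_k]$ and keeping the $\cI$-relation. A smaller point: your ``preliminary observation'' that rejection transfers from $Y_k$ down to $Z \le Y_k$ with $(Z, Y_k) \in \cI$ is not justified by the argument you give (a $Z'$ witnessing acceptance below $Z$ need not lie in $[\rm{depth}_{Y_k}(b), Y_k]$); this is exactly what Proposition~\ref{Prop:Rejects} proves using the \axiom-ideal property, and it only applies when $b$ is branching --- so you should invoke that proposition directly rather than re-argue it.
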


\begin{proof}
	The argument is almost identical to that of Lemma 4.35 from \cite{stevo_book}. Similar to Proposition~\ref{Prop:Decides}, the main difference is ensuring that $(Y_{k+1}, Y_k)\in \cI$ for every $k< \omega$. To do this, when the $X_i$ are defined with $Y_k = X_0\geq\cdots \geq X_p = Y_{k+1}$, we ensure that $(X_{i+1}, X_i)\in \cI$, and this is possible by Lemma~\ref{Lem:GrowRejects}.
\end{proof}

When working with \axiom-ideals, we 
note that  
the following stronger-looking versions of being $\cS$-Baire and $\cS$-Ramsey are equivalent to the original definition.
	
\begin{fact}
    \label{Fact:SRamseyBaire}
    In Definition~\ref{Def:SRamseyBaire}, when defining what it means for $\frakX\subseteq \cR$ to be $\cS$-Ramsey, $\cS$-Ramsey null, $\cS$-Baire, or $\cS$-meager, it is enough to only consider those non-empty basic sets $[a, Y]$ with $a\in \cal{BAR}^Y$. To see why for $\cS$-Ramsey, fix $\frakX\subseteq \cR$ which is $\cS$-Ramsey in this a priori weaker sense. Consider some non-empty basic set $[a, Y]$. As $[a, Y]$ is infinite, find $b\in \cal{AR}^Y$ with $a\sqsubseteq b$, $[a, Y] = [b, Y]$, and with $b\in \cal{BAR}^Y$. By assumption, we can find $X\in [\rm{depth}_Y(b), Y]\subseteq [\rm{depth}_Y(a), Y]$ with $[b, X]$ $\frakX$-invariant. Lastly, we note that $[a, X] = [b, X]$.

    When considering $\cS$-Baire, given $[a, Y]$, the witness $[b, X]$ may be chosen with $b\in \cal{BAR}^X$ and with $(X, Y)\in \cI$. To see this, first find $Y'\in [\depth_Y(a), Y]$ with $(Y', Y)\in \cI$. Then use the ordinary definition of $\cS$-Baire to find $X\leq Y'$ with $a\in \cal{AR}^X$ and $b'\in \cal{AR}^X$ which is $\frakX$-invariant,  Then $(X, Y)\in \cI$ as desired, and then extend $b'$ to $b\in \cal{BAR}^X$. By a similar argument, for $\cS$-Ramsey, we may choose the witness $X\in [\rm{depth}_Y(a), Y]$ with $(X, Y)\in \cI$.
\end{fact}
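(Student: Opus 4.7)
My plan is to handle all four claims in parallel by the same core move: use perfectness of $\Omega$ to extend a generic approximation $a\in \cal{AR}^Y$ to a branching $b\in \cal{BAR}^Y$ with $[a,Y]=[b,Y]$, then apply the restricted hypothesis at $b$ and transfer the witness back to $a$. The partial proof sketch in the statement already carries out the first half of this for $\cS$-Ramsey; the task is to verify that the same mechanism works for the other three notions and to justify the bookkeeping.

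For $\cS$-Ramsey and $\cS$-Ramsey null, I would start with an arbitrary non-empty $[a,Y]$. Since $\Omega$ is perfect, $[a,Y]$ is infinite, so by iteratively extending through nodes whose one-step successors in $\cal{AR}^Y$ are unique, I obtain $b\in\cal{BAR}^Y$ with $a\sqsubseteq b$ and $[a,Y]=[b,Y]$. Applying the (a priori weaker) hypothesis to $[b,Y]$ produces $X\in[\rm{depth}_Y(b),Y]\subseteq[\rm{depth}_Y(a),Y]$ with $[b,X]$ either $\frakX$-invariant (Ramsey case) or disjoint from $\frakX$ (Ramsey-null case). The key bookkeeping step is to check that the chain of non-branching one-step $\cal{AR}^Y$-extensions from $a$ to $b$ also consists of unique extensions inside $\cal{AR}^X$. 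This follows because $X|_{\rm{depth}_Y(b)}=Y|_{\rm{depth}_Y(b)}$ forces the same finitized comparison relations on approximations of depth at most $\rm{depth}_Y(b)$, so $[a,X]=[b,X]$ and the desired witness is obtained.

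For $\cS$-Baire and $\cS$-meager, I would use the $\axiom$-ideal to additionally strengthen the witness so that $b\in\cal{BAR}^X$ and $(X,Y)\in\cI$, following the sketch in the statement. Given $[a,Y]$, the first clause of Definition~\ref{Def:A32Ideal} produces $Y'\in[\rm{depth}_Y(a),Y]$ with $(Y',Y)\in\cI$. Applying ordinary $\cS$-Baire (resp.\ $\cS$-meager) to the basic set $[a,Y']$ yields some $X\leq Y'$ together with $a\sqsubseteq b'\in\cal{AR}^X$ such that $[b',X]$ is $\frakX$-invariant (resp.\ misses $\frakX$). Since $(X,Y')\in$ trivial subordinate relation and $(Y',Y)\in\cI$, the ideal property gives $(X,Y)\in\cI$. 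Finally, perfectness of $\Omega$ lets me extend $b'$ to $b\in\cal{BAR}^X$, and the containment $[b,X]\subseteq[b',X]$ preserves the relevant property. Restricting attention to those $[a,Y]$ already satisfying $a\in\cal{BAR}^Y$ then gives the stated equivalence.

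The only real obstacle is the aforementioned bookkeeping: one must argue that branching/non-branching behavior and one-step extensions transfer correctly between $\cal{AR}^Y$ and $\cal{AR}^X$ whenever $X\in[\rm{depth}_Y(b),Y]$. This is a direct application of \textbf{A.2}(3) together with the hypothesis that $\Omega$ respects branching, so no new ingredients are needed beyond the standing assumptions on $\Omega$ and $\cI$.
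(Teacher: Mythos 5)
Your proposal is correct and takes essentially the same route as the paper: use perfectness to extend $a$ to a branching $b\in\cal{BAR}^Y$ with $[a,Y]=[b,Y]$, apply the restricted hypothesis at $b$ and transfer back, and for the refinements involving $\cI$ first pass to $Y'\in[\depth_Y(a),Y]$ with $(Y',Y)\in\cI$, apply the ordinary definition below $Y'$, and use downward closure of the ideal to get $(X,Y)\in\cI$. One remark: the transfer step $[a,X]=[b,X]$ needs none of the bookkeeping about one-step extensions or branching in $\cal{AR}^X$ — it is immediate from $a\sqsubseteq b$, $X\le Y$, and $[a,Y]=[b,Y]$, since $[b,X]\subseteq[a,X]\subseteq[a,Y]=[b,Y]$ forces every member of $[a,X]$ to restrict to $b$.
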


As each $\mathcal{S}$-Ramsey set is $\mathcal{S}$-Baire,  the following theorem consists of proving the converse.

\begin{theorem}
	\label{Thm:BaireEqualsRamsey}
	Given $\frakX\subseteq \cR$, we have that $\frakX$ is $\cS$-Baire iff it is $\cS$-Ramsey. 
\end{theorem}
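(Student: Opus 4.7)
The forward direction is immediate: any witness $[a, X]$ to $\frakX$ being $\cS$-Ramsey is also a witness to $\cS$-Baire with $b = a$. For the reverse, fix $\frakX$ that is $\cS$-Baire together with a non-empty basic set $[a, Y]$; by Fact~\ref{Fact:SRamseyBaire} we may assume $a \in \cal{BAR}^Y$, and the task is to produce $X \in [\depth_Y(a), Y]$ so that $[a, X]$ is $\frakX$-invariant.

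The first step is to apply Proposition~\ref{Prop:Decides} to obtain $Z \in \cS$ with $(Z, Y) \in \cI$, $Z|_{\depth_Y(a)} = Y|_{\depth_Y(a)}$, and such that $Z$ $\frakX$-decides every $b \in \cal{AR}^Z$ of depth at least $\depth_Y(a)$; in particular $Z$ decides $a$. If $Z$ accepts $a$, set $X := Z$, so that $[a, Z] \subseteq \frakX$. Otherwise $Z$ rejects $a$, and Lemma~\ref{Lem:RejectAll} provides $X_1 \in [\depth_Y(a), Z]$ with $(X_1, Z) \in \cI$ that rejects every $b \in \cal{BAR}^{X_1}$ with $a \sqsubseteq b$. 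The next step is to establish the following local density property: for every non-empty sub-basic set $[b, X'] \subseteq [a, X_1]$ with $b \in \cal{BAR}^{X'}$ and $(X', X_1) \in \cI$, there is a further sub-basic $[c, X''] \subseteq [b, X']$ with $c \in \cal{BAR}^{X''}$, $(X'', X_1) \in \cI$, and $[c, X''] \cap \frakX = \emptyset$. To verify this, apply $\cS$-Baire to $[b, X']$ via Fact~\ref{Fact:SRamseyBaire} to obtain $c \sqsupseteq b$ and $X'' \leq X'$ with $c \in \cal{BAR}^{X''}$, $(X'', X') \in \cI$, and $[c, X'']$ $\frakX$-invariant. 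Since $X'' \leq X_1$ we have $c \in \cal{BAR}^{X_1}$, and by construction $X_1$ rejects $c$. If $[c, X''] \subseteq \frakX$, then clause (2) of the \axiom-ideal applied to $(X'', X_1) \in \cI$ and $c \in \cal{BAR}^{X''}$ yields $Y' \in [\depth_{X_1}(c), X_1]$ with $(Y', X_1) \in \cI$ and $[c, Y'] \subseteq [c, X''] \subseteq \frakX$, making $Y'$ accept $c$ and contradicting that $X_1$ rejects $c$. Hence the remaining alternative $[c, X''] \cap \frakX = \emptyset$ must hold.

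The final step converts this density of disjoint sub-basics into a single $X \in [\depth_Y(a), X_1]$ with $[a, X] \cap \frakX = \emptyset$. The plan is a fusion in the flavor of Proposition~\ref{Prop:Decides} and Lemma~\ref{Lem:RejectAll}: build an $\cI$-decreasing chain $X_1 = W_0 \geq W_1 \geq \cdots$ with $W_{n+1}|_{n+1} = W_n|_{n+1}$ so that at stage $n$, one uses the local density just proved, combined with the pigeonhole version in Lemma~\ref{Lem:StrongA4}, to handle simultaneously the finitely many relevant branching extensions $b \in \cal{BAR}^{W_{n+1}}$ of $a$ with $\depth_{W_{n+1}}(b) \leq n+1$, arranging $[b, W_{n+1}] \cap \frakX = \emptyset$ for each such $b$. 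Taking $X := \lim_n W_n$, the ideal property $(X, Y) \in \cI$ is inherited since $(W_n, Y) \in \cI$ and $X \leq W_n$ for all $n$, and $[a, X] \cap \frakX = \emptyset$ because any $A \in [a, X]$ has a branching initial segment $A|_n \sqsupseteq a$ and hence lies in a basic set arranged to be disjoint from $\frakX$. The main technical obstacle is orchestrating this fusion so that the \axiom-ideal condition $(W_{n+1}, W_n) \in \cI$ is preserved at every step, the stabilization $W_{n+1}|_{n+1} = W_n|_{n+1}$ is respected, and all finitely many branching candidates at a given stage are handled in parallel rather than one at a time.
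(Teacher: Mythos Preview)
Your local density argument is correct and captures the key use of the \axiom-ideal: once $X_1$ rejects every branching $b\sqsupseteq a$, any $\frakX$-invariant sub-basic $[c,X'']$ found by $\cS$-Baire must in fact be disjoint from $\frakX$. However, the final fusion step has a genuine gap. What density gives you is that below every $[b,X']$ there is \emph{some} $[c,X'']$ with $c\sqsupseteq b$ and $[c,X'']\cap\frakX=\emptyset$. What you claim to arrange is the much stronger $[b,W_{n+1}]\cap\frakX=\emptyset$ for every branching $b$ of small depth. Density does not yield this: the disjoint piece lives above a longer $c$ and inside a possibly much smaller $X''$, and there is no mechanism offered for amalgamating the various $(c_b,X_b'')$ across all relevant $b$ into a single $W_{n+1}$ while preserving the stabilization $W_{n+1}|_{n+1}=W_n|_{n+1}$. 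Lemma~\ref{Lem:StrongA4} does not help here, as it only controls one-step extensions, not full basic sets. In effect, you have shown that $\frakX\cap[a,X_1]$ is $\cS({\leq}X_1)$-nowhere-dense, and you are tacitly assuming that nowhere-dense implies Ramsey-null; this is precisely the content of Theorem~\ref{Thm:RamseyNull}, which you have neither cited nor proved.

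The paper avoids this entirely by a symmetry trick: apply Proposition~\ref{Prop:Decides} and Lemma~\ref{Lem:RejectAll} to both $\frakX$ and $\frakX^c$ simultaneously, obtaining $Z$ that $\frakX$-rejects and $\frakX^c$-rejects every branching $b\sqsupseteq a$. Then a single application of $\cS$-Baire produces a $[b,Z_0]$ that is $\frakX$-invariant; pulling this back through the \axiom-ideal to some $[b,Z_1]$ with $(Z_1,Z)\in\cI$ contradicts rejection of whichever of $\frakX,\frakX^c$ contains $[b,Z_1]$. No fusion is needed beyond what is already inside Lemma~\ref{Lem:RejectAll}. The missing idea in your argument is exactly this use of $\frakX^c$, which converts ``density of disjoint sub-basics'' into an immediate contradiction.
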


\begin{proof}
	Assume $\frakX$ is $\cS$-Baire. Towards showing $\cS$-Ramsey, fix $Y\in \cS$ and $a\in \cal{AR}^Y$. Write $n = \depth_Y(a)$. Use Proposition~\ref{Prop:Decides} to find $X\in [n, Y]$ with $(X, Y)\in \cI$ and so that $X$ $\frakX$-decides and $\frakX^c$-decides every $b\in \cal{AR}^X$ with $\depth_Y(b)\geq n$. 
		
	If $X$ $\frakX$-accepts or $\frakX^c$-accepts $a$, we are done, so towards a contradiction assume reject for both. By Lemma~\ref{Lem:RejectAll}, find $Z\in [n, X]$ which $\frakX$-and-$\frakX^c$-rejects every $b\in \cal{AR}^Z$ with $a\sqsubseteq b$.
		
	By Fact~\ref{Fact:SRamseyBaire}, find $Z_0\in \cS$ with $(Z_0, Z)\in \cI$ along with $b\in \cal{BAR}^{Z_0}$ with $a\sqsubseteq b$ so that $[b, Z_0]$ is $\frakX$-invariant. As $\cI$ is an \axiom-ideal, find $Z_1\in [\depth_{Z}(b), Z]$ with $(Z_1, Z)\in \cI$ and $[b, Z_1]\subseteq [b, Z_0]$. Because $(Z_1, Z)\in \cI$ and $Z$ $\frakX$-and-$\frakX^c$-rejects $b$, we cannot have $[b, Z_1]$ $\frakX$-invariant, a contradiction. 
\end{proof}

\begin{theorem}
	\label{Thm:CountablyAdditive}
	The collection of $\cS$-Ramsey subsets of $\cR$ is countably additive. 
\end{theorem}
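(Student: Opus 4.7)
The plan is to reduce to $\cS$-Baire via Theorem~\ref{Thm:BaireEqualsRamsey} and argue by contradiction using a fusion. Let $\{\frakX_n : n< \omega\}$ be $\cS$-Ramsey and set $\frakX := \bigcup_n \frakX_n$. Assume $\frakX$ is not $\cS$-Baire; by Fact~\ref{Fact:SRamseyBaire} fix $[a, Y]$ with $a\in \cal{BAR}^Y$ such that no $X\le Y$ and $b\sqsupseteq a$ with $b\in \cal{BAR}^X$ makes $[b, X]$ $\frakX$-invariant. Since $\frakX_n\subseteq \frakX$, and since by perfectness every $c\sqsupseteq a$ in $\cal{AR}^X$ has a branching extension $c''$ with $[c, X] = [c'', X]$, this failure hypothesis rules out $[c, X]\subseteq \frakX_n$ for every $c\sqsupseteq a$ in $\cal{AR}^X$, every $X\le Y$, and every $n$.

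I would construct a descending fusion $Y = Y_0\ge Y_1\ge \cdots$ in $\cS$ with $(Y_{n+1}, Y_n)\in \cI$, $Y_{n+1}|_n = Y_n|_n$, and $[a, Y_{n+1}]\cap \frakX_n = \emptyset$. At stage $n$, set $m := \max(n, \depth_{Y_n}(a))$ and enumerate the finite family (finite by \textbf{A.2}(1)) $\{c_1,\ldots, c_p\}$ of level-$m$ approximations in $\cal{AR}^{Y_n}_m$ extending $a$. Starting from $Z^0 := Y_n$, I successively refine $Z^i\in [m, Z^{i-1}]$ by applying the $\cS$-Ramsey property of $\frakX_n$ at $[c_i, Z^{i-1}]$, using the $\cI$-witnessed strengthening in Fact~\ref{Fact:SRamseyBaire} to obtain $(Z^i, Z^{i-1})\in \cI$ with $[c_i, Z^i]$ $\frakX_n$-invariant; the failure hypothesis then forces $[c_i, Z^i]\cap \frakX_n = \emptyset$. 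Setting $Y_{n+1}:= Z^p$ and using the second clause of Definition~\ref{Def:A32Ideal} to compose the $\cI$-steps inductively, $(Y_{n+1}, Y_n)\in \cI$ and $Y_{n+1}|_m = Y_n|_m$. Since every $c\in \cal{AR}^{Y_{n+1}}_m$ with $a\sqsubseteq c$ lies among the $c_i$, we have $[a, Y_{n+1}] \subseteq \bigcup_i [c_i, Y_{n+1}] \subseteq \bigcup_i [c_i, Z^i]$, so the disjointness propagates to $[a, Y_{n+1}]\cap \frakX_n = \emptyset$.

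Passing to $X := \lim_n Y_n$ (valid in $\cS = \varprojlim \cal{AS}_n$ since the $Y_n|_n$ stabilize), we have $X\le Y_n$ for every $n$ and $(X, Y)\in \cI$. Thus $[a, X]\subseteq [a, Y_{n+1}]$ gives $[a, X]\cap \frakX_n = \emptyset$ for every $n$, so $[a, X]\cap \frakX = \emptyset$, i.e., $[a, X]$ is $\frakX$-invariant. As $a\in \cal{BAR}^X$ by $\Omega$ respecting branching, this contradicts the defining property of $[a, Y]$.

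The main obstacle is reconciling convergence (the requirement $Y_{n+1}|_n = Y_n|_n$) with the $\cS$-Ramsey refinement, whose natural matching level at $[a, Y_n]$ is only $\depth_{Y_n}(a)$ and need not grow with $n$. The key maneuver is to apply $\cS$-Ramsey of $\frakX_n$ not at $[a, Y_n]$ itself but at the finer basic sets $[c_i, Y_n]$ ranging over the finite family of level-$\max(n, \depth_{Y_n}(a))$ extensions of $a$, and to package the resulting finite composition into one $Y_{n+1}$ using closure of $\cI$ under the second bullet of the \axiom-ideal.
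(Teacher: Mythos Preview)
Your argument has a genuine gap at the finiteness claim. You set $m := \max(n, \depth_{Y_n}(a))$ and then enumerate ``level-$m$ approximations in $\cal{AR}^{Y_n}_m$ extending $a$,'' citing \textbf{A.2}(1). But $\cal{AR}^{Y_n}_m$ denotes approximations of \emph{length} $m$, and \textbf{A.2}(1) only bounds the number of approximations with $\le^\cR_{\rm fin}$ a fixed $x\in\cal{AS}$, i.e.\ of bounded \emph{depth}. Length-$m$ extensions of $a$ inside $Y_n$ can have unbounded depth and hence be infinite: already in the Ellentuck space, the length-$m$ extensions of $a$ inside an infinite $Y_n$ form an infinite set. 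If instead you meant depth-$m$ extensions (so that the family is finite and each application of $\cS$-Ramsey really lands in $[m, Z^{i-1}]$), then the covering $[a, Y_{n+1}]\subseteq \bigcup_i [c_i, Y_{n+1}]$ fails: for $A\in [a, Y_{n+1}]$, the sequence $j\mapsto \depth_{Y_{n+1}}(A|_j)$ is nondecreasing and unbounded but can skip the value $m$, so there need not be any $j$ with $A|_j$ among your $c_i$. Either reading leaves the fusion step broken, and this is exactly the tension you yourself flag in your last paragraph --- your proposed ``key maneuver'' does not resolve it.

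The paper's proof avoids this tension by a different organization. First, using Proposition~\ref{Prop:Decides} and Lemma~\ref{Lem:RejectAll}, it reduces to the case that $Y$ $\frakX$-\emph{rejects} every $b\in\cal{BAR}^Y$ with $a\sqsubseteq b$. Then the fusion is depth-based: $Y_{k+1}\in [n+k, Y_k]$ is chosen so that for the finitely many $b$ with $\depth_{Y_k}(b)=n+k$ (this is where \textbf{A.2}(1) is correctly invoked) and all $i\le k$, one obtains only $\frakX_i$-\emph{invariance} of $[b, Y_{k+1}]$, not disjointness. For the limit $X$, given $A\in [a,X]$ and $i$, one picks a branching $b=A|_\ell$ with $\depth_X(b)=n+k\ge n+i$, gets $[b,X]$ $\frakX_i$-invariant, and only then uses the rejection hypothesis (transferred to $X$ via $(X,Y)\in\cI$ and Proposition~\ref{Prop:Rejects}) to rule out $[b,X]\subseteq\frakX$, forcing $[b,X]\cap\frakX_i=\emptyset$. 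The rejection step is what lets the paper postpone the disjointness conclusion until after the fusion, sidestepping the length/depth mismatch that trips up your direct approach.
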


\begin{proof}
	Suppose $\{\frakX_k: k< \omega\}$ are $\cS$-Ramsey subsets of $\cR$, and set $\frakX = \bigcup_k \frakX_k$. Fix $Y\in \cS$ and $a\in\cal{AR}^Y$. Write $n = \depth_Y(a)$. By applications of Proposition~\ref{Prop:Decides} and Lemma~\ref{Lem:RejectAll}, we may assume that $Y$ $\frakX$-rejects every $b\in \cal{BAR}^Y$ with $a\sqsubseteq b$. 
		
	Setting $Y = Y_0$, we now run a fusion construction. If $Y_k$ is given, use the assumption that every $\frakX_i$ is $\cS$-Ramsey to find $Y_{k+1}\in [n+k, Y_k]$ such that for every $b\in \cal{AR}^{Y_k}$ with $a\sqsubseteq b$ and $\depth_{Y_k}(b) = n+k$, we have that $[b, Y_{k+1}]$ is $\frakX_i$-invariant for all $i\leq k$. The number of such $b$ is finite by \textbf{A.2}. By Fact~\ref{Fact:SRamseyBaire}, we can assume that $(Y_{k+1}, Y_k)\in \cI$ for every $k< \omega$.
		
	Let $X = \lim_k Y_k$, and note that $(X, Y)\in \cI$; we show that $[a, X]\cap \frakX = \emptyset$. Fix $A\in [a, X]$ and $i< \omega$, towards showing that $A\not\in \frakX_i$. Fix $\ell< \omega$ so that, writing $b = A|_\ell$, we have $b\in \cal{BAR}^X$ and $\depth_X(b)\geq n+i$. For some $k\geq i$, we have that $\depth_X(b) = n+k$. As $X\in [n+k, Y_k]$, also $\depth_{Y_k}(b) = n+k$. It follows by the construction of $Y_{k+1}$ that $[b, X]$ is $\cX_i$-invariant. As $(X, Y)\in \cI$ and $Y$ $\frakX$-rejects $b$, also $X$ $\frakX$-rejects $b$ by Proposition~\ref{Prop:Rejects}. Thus we cannot have $[b, X]\subseteq \frakX$, so $[b, X]\cap \frakX_i = \emptyset$, and in particular, $A\not\in \frakX_i$. 
\end{proof}

\begin{theorem}
	\label{Thm:RamseyNull}
	Given $\frakX\subseteq \cR$, we have that $\frakX$ is $\cS$-Ramsey-null iff $\frakX$ is $\cS$-meager.
\end{theorem}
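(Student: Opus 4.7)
The plan is to derive this essentially from Theorem~\ref{Thm:BaireEqualsRamsey} together with a short contradiction argument. The implication ``$\cS$-Ramsey-null $\Rightarrow$ $\cS$-meager" is trivial: given $[a,Y]\ne\emptyset$ and $X\in[\depth_Y(a),Y]$ with $[a,X]\cap\frakX=\emptyset$, simply take $b=a$ in the definition of $\cS$-meager.

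For the nontrivial direction, assume $\frakX$ is $\cS$-meager. First I would observe that $\frakX$ is automatically $\cS$-Baire: given $[a,Y]\ne\emptyset$, the set $[b,X]$ supplied by $\cS$-meagerness is $\frakX$-invariant (it is disjoint from $\frakX$). Hence Theorem~\ref{Thm:BaireEqualsRamsey} applies and $\frakX$ is $\cS$-Ramsey.

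Now fix a nonempty basic set $[a,Y]$. By $\cS$-Ramseyness, choose $X\in[\depth_Y(a),Y]$ so that $[a,X]$ is $\frakX$-invariant. If $[a,X]\cap\frakX=\emptyset$ we are done, so towards a contradiction suppose $[a,X]\subseteq\frakX$. Apply $\cS$-meagerness to the basic set $[a,X]$ to obtain $X'\le X$ and $a\sqsubseteq b\in\cal{AR}^{X'}$ with $[b,X']\cap\frakX=\emptyset$. Since $b\in\cal{AR}^{X'}$, axiom \textbf{A.3(1)} (applied to $X'\in[\depth_{X'}(b),X']$) gives $[b,X']\ne\emptyset$. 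But $[b,X']\subseteq[a,X']\subseteq[a,X]\subseteq\frakX$, contradicting $[b,X']\cap\frakX=\emptyset$. Therefore $[a,X]\cap\frakX=\emptyset$, establishing that $\frakX$ is $\cS$-Ramsey-null.

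There is really no significant obstacle here, as the entire content is bundled into the equivalence of $\cS$-Baire and $\cS$-Ramsey already proved in Theorem~\ref{Thm:BaireEqualsRamsey}; the remaining step is a one-line argument using the nonemptiness guaranteed by \textbf{A.3(1)} to rule out the ``accept'' alternative of $\frakX$-invariance.
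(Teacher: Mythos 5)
Your proof is correct and matches the standard argument one finds in Todorcevic's book, which is all the paper does (it proves Theorem~\ref{Thm:RamseyNull} by reference to Lemma~4.38 of \cite{stevo_book}). The reduction---$\cS$-meager implies $\cS$-Baire, hence $\cS$-Ramsey by Theorem~\ref{Thm:BaireEqualsRamsey}, and then one rules out the ``accept'' branch of $\frakX$-invariance by applying meagerness inside $[a,X]$ to produce a nonempty $[b,X']\subseteq[a,X]\subseteq\frakX$ that is disjoint from $\frakX$---is exactly the intended one. The only stylistic remark is that your invocation of \textbf{A.3(1)} to get $[b,X']\ne\emptyset$ is redundant, since $b\in\cal{AR}^{X'}$ already means $[b,X']\ne\emptyset$ by the definition of $\cal{AR}^{X'}$; but this does no harm.
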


\begin{proof}
	Identical to that of Lemma 4.38 of \cite{stevo_book}.
\end{proof}

\begin{theorem}
	\label{Thm:Souslin}
	The collection of $\cS$-Ramsey subsets of $\cR$ is closed under the Souslin operation.
\end{theorem}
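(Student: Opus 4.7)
The plan is to adapt the proof of Lemma 4.39 from \cite{stevo_book} to the \axiom-ideal setting, relying on the three preceding theorems of this subsection. Given a Souslin scheme $\{\frakX_s : s \in \omega^{<\omega}\}$ of $\cS$-Ramsey subsets of $\cR$, let $\frakX := \bigcup_{f \in \omega^\omega}\bigcap_n \frakX_{f|n}$. After replacing each $\frakX_s$ by $\bigcap_{k \le |s|}\frakX_{s|k}$, which remains $\cS$-Ramsey by Theorem~\ref{Thm:CountablyAdditive}, I may assume the scheme is monotone: $\frakX_s \supseteq \frakX_t$ whenever $s \sqsubseteq t$. Introduce the auxiliary sets
\[
\frakY_s := \bigcup_{f \supseteq s}\ \bigcap_{n \ge |s|} \frakX_{f|n},
\]
so that $\frakY_\emptyset = \frakX$, each $\frakY_s \subseteq \frakX_s$, and the recursion $\frakY_s = \bigcup_{i<\omega} \frakY_{s^\frown i}$ holds.

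Fix a non-empty basic set $[a, Y]$ with $a \in \cal{BAR}^Y$; the goal is to produce $X \in [\depth_Y(a), Y]$ with $(X, Y) \in \cI$ such that $[a, X]$ is $\frakX$-invariant. The crux is to run a single diagonal fusion that simultaneously decides, for every $b \in \cal{BAR}^X$ extending $a$ and every $s \in \omega^{<\omega}$, whether $X$ $\frakY_s$-accepts $b$ or $\frakY_s$-rejects every $c \in \cal{BAR}^X$ above $b$. Since $\cal{BAR}^X$ and $\omega^{<\omega}$ are countable, I can enumerate all relevant $(b, s)$-pairs and at stage $k$ of the fusion apply Proposition~\ref{Prop:Decides} and, when a rejection occurs, Lemma~\ref{Lem:RejectAll}, at each step choosing the next element of the chain within $\cI$; the transitivity of $\cI$ then gives $(X, Y) \in \cI$ for the limit $X := \lim_k Y_k$, and Proposition~\ref{Prop:Rejects} propagates every rejection decision from the finite stages up to $X$.

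Having constructed such an $X$, I run a tree-induction on $\omega^{<\omega}$. If $X$ $\frakY_\emptyset$-accepts some $b \in \cal{BAR}^X$ with $a \sqsubseteq b$, then $[b, X] \subseteq \frakX$ and, after one further shrinking step using the $\cS$-Ramsey property of the remaining sets on $[a, X]$ (valid by Theorem~\ref{Thm:BaireEqualsRamsey} and the \axiom-ideal), we obtain $X' \in [\depth_Y(a), Y]$ with $[a, X']$ $\frakX$-invariant. If instead $X$ rejects every $b \in \cal{BAR}^X$ with $a \sqsubseteq b$ at $\frakY_\emptyset$, I claim $[a, X] \cap \frakX = \emptyset$: an $A \in [a, X] \cap \frakX$ would satisfy $A \in \bigcap_n \frakX_{f|n}$ for some $f \in \omega^\omega$, and choosing approximations $b_n \sqsubset A$ in $\cal{BAR}^X$ with $\depth_X(b_n) \to \infty$, the element $A$ itself witnesses via the \axiom-ideal (choose $X' \in [\depth_X(b_n), X]$ with $(X', X) \in \cI$ so that $[b_n, X']$ avoids no relevant set) that $b_n$ cannot be rejected at $\frakY_{f|n}$, contradicting the fusion. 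The recursion $\frakY_s = \bigcup_i \frakY_{s^\frown i}$ together with Theorem~\ref{Thm:CountablyAdditive} applied to the countable union handles the inductive collapse from levels $s^\frown i$ back to $s$.

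The main obstacle is the fusion step: countably many acceptance/rejection decisions over $\cal{BAR}^X \times \omega^{<\omega}$ must coexist inside a single limit $X$ with $(X, Y) \in \cI$. Without full axiom \textbf{A.3(2)}, ordinary amalgamation is unavailable, and each intermediate stage of the fusion must be carried out within the \axiom-ideal using the second bullet of Definition~\ref{Def:A32Ideal}; Proposition~\ref{Prop:Rejects} is then precisely what permits rejection to transfer through the fusion to the limit $X$. Once the fusion is in place, \textbf{A.4} enters only through Lemma~\ref{Lem:StrongA4} to handle the branching of the Souslin scheme.
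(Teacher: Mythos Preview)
Your plan has a genuine gap in the rejection case. You claim that if $X$ $\frakY_\emptyset$-rejects every $b\in\cal{BAR}^X$ extending $a$, then $[a,X]\cap\frakX=\emptyset$, and you justify this by saying that a single $A\in[a,X]\cap\frakX$ ``witnesses via the \axiom-ideal that $b_n$ cannot be rejected at $\frakY_{f|n}$.'' But rejection of $b$ at $\frakY_s$ only says that no $X'\in[\depth_X(b),X]$ with $(X',X)\in\cI$ satisfies $[b,X']\subseteq\frakY_s$; a single point $A\in[b,X]\cap\frakY_s$ does not contradict this. Your tree recursion does correctly propagate rejection downward (rejection at $\frakY_s$ plus decision at each $\frakY_{s^\frown i}$ forces rejection at each $\frakY_{s^\frown i}$), but universal rejection at every $\frakY_s$ still does not yield $[a,X]\cap\frakX=\emptyset$: the sets $\frakY_s$ are themselves results of the Souslin operation and are not known to be $\cS$-Baire, so there is no mechanism to convert rejection into disjointness. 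Trying to patch this by also making $[b,X]$ $\frakX_s$-invariant during the fusion runs into the usual obstacle: for a fixed $b$ only finitely many $s$ are handled, and the branch $f$ is not known in advance, so you cannot get $[b,X]\subseteq\bigcap_n\frakX_{f|n}$ for any single $b$.

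The paper's proof (following Todorcevic's Lemma~4.39) avoids this by introducing the Marczewski-type hull $\Phi(\wt{\frakX}_s):=([a,X]\cap\frakX_s)\setminus\bigcup\{[b,X]:[b,X]\cap\wt{\frakX}_s=\emptyset\}$. The point is that $\Phi(\wt{\frakX}_s)$ \emph{is} $\cS$-Ramsey, being a Boolean combination of $\frakX_s$ and basic sets. The fusion then only decides $(\wt{\frakX}_{s_i})^c$ for the enumerated $s_i$ at each depth, and the first Claim shows that any $\cS({\leq}X)$-Baire subset of $\Phi(\wt{\frakX}_s)\setminus\wt{\frakX}_s$ is meager; this is exactly where the fusion decision is cashed in. One then sets $\frakY_t=\Phi(\wt{\frakX}_t)\setminus\bigcup_m\Phi(\wt{\frakX}_{t^\frown m})$, observes these are Ramsey-null, and shows $\Phi(\frakX)\setminus\frakX\subseteq\bigcup_t\frakY_t$. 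The hull construction is the missing idea in your sketch; without it the rejection branch does not close.
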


\begin{proof}
	Fix a Souslin scheme $\{\frakX_s: s\in \omega^{<\omega}\}$ with each $\frakX_s$ $\cS$-Ramsey; we may assume that $\frakX_s\supseteq \frakX_t$ whenever $s\sqsubseteq t$. Write $\displaystyle\frakX = \bigcup_{f\in \omega^\omega} \bigcap_{n< \omega} \frakX_{f|_n}$ for the result of the Souslin operation, and given $s\in \omega^{<\omega}$, write $\wt{\frakX}_s$ for the result of the Souslin operation applied to the subtree $N_s := \{t\in \omega^{<\omega}: t\sqsubseteq s\text{ or }s\sqsubseteq t\}$. Note that $\wt{\frakX}_s\subseteq \frakX_s$ and $\wt{\frakX}_\emptyset = \frakX$. 
		
	Towards showing that $\frakX$ is $\cS$-Ramsey, fix $Y\in \cS$ and $a\in \cal{AR}^Y$. Write $n = \depth_Y(a)$. Enumerate $\omega^{<\omega}$ as $\{s_i: i< \omega\}$ so that the enumeration is non-$\sqsubseteq$-decreasing. Setting $Y_0 = Y$, we perform a fusion construction. If $Y_k$ is given, choose $Y_{k+1}\in [n+k, Y_k]$ with $(Y_{k+1}, Y_k)\in \cI$ and so that for any $b\in \cal{AR}^{Y_k}$ with $\depth_{Y_k}(b) = n+k$, $Y_{k+1}$ $(\wt{\frakX}_{s_i})^c$-decides $b$ for every $i\leq k$. Let $X = \lim_k Y_k$. Note that $(X, Y_k)\in \cI$ for every $k< \omega$ and that $\depth_X(a) = n$. 
		
	For the time being, fix $i< \omega$, and let $s = s_i\in \omega^{<\omega}$. Write $\Psi = \{b\in \cal{AR}^X: [b, X]\cap \wt{\frakX}_s = \emptyset\}$. Then set $\Phi(\wt{\frakX}_s) = ([a, X]\cap \frakX_s)\setminus \bigcup\{[b, X]: b\in \Psi\}$. So $([a, X]\cap \wt{\frakX}_s)\subseteq \Phi(\wt{\frakX}_s)\subseteq ([a, X]\cap \frakX_s)$. We note that metrically closed subsets of $\cR$ are $\cS$-Baire, so in particular, each $[b, X]$ is $\cS$-Baire, so also $\Phi(\wt{\frakX}_s)$ is $\cS$-Baire, hence $\cS$-Ramsey and also $\cS({\leq}X)$-Baire, where $\cS({\leq}X):= \{Y\in \cS: Y\leq X\}$, and we can discuss subsets of $\cR({\leq}X):= \{M\in \cR: M\leq_{\cR} X\}$ as being $\cS({\leq}X)$-Baire, meager, Ramsey, or Ramsey null. We remark that the containment space $(\cR({\leq}X), \cS({\leq}X), \leq, \leq_\cR)$, with respective sets of approximations $\cal{AR}^X$ and $\cal{AS}^X$, is perfect, respects branching, satisfies Axiom \textbf{A.4}, and that $\cI\cap (\cS({\leq}X)\times \cS({\leq}X))$ is an \axiom-ideal for $\cR({\leq}X)$. Hence we may freely use Theorems~\ref{Thm:BaireEqualsRamsey}, \ref{Thm:CountablyAdditive}, and \ref{Thm:RamseyNull} for $(\cR({\leq}X), \cS({\leq}X), \leq, \leq_\cR)$.
    \vspace{2 mm}

	\begin{claim}
		Any $\frakY\subseteq \Phi(\wt{\frakX}_s)\setminus \wt{\frakX}_s$ which is $\cS({\leq}X)$-Baire is $\cS({\leq}X)$-meager. 
	\end{claim}
	
	\begin{proof}[Proof of claim]
		Fix $X'\leq X$ and $b\in \cal{AR}^{X'}$. By growing $b$ if needed so that $|a|\leq |b|$, we may assume $a\sqsubseteq b$, as otherwise $[b, X']\cap \frakY = \emptyset$. Find $b'\in \cal{AR}^{X'}$ with $b\sqsubseteq b'$ and so that $\depth_X(b')\geq n+i$. As $\frakY$ is $\cS({\leq}X)$-Baire, find $Z\leq X'$ and $c\in \cal{BAR}^Z$ with $b\sqsubseteq c$ and $[c, Z]$ $\frakY$-invariant. We want $[c, Z]\cap \frakY = \emptyset$, so towards a contradiction, suppose $[c, Z]\subseteq \frakY$.
		
		Write $n+k = \depth_X(c)$, and note that $k\geq i$. By the construction of $X$, also $n+k = \depth_{Y_k}(c)$, so $Y_{k+1}$ $(\wt{\frakX}_s)^c$-decides $c$. As $[c, Z]\subseteq \frakY$ and $\frakY \cap \wt{\frakX}_s = \emptyset$, we have that $Z$ $(\wt{\frakX}_s)^c$-accepts $c$. As $(X, Y_{k+1})\in \cI$ and $Z\leq X$, we have $(Z, Y_{k+1})\in \cI$, so by Proposition~\ref{Prop:Rejects}, we must have $Y_{k+1}$ $(\wt{\frakX}_s)^c$-accepts $c$. But then $c\in \Psi$, so by the definition of $\Phi(\wt{\frakX}_s)$, we have $[c, X]\cap \Phi(\wt{\frakX}_s) = \emptyset$, contradicting that $[c, Z]\subseteq [c, X]$ and $[c, Z]\subseteq \frakY$. 
	\end{proof}
	
	Now given $t\in \omega^{<\omega}$, consider $\frakY_t = \Phi(\wt{\frakX}_t)\setminus \bigcup\{\Phi(\wt{\frakX}_{t^\frown m}): m< \omega\}$. As each $\Phi(\wt{\frakX}_{t^\frown m})$ contains $\wt{\frakX}_{t^\frown m}$ and $\wt{\frakX}_t = \bigcup_m \wt{\frakX}_{t^\frown m}$, we have that $\frakY_t \subseteq \Phi(\wt{\frakX}_t)\setminus \wt{\frakX}_t$. As $\frakY_t$ is $\cS({\leq}X)$-Baire by Theorem~\ref{Thm:CountablyAdditive}, it is $\cS({\leq}X)$-meager by the claim, hence also $\cS({\leq}X)$-Ramsey-null. 
 \vspace{2 mm}
		
	\begin{claim}
		Writing $\Phi(\frakX)$ for $\Phi(\wt{\frakX}_\emptyset)$, we have $(\Phi(\frakX)\setminus \frakX)\subseteq \bigcup_{t\in \omega^{<\omega}} \frakY_t$. 
	\end{claim}
		
	\begin{proof}[Proof of claim]
		Fix $A\in \Phi(\frakX) = \Phi(\wt{\frakX}_\emptyset)$. If it exists, find a $\sqsubseteq$-maximal $t\in \omega^{\omega}$ with $A\in \Phi(\wt{\frakX}_t)$. Then $A\in \frakY_t$. If no such $t$ exists, then let $f\in \omega^\omega$ be such that $A\in \Phi(\wt{\frakX}_{f|_m})$ for every $m< \omega$. As $\Phi(\wt{\frakX}_{f|_m})\subseteq \frakX_{f|_m}$, we conclude that $A\in \frakX$.  
	\end{proof}
		
	Again using Theorem~\ref{Thm:CountablyAdditive}, we can find $Z\in [n, X]$ with $[a, Z]\cap \frakY_t = \emptyset$ for every $t\in \omega^{<\omega}$. It follows that $[a, Z]\cap \Phi(\frakX) = [a, Z]\cap \frakX$.  As $\Phi(\frakX)$ is $\cS({\leq}X)$-Ramsey, find $Z_0\in [n, Z]$ so that $[a, Z_0]$ is $\Phi(\frakX)$-invariant. Then also $[a, Z_0]$ is $\frakX$-invariant as desired.
\end{proof}

From the previous theorems, we obtain the conclusion of the Abstract Ramsey Theorem with  Axiom \axiom\ replaced with the existence of an \axiom-ideal.

\begin{theorem}\label{thm.ARTIdeal}
Suppose $\Omega = (\cR,\cS,\le,\le_{\cR})$ is a containment space which is perfect, respects branching, satisfies axiom {\bf A.4}, and admits an {\bf A.3(2)}-ideal for $\cR$.
Then the conclusion of the Abstract Ramsey Theorem holds.
\end{theorem}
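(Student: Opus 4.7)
The plan is to assemble Theorem~\ref{thm.ARTIdeal} as a direct consequence of the four main results already established in this subsection, namely Theorems~\ref{Thm:BaireEqualsRamsey}, \ref{Thm:CountablyAdditive}, \ref{Thm:RamseyNull}, and \ref{Thm:Souslin}. Each of these was proved under exactly the hypotheses assumed here (perfectness, respecting branching, axiom \textbf{A.4}, and the existence of an \axiom-ideal for $\cR$), and together they supply every clause of the Abstract Ramsey Theorem. The genuine content lies in the combinatorial forcing machinery developed earlier, where uses of axiom \axiom\ were systematically replaced by applications of the \axiom-ideal via Proposition~\ref{Prop:Rejects} and Lemmas~\ref{Lem:GrowRejects} and~\ref{Lem:RejectAll}; by comparison, the present theorem is a bookkeeping exercise.

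Concretely, I would verify each clause in turn. The family of $\cS$-Ramsey subsets of $\cR$ is trivially closed under complements, since $\frakX$-invariance of a basic set $[a,X]$ coincides with $\frakX^{c}$-invariance, and is closed under countable unions by Theorem~\ref{Thm:CountablyAdditive}, hence forms a $\sigma$-field. Theorem~\ref{Thm:Souslin} strengthens this closure to include the Souslin operation, giving the first clause. Theorem~\ref{Thm:BaireEqualsRamsey} then identifies this $\sigma$-field with the field of $\cS$-Baire subsets of $\cR$, giving the second clause.

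For the third clause, Theorem~\ref{Thm:RamseyNull} gives that $\cS$-Ramsey-null and $\cS$-meager coincide as collections, so it suffices to check that one of them is a $\sigma$-ideal. Closure of $\cS$-Ramsey-null under passage to subsets is immediate from the definition, since any basic set $[a,X]$ with $[a,X]\cap\frakX=\emptyset$ also satisfies $[a,X]\cap\frakY=\emptyset$ whenever $\frakY\subseteq\frakX$. Closure under countable unions is already implicit in the proof of Theorem~\ref{Thm:CountablyAdditive}: that fusion argument actually produces, for a countable union $\frakX=\bigcup_{k}\frakX_{k}$ of $\cS$-Ramsey sets, a basic set $[a,X]$ with $[a,X]\cap\frakX=\emptyset$, so strengthening the hypothesis to each $\frakX_{k}$ being Ramsey-null makes the same construction deliver that $\frakX$ is Ramsey-null. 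This completes all three clauses and hence the proof. There is no real obstacle at this stage; the analytic and combinatorial difficulty was resolved in the four pillar theorems above.
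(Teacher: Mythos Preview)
Your proposal is correct and matches the paper's own treatment: the paper gives no separate proof of Theorem~\ref{thm.ARTIdeal} but simply states that it follows ``from the previous theorems,'' i.e.\ Theorems~\ref{Thm:BaireEqualsRamsey}, \ref{Thm:CountablyAdditive}, \ref{Thm:RamseyNull}, and \ref{Thm:Souslin}. Your verification that these four results assemble into all clauses of the Abstract Ramsey Theorem, including the observation that the fusion in Theorem~\ref{Thm:CountablyAdditive} yields the $\sigma$-ideal property for the Ramsey-null sets, is exactly the intended argument.
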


\begin{rem}
    If in Definition~\ref{Def:A32Ideal}, one strengthens the last bullet to hold for any $a\in \cal{AR}^X$, then one can obtain the conclusion of Theorem~\ref{thm.ARTIdeal} without the assumption that $\Omega$ is perfect or respects branching. The original \axiom-axiom states that $\{(X, Y): X\leq Y\}\subseteq \cS\times \cS$ is a \axiom-ideal in this stronger sense. 
\end{rem}

For many containment spaces, including the main one of consideration in this paper, we can simplify the definition of \axiom-ideal.
	
\begin{defin}
	\label{Def:MonoidBased}
	We call a containment space $(\cR, \cS, \leq_\cR, \leq)$ \emph{monoid-based} if 
	\begin{itemize}
		\item
		$\cS$ is an infinite left-cancellative monoid, and given $\phi, \sigma\in \cS$, $\sigma\leq \phi$ iff $\sigma\in \phi\cdot\cS$.
		\item
		For every $n< \omega$, there is a left-cancellative action $\cS\times \cal{AS}_n\to \cal{AS}_n$ such that the restriction map $\phi\to \phi|_n$ is \emph{$\cS$-equivariant}, i.e.\ for any $\phi, \sigma\in \cS$ and any $n< \omega$, we have $\phi\cdot (\sigma|_n) = (\phi\cdot \sigma)|_n$. 
		\item
		There are left-cancellative left actions of $\cS$ on $\cR$ and $\cal{AR}_n$ for each $n< \omega$ such that the restriction maps are $\cS$-equivariant. Additionally, given $(\eta, \phi)\in \cR\times \cS$, we have $\eta\leq_\cR \phi$ iff $\eta\in \phi\cdot \cR$. \qed
	\end{itemize}
\end{defin}

When working with monoid-based containment spaces, when given $h\in \cal{AR}$, we write $\depth(h)$ for $\depth_{\rm{id}}(h)$, and similarly for members of $\cal{AS}$. Given $h\in \cal{AR}$ and $\phi\in \cS$, we note that $h\in \cal{AR}^\phi$ iff there is $g\in \cal{AR}$ with $h = \phi\cdot g$, in which case $[h, \phi] = \phi\circ [g]$ and $\rm{depth}_\phi(h) = \rm{depth}(g)$. We put $\cal{BAR} := \cal{BAR}^{\rm{id}}$, and likewise for $\cal{BAR}_\ell$. Monoid-based containment spaces always respect branching and are always perfect.

In this case, we call $\cI\subseteq \cS$ an \emph{\axiom-ideal for $\cR$} if $\{(\sigma, \phi)\in \cS\times \cS: \sigma\in \phi\cdot \cI\}$ is an \axiom-ideal as in Definition~\ref{Def:A32Ideal}. This means that $\cI$ is a right ideal satisfying 
\begin{itemize}
	\item
	$\forall n{<}\omega\,\, (\cI\cap [\rm{id}_n])\neq \emptyset$.
	\item
	For every $h\in \cal{BAR}$ and $\psi\in \cI$, writing $p = \depth(\psi\circ h)$, there is $\sigma\in (\cI\cap [\rm{id}_p])$ with $\sigma\circ [\psi\circ h]\subseteq \psi\circ [h]$.
\end{itemize}

For the rest of the paper, this is the sense in which we will be thinking of \axiom-ideals.

\section{Infinite-dimensional Ramsey theorems}\label{Sec:Main Theorem}

Recall that $\Delta$ is the fixed strong diary constructed in Section~\ref{Sec:Strong diaries}.

\begin{defin}
    \label{Def:StrongEmbs}
    Let $\Theta$ be a diary. A \emph{strong embedding} of $\Theta$ into $\Delta$ is an embedding $\phi\in \demb(\Theta, \Delta)$ with the property that $\wt{\phi}(m)+1 \in \rm{SL}(\Delta)$ for every $m< \omega$, and similarly for strong embeddings of $\Theta({<}n)$ into $\Delta$ for some $n< \rm{ht}(\Theta)$. Write $\StEmb(\Theta, \Delta)$ for the set of strong embeddings of $\Theta$ into $\Delta$, and likewise for $\Theta({<}n)$. Given $\phi\in \StEmb(\Theta, \Delta)$ and $n< \rm{ht}(\Theta)$, we write $\phi|_n$ for $\phi|_{\Theta(<n)}$. 

    Strong embeddings from a diary $\Theta$ into $\Delta$ will be the objects we turn into a Ramsey space. \textbf{For the remainder of this section, fix} a diary $\Theta$ with $\rm{ht}(\Theta) = \omega$. In following the notation from chapter 4 of  \cite{stevo_book}, we write 
    \begin{align*}
        \cR &:= \StEmb(\Theta, \Delta),\\
        \cal{AR}_n &:= \StEmb(\Theta({<}n), \Delta) = \{\phi|_n: \phi\in \cR\},\\
        \cal{AR} &:= \bigcup_{n< \omega} \cal{AR}_n.
    \end{align*}
    The two descriptions of $\cal{AR}_n$ are equivalent by repeated application of Proposition~\ref{Prop:Find_Level_Type}. Given $f\in \c{AR}_n$, let $\rm{depth}(f) = \tilde{f}(n-1)+1\in \rm{SL}(\Delta)$, and let $[f] = \{\phi\in \cR: \phi|_n = f\}$.

    When considering strong embeddings of $\Delta$ into itself, there is a natural variation. Given $\mu\in \rm{SL}(\Delta)\cup \{\omega\}$, we call $\psi\in \demb(\Delta, \Delta)$ \emph{$\mu$-strong} if $\psi$ is the identity on $\psi({<}\mu)$, and for $m\geq \mu$, we have $\wt{\psi}(m)+1\in \rm{SL}(\Delta)$. In particular, $\phi\in \demb(\Delta, \Delta)$ is $0$-strong iff $\phi\in \StEmb(\Delta, \Delta)$, and the identity embedding is the unique $\omega$-strong member of $\demb(\Delta, \Delta)$. Write $\cS_\mu$ for the set of all $\mu$-strong members of $\demb(\Delta, \Delta)$, and put 
    \begin{align*}
        \cS &= \bigcup_{\mu\in \SL(\Delta)\cup \{\omega\}} \cS_\mu,\\
        \cal{AS}_n &= \{\phi|_{\Delta({<}n)}: \phi\in \cS\},\\
        \cal{AS} &= \bigcup_{n<\omega} \cal{AS}_n.
    \end{align*}
    Given $s\in \cal{AS}_n$, write $[s] = \{\psi\in \cS: \psi|_n = s\}$. On $\cS$, let $\leq$ denote the partial order where $\sigma\leq \phi$ iff $\sigma\in \phi\cdot\cS$, and let $\leq_\cR\,\subseteq \cR\times\cS$ be defined so that $\eta \leq_\cR \phi$ iff $\eta\in \phi\cdot \cR$. Then $(\cR, \cS, \leq_\cR, \leq)$ is a monoid-based 
    containment space. 
\end{defin}

One of the main theorems of this paper is the following.

\begin{theorem}
    \label{Thm:Ramsey_Space}
    With notation as in Definition~\ref{Def:StrongEmbs}, $(\cR, \cS, \leq_\cR, \leq)$ is a  Ramsey 
    space. More precisely, $(\cR, \cS, \leq_\cR, \leq)$ satisfies \textbf{A.4} and $\cS$ contains an \axiom-ideal for $\cR$. 
\end{theorem}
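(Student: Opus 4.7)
My plan is to first dispatch the monoid-based containment space structure, then handle the two substantive claims -- existence of an \axiom-ideal and the pigeonhole \textbf{A.4} -- separately.

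First, I would observe that $(\cR,\cS,\le_\cR,\le)$ is monoid-based in the sense of Definition~\ref{Def:MonoidBased}: strong embeddings of $\Delta$ into itself are closed under composition (the strong-level condition $\wt\psi(m)+1\in\SL(\Delta)$ is preserved because strong levels are respected by composition), so $\cS$ is a left-cancellative monoid, and the obvious precomposition actions on $\cR$ and on the restrictions $\cal{AR}_n$, $\cal{AS}_n$ make the approximation maps $\cS$-equivariant. This gives axioms \textbf{A.1} and \textbf{A.2} for free. For \textbf{A.3(1)}, given $f\in\cal{AR}^Y_n$ and $Y'\in[\depth_Y(f),Y]$, I need to exhibit a strong embedding extending $f$ inside $Y'$. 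Iterating Proposition~\ref{Prop:Find_Level_Type} (which lets one realize any prescribed critical level type above a sufficiently high strong level of $\Delta$) provides the extension, since strong diaries contain cofinally many strong gadgets of every required flavor; monoid-based containment spaces are automatically perfect and respect branching.

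For the \axiom-ideal, I would define $\cI\subseteq\cS$ as the set of \emph{generic} strong self-embeddings of $\Delta$: those $\psi\in\cS$ such that for every $n<\omega$, every $S\subseteq\Delta(n)$, and every critical level type $\tau$ based on $\psi\circ S\subseteq\Delta(\wt\psi(n))$, there exists $m>\wt\psi(n)$ with $m+1\in\SL(\Delta)$ realizing $\tau$ using only nodes in the downward closure of $\psi[\Delta]$ above $\psi\circ S$. Generic embeddings are constructed by a fusion: at stage $j$ one lists the (finitely many) critical level types based on subsets of the current level and routes the next block of strong gadgets through those types, using Proposition~\ref{Prop:Find_Level_Type} item~2 to preserve earlier commitments. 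The first bullet of Definition~\ref{Def:A32Ideal} (agreement with the identity on $[\rm{id}_n]$) is handled by starting the fusion at level $n$. The second bullet is the delicate one: given $h\in\cal{BAR}$ and $\psi\in\cI$, setting $p=\depth(\psi\circ h)$, I need $\sigma\in\cI\cap[\rm{id}_p]$ with $\sigma\circ[\psi\circ h]\subseteq\psi\circ[h]$; this amounts to running the genericity fusion \emph{inside} the image of $\psi$ above $\psi\circ h$, which is possible precisely because $\psi$ being generic already guarantees that every critical level type needed by the fusion is realized by nodes of $\psi[\Delta]$.

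For axiom \textbf{A.4}, I would reduce to a Halpern--L\"auchli-style statement on the strong diary $\Delta$. Fix $Y\in\cS$, $a\in\cal{AR}^Y_k$ with $\depth_Y(a)=n$, and a partition $\cO,\cO^c$ of the one-step extensions of $a$. Write $a=Y\circ g$ for $g\in\cal{AR}_k$, so extensions of $a$ inside $Y$ biject with extensions of $g$ inside strong self-embeddings of $\Delta$ sitting above level $n$. The type of new level of $\Theta$ at position $k$ dictates the combinatorics:
\begin{itemize}
\item If level $k$ of $\Theta$ is a splitting or age-change level, the extension is determined once a target strong splitting or strong age-change gadget of the right type is chosen, and a straightforward pigeonhole over the finitely many gadget types (using genericity of $Y$, or shrinking $Y$ within $\cI$) produces the monochromatic $X$.
\item If level $k$ of $\Theta$ is a coding level, each extension selects a tuple of valid passing numbers for the various branches sitting above a fixed strong splitting node in $\Delta$. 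Coloring these tuples, a product pigeonhole -- a very tight analogue of the Halpern--L\"auchli theorem on the finite collection of branching possibilities at each strong coding gadget -- yields a sub-selection of branches, along all of $\Delta$, on which the color is constant. Iterating this across the infinitely many strong coding gadgets of $Y$ and diagonalizing produces the desired $X\in[n,Y]$.
\end{itemize}

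I expect the main obstacle to be the coding-level case of \textbf{A.4}: carrying out the product Halpern--L\"auchli argument while maintaining that the final $X$ lives in $\cS$ (i.e.\ is a strong self-embedding of $\Delta$) and, when one wants $(X,Y)\in\cI$, that $X$ remains generic. The second obstacle is synchronizing the ideal construction with \textbf{A.4} so that Lemma~\ref{Lem:StrongA4}-type uses of \textbf{A.4} against the ideal $\cI$ always return ideal elements, which requires that the fusion producing generic embeddings can be interleaved with the Halpern--L\"auchli selections without destroying either property.
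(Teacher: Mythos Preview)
Your overall architecture matches the paper's, but there are substantive gaps in both halves.

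\textbf{The \axiom-ideal.} Your notion of genericity is not the right one. You ask that every critical level type $\tau$ based on $\psi[S]$ be realized \emph{entirely inside} $\im(\psi){\downarrow}$. The paper's definition (Definition~\ref{Def:A32Generic}) is crucially different: one takes $X\subseteq\phi^+[\Delta(\mu)]$ in the image and a critical level type $\tau$ based on some $S\supseteq X$ with $\crit(\tau)\not\subseteq X$, and asks only that the $X$-part of the realization $T$ stay in $\im(\phi){\downarrow}$; the critical nodes of $\tau$ are allowed (indeed, expected) to lie outside the image. This inside/outside distinction is exactly what makes the second bullet of Definition~\ref{Def:A32Ideal} provable: when you build $\sigma$ inside $\im(\psi){\downarrow}$, $\sigma$ itself must be generic, so you must realize critical level types whose critical nodes sit outside $\sigma$'s image---and those outside nodes need not lie in $\im(\psi){\downarrow}$ at all. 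Your definition forces all of $\tau$ into the image, which makes the recursion ``run the genericity fusion inside $\im(\psi)$'' circular: you cannot arrange $\sigma$'s outside events using only $\psi$'s inside events. The paper's Proposition~\ref{Prop:IdealIsA32} handles this by using $\psi$'s genericity to steer the $X$-part while letting $\crit(\tau)$ wander freely in $\Delta$.

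\textbf{Axiom A.4.} Your case split on the type of level $m$ of $\Theta$ underestimates the difficulty. Even when $\Theta(m)$ is a splitting or age-change level, an extension of $f$ to $\cal{AR}_{m+1}$ involves choosing a strong level of $\Delta$ at which to realize the event \emph{and} positions for all nodes of $\Theta(m)$ above $f^+[\Theta(m)]$; this is an infinite-dimensional choice, not a ``straightforward pigeonhole over the finitely many gadget types.'' The paper proves \textbf{A.4} uniformly (Theorem~\ref{Thm:StrongHL}) via a forcing argument: one forces with finite partial functions $p\colon B(p)\times\Theta(m)\to\Delta(\ell(p))$ indexed by a large cardinal $\kappa$, names an ultrafilter on $\omega$ extending the canonical $\dot L(p)$'s, and then applies the Erd\H{o}s--Rado theorem to extract a homogeneous family of conditions $q_{\vec\alpha}$ (Lemma~\ref{Lem:ErdosRado}). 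The fusion that builds $h\in\cS\cap[\rm{id}_\mu]$ uses these conditions together with Proposition~\ref{Prop:Find_Level_Type} to extend one level at a time while maintaining color $\epsilon^*$. Your sketch mentions Halpern--L\"auchli but gives no mechanism to prove it in this setting; the forcing/Erd\H{o}s--Rado step is the missing idea.
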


In the case when $\Theta = \Delta$, we can prove a stronger result. 

\begin{theorem}
    \label{Thm:TRS}
    With notation as in Definition~\ref{Def:StrongEmbs}, $(\cS, \leq)$ is a topological Ramsey space.
\end{theorem}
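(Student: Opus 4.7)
The plan is to apply Theorem~\ref{thm.ARTIdeal} to the monoid-based topological containment space $(\cS,\leq)$ (with $\cR:=\cS$ and $\leq_{\cR}\,:=\,\leq$). Since $(\cS,\leq)$ is monoid-based in the sense of Definition~\ref{Def:MonoidBased}, it is automatically perfect and respects branching, and so only two items remain: axiom \textbf{A.4} and the existence of an \axiom-ideal $\cI\subseteq\cS$ for $\cS$. The background containment-space axioms \textbf{A.1}--\textbf{A.3(1)} follow routinely from the tree of finite approximations $\psi|_n$: sequencing is immediate, finitization \textbf{A.2} is obtained by declaring $s\leq_{\mathrm{fin}} t$ when $s$ factors through $t$ by a finite strong self-embedding, and the nonemptiness of basic sets $[a,X]$ in \textbf{A.3(1)} follows by iteratively extending $a$ one strong gadget at a time inside $X$, which is always possible by Proposition~\ref{Prop:Find_Level_Type}.

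For \textbf{A.4}, I must show that given $Y\in\cS$, $a\in\cal{AS}^Y_k$, and any $\cO\subseteq\cal{AS}_{k+1}$, there is $X\in[\depth_Y(a),Y]$ so that all $b\in\cal{AS}^X_{k+1}$ extending $a$ are uniformly in $\cO$ or all are in $\cO^c$. The extensions split into three cases according to whether the strong gadget of $\Delta$ at level $k$ is splitting, age-change, or coding. The splitting and age-change gadgets yield only binary branching and are handled by a short diagonal fusion. The coding gadget case is the substantive one: admissible extensions are indexed by valid passing numbers (Definition~\ref{Def:ValidPassingNumbers}), whose range is controlled by the class $\age_Y(\depth_Y(a))$, and the uniform choice is extracted through a tight analogue of the Halpern-L\"auchli theorem carried out along strong levels of $Y$, with ``left'' serving as a safe direction via Proposition~\ref{Prop:Left_Safe}. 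This is the content of Subsection~\ref{Subsec:A4} and is the main obstacle.

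For the \axiom-ideal, following Subsection~\ref{Subsec:A32GenericEmbs}, I would let $\cI\subseteq\cS$ consist of ``sufficiently generic'' strong self-embeddings of $\Delta$: those $\psi\in\cS$ with the property that for every branching approximation $h\in\cal{BAR}$, the image $\psi\cdot[h]$ has already been pruned down to exactly the gadget options dictated at $\depth(\psi\circ h)$. A diagonal fusion driven by \textbf{A.4} produces such $\psi\in\cI$ agreeing with the identity on any prescribed finite initial segment (the first bullet of Definition~\ref{Def:A32Ideal}). For the second bullet---that given $h\in\cal{BAR}$ and $\psi\in\cI$, letting $p=\depth(\psi\circ h)$, there exists $\sigma\in\cI\cap[\mathrm{id}_p]$ with $\sigma\circ[\psi\circ h]\subseteq\psi\circ[h]$---I would construct $\sigma$ generically inside the restriction of $\psi$ above level $p$, with the genericity of $\psi$ ensuring that every strong continuation visible in $\sigma\circ\psi\circ h$ is already present in $\psi\circ[h]$. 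Once \textbf{A.4} and the \axiom-ideal are established, Theorem~\ref{thm.ARTIdeal} yields the conclusion of the Abstract Ramsey Theorem for $(\cS,\leq)$, making it a topological Ramsey space; the parallel arguments with $\Theta$ in place of $\Delta$ as the source of finite approximations give Theorem~\ref{Thm:Ramsey_Space}.
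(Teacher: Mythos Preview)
Your high-level strategy---apply Theorem~\ref{thm.ARTIdeal} to the monoid-based topological containment space $(\cS,\leq)$, verifying \textbf{A.4} and constructing an \axiom-ideal---is exactly the paper's approach. However, your descriptions of both technical ingredients contain misconceptions that would block a correct execution.

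For \textbf{A.4}: your claim that ``the splitting and age-change gadgets yield only binary branching and are handled by a short diagonal fusion'' conflates branching in the tree $\Delta$ with branching in the space of approximations. Given $f\in\cal{AS}_m$, the set $F=\{g\in\cal{AS}_{m+1}:g|_m=f\}$ is \emph{infinite} regardless of whether level $m$ of the source is splitting, age-change, or coding, because $g$ must specify an entire level set $g[\Delta(m)]$ at some (freely chosen) strong level of $\Delta$. There is no easy case. The paper's proof (Theorem~\ref{Thm:StrongHL}) is a single forcing argument treating all three cases uniformly: one forces with finite ``columns'' indexed by a large cardinal $\kappa$, extracts via Erd\H{o}s--Rado a homogeneous family of conditions $q_{\vec\alpha}$ (Lemma~\ref{Lem:ErdosRado}), and uses these and an ultrafilter name to build $h$ level by level. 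The case split you mention only appears inside Lemma~\ref{Lem:FindConditionExtending}, where one must cook up the right level type $\tau$ to apply Proposition~\ref{Prop:Find_Level_Type}.

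For the \axiom-ideal: your plan is circular. You propose to build generic $\psi\in\cI$ by ``a diagonal fusion driven by \textbf{A.4}'', but the paper constructs $\cI$ \emph{directly} and independently of \textbf{A.4} (Subsection~\ref{Subsec:A32GenericEmbs}). The correct definition (Definition~\ref{Def:A32Generic}) is not about any particular branching approximation $h$; rather, $\phi\in\cI$ iff for cofinitely many $\mu\in\SL(\Delta)$, \emph{every} critical level type $\tau$ based on a level set $S\supseteq X$ with $X\subseteq\phi^+[\Delta(\mu)]$ and $\crit(\tau)\not\subseteq X$ is realized somewhere below $\wt\phi(\mu)$ with the $X$-part remaining inside $\im(\phi){\downarrow}$. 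This genericity is what makes the second bullet of the \axiom-ideal definition go through (Proposition~\ref{Prop:IdealIsA32}): given $\psi\in\cI$ and $h\in\cal{BAS}$, one builds $\sigma\in\cI\cap[\mathrm{id}_p]$ level by level, using $\psi$'s genericity at each stage to keep the relevant part of $\im(\sigma)$ inside $\im(\psi){\downarrow}$, rather than appealing to any pigeonhole. Existence of such $\phi$ (Proposition~\ref{Prop:IdealExists}) is a direct greedy construction using Proposition~\ref{Prop:Find_Level_Type} and Proposition~\ref{Prop:Left_Safe}.
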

This does not directly follow from Theorem~\ref{Thm:Ramsey_Space}, since in the case that $\Theta = \Delta$, we have $\cR = \cS_0\subsetneq \cS$. 

Before undertaking the proof of Theorem~\ref{Thm:Ramsey_Space}, let us show how this implies theorems analogous to those of Galvin-Prikry \cite{GP} and Silver \cite{Silver} for \emph{all} embeddings, not just strong ones. 

\begin{theorem}
    \label{Thm:General_GP_Silver}
    Let $\Gamma$ and $\Theta$ be any diaries coding $\bK$. 
        Then for any finite Souslin-measurable coloring of $\demb(\Theta, \Gamma)$, there is $h\in \demb(\Gamma, \Gamma)$ with $h\circ \demb(\Theta, \Gamma)$ monochromatic. 
\end{theorem}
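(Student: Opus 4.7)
The plan is to pull back the coloring to the Ramsey space $\cR = \StEmb(\Theta, \Delta)$, apply Theorem~\ref{Thm:Ramsey_Space}, and transfer the resulting monochromatic basic set back to $\demb(\Theta, \Gamma)$ using bi-embeddability of diaries coding $\bK$.

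First I would use the bi-embeddability of diaries coding $\bK$ (from \cite{BCDHKVZ}), strengthened to the assertion that $\StEmb(\Theta', \Delta) \neq \emptyset$ for every diary $\Theta'$ with $\str(\Theta') \cong \bK$: because $\Delta$ was constructed so that every type of strong gadget (splitting, coding, and age-change at every valid parameter) appears cofinally, one can build a strong embedding of $\Theta'$ into $\Delta$ by induction on levels, at each stage routing the next level of $\Theta'$ into the terminal level of a suitable strong gadget of $\Delta$ in the style of Proposition~\ref{Prop:Strong_Diaries} and Proposition~\ref{Prop:Find_Level_Type}. This both ensures $\cR \neq \emptyset$ and lets me fix a chosen strong embedding $\iota \in \StEmb(\Gamma, \Delta)$. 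I would also fix any diary embedding $\iota' \in \demb(\Delta, \Gamma)$.

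Next, define $\chi' \colon \cR \to r$ by $\chi'(\phi) = \chi(\iota' \circ \phi)$. Post-composition by $\iota'$ is continuous from $\cR$ into $\demb(\Theta, \Gamma)$ in the pointwise-convergence topology, so each color class of $\chi'$ is Souslin-measurable in $\cR$. Applying the $\cS$-Ramsey property guaranteed by Theorem~\ref{Thm:Ramsey_Space} successively to each of the finitely many color classes produces some $\sigma \in \cS$ such that $\chi'$ is constant on the basic set $\sigma \circ \cR$; call this constant value $c$.

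Finally, set $h := \iota' \circ \sigma \circ \iota$, which lies in $\demb(\Gamma, \Gamma)$ as a composition of diary embeddings. The key observation is that for any $g \in \demb(\Theta, \Gamma)$ the composition $\iota \circ g$ lies in $\cR$: since $\iota$ is strong, $\wt{\iota \circ g}(m) + 1 = \wt{\iota}(\wt{g}(m)) + 1 \in \rm{SL}(\Delta)$ for every level $m$ of $\Theta$. Therefore $h \circ g = \iota' \circ \sigma \circ (\iota \circ g) \in \iota' \circ \sigma \circ \cR$, giving $\chi(h \circ g) = \chi'(\iota \circ g) = c$. Hence $h \circ \demb(\Theta, \Gamma)$ is $\chi$-monochromatic, as required. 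The main obstacle is establishing $\StEmb(\Gamma, \Delta) \neq \emptyset$ and $\cR \neq \emptyset$; granted these, the argument is a straightforward bi-embedding transfer on top of Theorem~\ref{Thm:Ramsey_Space}.
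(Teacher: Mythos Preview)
Your argument is correct and follows essentially the paper's approach: pull back to $\cR$, apply Theorem~\ref{Thm:Ramsey_Space}, then push back via bi-embeddability. The only cosmetic difference is that the paper does not argue separately that $\StEmb(\Gamma,\Delta)\neq\emptyset$; instead it takes an arbitrary $\beta\in\demb(\Gamma,\Delta)$ together with some $g\in\cS_0$ (so $\wt{g}(m)+1\in\SL(\Delta)$ for all $m$) and sets $h=\psi\circ\sigma\circ g\circ\beta$. Your single strong $\iota\in\StEmb(\Gamma,\Delta)$ plays exactly the role of the composite $g\circ\beta$, so this is a mild streamlining; and your ``main obstacle'' is not really one, since $\StEmb(\Gamma,\Delta)\neq\emptyset$ is already implicit in the remark after Definition~\ref{Def:StrongEmbs} that the two descriptions of $\cal{AR}_n$ coincide via Proposition~\ref{Prop:Find_Level_Type}.

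One small slip in your final line: you write $\chi(h\circ g)=\chi'(\iota\circ g)$, but what you need is $\chi(h\circ g)=\chi'(\sigma\circ\iota\circ g)$; the point is that $\sigma\circ\iota\circ g\in\sigma\circ\cR$, where $\chi'$ is constant. The element $\iota\circ g$ itself lies in $\cR$ but need not lie in $\sigma\circ\cR$.
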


\begin{proof}
    Fix a Souslin-measurable coloring $\chi\colon \demb(\Theta, \Gamma)\to r$. By Theorem 6.2.14 of \cite{BCDHKVZ}, any two diaries which code $\flim(\cK)$ are bi-embeddable. So fix $\psi\in \demb(\Delta, \Gamma)$.  Then $\chi\circ \psi\colon \demb(\Theta, \Delta)\to r$ is also Souslin-measurable. So also $\chi\circ \psi|_\cR$ is Souslin-measurable.
    By Theorem~\ref{Thm:Ramsey_Space}, there is $\sigma\in \cS$ with $\chi\circ \psi\circ \sigma|_\cR$ monochromatic. Now let $g\in \cS$ be chosen so that $\wt{g}(m)+1\in \SL(\Delta)$ for every $m< \omega$. Then $g\circ \demb(\Theta, \Delta)\subseteq \cR$, so in particular, $\chi\circ \psi\circ \sigma\circ g\colon \demb(\Theta, \Delta)\to r$ is constant. Now pick any $\beta\in \demb(\Gamma, \Delta)$, and observe that $\chi\circ \psi\circ \sigma\circ g\circ \beta\colon \demb(\Theta, \Gamma)\to r$ is constant. Thus $h:= \psi\circ \sigma\circ g \circ \beta \in \demb(\Gamma, \Gamma)$ is as desired. 
\end{proof}

\begin{rem}
The conclusion of Theorem~\ref{Thm:General_GP_Silver} goes well beyond just Souslin measurable colorings. Indeed, all that is needed for the proof is that there is some $\psi\in \demb(\Delta, \Gamma)$ so that $\chi\circ \psi|_\cR$ is $\cS$-Baire. 
\end{rem} 

\begin{rem}
Theorem~\ref{Thm:General_GP_Silver} cannot be strengthened to allow countably many colors. Fix any diary $\Gamma$. We view $S:= \demb(\Gamma, \Gamma)$ as a submonoid of $\emb(\bK, \bK)$ as in Section~\ref{sec.optimality}. For a given finite $\bA\subseteq \bK$ and $f\in \emb(\bA, \bK)$, the forward orbit $S{\cdot}f\subseteq \emb(\bA, \bK)$ is countably infinite. 
Then we can regard the function $\gamma\colon S\to \emb(\bA, \bK)$ given by $\gamma(s) = s|_{\bA}$ as a countable coloring of $S$. For this coloring, it is impossible to find $\eta\in \demb(\Gamma, \Gamma)$ with $\gamma\cdot \eta$ constant. 
\end{rem}

The remainder of this section is split into three subsections. The first discusses various notions of Ellentuck topology and how they compare. The second proves that $\cS$ contains an \axiom-ideal for $\cR$ and for $\cS$; in fact, the $\cI\subseteq \cS$ will be the same for all cases. The third uses a forcing argument to show that axiom \textbf{A.4} holds.

\subsection{The Ellentuck topology}
\label{Subsec:Ellentuck}

\begin{defin}
    \label{Def:Ellentuck}
    Fix a monoid $M$ equipped with a left-invariant metric $d$. The \emph{Ellentuck topology} on $M$ is the coarsest left-invariant topology containing the metric topology. We write $s_i\xrightarrow{\rmE_M} s$ to denote Ellentuck convergence of a net from $M$. Explicitly, given $(s_i)_{i\in I}$ and $s$ in $M$, we have that $s_i\xrightarrow{\rmE_M} s$ iff $s_i\xrightarrow{d} s$ \emph{and} eventually $s_i\in sM$.
\end{defin}

We will want to consider the above definition in three settings.
\begin{itemize}
    \item
    Ellentuck's original definition \cite{Ellentuck} was of a finer topology on $[\bbN]^{\infty}$. Upon identifying $[\bbN]^\infty$ with the monoid of all increasing injections of $\bbN$ to itself equipped with the pointwise convergence topology, Definition~\ref{Def:Ellentuck} is equivalent to the original.
    \item 
    For any diary $\Gamma$ with $\str(\Gamma) \cong \bK$, we can consider the monoid $M = \demb(\Gamma, \Gamma)$ equipped with its standard metric topology. 
    \item
    In the case $\Gamma = \Delta$, we also want to consider the Ellentuck topology as computed in the submonoid $\cS\subseteq \demb(\Delta, \Delta)$.
\end{itemize} 
First, we discuss $M= \demb(\Gamma, \Gamma)$. In principle, there is another way we could put an Ellentuck-like topology on $M$. As a metric space, we can identify $M$ with a subspace of $\binom{\str^\#(\Gamma)}{\str^\#(\Gamma)} \subseteq [\bbN]^\infty$. Thus we can equip $M$ with the topology it inherits as a subspace of $[\bbN]^\infty$ equipped with its Ellentuck topology. Let us observe that this topology in fact coincides with the Ellentuck topology on $M$ as follows. Given $s\in M$, let $\str^\#(s)$ denote the substructure of $\str^\#(\Gamma)$ corresponding to the image of $s$. Then if $s, t\in M$ are such that $\str^\#(t)\subseteq \str^\#(s)$, it follows that for some $u\in M$ we have $t = su$. Thus if $(s_i)_{i\in I}$ and $s$ are in $M$ and $s_i\to s$ metrically, we have that eventually $s_i\in sM$ iff eventually $\str^\#(s_i)\subseteq \str^\#(s_i)$. 

However, this is no longer true when considering the submonoid $\cS\subseteq \demb(\Delta, \Delta)$. Let us write $\cS$-Ellentuck for the Ellentuck topology on $\cS$ as computed by $\cS$, and $\demb(\Delta, \Delta)$-Ellentuck for the topology on $\cS$ inherited as a subspace of the Ellentuck topology on $\demb(\Delta, \Delta)$. Abstractly, we have that the $\cS$-Ellentuck topology is at least as fine as the $\demb(\Delta, \Delta)$-Ellentuck topology. To see that it is strictly finer, first note that if $s\in \cS_0$, then $s\circ \demb(\Delta, \Delta)\subseteq \cS_0$. This fact can be used to create sequences from $\cS$ converging in the $\demb(\Delta, \Delta)$-Ellentuck topology to another member of $\cS$, but which do not converge in the $\cS$-Ellentuck topology.

Theorem~\ref{Thm:TRS} tells us that for any finite $\cS$-Ellentuck-BP coloring of $\cS$, we can find $u\in \cS$ with $u{\cdot}\cS$ monochromatic. In particular, if $s\in \cS_0$, we have that $us{\cdot}\demb(\Delta, \Delta)$ is monochromatic. Hence if we want to instead consider a finite coloring $\chi$ of $\demb(\Delta, \Delta)$, it suffices that $\chi|_\cS$ is $\cS$-Ellentuck-BP. However, knowing that $\chi$ is $\demb(\Delta, \Delta)$-Ellentuck-BP is neither necessary nor sufficient to conclude this. This is simply because given a set $X$ and topologies $\tau_0\subseteq \tau_1$ on $X$, there isn't a clear comparison between the two corresponding $\sigma$-fields of BP sets. However, we can say the following. Let us call a subset of a topological space $(X, \tau)$ \emph{$\tau$-Souslin-measurable} if it belongs to the smallest $\sigma$-algebra containing $\tau$ and closed under the Souslin operation.

\begin{cor}
    \label{Cor:Ellentuck}
    For any finite Ellentuck-Souslin-measurable $\chi\colon \demb(\Delta, \Delta)\to r$, there is $h\in \demb(\Delta, \Delta)$ with $h\circ \demb(\Delta, \Delta)$ monochromatic.
\end{cor}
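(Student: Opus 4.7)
The plan is to pass from $\chi$ to its restriction $\chi|_{\cS}$ to the submonoid $\cS\subseteq\demb(\Delta,\Delta)$, to apply the topological Ramsey property of $(\cS,\le)$ given by Theorem~\ref{Thm:TRS}, and then to transfer the resulting monochromatic set back into $\demb(\Delta,\Delta)$ by right-composing with an element of $\cS_0=\StEmb(\Delta,\Delta)$. Once $\chi|_{\cS}$ is shown to be $\cS$-Ellentuck-Souslin-measurable, Theorem~\ref{Thm:TRS} will yield $u\in\cS$ with $\chi$ constant on $u\cdot\cS$. Picking any $s\in\cS_0$, the observation recorded in the discussion preceding the corollary, that $s\cdot\demb(\Delta,\Delta)\subseteq\cS_0\subseteq\cS$, then gives $(u\circ s)\cdot\demb(\Delta,\Delta)\subseteq u\cdot\cS$, so $h:=u\circ s\in\demb(\Delta,\Delta)$ witnesses the corollary.

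The substantive step is the measurability transfer. As noted just before the statement of the corollary, the $\cS$-Ellentuck topology is at least as fine as the topology $\cS$ inherits as a subspace of $\demb(\Delta,\Delta)$ equipped with its own Ellentuck topology; indeed $\cS\subseteq\demb(\Delta,\Delta)$, so requiring $s_i\in s\cdot\cS$ eventually is strictly stronger than requiring only $s_i\in s\cdot\demb(\Delta,\Delta)$ eventually. Thus every Ellentuck-open subset of $\demb(\Delta,\Delta)$ meets $\cS$ in an $\cS$-Ellentuck-open set, and a routine induction on the construction of the smallest Souslin-closed $\sigma$-algebra containing a topology lifts this to the following: if $A\subseteq\demb(\Delta,\Delta)$ is Ellentuck-Souslin-measurable, then $A\cap\cS$ is $\cS$-Ellentuck-Souslin-measurable. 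The induction uses only that the Souslin operation commutes with intersection by $\cS$, together with the corresponding commutations for countable unions, intersections, and complements. Applying this to each fiber $\chi^{-1}(i)$ confirms the required measurability of $\chi|_{\cS}$.

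Finally, by Theorem~\ref{Thm:TRS} the field of $\cS$-Ramsey subsets of $\cS$ is closed under the Souslin operation and coincides with the field of $\cS$-Baire sets; it therefore contains every $\cS$-Ellentuck-Souslin-measurable set. Each fiber of $\chi|_{\cS}$ is thus $\cS$-Ramsey, and a standard finite fusion across the $r$ fibers, starting from the level-$0$ basic set $\cS$ and iteratively shrinking inside it to $\chi^{-1}(i)$-invariant basic sets for $i<r$, produces $u\in\cS$ whose basic set $u\cdot\cS$ lies in a single fiber. Composing with any $s\in\cS_0$ as above produces the desired $h$. I expect no substantive obstacle in carrying out this plan: the heavy combinatorial work is packaged into Theorem~\ref{Thm:TRS}, and the topology comparison that drives the measurability transfer has essentially already been made in the discussion preceding the corollary.
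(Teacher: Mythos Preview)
Your proposal is correct and follows essentially the same approach as the paper: the paper's proof is a single sentence observing that since the $\cS$-Ellentuck topology refines the subspace topology inherited from the $\demb(\Delta,\Delta)$-Ellentuck topology, $\chi|_\cS$ is $\cS$-Ellentuck-Souslin-measurable, hence $\cS$-Ellentuck-BP, and then Theorem~\ref{Thm:TRS} together with the already-recorded fact that $s\cdot\demb(\Delta,\Delta)\subseteq\cS_0$ for $s\in\cS_0$ finishes the argument. You have simply filled in the routine details of the measurability transfer and the finite iteration over the $r$ fibers that the paper leaves implicit.
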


\begin{proof}
    Such a $\chi$ satisfies that $\chi|_\cS$ is $\cS$-Ellentuck-Souslin-measurable, so in particular $\cS$-Ellentuck-BP. 
\end{proof}

It makes sense to ask if the conclusion of Corollary~\ref{Cor:Ellentuck} holds for $\demb(\Gamma, \Gamma)$ whenever $\Gamma$ is a diary coding $\bK$. We will see in Subsection~\ref{Subsec:7.3} that it does not.

\subsection{\axiom-generic embeddings}
\label{Subsec:A32GenericEmbs} 

This subsection constructs a single right ideal $\cI\subseteq \cS$ which is simultaneously an \axiom-ideal for $\cR$ and for $\cS$. 

\begin{rem}
    There is no $\cI\subseteq \cS$ which is a ``strong" \axiom-ideal for $\cS$ as in the remark after Theorem~\ref{thm.ARTIdeal}. Towards a contradiction, suppose $\cI\subseteq \cS$ were a ``strong" \axiom-ideal for $\cS$. Note that $\cI\cap \cS_0\neq \emptyset$. Fix $n\in \omega\setminus \SL(\Delta)$, and consider $h:= \rm{id}_n\in \cal{AS}_n$ (note that $h\not\in \cal{BAS}$).    Let $\mu\in \SL(\Delta)$ be the least strong level above $n$, and consider $\psi\in \cI\cap \cS_0$. Let us now compare the sets $[\psi\circ h]$ and $\psi\circ [h]$. For any $\phi\in \psi\circ [h]$ we have $\phi|_\mu = \psi|_\mu$, whereas since $\wt{\psi}(n)+1\in \SL(\Delta)$, we have that $\{\phi|_\mu: \phi\in [\psi\circ h]\}$ is infinite. In particular, writing $p = \rm{depth}(\psi\circ h)$, we have that for any $\sigma\in (\cI\cap [\rm{id}_p])$, $\sigma\circ [\psi\circ h]\not\subseteq \psi\circ [h]$.

    As for $\cR$, we simply note that $\cal{AR} = \cal{BAR}$. 
\end{rem}

\begin{notation}
	If $T\subseteq \Delta$ is level and $s\in T{\downarrow}$, we put $T(s)\in T$ to be the unique
	$t\in T$ with $s\sqsubseteq t$. For $X\subseteq T{\downarrow}$, $T(X) := T\cap \succ_\Delta(X)$. 
\end{notation}
	
\begin{defin}
	\label{Def:A32Generic}
	Given $\phi\in \cS$ and $\mu\in \SL(\Delta)\setminus\{0\}$, we call $\phi$ \emph{\axiom-generic below $\wt{\phi}(\mu)$} if whenever $X\subseteq \phi^+[\Delta(\mu)]$ and, writing $m = \wt{\phi}(\mu-1)+1$ (in particular $\phi^+[\Delta(\mu)]\subseteq \Delta(m)$), if $\tau$ is a critical level type based on $S\subseteq\Delta(m)$ with $X\subseteq S$ and  $\rm{Crit}(\tau)\not\subseteq X$, there is $T\sqsupseteq_{age} S$ with $T\cong^* \tau$, $\ell(T)< \wt{\phi}(\mu)$, $\ell(T)+1\in \SL(\Delta)$, and $T(X)\subseteq \im(\phi){\downarrow}$.

	We define $\cI\subseteq \cS$ to be those $\phi\in \cS$ which are \axiom-generic below $\wt{\phi}(\mu)$ for all but finitely many $\mu\in \SL(\Delta)\setminus\{0\}$.
\end{defin}

\begin{figure}
\begin{center}
\begin{tikzpicture}[scale=.2]

\node at (0,0) {};

\foreach \x in {-15,-7,0,6, 12}{
\foreach \y in {0}{
\node at (\x,\y) {};
}}

\draw[dotted] (-17,0) -- (14,0);

\node at (17,0) {$\Delta(m)$};

\node[circle, fill=blue,inner sep=0pt, minimum size=2pt] at (-15,0) {};
\node[circle, fill=black,inner sep=0pt, minimum size=2pt] at (-7,0) {};
\node[circle, fill=black,inner sep=0pt, minimum size=2pt] at (0,0) {};
\node[circle, fill=black,inner sep=0pt, minimum size=2pt] at (6,0) {};
\node[circle, fill=blue,inner sep=0pt, minimum size=2pt] at (12,0) {};

\node at (-15,-1.5) {\color{blue}$x$};
\node at (-7,-1.5) {$r$};
\node at (0,-1.5) {$s$};
\node at (6,-1.5) {$t$};
\node at (12,-1.5) {\color{blue}$y$};

\node[circle, fill=black,inner sep=0pt, minimum size=2pt] at (0,4) {};
\node[circle, fill=blue,inner sep=0pt, minimum size=2pt] at (2,7) {};
\node[circle, fill=black,inner sep=0pt, minimum size=2pt] at (6.6,7) {};

\draw[thick] (0,0) to [out= 100, in=-80] (0,4);
\draw[blue] (0,4)--(1,5);
\draw [blue](1,5)  to [out= 55, in=-80] (2,7);
\draw [thick](6,0) to [out= 100, in=-80] (6.6,7);

\draw[dotted] (1,7) -- (8,7);

\draw [thick](6.6,7)--(7.6,9);
\draw [blue](2,7)--(3,9);

\node at (5,7.8) {$\tau_0$};

\node[circle, fill=blue,inner sep=0pt, minimum size=5pt] at (-15,14) {};
\node[circle, fill=black,inner sep=0pt, minimum size=2pt] at (-7,14) {};
\node[circle, fill=black,inner sep=0pt, minimum size=2pt] at (-1,14) {};
\node[circle, fill=black,inner sep=0pt, minimum size=2pt] at (7,14) {};

\draw[dotted] (-17,14) -- (9,14);

\draw[blue] (-15,0) to [out= 105, in=-80] (-15,14);
\draw [thick](-7,0)--(-7,14);
\draw [thick](0,4) to [out= 100, in=-80] (-1,14);
\draw [thick](7.6,9) to [out=70, in=-110] (7,14);

\draw[thick] (-7,14)--(-8,16);
\draw[thick](-1,14)--(-2,16);
\draw [thick](7,14)--(6,16);

\node at (-9,14.8) {$\tau_1$};

\node[circle, fill=black,inner sep=0pt, minimum size=2pt] at (-8.8,20) {};
\node[circle, fill=black,inner sep=0pt, minimum size=2pt] at (-1.8,20) {};
\node[circle, fill=black,inner sep=0pt, minimum size=2pt] at (5.8,20) {};
\node[circle, fill=blue,inner sep=0pt, minimum size=5pt] at (13,20) {};

\draw[dotted] (-10,20) -- (15,20);

\draw[thick] (-8,16) to [out= 120, in=-80] (-8.8,20);
\draw[thick] (-2,16) to [out= 100, in=-85] (-1.8,20);
\draw[thick](6,16) to [out=120, in=-100] (5.8,20);
\draw [blue](12,0) to [out=105, in=-90] (13,20);

\node at (7.5,20.8) {$\tau_2$};

\draw [thick] (-8.8,20)--(-7.8,22);
\draw [thick](-1.8,20)--(-2.8,22);
\draw[thick] (5.8,20)--(4.8,22);

\node[circle, fill=black,inner sep=0pt, minimum size=2pt] at (-7.5,27) {};
\node[circle, fill=black,inner sep=0pt, minimum size=2pt] at (-2.8,27) {};
\node[circle, fill=black,inner sep=0pt, minimum size=2pt] at (4.5,27) {};

\draw[dotted] (-9,27) -- (7,27);

\node at (10,27) {$\tilde{\varphi}(\mu)$};

\draw [thick] (-7.8,22) to [out= 70, in=-110] (-7.5,27);
\draw [thick](-2.8,22) to [out= 120, in=-80] (-2.8,27);
\draw[thick] (4.8,22) to [out= 120, in=-85] (4.5,27);

\end{tikzpicture}
\caption{Partial example of a $\varphi$ which is  {\bf A.3(2)}-generic below $\tilde{\varphi}(\mu)$ for the class of triangle-free graphs. $\im(\phi){\downarrow}$ is drawn in black. The nodes $r,s,t$ are in $\varphi^+[\Delta(\mu)]$, while the blue nodes $x,y$ are just in $\Delta(m)$. 
For  $X_0=\{t\}$ and  $S_0=\{s,t\}$, $\tau_0$ is
an essential age change for $S_0$.
For  $X_1=\{r,s,t\}$ and  $S_1=\{x,r,s,t\}$, $\tau_1$ is  the critical level type with a coding node at $x$ and all passing numbers $0$.  For $X_2=\{r,s,t\}$ and $S_2=\{r,s,t,y\}$, $\tau_2$ is the critical level type with a coding node at $y$, the passing number at $r$ is $1$, and the other passing numbers $0$.}
\end{center}
\end{figure}
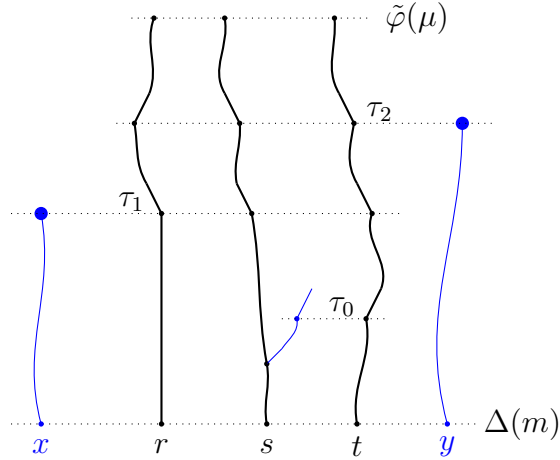

\begin{lemma}
	\label{Lem:GenericIdeal}
	$\cI\subseteq\cS$ is a right ideal. 
\end{lemma}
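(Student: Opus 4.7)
The plan is to reduce the \axiom-genericity of $\phi\circ\sigma$ at a strong level $\mu$ to the \axiom-genericity of $\phi$ at the related strong level $\mu':=\wt{\sigma}(\mu-1)+1$. Fix $\phi\in\cI$ and $\sigma\in\cS$, and suppose $\sigma$ is $\nu_0$-strong. Let $N$ be such that $\phi$ is \axiom-generic below $\wt{\phi}(\mu')$ for every $\mu'\in\SL(\Delta)\setminus\{0\}$ with $\mu'\geq N$. I will verify that $\phi\circ\sigma$ is \axiom-generic below $\wt{\phi\circ\sigma}(\mu)$ for every $\mu\in\SL(\Delta)\setminus\{0\}$ with $\mu>\max(\nu_0,N)$, which excludes only finitely many $\mu$, as required.

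The first step is to record the identities $(\phi\circ\sigma)^+=\phi^+\circ\sigma^+$ and $\wt{\phi\circ\sigma}(\mu-1)+1=\wt{\phi}(\mu'-1)+1=:m$. Since $\sigma$ is $\nu_0$-strong and $\mu-1\geq\nu_0$, we have $\mu'\in\SL(\Delta)$, and since $\wt{\sigma}(\mu-1)\geq\mu-1$ we also have $\mu'\geq\mu\geq N$. These identities give $(\phi\circ\sigma)^+[\Delta(\mu)]\subseteq\phi^+[\Delta(\mu')]$, and the ``anticipated level'' $m$ in the definition of \axiom-genericity is the same for both applications. Given $X\subseteq(\phi\circ\sigma)^+[\Delta(\mu)]$ and a critical level type $\tau$ based on $S\subseteq\Delta(m)$ with $X\subseteq S$ and $\rm{Crit}(\tau)\not\subseteq X$, I would feed $(X,S,\tau)$ directly into $\phi$'s \axiom-genericity at $\mu'$ to produce $T\sqsupseteq_{age}S$ with $T\cong^*\tau$, $\ell(T)<\wt{\phi}(\mu')$, $\ell(T)+1\in\SL(\Delta)$, and $T(X)\subseteq\im(\phi){\downarrow}$. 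Since $\wt{\sigma}(\mu)\geq\mu'$ and $\wt{\phi}$ is monotone, we get $\ell(T)<\wt{\phi}(\mu')\leq\wt{\phi\circ\sigma}(\mu)$, so every condition required of $T$ is verified except the refined containment $T(X)\subseteq\im(\phi\circ\sigma){\downarrow}$.

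This last containment is the main (and only substantive) obstacle, since $\phi$'s genericity only places $T(X)$ inside $\im(\phi){\downarrow}$, which is strictly larger than $\im(\phi\circ\sigma){\downarrow}$ in general. I would resolve it by a uniqueness argument. Fix $x\in X$ and write $x=\phi^+(y)$ where $y:=\sigma^+(w)\in\Delta(\mu')$ for some $w\in\Delta(\mu)$; item 4 of the definition of aged embedding shows that $\sigma(w)|_{\mu'}=y$, so $y\in\im(\sigma){\downarrow}$. Pick any $w''\in\Delta$ with $T(x)\sqsubseteq\phi(w'')$; since $\ell(T)<\wt{\phi}(\mu')$ and $\Delta$ has height $\omega$, I may replace $w''$ by a suitable restriction or $\sqsubseteq$-extension so that $w''\in\Delta(\mu')$, whence $T(x)=\phi(w'')|_{\ell(T)}$. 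Then $\phi(w'')|_m\sqsupseteq T(x)|_m\sqsupseteq x=\phi(y)|_m$, forcing $\phi(w'')|_m=\phi(y)|_m$. Since $\phi$ preserves $\wedge$, $\phi(w''\wedge y)=\phi(w'')\wedge\phi(y)$ has length at least $m>\wt{\phi}(\mu'-1)$, and strict monotonicity of $\wt{\phi}$ forces $\ell(w''\wedge y)\geq\mu'$, hence $w''=y$. Therefore $T(x)\sqsubseteq\phi(y)\sqsubseteq\phi(\sigma(w))\in\im(\phi\circ\sigma)$, completing the verification.
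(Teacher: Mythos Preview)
Your proof is correct and follows essentially the same strategy as the paper's: reduce \axiom-genericity of the composition at $\mu$ to \axiom-genericity of the $\cI$-factor at the induced strong level $\mu'=\wt{\sigma}(\mu-1)+1$, then argue that the witness $T$ produced by the latter also works for the former. Your treatment of the only nontrivial point, showing $T(X)\subseteq\im(\phi\circ\sigma){\downarrow}$ rather than merely $\im(\phi){\downarrow}$, unpacks via $\wedge$-preservation what the paper states in one line as ``$\im(\sigma){\downarrow}$ cannot split above $s$ between levels $m$ and $\ell(T)$''; the content is the same.
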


\begin{proof}
	Suppose $\phi = \sigma\circ \psi$ with $\sigma\in \cI$ and $\psi\in \cS$. Towards showing that $\phi\in \cI$, fix a suitably large $\mu\in \SL(\Delta)$. Note that $\wt{\psi}(\mu-1) = \nu-1$ with $\nu\in \SL(\Delta)$. Write $m = \wt{\phi}(\mu-1)+1 = \wt{\sigma}(\wt{\psi}(\mu-1))+1$.  As $\sigma\in \cI$, $\sigma$ is \axiom-generic below $\wt{\sigma}(\nu)$ so long as $\mu$ was suitably large. So given $X\subseteq \phi^+[\Delta(\mu)]\subseteq \sigma^+[\Delta(\nu)]$ and a critical level type $\tau$ based on $S\subseteq \Delta(m)$ with $X\subseteq X$ and $\crit(\tau)\not\subseteq X$, we can find $T\sqsupseteq_{age}S$ with $\ell(T)< \wt{\sigma}(\nu) \leq \wt{\phi}(\mu)$, $\ell(T)+1\in \SL(\Delta)$, and $T(X)\subseteq \im(\sigma){\downarrow}$. But consider some $s\in X$. Since $\ell(T)< \wt{\sigma}(\nu)$, it follows that $\im(\sigma){\downarrow}$ cannot split above $s$ between levels $m$ and $\ell(T)$. So as $s\in \im(\phi){\downarrow}$, we must also have $T(s)\in \im(\phi){\downarrow}$.  
\end{proof}

\begin{prop}
	\label{Prop:IdealExists}
	$\cI\cap [\rm{id}_n]\neq \emptyset$ for every $n< \omega$.
\end{prop}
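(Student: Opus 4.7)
The plan is to construct $\phi\in\cS_{\mu_0}\cap[\id_n]$ lying in $\cI$ directly. Let $\mu_0\in\SL(\Delta)$ be the smallest strong level with $\mu_0\ge n$. Set $\phi$ to be the identity on $\Delta({<}\mu_0)$, so that $\phi|_n=\id_n$ automatically, and extend $\phi$ inductively up the levels of the domain so that for every strong level $\mu\in\SL(\Delta)$ with $\mu>\mu_0$, $\phi$ is \axiom-generic below $\wt{\phi}(\mu)$. Since only finitely many $\mu\le\mu_0$ are left out, this places $\phi$ in $\cI$.

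The core is the inductive step. Suppose $\phi$ has been defined on $\Delta({<}\mu)$ for some $\mu\in\SL(\Delta)$ with $\mu>\mu_0$, so that $\phi^+[\Delta(\mu)]$ sits at the strong codomain level $m=\wt{\phi}(\mu-1)+1$. I would enumerate the finitely many pairs $(X_1,\tau_1),\ldots,(X_q,\tau_q)$ with $X_i\subseteq\phi^+[\Delta(\mu)]$, $\tau_i$ a critical level type based on some $S_i$ with $X_i\subseteq S_i\subseteq\Delta(m)$, and $\crit(\tau_i)\not\subseteq X_i$, and then iteratively grow the $\phi$-backbone from $m$ through strong codomain levels $m=\ell_0<\ell_1<\cdots<\ell_q$: at step $i$, append a realization $T_i\cong^*\tau_i$ with $T_i\sqsupseteq_{age}S_i$ so that $T_i(X_i)\subseteq\im(\phi){\downarrow}$. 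Because $\crit(\tau_i)\not\subseteq X_i$, the critical event witnessing $\tau_i$ occurs at a node of $S_i\setminus X_i$, giving enough room to place the non-backbone portion of $T_i$ inside a suitable strong gadget above $\ell_{i-1}$; Proposition~\ref{Prop:Find_Level_Type} supplies each such $T_i$ (applying part~2 to extend the partial realization already sitting in the backbone), while Proposition~\ref{Prop:Left_Safe} lets the non-$X_i$ backbone threads stay age-preserving between consecutive steps. The strong diary property (Definition~\ref{Def:StrongDiary}) furnishes arbitrarily many strong gadgets of every required type above any fixed level, so this iteration never stalls. Finally, one further strong codomain gadget whose event type matches level $\mu-1$ of the domain determines $\wt{\phi}(\mu)$ and $\phi|_{\Delta(\mu)}$.

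The main obstacle is coherently threading the backbone through all of $T_1,\ldots,T_q$ while respecting the rigid diary-embedding constraints on $\phi$: above each step, the successors in $\im(\phi){\downarrow}$ of any given backbone node are forced by $\Delta$'s tree structure, so we cannot simply prescribe where the backbone goes. The hypothesis $\crit(\tau_i)\not\subseteq X_i$ is exactly what unlocks the argument, since it ensures each critical event can be placed at a free node of $S_i\setminus X_i$ rather than at a backbone node whose extension is already determined.
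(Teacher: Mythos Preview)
Your approach is essentially the same as the paper's: fix the identity below a strong level, then at each strong domain level $\mu$ enumerate all relevant triples $(X_i,S_i,\tau_i)$ and iteratively realize each $\tau_i$ above the growing backbone, using Proposition~\ref{Prop:Left_Safe} to keep unused backbone threads age-preserving, and finally place $\phi[\Delta(\mu)]$ via one more application of Proposition~\ref{Prop:Find_Level_Type}.

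There is one small technical misstep. You invoke part~2 of Proposition~\ref{Prop:Find_Level_Type} with $S_0=X_i$ and $\tau_0=\tau_i|_{X_i}$, but part~2 requires $\tau_0$ to be \emph{critical}, and the hypothesis $\crit(\tau_i)\not\subseteq X_i$ means exactly that $\tau_i|_{X_i}$ typically fails this (e.g.\ if $\tau_i$ is a splitting or coding level type, its unique critical node lies outside $X_i$, so $\tau_0$ contains no critical node at all). The paper sidesteps this by not trying to extend from the backbone alone: at each step it brings \emph{all} of $\Delta(m)$ (not just the backbone $\phi^+[\Delta(\mu)]$) up to the current backbone level via $\Left_\Delta$, forming $Z_j=Y_j\cup\Left_\Delta(\Delta(m)\setminus Y_0,\ell(Y_j))$, which age-extends $\Delta(m)$ by Proposition~\ref{Prop:Left_Safe}. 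Then it applies part~1 of Proposition~\ref{Prop:Find_Level_Type} to $Z_j(S_j)$ to obtain $T_j$. Since we may restrict attention to those $\tau_j$ with $\age(\tau_j)=\age_\Delta(S_j)$ (otherwise the condition $T\sqsupseteq_{age}S$ in Definition~\ref{Def:A32Generic} is vacuous), this yields $T_j\sqsupseteq_{age}Z_j(S_j)$, and in particular $T_j(X_j)$ age-extends the backbone portion as needed. Your argument is easily repaired by inserting this Left-move of the non-backbone nodes in $S_i\setminus X_i$ before each realization step.
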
 
	
\begin{proof}
	We may assume that $n\in \SL(\Delta)$, and we build $\phi\in \cI\cap [\rm{id}_n]$ which is \axiom-generic below $\wt{\phi}(\mu)$ for every $\mu\in \SL(\Delta)$ with $\mu\geq n$. Suppose for some $\mu\geq n$ we have built $\phi|_{\Delta({<}\mu)}$, and write $m := \wt{\phi}(\mu-1)+1\in \SL(\Delta)$. If $\mu\not\in \SL(\Delta)$, then  any extension of $\phi|_{\Delta({<}\mu)}$ to $\Delta(\mu)$ will do. If $\mu\in \SL(\Delta)$, let $\{(\tau_j, S_j, X_j): j< N\}$ list all triples with $X_j\subseteq \phi^+[\Delta(\mu)]$ and $\tau_j$ a critical level type based on $S_j\supseteq X_j$ with $\crit(\tau_j)\not\subseteq X_j$. Writing $Y_0 = \phi^+[\Delta(\mu)]$, and suppose for some $j< N$ that $Y_j\sqsupseteq_{age} Y_0$ with $\ell(Y_j)\in \SL(\Delta)$ has been defined. Set $Z_j = Y_j\cup \Left_\Delta(\Delta(m)\setminus Y_0, \ell(Y_j))$. By Proposition~\ref{Prop:Left_Safe}, $Z_j\sqsupseteq_{age} \Delta(m)$, so by Proposition~\ref{Prop:Find_Level_Type}, we can find $T_j\sqsupseteq_{age} Z_j(S_j)$ with $\ell(T_j)+1\in \SL(\Delta)$ and $T_j\cong^* \tau_j$. Set $Y_{j+1} = \wh{T_j(X_j)}\cup \Left_\Delta(Y_j(Y_0{\setminus}X_j), \ell(T_j)+1)$.  Once $Y_N$ has been constructed, we use Proposition~\ref{Prop:Find_Level_Type} once more to find $U\sqsupseteq_{age}Y_N$ with $U\cong^* \Delta(\mu)$, and we set $\phi[\Delta(\mu)] = U$. 
\end{proof}

\begin{prop}
	\label{Prop:IdealIsA32}
	$\cI\subseteq \cS$ is an \axiom-ideal for $\cR$ and for $\cS$.
\end{prop}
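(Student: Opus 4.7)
That $\cI$ is a right ideal is Lemma~\ref{Lem:GenericIdeal} and the first bullet of the monoid-based form of Definition~\ref{Def:A32Ideal} is Proposition~\ref{Prop:IdealExists}, so the substantive content of the proposition is the second bullet, in both the $\cR$- and $\cS$-settings. The plan is therefore: given $\psi\in \cI$ and $h\in \cal{BAR}$ (resp.\ $h\in \cal{BAS}$), writing $p = \depth(\psi\circ h)$, produce $\sigma\in \cI\cap [\rm{id}_p]$ with $\sigma\circ [\psi\circ h]\subseteq \psi\circ [h]$. Since the construction of $\sigma$ will depend only on $p$ and $\psi$, a single argument will handle both cases simultaneously.

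The idea is to carry out the construction from Proposition~\ref{Prop:IdealExists} while forcing $\sigma$ to factor through $\psi$ above level $p$. I would set $\sigma|_p = \rm{id}_p$ and maintain the inductive invariant that for every $\mu\in \SL(\Delta)$ with $\mu\geq p$ at which $\sigma$ has been defined, there is some $\mu^*\in \SL(\Delta)$ with $\sigma[\Delta(\mu-1)]\subseteq \psi[\Delta(\mu^*-1)]$. At each strong-level step I would enumerate the finitely many critical-level-type obligations $(\tau_j, S_j, X_j)_{j<N}$ required to witness $\sigma$'s own \axiom-genericity at the next strong level, exactly as in Proposition~\ref{Prop:IdealExists}; but instead of invoking Proposition~\ref{Prop:Find_Level_Type} in $\Delta$ directly, I would choose a sufficiently large $\nu\in \SL(\Delta)$ in $\psi$'s domain and apply the \axiom-genericity of $\psi$ below $\wt{\psi}(\nu)$ to obtain $T\sqsupseteq_{age} S_j$ with $T\cong^* \tau_j$, $\ell(T)+1\in \SL(\Delta)$, and $T(X_j)\subseteq \im(\psi){\downarrow}$, and then extend $T(X_j)$ upward inside $\im(\psi){\downarrow}$ to reach actual nodes of $\psi^+[\Delta(\nu')]$ for a later $\nu'\in \SL(\Delta)$. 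Running this enumeration-and-fusion through all $N$ obligations and finishing with an age-extension matching $\age_\Delta(\mu')$ should yield $\sigma^+[\Delta(\mu')]\subseteq \psi^+[\Delta(\nu^*)]$, preserving the invariant. By construction $\sigma$ will be \axiom-generic at every strong level $\geq p$, so $\sigma\in \cI$.

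Finally, to verify $\sigma\circ [\psi\circ h]\subseteq \psi\circ [h]$, fix $\phi'\in [\psi\circ h]$. Since $\sigma|_p = \rm{id}_p$ and the image of $\psi\circ h$ lies in $\Delta({<}p)$, one has $(\sigma\circ \phi')|_n = \psi\circ h$. For each level $m\geq n$, $\phi'$ being strong puts its $m$-th image at some pre-strong level $\mu-1$ in $\Delta$ with $\mu\in \SL(\Delta)$, and the invariant then forces $\sigma\circ\phi'$ at level $m$ into $\psi[\Delta(\mu^*-1)]$ for some $\mu^*\in \SL(\Delta)$. Hence $g:=\psi^{-1}\circ \sigma\circ \phi'$ is a well-defined embedding whose level-$m$ image sits at the pre-strong level $\mu^*-1$; so $g$ is strong with $g|_n = h$, i.e., $g\in [h]$, and $\sigma\circ \phi' = \psi\circ g\in \psi\circ [h]$. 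The hard part will be coordinating the enumeration of $\sigma$'s own genericity obligations with the invocations of $\psi$'s genericity needed to keep the construction inside $\im(\psi)$: Proposition~\ref{Prop:IdealExists} supplies the template for the former, and $\psi$'s \axiom-genericity supplies exactly the flexibility to realize each obligation within $\im(\psi)$, but simultaneously ensuring both at each strong-level step is the key combinatorial point.
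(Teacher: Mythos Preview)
Your invariant is too strong and fails already at the base case. You want $\sigma[\Delta(\mu-1)]\subseteq \psi[\Delta(\mu^*-1)]$ for all strong $\mu\geq p$, but with $\sigma|_p = \rm{id}_p$ this forces $\Delta(p-1)\subseteq \im(\psi)$, which is impossible: $p = \wt{\psi}(n-1)+1$, so $\Delta(p-1)$ is a full level of $\Delta$ with strictly more nodes than $\psi[\Delta(n-1)]$. The point you are missing is that $\sigma$ need not land entirely in $\im(\psi)$; it only needs to map the cone above $W := \psi^+[\Delta(n)]$ into $\im(\psi){\downarrow}$, since any $\eta\in [\psi\circ h]$ satisfies $\eta[\Theta({\geq}\ell)]\subseteq \succ_\Delta(W)$. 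Outside that cone $\sigma$ is free to roam in $\Delta$.

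This weaker target changes the construction in an essential way. The paper maintains only $\sigma^+[Q]\subseteq \im(\psi){\downarrow}$ where $Q = \succ_\Delta(W,q)$, and splits each step into two cases. If $\crit_\Delta(q)\subseteq Q$, one cannot use $\psi$'s genericity (the hypothesis $\crit(\tau)\not\subseteq X$ fails), so instead one pulls back via a partial inverse $\psi^*$ into the domain, applies Proposition~\ref{Prop:Find_Level_Type} there, and pushes forward; this also pins $\wt{\sigma}(q)$ to $\wt{\psi}(r-1)$ for some strong $r$, which is exactly what makes $\psi^{-1}\circ\sigma\circ\eta$ \emph{strong}. If $\crit_\Delta(q)\not\subseteq Q$, then one can invoke $\psi$'s \axiom-genericity, but with $X$ taken to be the cone part $X_j\cap\sigma^+[Q]$ rather than all of $X_j$, and one uses a hybrid ``$\Left_\psi$'' operation (leftmost in $\Delta$ off the cone, leftmost within $\im(\psi){\downarrow}$ on the cone) to keep ages correct while staying inside $\im(\psi){\downarrow}$ only where needed. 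Your plan conflates these two cases and applies genericity uniformly, which both over-constrains $\sigma$ and omits the alignment of $\wt{\sigma}$ with $\wt{\psi}$ at strong levels that is required to conclude $\psi^{-1}\circ\sigma\circ\eta\in \cR$.
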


\begin{proof}
	Towards proving that $\cI$ is an \axiom-ideal for $\cR$, fix $h\in \cal{AR}_\ell = \cal{BAR}_\ell$ and $\psi\in \cI$. Set $n = \depth(h) = \wt{h}(\ell-1)+1\in \SL(\Delta)$ and $p = \depth(\psi\circ h) = \wt{\psi}(n-1)+1\in \SL(\Delta)$. Write $W = \psi^+[\Delta(n)]$.  We construct $\sigma\in \cI\cap [\rm{id}_p]$ with the following properties
	\begin{itemize}
		\item 
		$\sigma[\succ_\Delta(W)]\subseteq \im(\psi){\downarrow}$.
		\item
		For any $q\geq p$, there is $r\in \SL(\Delta)$ with $\wt{\psi}(r-1)\leq \tilde{\sigma}(q-1)< \wt{\psi}(r)$, and if $\crit_\Delta(q)\subseteq \succ_\Delta(W)$, then $\tilde{\sigma}(q) = \wt{\psi}(r-1)$.
	\end{itemize}
		
	To see that this gives what we want, fix $\eta\in [\psi\circ h]$, and consider $\sigma\circ \eta$. Since $\eta[\Theta({\geq}\ell)]\subseteq \succ_\Delta(W)$, the first demand on $\sigma$ gives $\sigma\circ \eta[\Theta({\geq}\ell)]\subseteq \im(\psi){\downarrow}$, which implies that $\im(\sigma\circ \eta)\subseteq \im(\psi){\downarrow}$. This can only happen if $\im(\sigma\circ \eta)\subseteq \im(\psi)$. Thus there is $\alpha\in \demb(\Theta, \Delta)$ with $\sigma\circ \eta = \psi\circ \alpha$, and by the last demand on $\sigma$, we have $\alpha\in \cR$, and necessarily $\alpha\in [h]$. 

    If instead we want to show that $\cI$ is an \axiom-ideal for $\cS$, we fix $h\in \cal{BAS}$. Note that since $h$ is branching, we have $\rm{depth}(h)\in \SL(\Delta)$. We build $\sigma$ satisfying exactly the same two bullets as above. The justification that this gives what we want is the same, simply replacing mentions of $\Theta$ with $\Delta$.
    
    Given $x\in \Delta$ and $n\geq \ell(x)$, we write
    \begin{align*}
        \Left_\psi(x, n) = \begin{cases}
        \Left_\Delta(x, n) \quad &\text{if } x\not\in \im(\psi){\downarrow}\\[1 mm]
        \text{The $\lex$-least } y\in \succ_\Delta(x, n)\cap \im(\psi){\downarrow} \quad &\text{if } x\in \im(\psi){\downarrow}.
        \end{cases}
    \end{align*}
    and likewise for level subsets of $\Delta$. Observe that if $\mu\in \SL(\Delta)$, $T\subseteq \Delta$ is a level set with $\ell(T)\in (\wt{\psi}(\mu-1), \wt{\psi}(\mu)]$, and $n> \ell(T)$, then $\Left_\psi(T, n)\sqsupseteq_{age} T$. Therefore, let us call levels $\ell< \omega$ satisfying $\ell\in (\wt{\psi}(\mu-1), \wt{\psi}(\mu)]$ for some $\mu\in \SL(\Delta)$ \emph{$\psi$-safe}.
        
    Given $x\in \im(\psi){\downarrow}$, we set $\psi^*(x)\in \Delta$ be the $\sqsubseteq$-least node with $\psi(\psi^*(x))\sqsupseteq x$. Think of $\psi^*$ as an extension of $\psi^{-1}$ to all of $\im(\psi){\downarrow}$. Note that $\psi^*$ is an age map on any level subset of $\im(\psi){\downarrow}$. Similarly, we write $\wt{\psi}^*\colon \omega\to \omega$ for the induced map on levels, though note that $\wt{\psi}^*$ need not be injective.

	Suppose we have constructed $\sigma|_{\Delta({<}q)}$ for some $q\geq p$. Write $Q = \succ_\Delta(W, q)$ and $m = \tilde{\sigma}(q-1)+1 = \ell(\sigma^+[\Delta(q)])$. Suppose we have maintained the following inductive assumptions:
	\begin{itemize}
		\item
		$\sigma^+[Q]\subseteq \im(\psi){\downarrow}$. 
		\item
		$m = \ell(\sigma^+[Q]) \in (\wt{\psi}(\mu-1), \wt{\psi}(\mu))$ for some $\mu\in \SL(\Delta)$, which we now fix.
	\end{itemize} 
    Note that when $q = p$, we have that $\sigma^+$ is the identity on $\Delta(p)$ and that $\tilde{\sigma}(p-1)+1 = p = \wt{\psi}(n-1)+1$, so the assumptions hold in our base case. Also note that the second inductive assumption implies that $\wt{\psi}^*(m) := \mu\in \SL(\Delta)$, which will be useful for when we need to perform various parts of the construction inside $\im(\psi){\downarrow}$.
		
	If $q\not\in \SL(\Delta)$, we can immediately proceed to the construction of $\sigma[\Delta(q)]$.  There are two cases.
    \begin{itemize}
        \item 
        If $\crit_\Delta(q)\subseteq Q$, consider $\psi^*[\sigma^+[Q]]\subseteq \Delta(\mu)$. By Proposition~\ref{Prop:Find_Level_Type}, we can find $U\sqsupseteq_{age} \psi^*[\sigma^+[Q]]$ with $\ell(U)+1:= r\in \SL(\Delta)$ and $U\cong^* Q$. So also $\psi[U]\cong^* Q$, and again using Proposition~\ref{Prop:Find_Level_Type}, we can find $V\sqsupseteq_{age} \sigma^+[\Delta(q)]$ with $V\cong^* \Delta(q)$ and $V(Q) = \psi[U]$. We set $\sigma[\Delta(q)] = V$, and we note that $\wt{\sigma}(q) = \wt{\psi}(r-1)$ as required.
        \item
        If $\crit_\Delta(q)\not\subseteq Q$, find a suitably large $r\in \SL(\Delta)$ with $\wt{\psi}(r-1)\geq m$. Write $S = \Left_\psi(\sigma^+[\Delta(q)], \wt{\psi}(r-1))$.  Because $\psi$ is \axiom-generic below $\wt{\psi}(r)$, we can find $T\sqsupseteq_{age} S$ with $T\cong^* \Delta(q)$,  $\ell(T)+1\in \SL(\Delta)$, and $T(Q)\subseteq \im(\psi){\downarrow}$. We set $\sigma[\Delta(q)] = T$.    
    \end{itemize}

    If $q\in \SL(\Delta)$, then before defining $\sigma[\Delta(q)]$, we need to ensure that $\sigma$ is \axiom-generic below $\wt{\sigma}(q)$. The argument is very similar to that of Proposition~\ref{Prop:IdealExists}, but certain parts of the construction need to stay inside of $\im(\psi){\downarrow}$. Let $\{(\tau_j, S_j, X_j): j< N\}$ list all triples with $X_j\subseteq \sigma^+[\Delta(q)]$ and $\tau_j$ a critical level type based on $S_j\supseteq X_j$ with $\crit(\tau_j)\not\subseteq X_j$. Set $Y_0 = \sigma^+[\Delta(q)]$, and suppose for some $j< N$ that $Y_j\sqsupseteq_{age} Y_0$ with $\ell(Y_j)\in \SL(\Delta)$, $Y_j(\sigma^+[Q])\subseteq \im(\psi){\downarrow}$, and $\ell(Y_j)$ a $\psi$-safe level. Find a suitably large $\nu_j\in \SL(\Delta)$ with $\wt{\psi}(\nu_j-1)\geq \ell(Y_j)$, write $m_j = \wt{\psi}(\nu_j-1)+1$, and  set $Z_j = \Left_\psi(Y_j, m_j) \cup \Left_\psi(\Delta(m)\setminus Y_0, m_j)$. Let $\tau_j'$ be the critical level type isomorphic to $\tau_j$ and based on $Z_j$. Then $\crit(\tau_j')\not\subseteq Z_j(X_j\cap \sigma^+[Q])$, so since $\psi$ is \axiom-generic below $\wt{\psi}(\nu_j)$, we can find $T_j\sqsupseteq_{age} Z_j(S_j)$ with $\ell(T_j)< \wt{\psi}(\nu_j)$, $T_j\cong \tau_j'\cong \tau_j$, $\ell(T_j)+1\in \SL(\Delta)$, and $T_j(X_j\cap \sigma^+[Q])\subseteq \im(\psi){\downarrow}$. We set $Y_{j+1} = \wh{T_j(X_j)}\cup \Left_\psi(Y_j(Y_0\setminus X_j), \ell(T_j)+1)$. After forming $Y_N$, we can proceed to the construction of $\sigma[\Delta(q)]$, which occurs much in the same way as in the case $q\not\in \SL(\Delta)$, just starting from $Y_N$ instead of $\sigma^+[\Delta(q)]$. 
\end{proof}
	  
\subsection{The proof of \textbf{A.4}}
\label{Subsec:A4}

This subsection proves that Axiom \textbf{A.4} holds in both $(\cR, \cS, \leq, \leq_\cR)$ and $(\cS, \leq)$. The proof for both is nearly identical. We state \textbf{A.4} below for $(\cR, \cS, \leq, \leq_\cR)$.

\begin{theorem}
	\label{Thm:StrongHL}
	Let $f\in \cal{AR}_m$, and write 
	$$F = \{g\in \cal{AR}_{m+1}: g|_m = f\}$$
	Let $\chi\colon F\to 2$ be a coloring. Then writing $\mu = \wt{f}(m-1)+1\in \SL(\Delta)$, there is $h\in \cS\cap [\rm{id}_\mu]$ with $h\circ F$ monochromatic for $\chi$. 
\end{theorem}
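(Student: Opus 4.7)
The plan is to prove this as a very tight analogue of the Halpern--L\"auchli theorem, following the Harrington-style forcing approach adapted to strong diaries. Since $h \in \cS_\mu$ is the identity on $\Delta({<}\mu)$ and $f$ already lives there, composing with $h$ fixes $f$ and only affects how the next level of $\Theta$ is embedded; concretely, each $g \in F$ is specified by a strong level $\nu$ of $\Delta$ with $\wt{g}(m) = \nu - 1$ together with an age-preserving injection $g^+\colon \Theta(m+1) \to \Delta(\nu)$ extending $f^+[\Theta(m)] \subseteq \Delta(\mu)$ and realizing the critical level type $\tau_m$ at level $m$ of $\Theta$. Proposition~\ref{Prop:Find_Level_Type} guarantees such realizations are abundant above $f^+[\Theta(m)]$. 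The case split on whether $m$ lies in $\sp(\Theta)$, $\cd(\Theta)$, or $\ac(\Theta)$ dictates the shape of $\tau_m$: either a single splitting node, a coding node together with a passing-number assignment, or a prescribed essential age change.

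My plan is to set up a forcing $\bP$ whose conditions are finite approximations to a candidate $h \in \cS_\mu$, encoded as pairs $(u, X)$ with $u$ a finite strong initial segment and $X \subseteq \Delta$ an end-extension of $f^+[\Theta(m)]$ inside some strong level above $\ell(u)$, ordered by extension. The generic adds a strong embedding $\dot{h} \in \cS_\mu$, which in turn picks out a generic $\dot{g} \in F$ realizing $\tau_m$ at some generic strong level. Let $\dot{c}$ be the $\bP$-name for $\chi(\dot{g})$. Working in a countable elementary submodel $\cM$ containing $(\Delta, \cK, f, F, \chi, \bP)$, the idea is to run a level-by-level fusion along $\SL(\Delta) \cap [\mu, \omega)$: at each strong level one applies finite pigeonhole to the finitely many configurations by which $\tau_m$ can be realized above a fixed level subset, reading off colors and stabilizing to a single color $i < 2$. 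Proposition~\ref{Prop:Left_Safe} makes ``left'' a safe direction to pass through age-change gadgets without accidentally changing the target age, so the fusion produces $h \in \cS_\mu$ whose image realizes $\tau_m$ in exactly those configurations colored $i$.

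The main technical obstacle is the case $m \in \cd(\Theta)$, where $\tau_m$ records a coding node and a passing-number assignment to the remaining nodes. Here the strong coding gadgets (Definition~\ref{Def:StrongCodingGadget}) of $\Delta$ are essential: at each $[\mu_i, \mu_{i+1})$ of coding type, every valid passing number $q$ for each non-coding node is realized simultaneously via the splitting layer at the top of the gadget, and Lemma~\ref{Lem:Valid_Passing} ensures validity is preserved under the age restrictions produced during fusion. The forcing ensures that colors of extensions are decided by finite information, and the strong coding gadget provides enough room to absorb every needed passing-number configuration at a single strong level. After finitely many iterations of pigeonhole (one per passing-number configuration, or simpler when $m$ is splitting or age-change), we obtain an $h \in \cS_\mu$ so that every $g' \in h \circ F$ realizes $\tau_m$ in a configuration forced to color $i$, giving $\chi \circ h \circ F \equiv i$. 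The hardest part to verify cleanly will be the coherence of this fusion across strong levels, i.e., that the diary-embedding axioms (especially the age-change compatibility of item~6 in Definition~\ref{Def:EmbDiary}) are maintained throughout, which is precisely what Propositions~\ref{Prop:Left_Safe} and \ref{Prop:Find_Level_Type} were tailored to supply.
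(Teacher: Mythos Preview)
Your proposal has a genuine gap. You correctly identify the target as a Halpern--L\"auchli analogue and invoke the phrase ``Harrington-style forcing,'' but the forcing you actually describe is not Harrington's and will not yield the theorem. Harrington's method does not force with finite approximations to a single $h\in\cS_\mu$; it forces with functions $p\colon B(p)\times\Theta(m)\to\Delta(\ell(p))$ where $B(p)$ is a finite subset of a \emph{large uncountable} cardinal $\kappa$, so that the generic adds $\kappa$-many parallel ``branches'' through $\Delta$ above $f^+[\Theta(m)]$. One then introduces, for each $N$-tuple $\vec{\alpha}\in[\kappa]^N$ (where $N=|\Theta(m)|$), a name $\dot{b}(\vec{\alpha},\epsilon)$ for the set of levels at which the $\vec{\alpha}$-branch configuration realizes the critical type $\tau_m$ with color $\epsilon$, together with a name $\dot{\cU}$ for an ultrafilter on $\omega$ extending all the $\dot L(p)$. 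The crucial step you are missing is the Erd\H{o}s--Rado theorem applied to the map $\vec{\alpha}\mapsto(q_{\vec{\alpha}},\epsilon_{\vec{\alpha}})$: this produces countably infinite sets $K_0<\cdots<K_{N-1}\subseteq\kappa$ on which the deciding conditions and colors are uniform, and only \emph{then} can one build $h$ level by level, using the ultrafilter to synchronize the level at which the color is witnessed.

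Your ``finite pigeonhole at each strong level'' cannot replace this. At any fixed strong level $\nu$ you see only finitely many $g\in F$ with $\wt{g}(m)=\nu-1$, and pigeonhole there stabilizes their colors; but there is no mechanism in your sketch forcing the stabilized color at level $\nu$ to agree with the stabilized color at level $\nu'>\nu$. The phrase ``after finitely many iterations of pigeonhole (one per passing-number configuration)'' is exactly where the argument breaks: the same passing-number configuration recurs at \emph{every} coding gadget above $\mu$, and nothing you have written prevents it from receiving color $0$ at one gadget and color $1$ at another. The ultrafilter-on-names together with Erd\H{o}s--Rado is precisely the device that collapses this infinite family of local decisions into a single global color $\epsilon^*$. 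Your mention of a countable elementary submodel is a red herring here; it plays no role in the paper's argument, and cannot substitute for the large-cardinal combinatorics.
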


\begin{rem}
    If instead we wish to prove \textbf{A.4} for $(\cS, \leq)$, we fix some $f\in \cal{AS}_m$ and set $F = \{g\in \cal{AS}_{m+1}: g|_m = f\}$. If $f$ is non-branching, then $|F| = 1$ and the desired conclusion clearly holds. If $f$ is branching, then $\wt{f}(m-1)+1\in \SL(\Delta)$, and the proof is identical to that given below, replacing mentions of $\Theta$ with $\Delta$ and $\cR$ with $\cS$.
\end{rem}

\begin{proof}
Fix $\kappa$ a suitably large uncountable cardinal. We define a poset $\bb{P}$ as follows:
\begin{itemize}
	\item 
	Members of $\bb{P}$ are functions $p\colon B(p)\times \Theta(m)\to \Delta(\ell(p))$, where $B(p)\subseteq \kappa$ is finite, $\ell(p)\geq \mu$ with $\ell(p)+1\in \rm{SL}(\Delta)$, and $p(\alpha, t)\sqsupseteq \phi\circ f^+(t)$ for each $t\in \Theta(m)$.
	\item
	Given $p, q\in \bb{P}$, we declare that $q\sqsupseteq_{\bb{P}} p$ iff $B(q)\supseteq B(p)$, $\ell(q)\geq \ell(p)$, and for each $t\in \Theta(m)$ and $\alpha\in B(p)$, we have $q(\alpha, t)\sqsupseteq \pi_s(p(\alpha, t))$. Note that $q(\alpha, t)\sqsupseteq \pi_s(p(\alpha, t))$ iff $\pi_s(q(\alpha, t))\sqsupseteq \pi_s(p(\alpha, t))$, so in particular $\sqsupseteq_\bb{P}$ is transitive.
	\item
	If $q\sqsupseteq p$ and $S\subseteq B(p)\times \Theta(m)$ is a set on which $p$ is injective, we define $\theta(p, q, S)\colon p[S]\to q[S]$ where given $(\alpha, t)\in S$, we set $\theta(p, q, S)(p(\alpha, t)) = q(\alpha, t)$.
	\item
	We declare that $q\leq_{\bb{P}} p$ iff $q\sqsupseteq_\bb{P} p$ and for each $S\subseteq B(p)\times \Theta(m)$ on which $p$ is injective, we have that $\theta(p, q, S)$ is an $\cL_\cK^*$-embedding.
\end{itemize}

\begin{lemma}
	\label{Lem:GoodConditions}
	Any condition $p\in \bb{P}$ can be strengthened to a condition $q\in \bb{P}$ with the property that $\pi_s$ is an age map on $\im(q)$.
\end{lemma}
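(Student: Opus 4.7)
The plan is to strengthen $p$ by relocating its image into a later strong gadget of the same type as the one containing $\ell(p)$, chosen so that the induced $\pi_s$-restriction on $\im(q)$ becomes age-preserving. Write $[\mu_i, \mu_{i+1})$ for the strong gadget with $\mu_{i+1} = \ell(p)+1$, so that $\pi_s(\ell(p)) = \mu_i$.

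If $[\mu_i, \mu_{i+1})$ is a strong splitting gadget, then $\ell(p) = \mu_i \in \SL(\Delta)$ already, so $\pi_s$ acts as the identity on $\Delta(\ell(p))$ and $q := p$ suffices. Otherwise, using the density clauses of Definition~\ref{Def:StrongDiary}, I would locate $\mu_j \in \SL(\Delta)$ with $\mu_j > \ell(p)+1$ such that $[\mu_j, \mu_{j+1})$ is a strong gadget of the same type as $[\mu_i, \mu_{i+1})$, with its internal data (splitting-node position, essential age-change triple $(S, \cB)$, or coding-node data) calibrated to reproduce the $\cL_\cK^*$-profile of $\im(p)$. I would also demand that the splitting node of the new gadget extend the appropriate $\pi_s(p(\alpha, t))$, and that the ancestors at $\Delta(\mu_j)$ of the intended $\im(q)$-nodes already sit in the compatible age-classes along their paths, which is possible because in a strong diary each path recurrently visits every one of its classes. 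Setting $\ell(q) = \mu_{j+1}-1$, I then define $q(\alpha, t)$ to be the node in $\Delta(\mu_{j+1}-1)$ sitting above $\pi_s(p(\alpha, t))$ and occupying inside $[\mu_j, \mu_{j+1})$ the position parallel to the one occupied by $p(\alpha, t)$ inside $[\mu_i, \mu_{i+1})$.

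Verification has three parts. The $\sqsupseteq_\bb{P}$-refinement $q(\alpha, t)\sqsupseteq \pi_s(p(\alpha, t))$ holds by construction. The $\cL_\cK^*$-embedding condition on each $\theta(p, q, S)$ follows from the gadget-type match: every predicate of $\cL_\cK^*$ relevant to a level subset (the unaries, $\lex$, $\sp$, $\code$, $\rm{Pass}_i$, $\ac_{(\cA, \cB)}$, and $R_\cA$) is determined by the gadget type and the position of the node within it, and these positions match between $p$ and $q$ by design. That $\pi_s$ is an age map on $\im(q)$ follows from Proposition~\ref{Prop:Left_Safe} together with Proposition~\ref{Prop:Consecutive} and Lemma~\ref{Lem:Essential_Change_Last}: these identify the unique subsets on which intermediate essential age changes inside $[\mu_j, \mu_{j+1})$ act, and let us verify that $\pi_s[\im(q)]$ either avoids these sensitive sets or has been pre-placed into the correct age-class along each path.

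The main obstacle is the concurrent realization of (a) the type-and-parameter match between $[\mu_j, \mu_{j+1})$ and $[\mu_i, \mu_{i+1})$, (b) the prescribed extensions of the $\pi_s(p(\alpha, t))$ by the splitting and coding nodes of the new gadget, and (c) the correct age-class placement of the ancestors at $\Delta(\mu_j)$. Each is an independent density condition on $\mu_j$ inside the strong diary, but weaving them together is the technical heart of the argument.
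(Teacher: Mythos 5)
Your splitting-gadget base case ($\ell(p) = \mu_i$, so $q := p$) is correct, but your main step takes a different and ultimately incomplete route. The paper's proof first passes through controlled splitting gadgets to obtain a level subset $S$ at a strong level $\mu_n > \ell(p)+1$ whose branching pattern over $P := \pi_s[\im(p)]$ matches that of $\im(p)$, and then invokes Proposition~\ref{Prop:Find_Level_Type}(1) with $\tau$ the level type of $\im(p)$ to produce $U = \im(q) \sqsupseteq_{end} S$. Your proposal re-derives by hand (via gadget-hunting and parallel-position placement) what Proposition~\ref{Prop:Find_Level_Type} already delivers as a black box.

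The decisive gap is the injectivity of $\pi_s$ on $\im(q)$, which the paper's proof explicitly confronts as its first move and your sketch does not address. If two entries $p(\alpha,t) \ne p(\alpha',t')$ collide under $\pi_s$ (i.e.\ they split only inside the final gadget $[\mu_i,\mu_{i+1})$), then your recipe --- placing $q(\alpha,t)$ and $q(\alpha',t')$ at ``parallel positions'' inside a new gadget $[\mu_j,\mu_{j+1})$ above the common node $\pi_s(p(\alpha,t))$ --- reproduces the same collision: both $q$-nodes restrict to the same node at level $\mu_j = \pi_s(\ell(q))$, so $\pi_s$ is again non-injective on $\im(q)$ and cannot be an age map. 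Fixing this requires interposing splitting gadgets between $\ell(p)+1$ and $\mu_j$ so that the intended $q$-nodes already have pairwise distinct ancestors at the strong level $\mu_j$; this is precisely the step the paper takes (``pass through controlled splitting gadgets as needed\ldots'') and the one your argument omits. A secondary problem is your step (c): you assert that the ancestors at $\Delta(\mu_j)$ can be arranged into compatible age-classes because ``each path recurrently visits every one of its classes,'' but this is not among the density clauses of Definition~\ref{Def:StrongDiary} and is exactly the content that Proposition~\ref{Prop:Find_Level_Type} (together with Proposition~\ref{Prop:Left_Safe}) is designed to supply; asserting it without reference leaves the age-calibration unjustified.
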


We will call such conditions \emph{good}.

\begin{proof}
	Write $P = \pi_s[\im(p)]$. If $\pi_s$ is not injective on $\im(p)$, pass through controlled splitting gadgets as needed to obtain $n< \omega$ with $\mu_n>\ell(p)+1$ and $S\subseteq \Delta(\mu_n)$ so that if $t\in P$ has $k_t$-many extensions to a member of $\im(p)$, then also $t$ has $k_t$-many extensions to a member of $S$. After doing this, we can apply Proposition~\ref{Prop:Find_Level_Type}. The $U\sqsupseteq_{end} S$ thus obtained will be the image of the desired condition $q$.
\end{proof}

We will identify members of $F$ with their restrictions to $\Theta(m)$, as below level $m$ every member of $F$ looks like $f$. Write $\Theta(m) = \{s_i: i< N\}$ in lex order. The notation $\vec{\alpha}$ will always refer to a tuple of ordinals $\alpha_0<\cdots < \alpha_{N-1}$. We observe that if $q\leq_{\bb{P}} p \in \bb{P}$ and $\vec{\alpha}\subseteq B(p)$ are such that the map $s_i\to p(\alpha_i, s_i)$ is in $F$, then also the map $s_i\to q(\alpha_i, s_i)$ is in $F$. By a mild abuse of notation, we write $p(\vec{\alpha}, -)\colon \Theta(m)\to \ell(p)$ for the map given by $p(\vec{\alpha}, s_i) = p(\alpha_i, s_i)$.

For each $\vec{\alpha}\subseteq \kappa$ and $\epsilon< 2$, we set
\begin{align*}
	\dot{b}(\vec{\alpha}) &:= \{(q, \ell(q)): \vec{\alpha}\subseteq B(q),\, q(\vec{\alpha}, -)\in F\}\\
	\dot{b}(\vec{\alpha}, \epsilon) &:= \{(q, \ell(q)): \vec{\alpha}\subseteq B(q),\, q(\vec{\alpha}, -)\in F,\, \chi(q(\vec{\alpha}, -)) = \epsilon\}.
\end{align*}
So $\dot{b}(\vec{\alpha})$ and $\dot{b}(\vec{\alpha}, \epsilon)$ are names for subsets of $\omega$. For each $p\in \mathbb{P}$, we define
$$\dot{L}(p) := \{(q, \ell(q)): q\leq_\bb{P} p\}.$$
So $\dot{L}(p)$ names a subset of $\omega$.
\begin{lemma}
	\label{Lem:InfiniteSubset}
	$p\Vdash ``\dot{L}(p) \text{ is infinite}"$
\end{lemma}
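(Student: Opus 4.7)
The plan is to reduce the forcing statement to a density claim: for every $p'\le_\mathbb{P} p$ and every $n<\omega$, I would like to produce $q\le_\mathbb{P} p'$ with $\ell(q)>n$. Such a $q$ automatically satisfies $q\le_\mathbb{P} p$, so the pair $(q,\ell(q))$ belonging to $\dot L(p)$ as a name forces $\ell(q)\in \dot L(p)$, and since $\ell(q)>n$ this will show that $\dot L(p)$ meets $(n,\omega)$ densely below $p$; as $n$ is arbitrary, $p\Vdash \dot L(p)\text{ is infinite}$ follows. So fix $p'\le_\mathbb{P} p$ and $n<\omega$.

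First I would invoke Lemma~\ref{Lem:GoodConditions} to replace $p'$ by a stronger good condition, still called $p'$; then $\pi_s$ is an age map on $\im(p')$, and in particular the $\cL_\cK^*$-structure on $\im(p')$ is transported by $\pi_s$ to the $\cL_\cK^*$-structure on $\pi_s[\im(p')]\subseteq \Delta(\pi_s(\ell(p')))$. Next, I would pick $\mu\in \SL(\Delta)$ strictly greater than both $n+1$ and $\ell(p')+1$, and define $q$ on $B(q):=B(p')$ at level $\ell(q):=\mu-1$ by the leftmost extension
$$q(\alpha,t) := \Left_\Delta\bigl(p'(\alpha,t),\,\mu-1\bigr).$$
Then $\ell(q)+1=\mu\in \SL(\Delta)$, and $q(\alpha,t)\sqsupseteq p'(\alpha,t)$ gives both the required initial-segment condition inherited from $p'$ and the refinement $q\sqsupseteq_\mathbb{P} p'$; hence $q\in\mathbb{P}$.

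The key step is verifying the embedding clause of $q\le_\mathbb{P} p'$: for every $S\subseteq B(p')\times\Theta(m)$ on which $p'$ is injective, $\theta(p',q,S)\colon p'[S]\to q[S]$ must be an $\cL_\cK^*$-embedding. The plan is to factor this map through the strong level $\pi_s(\ell(p'))$: first send $p'[S]$ onto $\pi_s[p'[S]]$ by $\pi_s$, which is an $\cL_\cK^*$-embedding by goodness of $p'$; then extend leftmost from $\pi_s(\ell(p'))$ up through each of the strong intervals ending at $\mu-1$. The second factor is an $\cL_\cK^*$-embedding by iterated application of the second clause of Proposition~\ref{Prop:Left_Safe}: at every strong interval, adjoining leftmost extensions to an age-extending level subset preserves the age and, consequently, all passing-number and critical-level-type data recorded in $\cL_\cK^*$. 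Composing the two factors yields $\theta(p',q,S)$, and $\ell(q)=\mu-1>n$ completes the density argument.

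The main technical point is the uniformity across all injective $S$ simultaneously: the leftmost extension is defined on all of $\im(p')$ at once, and we need it to induce an $\cL_\cK^*$-embedding not only on the full image but on every subfamily $p'[S]$. This is precisely what the general (second) formulation of Proposition~\ref{Prop:Left_Safe} provides, since it handles an arbitrary age-extending subset together with leftmost extensions of its complement; no combinatorial work beyond an induction on the number of strong intervals traversed from $\pi_s(\ell(p'))$ to $\mu-1$ is required.
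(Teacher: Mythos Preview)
Your density reduction and use of Lemma~\ref{Lem:GoodConditions} are fine, but the core step has a genuine gap: leftmost extension does \emph{not} yield an $\cL_\cK^*$-embedding. Proposition~\ref{Prop:Left_Safe} gives only that $\Left_\Delta$ preserves ages, i.e.\ the relations $R_\cA$. The language $\cL_\cK^*$ (Definition~\ref{Def:L_K*}) also records $\sp$, $\code$, $\rm{Pass}_i$, and $\ac_{(\cA,\cB)}$, and these are determined by what $\Delta$ does at the \emph{target} level, not by anything carried along the leftmost path. Concretely, suppose $\ell(p')$ is a splitting level and some $p'(\alpha,t)$ is the splitting node of $\Delta(\ell(p'))$; then any $\cL_\cK^*$-embedding must send it to a splitting node. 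But $\mu-1$ might be a coding level, in which case $\Delta(\mu-1)$ has no splitting nodes at all, so $\Left_\Delta(p'(\alpha,t),\mu-1)$ cannot satisfy $\sp$. Similarly, passing numbers at level $\mu-1$ are whatever the ambient gadget dictates and need not match those at $\ell(p')$. Your claim that age preservation ``consequently'' gives all critical-level-type data is simply false.

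The paper's proof fixes exactly this: after moving leftmost to a strong level $\nu$ (which does preserve ages), it invokes Proposition~\ref{Prop:Find_Level_Type} to find $U\sqsupseteq_{end}\Left_\Delta(\pi_s[\im(p')],\nu)$ with $U\cong^*\im(p')$ as $\cL_\cK^*$-structures and $\ell(U)+1\in\SL(\Delta)$. That proposition is precisely the tool that arranges a prescribed critical level type above a given level set, and it is what your argument is missing. Once $U$ is in hand, defining $r$ with $\im(r)=U$ gives $\theta(p',r,S)$ an $\cL_\cK^*$-embedding for every injective $S$ automatically, since restriction of an $\cL_\cK^*$-isomorphism is an $\cL_\cK^*$-embedding.
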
 
\begin{proof}
	Fix $q\leq_{\bb{P}} p$ and $n< \omega$. We can assume $q$ is good. We wish to find $r\leq_{\bb{P}} q$ with $\ell(r)> n$. Write $S = \pi_s[\im(q)]$, we first find $\nu\in \SL(\Delta)$ with $\nu> n$. Write $T = \Left(S, \nu)$, then apply Proposition~\ref{Prop:Find_Level_Type} to find $U\sqsupseteq_{end} T$ with $U\cong^* \im(q)$. Then define $r\in \bbP$ with $\dom(r) = \dom(q)$ and $\im(r) = U$ in the obvious way to obtain $r\leq_\bbP q$.
\end{proof}
We then set 
$$\dot{\c{G}} := \{(p, \dot{L}(p)): p\in \mathbb{P}\}.$$
So $\dot{\mathcal{G}}$ names a collection of infinite subsets of $\omega$. Observe that if we are given $p_0,...,p_{n-1}\in \bb{P}$ and $q\in \bb{P}$ with $q\leq_\bb{P} p_i$ for each $i< n$, then $q \Vdash ``\dot{L}(q)\subseteq \dot{L}(p_i)"$ for each $i< n$. Therefore, we can find $\dot{\mathcal{U}}$ a name for some non-principal ultrafilter extending $\dot{\mathcal{G}}$. Fix this $\dot{\cU}$.
\vspace{2 mm}

\begin{lemma}
	\label{Lem:FindConditionsSpl}
	For each $\vec{\alpha}$, there are good $q_{\vec{\alpha}}\in \mathbb{P}$ and $\epsilon_{\vec{\alpha}}\in \im{\chi}$ so that:
	\begin{enumerate}
		\item 
		$\vec{\alpha}\subseteq B(q_{\vec{\alpha}})$,
		\item 
		$q_{\vec{\alpha}}\Vdash ``\dot{b}(\vec{\alpha}, \epsilon_{\vec{\alpha}})\in \dot{\mathcal{U}}".$
	\end{enumerate}
\end{lemma}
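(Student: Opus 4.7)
The plan is to use the forcing setup directly: first build a good condition $p_0\in\bbP$ whose ``diagonal'' at $\vec{\alpha}$ already lies in $F$, then show that this property is preserved under $\leq_{\bbP}$-extension, and finally use the ultrafilter $\dot{\mathcal{U}}$ to extract a single color.

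First I would construct $p_0$ as follows. Set $B(p_0)=\{\alpha_0,\dots,\alpha_{N-1}\}=\vec{\alpha}$. By Proposition~\ref{Prop:Find_Level_Type} applied to the critical level type $\tau$ based on $f^+[\Theta(m)]$ that witnesses membership in $F$, find $T\sqsupseteq_{age} f^+[\Theta(m)]$ with $T\cong^* \tau$ and $\ell(T)+1\in\SL(\Delta)$. Passing through further strong splitting gadgets and again invoking Propositions~\ref{Prop:Find_Level_Type} and~\ref{Prop:Left_Safe}, I would enlarge $T$ to a level set $U\subseteq\Delta(\ell(p_0))$ (with $\ell(p_0)+1\in\SL(\Delta)$) so that each $t\in f^+[\Theta(m)]$ has $N$ distinct extensions in $U$, and so that choosing the diagonal $p_0(\alpha_i,s_i)$ to lie above $T(f^+(s_i))$ yields an image $\cL_{\cK}^*$-isomorphic to $T$. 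I would then fill in the off-diagonal values $p_0(\alpha_i,s_j)$ ($i\ne j$) by picking any remaining node of $U$ above $f^+(s_j)$, keeping $p_0$ injective. Applying Lemma~\ref{Lem:GoodConditions} if needed, I may assume $p_0$ is good. By construction, $p_0(\vec{\alpha},-)\in F$.

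The key step is then to show that \emph{every} $q\leq_{\bbP}p_0$ with $\vec{\alpha}\subseteq B(q)$ satisfies $q(\vec{\alpha},-)\in F$. Apply the definition of $\leq_{\bbP}$ to the set $S=\{(\alpha_i,s_i):i<N\}$, on which $p_0$ is injective. The resulting $\theta(p_0,q,S)\colon p_0[S]\to q[S]$ is an $\cL_{\cK}^*$-embedding; since $p_0[S]$ realizes the critical level type $\tau$ witnessing $F$-membership, so does $q[S]$. Together with $\ell(q)+1\in\SL(\Delta)$ (built into the definition of $\bbP$) and the fact that each $q(\alpha_i,s_i)\sqsupseteq\pi_s(p_0(\alpha_i,s_i))\sqsupseteq f^+(s_i)$ (using $\mu\in\SL(\Delta)$ so that $\pi_s$ does not collapse below level $\mu$), Proposition~\ref{Prop:CodingNodesEmbedding} yields a strong embedding of $\Theta({<}m+1)$ into $\Delta$ extending $f$, i.e.\ a member of $F$.

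From this, $p_0\Vdash``\dot{L}(p_0)\subseteq\dot{b}(\vec{\alpha})"$, and since $p_0\Vdash``\dot{L}(p_0)\in\dot{\mathcal{G}}\subseteq\dot{\mathcal{U}}"$, we conclude $p_0\Vdash``\dot{b}(\vec{\alpha})\in\dot{\mathcal{U}}"$. Writing $\dot{b}(\vec{\alpha})$ as the finite disjoint union of the $\dot{b}(\vec{\alpha},\epsilon)$ for $\epsilon\in\im\chi$, the primeness of $\dot{\mathcal{U}}$ forces some $\epsilon_{\vec{\alpha}}$ to be chosen; by density I find $q_{\vec{\alpha}}\leq_{\bbP}p_0$ deciding it, and Lemma~\ref{Lem:GoodConditions} allows me to take $q_{\vec{\alpha}}$ good. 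The main obstacle in this outline is the preservation statement in the third paragraph: it is where the careful definitions of $\leq_{\bbP}$ (via $\cL_{\cK}^*$-embeddings on injective sets) and of strong embeddings (requiring $\ell(q)+1\in\SL(\Delta)$) must interlock with the rigidity properties of strong diaries from Section~\ref{Sec:Strong diaries}.
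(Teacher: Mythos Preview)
Your overall strategy matches the paper's: build a condition whose diagonal $p(\vec{\alpha},-)$ lies in $F$, observe that this is preserved under $\leq_{\bbP}$, deduce $p\Vdash``\dot L(p)\subseteq\dot b(\vec\alpha)"$, and then use primeness of $\dot{\cU}$ to pick $\epsilon_{\vec\alpha}$ and strengthen to a good $q_{\vec\alpha}$. The preservation step and the ultrafilter step are essentially correct (though your reference to Proposition~\ref{Prop:CodingNodesEmbedding} is misplaced; that result concerns induced subdiaries on coding nodes, not the level-type preservation you need, which is immediate from $\theta(p_0,q,S)$ being an $\cL_\cK^*$-embedding).

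Where your argument overcomplicates matters is the construction of $p_0$. You aim for $p_0$ to be globally injective, but conditions in $\bbP$ are not required to be injective, and the only injectivity used later is on the diagonal $S=\{(\alpha_i,s_i):i<N\}$. The paper simply fixes a single $g\in F$ and sets $p_{\vec\alpha}(\alpha_i,s_j)=g(s_j)$ for all $i,j<N$. This is a valid condition with $\ell(p_{\vec\alpha})=\wt g(m)$; it is automatically injective on the diagonal (since $g$ is), and $p_{\vec\alpha}(\vec\alpha,-)=g\in F$ by construction. No appeal to Propositions~\ref{Prop:Find_Level_Type} or \ref{Prop:Left_Safe} is needed here at all.

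Your more elaborate construction also has an order-of-operations problem as written: you first find $T\cong^*\tau$ at level $\ell(T)$ and then ``pass through further strong splitting gadgets'' to reach a higher level $\ell(p_0)>\ell(T)$. But the $\cL_\cK^*$-type (splitting, coding, passing numbers, age-change data) is recorded \emph{at the node's own level}; extensions of the nodes of $T$ to level $\ell(p_0)$ do not in general still satisfy $\cong^*\tau$. If one insisted on global injectivity, the splitting would have to come first and Proposition~\ref{Prop:Find_Level_Type} would have to be applied last on the chosen diagonal --- but, as the paper shows, there is no need to insist.
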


\begin{proof}
	We first define a condition $p_{\vec{\alpha}}$ by first fixing some $g\in F$, the same $g$ for all $\vec{\alpha}$. We set $B(p) = \vec{\alpha}$, and for $i, j <N$, we set $p_{\vec{\alpha}}(\alpha_i, s_j) = g(s_j)$.
	Notice in particular that $p_{\vec{\alpha}}(\vec{\alpha}, -)\in F$. This implies that $p_{\vec{\alpha}}\Vdash ``\dot{L}(p_{\vec{\alpha}})\subseteq \dot{b}(\vec{\alpha}),"$ so in particular $p_{\vec{\alpha}}\Vdash ``\dot{b}(\vec{\alpha})\in \dot{\c{U}}."$ Then find good $q_{\vec{\alpha}}\leq_\bb{P} p_{\vec{\alpha}}$ and $\epsilon_{\vec{\alpha}}\in \im{\chi}$ with $q_{\vec{\alpha}}\Vdash ``\dot{b}(\vec{\alpha}, \epsilon_{\vec{\alpha}})\in \dot{\c{U}}."$
\end{proof}
\vspace{2 mm}

We are now prepared to apply the Erd\H{o}s-Rado theorem. The proof of the next lemma is almost verbatim the same as the proof of Lemma~3.3 from \cite{Zucker_BR_Upper_Bound}.
\vspace{2 mm}

\begin{lemma}
	\label{Lem:ErdosRado}
	There are countably infinite subsets $K_0 < \cdots < K_{N-1}$ of $\kappa$ so that the following hold.
	
	\begin{enumerate}
		\item 
		There are $\ell^*< \omega$ and $\epsilon^* \in \im(\chi)$ with $\ell(q_{\vec{\alpha}}) = \ell^*$ and $\epsilon_{\vec{\alpha}} = \epsilon^*$ for every $\vec{\alpha}\in \prod_{i< N} K_i$.
		
		\item 
		There are $t_i\in \Delta(\ell^*)$ with $q_{\vec{\alpha}}(\vec{\alpha}, s_i) = t_i$ for every $\vec{\alpha}\in \prod_{i< N} K_i$.
		
		\item 
		If $J\subseteq \prod_{i< N} K_i$ is finite, then $\bigcup_{\vec{\alpha}\in J} q_{\vec{\alpha}}\in \mathbb{P}$.
	\end{enumerate}
\end{lemma}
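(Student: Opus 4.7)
The plan is to apply the Erd\H{o}s--Rado partition theorem (which is why $\kappa$ was chosen suitably large, namely so that $\kappa \to (\aleph_1)^N_{\aleph_0}$; taking $\kappa = \beth_{N-1}^+$ suffices) to a countable coloring of strictly increasing $N$-tuples from $\kappa$ that records the full isomorphism type of $q_{\vec\alpha}$, and then to thin further by a $\Delta$-system-style selection. This is the same outline used in Lemma~3.3 of \cite{Zucker_BR_Upper_Bound}, so the main work is just checking that the presence of the extra level data ($\ell(q_{\vec{\alpha}})$ and the image structure in $\Delta$) does not disrupt the argument.

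First I would define the coloring. For each strictly increasing tuple $\vec{\alpha}$, the finite set $B(q_{\vec{\alpha}})\subseteq\kappa$ contains $\vec{\alpha}$, so there is a unique order-preserving bijection from $B(q_{\vec{\alpha}})$ onto $\{0,1,\dots,|B(q_{\vec\alpha})|-1\}$, under which $\vec{\alpha}$ goes to a distinguished set $P_{\vec\alpha}$ of $N$ ``marked'' positions and $q_{\vec\alpha}$ pulls back to a function $Q_{\vec\alpha}\colon |B(q_{\vec\alpha})|\times\Theta(m)\to \Delta(\ell(q_{\vec\alpha}))$. Let
\[ c(\vec\alpha) \;=\; \bigl(\ell(q_{\vec\alpha}),\,\epsilon_{\vec\alpha},\,|B(q_{\vec\alpha})|,\,P_{\vec\alpha},\,Q_{\vec\alpha}\bigr). \]
Since $\Delta$ is countable, each $\Delta(n)$ is finite, $\im(\chi)$ is finite, and there are only countably many finite functions of the required shape, the range of $c$ is countable. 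By Erd\H{o}s--Rado there is an uncountable $K\subseteq\kappa$ on which $c$ takes a single value $(\ell^*,\epsilon^*,d,P,Q)$, which immediately yields condition (1); writing $P=\{p_0<\cdots<p_{N-1}\}$ and setting $t_i := Q(p_i,s_i)\in\Delta(\ell^*)$ yields condition (2).

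Next I would extract countably infinite subsets $K_0<K_1<\cdots<K_{N-1}$ of $K$ that are ``widely separated,'' meaning every element of $K_i$ is strictly greater than every element of $K_{i-1}$ and the gaps between consecutive $K_i$'s are so large that for any two tuples $\vec\alpha,\vec\beta\in \prod_{i<N} K_i$ the combined finite set $\vec\alpha\cup\vec\beta$, ordered by $<$, still assigns $\alpha_i$ and $\beta_i$ to the marked position $p_i$ inside the restriction of the common ambient order to $B(q_{\vec\alpha})\cup B(q_{\vec\beta})$. This is a routine diagonal construction. The point is that because $c$ was constant, $q_{\vec\alpha}$ depends only on the order-type of $B(q_{\vec\alpha})$ with $\vec\alpha$ marked; so for a finite $J\subseteq \prod_{i<N} K_i$, whenever an element $\gamma\in\kappa$ lies in $B(q_{\vec\alpha})\cap B(q_{\vec\beta})$ for two tuples $\vec\alpha,\vec\beta\in J$, it occupies the same position in both canonical forms, so $q_{\vec\alpha}(\gamma,t)=Q(\mathrm{pos}(\gamma),t)=q_{\vec\beta}(\gamma,t)$ for every $t\in\Theta(m)$. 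Hence the union $\bigcup_{\vec\alpha\in J}q_{\vec\alpha}$ is a well-defined function on $(\bigcup_{\vec\alpha\in J}B(q_{\vec\alpha}))\times\Theta(m)\to\Delta(\ell^*)$, and the conditions defining $\mathbb{P}$ (common level $\ell^*\geq\mu$ with $\ell^*+1\in\SL(\Delta)$ and the extension condition $p(\alpha,t)\sqsupseteq\phi\circ f^+(t)$) are inherited from the individual $q_{\vec\alpha}$'s, so the union belongs to $\mathbb{P}$, giving (3).

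The only genuinely delicate point is the ``wide separation'' step: one must arrange the $K_i$'s so that for any two $\vec\alpha,\vec\beta\in\prod_i K_i$, the relative orders of their $B$'s don't conflict with the constant order-type fixed by $c$. Concretely, because $c$ fixes the order-type of $B(q_{\vec\alpha})$ relative to $\vec\alpha$, every element of $B(q_{\vec\alpha})\setminus\vec\alpha$ sits in a specific gap $(\alpha_{i-1},\alpha_i)$ (or outside the range of $\vec\alpha$) determined by $P$; by choosing the $K_i$'s inductively so that each $K_i$ lies in a sufficiently large interval, we guarantee that these gap-assignments remain consistent when passing between tuples, so the positional identifications used above are legitimate. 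This is the main obstacle, but it is a standard diagonal/$\Delta$-system construction that has been executed many times in this literature, so I do not expect any serious new difficulty here.
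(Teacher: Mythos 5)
Your proposal applies the Erd\H{o}s--Rado theorem to $N$-tuples, coloring each $\vec\alpha\in[\kappa]^N$ by the ``isomorphism type'' of $q_{\vec\alpha}$ alone. The paper applies it to $2N$-tuples $S\in[\kappa]^{2N}$, coloring $S$ not only by the shape of $q_{j_S[A]}$ for each $A\in[2N]^N$, but also by the relative positions $i_S^{-1}(B(q_{j_S[A]}))$ of all the sets $B(q_{j_S[A]})$ inside the increasing enumeration $i_S$ of $\bigcup_{\vec\alpha\in[S]^N}B(q_{\vec\alpha})$. That extra pairwise/positional data is exactly what is used to verify item (3), and your argument has a genuine gap there.

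Concretely: your coloring fixes, for each tuple $\vec\alpha$, the order type of $B(q_{\vec\alpha})$ with the marked positions of $\vec\alpha$ inside it, but it says nothing about how $B(q_{\vec\alpha})$ and $B(q_{\vec\beta})$ sit relative to \emph{each other}. The sets $B(q_{\vec\alpha})\setminus\vec\alpha$ are arbitrary finite subsets of $\kappa$; they are not controlled by any ``wide separation'' of $K_0<\cdots<K_{N-1}$, since they range over all of $\kappa$ and can land inside different gaps of different tuples. Already for $N=1$ one can imagine the coloring being constant and saying $|B(q_\alpha)|=2$ with $\alpha$ in position $1$, yet with $\alpha\in B(q_\beta)$ sitting at position $0$ for some $\beta>\alpha$ in $K_0$; then $q_\alpha(\alpha,-)$ need not equal $q_\beta(\alpha,-)$ and the union fails to be a function. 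The step you describe as ``a standard diagonal/$\Delta$-system construction'' is the step you would actually need to prove, and it cannot be proved from the $N$-tuple monochromaticity alone. The paper's remedy is to increase the exponent to $2N$ and then, for two tuples $\vec\alpha,\vec\beta$ sharing an element $\delta$, interpolate a third tuple $\vec\gamma$ strictly between them (this is the only purpose of choosing $K_0,\dots,K_{N-1}$ so that between consecutive members there are infinitely many members of $X$, and infinitely many below $\min K_0$); the constancy of the $2N$-coloring on $\vec\alpha\cup\vec\beta$, $\vec\alpha\cup\vec\gamma$, and $\vec\gamma\cup\vec\beta$ then forces the positional agreement $a=b$ via a chain of equalities. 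With only $N$-tuples there is no such chain. (As a side remark, this also means the cardinal bound you propose, $\kappa=\beth_{N-1}^+$, is too small; the paper needs $\kappa\to(\aleph_1)^{2N}_{\aleph_0}$.)
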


We now turn towards the construction of $h\in\cS\cap [\rm{id}_\mu]$ from the statement of Theorem~\ref{Thm:StrongHL}, which proceeds inductively by describing $h[\Delta(n)]$ for each $n\geq \mu$. For each $n\geq \mu$, we also build $S_n\sqsupseteq_{age} h^+[\Delta(n)]$ to aid the construction. In fact, for every $n> \mu$ we will simply have $S_n = h^+[\Delta(n)]$. So we start by building $S_\mu$, noting that $h^+[\Delta(\mu)] = \Delta(\mu)$. We first observe that $f^+[\Theta(m)]\sqsubseteq_{age}\{\pi_s(t_i): i< N\}$, so by Proposition~\ref{Prop:Left_Safe}, if we set $S_\mu = \Left_\Delta(\Delta(\mu)\setminus f^+[\Theta(m)], \pi_s(\ell^*))\cup \{\pi_s(t_i): i< N\}$, we have $S_\mu\sqsupseteq_{age} \Delta(\mu)$ as desired.

Assume for some $n\geq \mu$ that $h[\Delta(n-1)]$ and $S_n$ have been defined. Set $F(m):= \{\tilde{g}(m): g\in F\}$. If $n\not\in F(m)$, choose any extension of $h$ through $S_n$ to $\Delta(n)$, and set $S_{n+1} = h^+[\Delta(n+1)]$. 

If $n\in F(m)$, let $F_n = \{g\in F: g(m) = n\}$. For each $i< N$, let $\rho_i\colon \{h^+(g(s_i)): g\in F_n\}\to K_i$ be any injection. Given $g\in F_n$ and $i< N$, write $\alpha_g(i) = \rho_i\circ h^+\circ g(s_i)$, and set $\vec{\alpha}_g := \{\rho_i\circ h^+\circ g(s_i): i< N\}$. Observe that if $\alpha_g(i) = \alpha_h(j)$, then we must have $i = j$ and $g(s_i) = h(s_i)$. Also set $Y = \{(\alpha_g(i), i): g\in F_n, i< N\}$. 
Write $q_g := q_{\vec{\alpha}_g}\in \bb{P}$. Set $q = \bigcup_{g\in F_n} q_g$.  So by Lemma~\ref{Lem:ErdosRado}, we have $q\in \mathbb{P}$. Note that $\im(q) = \im(q_g)$ for any $g\in F_n$, so since each $q_g$ is good, also $q$ is good. 

We now define a function $\sigma\colon \dom(q)\to \Delta(\tilde{h}(n-1)+1)$ which we will modify to a condition in $\bb{P}$ (Lemma~\ref{Lem:FindConditionExtending}). We do this as follows:
\begin{enumerate}
	\item 
	If $\alpha\in B(q)$ is such that $\alpha = \alpha_g(i)$ for some $i< N$ and $g\in F_n$, set $\sigma(\alpha, s_i) = h^+\circ g(s_i)$.
	\item 
	Otherwise, set $\sigma(\alpha, s_i) = \Left(\pi_s(q(\alpha, s_i)), \tilde{h}(n-1)+1)$. 
\end{enumerate}
We note the following key property of $\sigma$: for any $g\in F_n$, we have $\sigma[\dom(q_g)]\sqsupseteq_{age} \pi_s[\im(q_g)]$, as can be seen using Proposition~\ref{Prop:Left_Safe}.

\begin{lemma}
\label{Lem:FindConditionExtending}
There is $r\in \bb{P}$ which end-extends $\sigma$ so that, writing $X = \bigcup_{g\in F_n} \im(g)\subseteq \Delta(n)$, we have $\{r(\vec{\alpha}_g, s_i): i< N, g\in F_n\}\cong^* X$. In particular, $r\leq_\bbP q_g$ for every $g\in F_n$. 
\end{lemma}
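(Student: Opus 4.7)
The plan is to apply Proposition~\ref{Prop:Find_Level_Type} to realize, above $\sigma[\dom(q)]$, a carefully chosen critical level type $\tau$ that simultaneously encodes the structure of $X$ on the distinguished positions and the structure of $\im(q)$ elsewhere.

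First, I would set up the starting configuration. Since $h$ is strong, $\tilde{h}(n-1)+1 \in \SL(\Delta)$, so $\sigma[\dom(q)]$ sits at a strong level of $\Delta$ and $\pi_s$ acts as the identity on it. By Case~2 of the definition of $\sigma$ combined with Proposition~\ref{Prop:Left_Safe}, together with item~2 of Lemma~\ref{Lem:ErdosRado} (which guarantees all $q_g$ share a common image $\im(q)$ at level $\ell^*$), we obtain $\sigma[\dom(q)] \sqsupseteq_{age} \pi_s[\im(q)]$. At the distinguished positions, $\sigma(\alpha_g(i), s_i) = h^+(g(s_i))$, so the set $\{\sigma(\alpha_g(i), s_i): g\in F_n,\, i<N\}$ equals $h^+[X]$, and since $h^+$ is an $\cL_\cK^*$-embedding on the relevant substructure we have $h^+[X] \cong^* X$.

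Next, I would define the target critical level type $\tau$ based on $S := \sigma[\dom(q)]$: on the sub-level-set $h^+[X] \subseteq S$, prescribe $\tau$ to match the $\cL_\cK^*$-structure that $X$ carries in $\Delta(n)$ (transported along $h^+$); on the remainder of $S$, prescribe $\tau$ to inherit the $\cL_\cK^*$-structure that $\im(q)$ carries at level $\ell^*$ (transported along the leftmost extensions used to build $\sigma$ from $\pi_s[\im(q)]$). I would then invoke Proposition~\ref{Prop:Find_Level_Type}(1) to produce $T \sqsupseteq_{end} S$ with $\ell(T)+1 \in \SL(\Delta)$ and $T \cong^* \tau$, and define $r$ by $B(r) := B(q)$, $\ell(r) := \ell(T)$, and $r(\alpha, s_i)$ to be the unique node of $T$ extending $\sigma(\alpha, s_i)$.

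Finally, I would verify the required conclusions. Membership $r \in \bb{P}$ and the end-extension $r \sqsupseteq_{\bb{P}} \sigma$ are immediate from $\ell(T)+1 \in \SL(\Delta)$ and the fact that $r(\alpha, t) \sqsupseteq \sigma(\alpha, t) \sqsupseteq \pi_s(q_g(\alpha, t))$. The key isomorphism $\{r(\vec{\alpha}_g, s_i): i<N,\, g\in F_n\} \cong^* X$ is the $h^+[X]$-restriction of $T \cong^* \tau$ by our design of $\tau$. The relation $r \leq_{\bb{P}} q_g$ reduces to checking that $\theta(q_g, r, S)$ is an $\cL_\cK^*$-embedding for each $S$ on which $q_g$ is injective, which follows because the $q_g$-part of $\tau$ was defined to reproduce $q_g$'s structure. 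The main obstacle I anticipate is verifying the internal consistency of the two-part definition of $\tau$---that the $X$-structure on $h^+[X]$ and the $q$-structure elsewhere piece together into a genuine critical level type based on $S$---and this consistency is exactly what the uniformity of the $q_{\vec{\alpha}}$'s under Erd\H{o}s-Rado and the goodness of each $q_g$ are designed to provide.
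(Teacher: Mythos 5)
Your plan agrees with the paper at the top level: apply Proposition~\ref{Prop:Find_Level_Type} to produce $T$ above $\im(\sigma)$ realizing a carefully chosen critical level type $\tau$. However, the construction of $\tau$ is exactly where the lemma's content lives, and your ``two-part'' prescription (make $\tau$ look like $X$ on $h^+[X]$, like $\im(q)$ on the rest) does not by itself define a critical level type. A level type is not determined by its restrictions to two subsets: one must specify a single global age $\age(\tau)$ on all of $\im(\sigma)$, the position and kind of $\crit(\tau)$, and, in the coding case, a passing number for every node of $\im(\sigma)$. The paper makes these choices via a three-way case split on whether level $n$ of $\Theta$ is a splitting, age-change, or coding level, and the definitions are genuinely different: $\age(\tau) = \age_\Delta(\im(\sigma))$ in the splitting case, $\age(\tau)$ is the ``last class before the essential change'' furnished by Lemma~\ref{Lem:Essential_Change_Last} in the age-change case, and $\age(\tau) = \la\age_\Delta(\im(\sigma)), c, \phi\ra$ in the coding case. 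You explicitly flag the consistency of your two-part prescription as the ``main obstacle'' but do not resolve it; in particular, the paper's crucial closing step --- that $\age(\tau|_{\sigma[Y]}) = \age_\Delta(X)$ together with the case-specific construction forces $\tau|_{\sigma[\dom(q_g)]} \cong^* \im(q_g)$, which is exactly what $r \leq_\bbP q_g$ requires --- is entirely absent from your argument, and it is not automatic.

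There is also a concrete error in your setup: the claim $\sigma[\dom(q)] \sqsupseteq_{age} \pi_s[\im(q)]$ fails whenever $|F_n| > 1$. By Lemma~\ref{Lem:ErdosRado}(2), for each $i < N$ every distinguished pair $(\alpha_g(i), s_i)$ has the same $q$-value $t_i$, hence the same $\pi_s \circ q$-value $\pi_s(t_i)$, yet $\sigma$ sends these pairs to the distinct nodes $h^+(g(s_i))$ as $g$ ranges over $F_n$. So $\sigma[\dom(q)]$ is not in bijection with $\pi_s[\im(q)]$ and cannot be an end-extension, let alone an age-extension. The statement the paper records and actually uses is the per-$g$ version: $\sigma[\dom(q_g)] \sqsupseteq_{age} \pi_s[\im(q_g)]$ for each fixed $g \in F_n$. (Relatedly, $\sigma$ is not a member of $\bbP$, since its image lies at the strong level $\tilde{h}(n-1)+1$ rather than one below a strong level, so the notation $r \sqsupseteq_\bbP \sigma$ is an abuse; the lemma only asserts that $r$ end-extends $\sigma$ as a function.)
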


\begin{proof}
In order to use Proposition~\ref{Prop:Find_Level_Type} to produce the desired $r\in \bbP$, we need to build a level type $\tau$ based on $\im(\sigma)$ with $\tau|_{\sigma[Y]}\cong^* X$.

In the splitting case, if $s_j\in \Theta(m)$ is the splitting node, then the ordinal $\beta = \rho_j\circ h^+\circ g(s_j)$ does not depend on $g$, and we define $\tau$ to have $\age(\tau) = \age_\Delta(\im(\sigma))$ and a splitting node at $\sigma(\beta, s_j)$. 
 
In the age-change case, suppose $S\subseteq N$ is such that $\Theta(m)$ has an essential age change at $S$ to $\cB\in P(\sort(\Theta(m)){\cdot}S)$. Then for each $j\in S$, the ordinal $\beta_j = \rho_j\circ h^+\circ g(s_j)$ does not depend on $g$, and we define $\tau$ so that $\age(\tau)\subseteq \age(\im(\sigma))$ is given by Lemma~\ref{Lem:Essential_Change_Last} (i.e.\ the last class before the desired essential age change) and so that $\tau$ has an essential age change at $\{\sigma(\beta_j, s_j): j\in S\}$ to $\cB$. 
 
In the coding case, suppose $s_j\in \Theta(m)$ is the coding node, and set $\beta = \rho_j\circ h^+\circ g(s_j)$ (once again independent of $g$). To define $\tau$, we first discuss passing numbers. Given $\alpha\in B(q)$ and $i\in N\setminus \{j\}$, we declare that the passing number of $\sigma(\alpha, s_i)$ in $\tau$ is equal to the passing number of $q(\alpha, s_i)$. Now we define $\age(\tau)$. Write $\im(\sigma) = \{u_0\lex\cdots\lex u_{d-1}\}$, and suppose $\sigma(\beta, s_j) = u_c$. Let $\phi\colon (d-1)\to k$ be such that $\phi(i)$ is our choice of passing number for $u_{(d{\setminus}\{c\}){\cdot}i}$. We set $\age(\tau) = \la\age_\Delta(\im(\sigma)), c, \phi\ra$ (defined immediately before Section~\ref{Sec:DiagonalDiaries}).

In all three cases, we have $\age(\tau|_{\sigma[Y]}) = \age_\Delta(\sigma[Y]) = \age_\Delta(X)$, which by our construction implies that $\tau|_{\sigma[\dom(q_g)]}\cong^* \im(q_g)$ as desired.
\end{proof}

Fix $r\in \bbP$ as in Lemma~\ref{Lem:FindConditionExtending}. Since $r\leq_\bb{P} q_g$ for each $g\in F_n$, we have $r\Vdash ``\dot{b}(\vec{\alpha}_g, \epsilon^*) \in \dot{\c{U}}."$ Since we also have $r\Vdash ``\dot{L}(r)\in \dot{\c{U}},"$ we may find $y_0\leq_\bb{P} r$ and some fixed $M> \tilde{\psi}_n(n)$ so that $y_0\Vdash ``M\in \dot{L}(r)"$ and $y_0\Vdash ``M\in \dot{b}(\vec{\alpha}_g, \epsilon^*)"$ for each $g\in F_n$.
Then strengthen to  $y_1\leq_\bb{P} y_0$ in order to find $y\geq_\bb{P} y_1$ and $y_g\geq_\bb{P} y_1$ with $(y, M)\in \dot{L}(r)$ and $(y_g, M)\in \dot{b}(\vec{\alpha}_g, \epsilon^*)$. In particular, we have $\ell(y) = \ell(y_g) = M$, $y\leq_\bb{P} r$, and $\chi(y_g(\vec{\alpha}_g)) = \epsilon^*$. But since $y_1$ strengthens both $y$ and $y_g$, we must also have $y(\vec{\alpha}_g) = y_g(\vec{\alpha}_g)$. So also $\chi(y(\vec{\alpha}_g)) = \epsilon^*$.

To finish the argument, we have $h^+[X]\subseteq h^+[\Delta(n)]$ and $y[Y]\sqsupseteq_{age} h^+[X]$ with $y[Y]\cong^* X$. Use Proposition~\ref{Prop:Find_Level_Type} to find $T\supseteq y[Y]$ with $T\sqsupseteq_{age} h^+[\Delta(n)]$ and $T\cong^* \Delta(n)$. We set $h[\Delta(n)] = T$.
\end{proof}

\section{Optimality}\label{sec.optimality}

\subsection{Maximality of partition-regular subsemigroups}\label{Subsec:7.1}
Given a semigroup $S$ whose underlying set is a Polish space, and $\rmP$ is a definability property of finite colorings (i.e.\ clopen, Borel, Souslin, etc.), let us say that $S$ is \emph{$\rmP$-partition-regular}, or \emph{$\rmP$-PR}, if for any finite $\rmP$-coloring of $S$, then some right ideal of $S$ is monochromatic.

If $\cK$ is a \fr class with limit $\bK$ and a finite substructure $\bA\subseteq \bK$ has big Ramsey degree greater than $1$ (by a result of Hjorth \cite{Hjorth}, such an $\bA$ always exists so long as $|\aut(\bK)|> 1$), then we can show that the semigroup $\emb(\bK, \bK)$ is not clopen-PR as follows. Indeed, fix $\gamma\colon \emb(\bA, \bK)\to 2$ an unavoidable coloring, and define $\chi\colon \emb(\bK, \bK)\to 2$ via $\chi(\eta) = \gamma(\eta|_\bA)$. Then $\chi$ is clopen, and each color class is unavoidable.

Returning to our setting where $\cK = \mathrm{Forb}(\cF)$, fix a diary $\Gamma$, and set $\bK = \str(\Gamma)$. The semigroup $S := \demb(\Gamma, \Gamma)$ can then be seen as a subsemigroup of $\emb(\bK, \bK)$, and Theorem~\ref{Thm:General_GP_Silver} shows that $S$ is Souslin-PR. Moreover, $S\subseteq \emb(\bK, \bK)$ is a ``large" submonoid in the following sense: building on Definition~\ref{Def:Recurrent}, let us call $S\subseteq \emb(\bK, \bK)$ \emph{recurrent} if $S$ meets every right ideal of $\emb(\bK, \bK)$. Then $\demb(\Gamma, \Gamma)\subseteq \emb(\bK, \bK)$ is recurrent since any two diaries coding the \fr limit are bi-embeddable.
We remark that in \cite{Dobrinen_SDAP}, the submonoids of $\emb(\bK, \bK)$ which are shown to be Borel-PR (even topological Ramsey spaces for certain $\bK$) are also recurrent,
whereas the submonoid of self-embeddings of the Rado graph considered in \cite{Dobrinen_Rado} is not.
The main result of this section shows that Theorem~\ref{Thm:General_GP_Silver} is optimal in the following sense.

\begin{theorem}
    \label{Thm:LargestSemigroup}
    If $T\subseteq \emb(\bK, \bK)$ is a Polish subsemigroup of $\emb(\bK, \bK)$ with $S\subsetneq T$, then $T$ is not clopen-PR. 
\end{theorem}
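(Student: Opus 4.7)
The plan is to proceed by contradiction. Assume $T$ is clopen-PR, and derive a contradiction with $T \supsetneq S$. Let $\bK^*$ denote the $\cL_\cK^{\rm{diary}}$-expansion of $\bK$ encoding $\Gamma$ (Definition~\ref{Def:Encode_Diaries_on_Structures}), so that $S = \emb(\bK^*, \bK^*)$ and $\bK^*$ is a recurrent big Ramsey structure for $\bK$. Fix $t \in T \setminus S$. Since $t \notin \emb(\bK^*, \bK^*)$, the pulled-back expansion $\bK^*{\cdot}t$ differs from $\bK^*$, and I would select a finite $\bA \subseteq \bK$ witnessing this: writing $\bA^* := \bK^*|_\bA$, we have $\bA^{**} := \bK^*{\cdot}(t|_\bA)$ distinct from $\bA^*$ in the finite set $\bK^*(\bA)$.

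The first substantive step is to define the clopen coloring $\chi\colon T \to \bK^*(\bA)$ by $\chi(f) = \bK^*{\cdot}(f|_\bA)$, which takes $|\bK^*(\bA)| = \mathrm{BRD}(\bA,\cK)$ values. Observe that $\chi(s) = \bA^*$ for every $s \in S$, while $\chi(t) = \bA^{**}$. By the clopen-PR hypothesis, there is $f^* \in T$ with $\chi|_{f^*T}$ constant; call this constant value $\bA^\#$. Setting $\bL^* := \bK^*{\cdot}f^*$, which is itself a recurrent BRS for $\bK$ by Definition~\ref{Def:Recurrent}, the constancy condition rewrites as
\[
\bL^*{\cdot}(h|_\bA) \;=\; \bA^\# \qquad \text{for every } h \in T.
\]
Taking $h = \mathrm{id}$ yields $\bA^\# = \bL^*|_\bA$; taking $h = t$ yields $\bL^*{\cdot}(t|_\bA) = \bL^*|_\bA$; and taking $h = s$ for arbitrary $s \in S$ yields $\bL^*{\cdot}(s|_\bA) = \bL^*|_\bA$. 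In other words, every element of $S \cup \{t\} \subseteq T$ preserves the $\cL^*$-structure of $\bL^*$ when restricted to $\bA$.

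The crux of the argument is to conclude $\bL^* = \bK^*$, and hence $f^* \in S$. Once this is established, the contradiction is immediate: $\bA^\# = \bK^*|_\bA = \bA^*$, but constancy applied to $h = t$ combined with $f^* \in S$ gives $\bA^\# = \bK^*{\cdot}(f^*t|_\bA) = \bK^*{\cdot}(t|_\bA) = \bA^{**}$, forcing $\bA^* = \bA^{**}$ and contradicting the choice of $\bA$. To obtain $\bL^* = \bK^*$, I would proceed in two stages: (i) iterate the construction over a nested sequence $\bA \subseteq \bA_1 \subseteq \bA_2 \subseteq \cdots$ exhausting $\bK$ using product colorings (each product of finitely many $\chi_{\bA_i}$ is clopen) and a fusion argument leveraging the Polish structure of $T$, to obtain a single $f^{**} \in T$ with $S \cup \{t\} \subseteq T \subseteq \emb(\bL^{**}, \bL^{**})$ for $\bL^{**} := \bK^*{\cdot}f^{**}$; and (ii) invoke a rigidity result that the only recurrent BRS $\bL^{**}$ with $\emb(\bK^*,\bK^*) \subseteq \emb(\bL^{**}, \bL^{**})$ is $\bK^*$ itself. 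The latter relies on the canonicity of diary-encoded expansions: by Corollary~\ref{Cor:Recurrent_BRS_Canonical} and the bi-interpretability of recurrent BRSs, $\bL^{**}$ is determined by its $S$-orbit structure, and the richness of $S = \demb(\Gamma,\Gamma)$ on copies of finite $\bA^*$ in $\bK^*$ (a consequence of the strong diary construction in Section~\ref{Sec:Strong diaries}) forces this determination to coincide with $\bK^*$.

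The main obstacle is step (ii), the rigidity claim: making precise that $S$-invariance together with being a recurrent BRS uniquely identifies $\bK^*$ among $\cL_\cK^{\rm{diary}}$-expansions of $\bK$. A subsidiary obstacle is the fusion step (i), which requires carefully iterating clopen-PR along a countable sequence of finite substructures while controlling the sequence of witnesses $f^*_{\bA_n}$; for this, I would exploit that intersections of principal right ideals $f^*_n T$ remain non-empty and use the closedness of the relevant constraint sets in the Polish space $T$ to extract a limiting witness $f^{**}$. Both obstacles reduce to invariant-theoretic properties of the specific diary expansion, and their resolution uses only ingredients already developed in Sections~\ref{Sec:DiagonalDiaries}--\ref{Sec:Main Theorem} of the paper together with the BRS canonicity results of Section~2.
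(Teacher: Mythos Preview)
Your approach has a genuine gap at step (ii), and the paper's argument proceeds quite differently.

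\textbf{The gap.} Your rigidity claim---that a recurrent big Ramsey structure $\bL^{**}$ with $\emb(\bK^*,\bK^*)\subseteq \emb(\bL^{**},\bL^{**})$ must equal $\bK^*$---is neither proved in the paper nor does it follow from Corollary~\ref{Cor:Recurrent_BRS_Canonical}. Bi-interpretability up to substructures tells you nothing about equality of specific expansions on the nose, and ``richness of $S$'' is not a precise statement. In effect you are asserting that $S=\emb(\bK^*,\bK^*)$ is a maximal semigroup of self-embeddings of some recurrent big Ramsey structure, which is essentially a reformulation of the theorem you are trying to prove. The fusion step (i) is also suspect: repeatedly applying clopen-PR produces a nested sequence of right ideals $f_0^*T\supseteq f_0^*f_1^*T\supseteq\cdots$, but you have no mechanism to guarantee that this intersection is non-empty, since the $f_n^*$ are arbitrary elements of $T$ and infinite compositions in $\emb(\bK,\bK)$ need not converge.

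\textbf{What the paper does instead.} The paper never tries to show that any element of $T$ lies in $S$. It argues by a direct counting contradiction with the big Ramsey degree. From $t\in T\setminus S$ one extracts a finite $\bA$ (with $\rm{BRD}(\bA,\cK)=\ell$) and an $f\in\emb(\bA,\bK)$ such that $\Gamma{\cdot}tf\neq\Gamma{\cdot}f$; hence the $T$-orbit $T{\cdot}f$ swallows \emph{two} of the $\ell$ distinct $S$-orbits on $\emb(\bA,\bK)$. One then chooses representatives $f_0,\dots,f_{\ell-1}$ of the $S$-orbits with $f_0=f$ and $f_{\ell-1}=tf$, so that $T{\cdot}f_0\cup\cdots\cup T{\cdot}f_{\ell-2}\supseteq S{\cdot}f_0\cup\cdots\cup S{\cdot}f_{\ell-1}$, and the latter union is large by Lemma~\ref{Lem:S_Action}. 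Now for an arbitrary finite coloring $\gamma$ of $\emb(\bA,\bK)$, apply clopen-PR of $T$ just $\ell-1$ times to the clopen colorings $x\mapsto \gamma(\eta_0\cdots\eta_{i-1}\cdot x\cdot f_i)$ to obtain $\eta=\eta_0\cdots\eta_{\ell-2}\in T$ such that $\gamma\circ\eta$ takes at most $\ell-1$ values on a large set. This contradicts $\rm{BRD}(\bA,\cK)=\ell$. No rigidity, no fusion, and only finitely many applications of partition regularity are needed.
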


To prove this, we first fix $\bA\in \cK$ and investigate the left action of $S$ on $\emb(\bA, \bK)$ by post-composition. Call $X\subseteq \emb(\bA, \bK)$ \emph{large} if there is $\eta\in \emb(\bK, \bK)$ with $X\supseteq \{\eta\circ f: f\in \emb(\bA, \bK)\}$, and call $X$ \emph{unavoidable} if $\emb(\bA, \bK)\setminus X$ is not large. Notice that since $S\subseteq \emb(\bK, \bK)$ is recurrent, it follows that for any $f\in \emb(\bA, \bK)$, the set $S{\cdot}f$ is unavoidable.

\begin{lemma}
    \label{Lem:S_Action}
    If $\bA\in \cK$ has big Ramsey degree $\ell< \omega$ and $\{f_i: i< \ell\}\subseteq \emb(\bA, \bK)$ are such that $S{\cdot}f_i\cap S{\cdot}f_j = \emptyset$ for $i\neq j< \ell$, then $\bigcup_{i< \ell} S{\cdot}f_i$ is large.
\end{lemma}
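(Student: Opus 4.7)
The plan is to use the unavoidability of each $S{\cdot}f_i$ together with an $(\ell+1)$-coloring argument based on the definition of the big Ramsey degree. The recurrence of $S\subseteq \emb(\bK,\bK)$ (noted in the paragraph immediately before the lemma) supplies, for any $\eta\in\emb(\bK,\bK)$, some $\theta\in\emb(\bK,\bK)$ with $\eta\circ\theta\in S$; then $(\eta\circ\theta)\circ f_i = \eta\circ(\theta\circ f_i)$ lies in $(S{\cdot}f_i)\cap(\eta\circ\emb(\bA,\bK))$, so each $S{\cdot}f_i$ is unavoidable.

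With this in hand, I would introduce the coloring $\chi\colon \emb(\bA,\bK)\to\{0,1,\ldots,\ell\}$ by setting $\chi(f)=i$ if $f\in S{\cdot}f_i$ for some $i<\ell$, and $\chi(f)=\ell$ otherwise; the disjointness hypothesis on the orbits $\{S{\cdot}f_i\}_{i<\ell}$ makes $\chi$ well-defined. Since $\mathrm{BRD}(\bA,\bK)=\ell$, there exists $\eta\in\emb(\bK,\bK)$ with $|\chi(\eta\circ \emb(\bA,\bK))|\leq \ell$, so $\eta\circ \emb(\bA,\bK)$ omits at least one of the $\ell+1$ color values. By the unavoidability of each $S{\cdot}f_i$ for $i<\ell$, the image $\eta\circ \emb(\bA,\bK)$ realizes each of the colors $0,1,\ldots,\ell-1$, and therefore the omitted color is forced to be $\ell$. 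This yields $\eta\circ \emb(\bA,\bK)\subseteq \bigcup_{i<\ell} S{\cdot}f_i$, exhibiting the union as large.

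There is no substantive obstacle: the argument is a clean pigeonhole. The only minor cleverness is introducing the ``garbage'' color $\ell$ so that the big Ramsey degree bound pushes the image of $\eta$ away from the complement of $\bigcup_{i<\ell} S{\cdot}f_i$, and the only serious input is the recurrence of $S$ inside $\emb(\bK,\bK)$, which is guaranteed by bi-embeddability of diaries coding $\bK$.
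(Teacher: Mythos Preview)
Your proof is correct and is essentially the same as the paper's: both use the $(\ell+1)$-partition $\emb(\bA,\bK)=S{\cdot}f_0\sqcup\cdots\sqcup S{\cdot}f_{\ell-1}\sqcup X$ together with the unavoidability of each $S{\cdot}f_i$ and $\mathrm{BRD}(\bA,\bK)=\ell$ to force $X$ to be avoidable. The paper phrases this as a one-line contradiction (if $X$ were unavoidable, all $\ell+1$ classes would be unavoidable), while you unwind it as a direct argument, but the content is identical.
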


\begin{proof}
    Towards a contradiction, suppose not. Write $X = \emb(\bA, \bK)\setminus \left(\bigcup_{i< \ell} S{\cdot}f_i\right)$. Then $\emb(\bA, \bK) = S{\cdot}f_0\sqcup\cdots\sqcup S{\cdot}f_{\ell-1}\sqcup X$ is a partition of $\emb(\bA, \bK)$ into $\ell+1$-many unavoidable sets, a contradiction to our assumption that the big Ramsey degree of $\bA\in \cK$ is $\ell$. 
\end{proof}

The last observation that we need is that given $\bA\in \cK$ with big Ramsey degree $\ell< \omega$, we can find $\{f_i: i< \ell\}\subseteq \emb(\bA, \bK)$ as in Lemma~\ref{Lem:S_Action}. To see this, given $f\in \emb(\bA, \bK)$, let $\Gamma{\cdot}f$ denote the unique diary with coding nodes labelled by the members of $\bA$ such that the subdiary of $\Gamma$ induced by $f[\bA]$ (see the discussion after Definition~\ref{Def:EmbDiary}) is isomorphic to $\Gamma{\cdot}f$ via an isomorphism which respects the labels on coding nodes. Because diaries coding $\bK$ are big Ramsey structures for $\bK$, the map $f\to \Gamma{\cdot}f$ partitions $\emb(\bA, \bK)$ into $\ell$ unavoidable parts. Form $\{f_i: i< \ell\}$ by selecting one embedding per part.

\begin{proof}[Proof of Theorem~\ref{Thm:LargestSemigroup}]
    Fix a Polish subsemigroup $T\subseteq \emb(\bK, \bK)$ with $S\subsetneq T$. As $S\subseteq \emb(\bK, \bK)$ is closed, this implies that for some $\bA\in \cK$, $f\in \emb(\bA, \bK)$, and $t\in T$, we have $\Gamma{\cdot}tf\neq \Gamma{\cdot}f$. Let $\ell$ denote the big Ramsey degree of $\bA\in \cK$, and find $\{f_i: i< \ell\}\subseteq \emb(\bA, \bK)$ as indicated above. We may suppose that $f_0 = f$ and $f_{\ell-1} = tf$, implying that $T{\cdot}f_0\supseteq S{\cdot}f_0\cup S{\cdot}f_{\ell-1}$. Assuming that $T$ is Borel-PR for clopen colorings, we will show that the big Ramsey degree of $\bA\in \cK$ is at most $\ell-1$, obtaining a contradiction. So fix $r< \omega$ and a coloring $\gamma\colon \emb(\bA, \bK)\to r$. We define for each $i< \ell-1$ elements $\eta_i\in T$ and clopen colorings $\gamma_i\colon T\to r$ by induction. Set $\gamma_0(x) = \gamma(x{\cdot}f_0)$. If the clopen coloring $\gamma_i\colon T\to T$ has been defined, then by our assumption on $T$, we can find $\eta_i\in T$ with $\gamma_i{\cdot}\eta_i$ constant. If $i< \ell-2$, we set $\gamma_{i+1}(x) = \gamma(\eta_0\circ \cdots \circ \eta_i\circ x\cdot f_{i+1})$. At the end, set $\eta = \eta_0\circ\cdots \circ \eta_{\ell-2}$. Then $\gamma\circ \eta$ takes on at most $(\ell-1)$-many colors on the large set $T{\cdot}f_0\cupdots T{\cdot}f_{\ell-2}\supseteq S{\cdot}f_0\cupdots S{\cdot}f_{\ell-1}$. In particular, we can find $\theta \in \emb(\bK, \bK)$ so that $\gamma\cdot (\eta\circ \theta)$ takes on at most $\ell-1$ colors.  
\end{proof}

\subsection{A counterexample for $\emb(\bK, \bK)$.}\label{Subsec:7.2}

We have seen above that for any \fr class $\cK$ with limit $\bK$ such that $|\aut(\bK)|> 1$, we have that $\emb(\bK, \bK)$ is not Borel-PR. However, one might be tempted to hope that given a finite Borel coloring of $\emb(\bK, \bK)$ that one can zoom in to a copy to make the coloring appear much nicer. For \emph{uniformly clopen} colorings of $\emb(\bK, \bK)$, i.e.\ those which are induced from colorings of $\emb(\bA, \bK)$ for some finite $\bA\subseteq \bK$, the bi-embeddability of diaries implies that for any finite uniformly clopen coloring, we may zoom in to a copy of $\bK$ on which the coloring is recurrent. However, even upon considering non-uniformly clopen colorings, this is not in general possible.

Consider $\bK = \bbQ$; while this is not technically one of the classes treated in this paper, it is easy to work with, and the style of counterexample can be easily generalized. Enumerate $\bbQ = \{q_n: n< \omega\}$. Given $\eta\in \emb(\bbQ, \bbQ)$, let $\{\ol{\eta}(n): n< \omega\} = \{m< \omega: q_m\in \im(\eta)\}$, where $\ol{\eta}(0) < \ol{\eta}(1)<\cdots$. We define a coloring $\gamma\colon \emb(\bbQ, \bbQ)\to 2$ via
\begin{align*}
    \gamma(\eta) = \begin{cases}
        1\quad &\text{if }\ol{\eta}(0) = n \text{ and }q_{\ol{\eta}(0)}<\cdots < q_{\ol{\eta}(n)}\\[1 mm]
        0 \quad &\text{else}.
    \end{cases}
\end{align*}
We note that $\gamma$ is clopen. Towards a contradiction, suppose we could find $\eta\in \emb(\bK, \bK)$ with $\gamma\cdot \eta$ recurrent. We may assume, zooming in to a further subcopy if needed, that $\gamma(\eta) = 1$. Write $n = \ol{\eta}(0)$. Notice that if $h\in \emb(\bbQ, \bbQ)$ satisfies $h\circ \eta^{-1}(q_{\ol{\eta}(i)}) = \eta^{-1}(q_{\ol{\eta}(i)})$ for each $i\leq n$, then $\gamma(\eta\circ h) = 1$. Now suppose $\theta\in \emb(\bbQ, \bbQ)$ is such that $\ol{\theta}(0) = N > n$ and satisfies $\gamma(\theta) = 1$. We show that for any set $Y\subseteq \bbQ$ with $|Y| = n+1$, there is $h\in \emb(\bbQ, \bbQ)$ fixing $Y$ pointwise, but with $\gamma(\theta\circ h) = 0$. To do this, we first arrange so that $\theta[Y] = \theta\circ h[Y]$ are the least-enumerated elements of $\im(\theta\circ h)$, then we arrange so that the first $N+1$ enumerated elements of $\theta\circ h$ are not enumerated in correct rational order. As $\ol{\theta\circ h}(0)\geq \ol{\theta}(0)$, this is enough to conclude that $\gamma(\theta\circ h) = 0$. It follows that $\gamma\cdot \eta \neq \gamma\cdot \theta$. In particular, $\gamma$ cannot be recurrent.

A similar style of example yields a countable coloring $\delta\colon \emb(\bbQ, \bbQ)\to \omega$ so that all colors are unavoidable, i.e.\ for any $h\in \emb(\bbQ, \bbQ)$, we have $\delta[h\circ \emb(\bbQ, \bbQ)] = \omega$. Given $\eta\in \emb(\bbQ, \bbQ)$, we set
\begin{align*}
    \delta(\eta) = n
        \quad &\text{if $n$ is largest with }q_{\ol{\eta}(0)}<\cdots < q_{\ol{\eta}(n)}
\end{align*}
Because the action of $\emb(\bbQ, \bbQ)$ on the space of enumerations of $\bbQ$ is recurrent, we see that all colors are unavoidable. More generally, considering the recurrent action of $\emb(\bK, \bK)$ on the space of diaries, any countable Borel partition of this space yields a countable coloring of $\emb(\bK, \bK)$ with every color class unavoidable.

\subsection{Not all diaries behave the same}\label{Subsec:7.3}

We have seen in Corollary~\ref{Cor:Ellentuck} that \emph{there is} a diary $\Delta$ coding $\bK$ so that for any Ellentuck-Souslin-measurable finite coloring $\chi$ of $\demb(\Delta, \Delta)$, there is $h\in \demb(\Delta, \Delta)$ with $\chi\circ h$ constant. In this section, we consider the class $\cK$ of finite graphs and its \fr limit $\bK$, the \emph{Rado graph}. We construct a diary $T$ coding the Rado graph for which the Ellentuck topology on $\emb(T, T)$ is discrete. In particular, the conclusion of Corollary~\ref{Cor:Ellentuck} cannot be strengthened to hold for all diaries coding $\bK$.

The exact big Ramsey degrees for the class of finite graphs were characterized by Laflamme, Sauer, and Vuksanovic \cite{LSV}. The diaries coding objects in $\cK$ are particularly straightforward to describe, and we call them \emph{LSV-trees}. An LSV-tree is simply a subtree $T\subseteq 2^{<\omega}$ where every level contains exactly one terminal coding node or exactly one splitting node, and if $m< \rm{ht}(T)$ is a splitting level and $s\in T(m)$, then $s^\frown 0\in T(m+1)$.

Fix an enumerated Rado graph $\bK = \{r_n: n< \omega\}$. We will build an LSV-tree $T\subseteq 2^{<\omega}$ coding a Rado graph order-isomorphic to $\bK$. We will denote the coding nodes of $T$ by $\{c_n: n< \omega\}$, with $|c_0|< |c_1|<\cdots$. Given a node $s\in T$, the \emph{splitting predecessor} of $s$, denoted $\rm{SpPred}_T(s)$, denotes the maximal splitting node $t\in T$ with $t\sqsubseteq s$. The key feature of our construction is that $\rm{SpPred}_T(c_0) = \emptyset$, and for $n> 0$, we have $|\rm{SpPred}_T(c_n)|< |c_{n-1}|$. If $T$ satisfies this property and $\eta\in \emb(T, T)$ satisfies $\eta(\emptyset) = \emptyset$, then we must have $\eta= \rm{id}_T$. Indeed, if $\eta$ fixes $\emptyset$, then $\eta$ must fix $c_0$, so $\eta$ fixes all levels up to $|c_0|$, but then $\eta$ must fix $c_1$, so also all levels up to $|c_1|$, etc.

To begin our construction, set $c_0 = 100$, and define $T(3) = T(|c_0|) = \{100, 000, 010, 101\}$.
Assign all three members of $T(|c_0|)\setminus \{c_0\}$ the type string $\emptyset$, i.e.\ all of these represent the same type over the empty structure.

Now suppose that $T(|c_n|)$ has been constructed, and that furthermore, every node $p\in T(|c_n|)\setminus \{c_n\}$ has been assigned a type string $\tau_p\in 2^n$ via $\tau_p(m) = p(|c_m|)$ in such a way that the map $p\to \tau_p$ is at least $3$-to-$1$. Now, consider the type of $r_{n+1}$ over $\{r_0,...,r_{n-1}\}$, which we can encode as some $\sigma\in 2^n$. Choose some $p^*\in T(|c_n|)\setminus \{c_n\}$ with $\tau_{p^*} = \sigma$. This will be the node that we extend to $c_{n+1}$. We set $c_{n+1}(|c_n|)$ depending on the adjacency of $r_n$ and $r_{n+1}$, and set $c_{n+1}(m) = 0$ for all $|c_n|< m < |c_{n+1}|$ (once we decide what $|c_{n+1}|$ will be). For $q\in T(|c_n|)\setminus \{c_n, p^*\}$, we first extend to level $|c_n|+1$ by choosing a $0$ or $1$ in such a way that for each $\phi\in 2^n$ and $i< 2$, there is $q\in T(|c_n|)\setminus \{c_n, p^*\}$ with $\tau_q = \sigma$ and so that $q$ is extended by appending $i$. This is possible by our $3$-to-$1$ inductive assumption. Notice that these extensions to level $|c_n|+1$ determine the type string for extensions to level $|c_{n+1}|$. Thus at level $|c_n|+1$, we have ensured that every possible type over $\{r_0,...,r_n\}$ is represented, but perhaps not three times yet. In the remaining levels between $|c_n|+1$ and $|c_{n+1}|$, we simply perform enough splitting to ensure the $3$-to-$1$ condition. 

The existence of this pathological diary motivates the following question.

\begin{que}
    \label{Que:Ellentuck}
    Is there a diary $\Gamma$ coding $\bK$ with the property that for any finite Ellentuck-BP coloring $\chi$ of $\demb(\Gamma, \Gamma)$, there is $h\in \demb(\Gamma, \Gamma)$ with $\chi\circ h$ constant? For which diaries can we make this conclusion, even just for Ellentuck-Souslin-measurable colorings?
\end{que}

Another question raised by the diary constructed in this subsection concerns the possibility of proving that certain subsets or colorings of $\demb(\Gamma, \Gamma)$ are ``completely Ramsey." For instance, Theorem~\ref{Thm:TRS} has the stronger consequence that finite Ellentuck-BP colorings of $\cS$ are \emph{completely Ramsey}, namely, given such a coloring $\chi$, then for any $f\in \cal{AR}$ with $n = \rm{depth}(f)$, there is $h\in \cS$ \emph{which is the identity on $\Delta_n$} (i.e.\ $h\in \cS_\mu$ for some $\mu\in \SL(\Delta)$ with $\mu\geq n$) such that $\chi\circ h$ is monochromatic on $\{\phi\in \cR: f\sqsubseteq \phi\}$. So while we are asking for a smaller monochromatic set (namely $\{\phi\in \cR: f\sqsubseteq \phi\}$ instead of all of $\cR$), we place stronger demands on $h$, namely that $h$ fixes the first $n$ levels. To attempt to generalize this idea, fix a monoid $M$ whose underlying set is a zero-dimensional Polish space. In addition, assume that the identiy $1_M$ has a base of clopen submonoids. Given a definability property $\rmP$ of finite colorings, let us call $M$ \emph{completely $\rmP$-partition-regular} if for any clopen submonoid $U\subseteq M$ and any finite $\rmP$-coloring $\chi$ of $M$, there is $h\in U$ so that $\chi\circ h$ is a clopen coloring\footnote{The authors thank Spencer Unger for helpful discussions about possible notions of ``completely Ramsey."}. In particular, if a monoid $M$ has isolated identity and any Borel non-clopen colorings at all, then $M$ cannot be completely Borel-PR. Hence for the LSV-tree $T$ constructed in this section, $\emb(T, T)$ cannot be completely Borel-PR. 

\begin{que}
    Is the monoid $\demb(\Delta, \Delta)$ \emph{completely} Ellentuck-Souslin-PR? More generally, characterize the diaries $\Gamma$ coding $\bK$ such that $\demb(\Gamma, \Gamma)$ is completely $\rmP$-PR for $\rmP$ equal to Borel, Souslin-measurable, etc.
\end{que}

\bibliographystyle{amsplain}
\bibliography{bibzucker}

\end{document}